\documentclass[a4paper]{amsart}
\usepackage{amsthm}
\usepackage{amssymb}
\usepackage{amsmath}

\usepackage{stmaryrd}
\usepackage{amsfonts}
\usepackage{lmodern}
\usepackage{mathtools}
\usepackage{pdfpages}
\usepackage{changes}
\usepackage{exscale,cite,color,amsopn}
\usepackage[colorlinks=true,pdftex,unicode=true,linktocpage,bookmarksopen,hypertexnames=true]{hyperref}
\usepackage{aliascnt}
\usepackage{colortbl}
\usepackage{enumitem}
\usepackage{verbatim}
\usepackage{tikz-cd}
\usepackage[capitalize]{cleveref}
\usepackage{orcidlink}
\usepackage{xcolor}
\usepackage{mathdots}
\usepackage{mathrsfs}

\usetikzlibrary{trees}

\renewcommand{\leq}{\leqslant}
\renewcommand{\geq}{\geqslant}
\DeclareMathOperator{\id}{id}

\DeclareMathOperator{\Triv}{Triv}

\DeclareMathOperator{\Ret}{Ret}

\DeclareMathOperator{\Sym}{Sym}

\DeclareMathOperator{\FSB}{FSB}
\DeclareMathOperator{\FB}{FB}
\DeclareMathOperator{\FSol}{FSol}
\DeclareMathOperator{\Fg}{F}
\DeclareMathOperator{\Fm}{S}
\DeclareMathOperator{\Inj}{Inj}
\DeclareMathOperator{\Inv}{Inv}

\DeclareMathOperator{\eva}{ev}

\newcommand{\N}{\mathbb{N}}
\newcommand{\Z}{\mathbb{Z}}
\newcommand{\F}{\mathbb{F}}
\newcommand{\Q}{\mathbb{Q}}

\newcommand{\Aut}{\operatorname{Aut}}
\newcommand{\Hom}{\operatorname{Hom}}
\newcommand{\Hol}{\operatorname{Hol}}
\newcommand{\End}{\mathrm{End}}
\newcommand{\fab}{\operatorname{F}^{\operatorname{ab}}}

\makeatletter
\numberwithin{equation}{section}
\numberwithin{figure}{section}
\numberwithin{table}{section}
\newtheorem{thm}{Theorem}[section]
\newtheorem*{thm*}{Theorem}
\newtheorem{lem}[thm]{Lemma}
\newtheorem{cor}[thm]{Corollary}
\newtheorem{pro}[thm]{Proposition}

\theoremstyle{definition}
\newtheorem{defn}[thm]{Definition}

\newtheorem{question}[thm]{Question}

\newtheorem{rem}[thm]{Remark}
\newtheorem{exa}[thm]{Example}

\newtheorem*{convention*}{Convention}

\makeatother

\title{Free Skew Braces and Free Solutions of the Yang--Baxter Equation}

\author[\tiny Jespers]{Eric Jespers\orcidlink{0000-0002-2695-7949}}
\address{Department of Mathematics and Data Science\\
Vrije Universiteit Brussel\\
Pleinlaan 2\\
1050 Brussels\\
Belgium}
\email{eric.jespers@vub.be}

\author[\tiny Letourmy]{Thomas Letourmy\orcidlink{0000-0003-4769-1294}}
\address{Faculté des Sciences\\
Université libre de Bruxelles\\
Campus de la Plaine - CP 216\\
Boulevard du Triomphe, ACC.2\\
1050 Brussels\\
Belgium}
\email{thomas.letourmy@ulb.be}

\author[\tiny Properzi]{Silvia Properzi\orcidlink{0000-0002-8228-9156}}
\address{Department of Mathematics and Data Science\\
Vrije Universiteit Brussel\\
Pleinlaan 2\\
1050 Brussels\\
Belgium}
\email{silvia.properzi@vub.be}

\author[\tiny Trombetti]{Marco Trombetti\orcidlink{0000-0003-4532-3690}}
\address{
Dipartimento di Matematica e Applicazioni "Renato Caccioppoli" \\
Università degli Studi di Napoli Federico II\\
Via Cinthia 26\\
80126 Naples\\
Italy}

\email{marco.trombetti@unina.it}

\author[\tiny Van Antwerpen]{Arne Van Antwerpen\orcidlink{0000-0001-7619-6298}}
\address{Department of Mathematics and Statistics\\
Maynooth University\\
Maynooth\\
Ireland}
\email{Arne.VanAntwerpen@mu.ie}

\subjclass{Primary 16T25; Secondary 08B20, 20N99}

\keywords{Yang--Baxter Equation, solution, skew brace, free object}

\date{\today}

\begin{document}

\begin{abstract}
We offer a workable construction of the free right nilpotent skew braces of arbitrary class which allows us to prove (among many other things) that this free object has free additive/multiplicative groups, and that it must also be residually finite and Hopfian. We introduce the class of right nilpotent solutions, which correspond to right nilpotent skew braces. As a consequence of our construction, the free solutions in this class have a solvable Word Problem, and every law holding for finite solutions of the previous type also holds for {\it every} solution of the same type.

In the remainder of the paper, we present further explicit realizations of free objects and explore their consequences. Among these are free two-sided skew braces of abelian type (with an abelian multiplicative group) and free centrally nilpotent skew braces of class $2$.
\end{abstract}

\maketitle

\section{Introduction}

Free objects are among the most basic and important tools in algebra. They can be thought of as ‘‘generic’’ algebraic structures (think of groups) satisfying certain ‘‘test’’ properties (think of nilpotency of bounded class), so, for example, among other things, they may serve as a testing ground for verifying conjectures. The first free objects ever studied are the free groups --- these were introduced by Walther von Dyck \cite{MR1510147} in 1882, although their name and basic properties are due to Jakob Nielsen in 1924. Soon people realized that free groups were actually a very particular case of certain universal constructions, and in 1948 the theory of free objects started thanks to Saunders MacLane (see \cite{MR49192}, where the term ‘‘free’’ has even been given a very curious political context). 

The study of free objects has shown for example that in certain contexts if a ‘‘law’’ holds in every finite object, then it also holds in {\it every} object. This is precisely what happens for free groups, which have been proved to be residually finite. We cannot stress enough the importance of being residually finite in group theory. As a consequence of this property, one has that free groups of finite rank are Hopfian. Moreover, residual finiteness can usually be exploited to solve the Word Problem and other Dehn's Problems in several classes of groups, or at least to provide an algorithmic approach to them (see for example \cite{MR635200}). Furthermore, it provides ground for certain topological approaches, and it is something even relevant in Geometric Group Theory: one of the long-standing questions in this area asks if hyperbolic groups are residually finite or not.

\smallskip

Of course, the amount of information a free object encodes ultimately depends on the way it is constructed. There are general results in universal algebra asserting that free objects always exist, but their mere existence provides little information about their structure beyond the usual universal property. For example, the property of free groups to be residually finite does not follow at once from the construction but it exploits the universal property and certain explicit constructions of finite groups. Moreover, although free groups are very well understood thanks to their clear construction (using which one can for instance prove that subgroups of free groups are themselves free --- the Nielsen--Schreier Theorem), the same cannot be said about many other types of relevant  structures, especially when these structures cannot be really described in terms of neat {\it algebraic} structures. 

\medskip

The main motivation for the present work is indeed to tackle one of these non-easy frameworks. We study free bijective non-degenerate set-theoretic solutions (solutions, for short) to the Yang--Baxter Equation (YBE, for short). This is a consistency equation (independently obtained by the physicists Yang \cite{Yang} and Baxter \cite{Baxter}) in the field of quantum statistical mechanics which has many relevant interpretations in the realm of mathematical physics, and which plays a key role in the foundation of quantum groups. A major problem in mathematical physics is the classification of all (set-theoretic) solutions to the YBE, an objective that, at present, remains far beyond reach. Having a nice description of a ‘‘free solution’’ would thus be a good starting point in solving this major problem. Not only because every other solution would be a homomorphic image of it, but also because it would  provide a rigorous framework to reduce 
certain types of questions on solutions to finite ones.

\bigskip

\noindent{\bf Conjecture ($\star$)}\quad
Finite solutions of the YBE (in a given category) determine the law for all other  solutions (in that category).

\bigskip

It is now established that all solutions to the YBE can be studied by means of the relatively new algebraic structures, called {\it skew braces} (in a nutshell, these are two group structures on the same underlying set linked by a skew distributivity relation).
In fact, by a result of Bachiller \cite{Bach}, 
there is a method to construct all the solutions to the YBE from skew braces.

More details on skew braces and the YBE are provided in~Section \ref{preliminaries}, but for the sake of the introduction, we just notice that every solution of the YBE gives a skew brace (sometimes called the {\it structure} skew brace), and, conversely, that every skew brace is the support of a solution. Clearly, this correspondence is not bijective, and an even subtler issue may arise. Indeed, there is a natural morphism from a solution to its structure skew brace, and this morphism need not be injective (solutions for which it is injective are called \emph{injective}). For this reason, one often considers the \emph{injectivisation} of a solution, obtained by identifying precisely those elements that have the same image in the structure skew brace. In this sense, injective solutions are ``minimal'' among the solutions giving rise to the same structure skew brace.

\smallskip

As noted in \cite{ChiMil} by Chirvasitu and Militaru, it follows from classical results in universal algebra and adjunctions in categories that free solutions and free skew braces exist. In the spirit of that paper, the following result can be viewed as a consequence of Freyd's theorem on adjunctions and motivates to further study the structure of these free objects.

\bigskip

\noindent{\bf Theorem A}\;(see \cref{freesolutionisfreeskew})\\ 
{\it The structure skew brace of a free solution is free.} 
\bigskip

Determining whether a skew brace gives rise to a free solution is much more subtle. Indeed, under the natural correspondence described above, there is no converse to Theorem~A (see \cref{freeskewbracenotfreesolution}). Another possibility would be to find a copy of the free solution inside the free skew brace. This would certainly be possible if the free solution were injective. Unfortunately, determining whether the free solution is injective seems to be a difficult problem (see \cref{questionquestion}). The difficulty lies in the fact that the passage from solutions to skew braces may lose information. This phenomenon already occurs in the natural category of solutions considered in Theorem~B, where the corresponding free objects are non-injective (see \cref{solnotinjective}). For these reasons, it is natural to consider the injectivisation of the free solution, which turns out to be the free injective solution~(see \cref{injectivisation}). 

\bigskip

Constructions of the free skew braces have been given in \cite{Orza} and \cite{Pompili2022}.
However, these descriptions do not seem to reveal the
structural properties of the resulting skew braces in a transparent way.
Is a free skew brace residually finite? Which kind of groups are the additive and multiplicative groups of a free skew brace? Are they free groups, or at least torsion-free? Does an analogue of the Nielsen--Schreier Theorem for groups hold for free skew braces? (In other words, are sub-skew braces free?)\ldots At present, a satisfactory description of free skew braces remains elusive. By The\-o\-rem~A, together with the established correspondence between solutions and skew braces, even obtaining a satisfactory description of free solutions appears to be an extremely difficult task. In particular, Conjecture ($\star$) remains open.

The difficulty of this task can already be appreciated by comparing
one-gen\- erated groups and rings with one-generated skew braces. While the former are easily dealt with,  the latter are as difficult to deal with as arbitrary skew braces. The obstruction here is that the generators may produce elements that are independent (in the additive and/or the multiplicative group)
to the original generators, and therefore cannot be expected to admit any prescribed form; for example, this makes it very difficult to understand if any two elements in a one-generated skew brace additively/multiplicatively commute or not.

The relevance of the problem of providing a workable description of free skew braces and free solutions is also underscored by the notion of a presentation for skew braces introduced in \cite{TrombettiFP}, and which is entirely based on the concept of free skew braces. Even without an operative description of these free objects, skew brace presentations have made it possible to solve some relevant problems concerning skew braces and the YBE (see \cite{MR4718182},\cite{Wordproblem}), but there are other problems which, to be solved, would require a complete understanding of the additive/multiplicative group of a free skew brace.

\medskip

In light of the relevance of this problem and its intrinsic difficulties, our approach shifted from the study of general free objects to that of free objects in specific, hopefully more manageable, categories.

One of the most relevant types of solutions to the YBE is the so-called {\it multipermutation} solutions, which are  solutions that can be retracted into the trivial solution over a singleton after finitely many natural identification steps. These solutions received a lot of attention in recent years (see for example \cite{MR4627847}) and can be characterized precisely as those solutions whose structure skew brace has a nilpotent additive group and satisfies a nilpotency concept for skew braces called ‘‘right nilpotency’’ (this essentially means that the skew brace is an iterated extension of groups). Clearly, the nilpotency condition on the additive group makes things a little more well-behaved, but here we are concerned with the broader category of solutions corresponding to the class of right nilpotent skew braces. Our second main result links free solutions of right-class $n$ (denoted by $\mathcal{RNS}_n$) with the corresponding free skew braces. 

\bigskip

\noindent{\bf Theorem B}\;(see Theorem \ref{StrGrpFSBn})\\ 
{\it The structure skew brace of a free solution in the category $\mathcal{RNS}_n$ is free right nilpotent of class $n$.}
\bigskip

This is achieved through a general, though rather technical, formula translating right nilpotency of the structure skew brace into identities on the originating solution (see Lemma~\ref{lemmmino-class-n}). Motivated by this connection, our
third main result gives an explicit description of free right nilpotent skew braces of arbitrary class. This is the most substantial part of the paper, and its proof occupies a significant portion of the manuscript.

\bigskip

\noindent{\bf Theorem C}\;(see Section \ref{sec: right nilp}, and in particular Theorem \ref{FreeNilpn})\\ 
{\it A \emph{workable} construction of free right nilpotent skew braces.}

\bigskip

In the previous statement the adjective ‘‘workable’’ is fundamental. In fact, as we have already noticed, the value of such constructions depends only on the consequences they have. We are now going to highlight some of the main consequences of Theorem C.

\begin{itemize}
    \item[(1)] Our construction shows that the additive and multiplicative groups of such a free object must be both isomorphic to a free group of infinite rank. In turn, this implies that the corresponding free object of {\it abelian type} (that is, with an abelian additive group) does not have isomorphic additive and multiplicative groups (see~Re\-mark~\ref{remarkrn22}). We also note that this description allows us to prove that in right nilpotency class~$n$,  the last non-zero term of the right nilpotent series is a free group (see~\cref{quotient-by-last-term}), and we also have a description of a free basis. But more than that, it implies that elements in the free skew brace of right nilpotency class at most $n$ have canonical forms. Moreover, we identify a second free basis of the additive group (see \cref{freebasisy}), which is particularly well-suited to construct quotient skew braces. 

    \smallskip

\item[(2)] In general, free right nilpotent skew braces are not co-Hopfian (see~Proposition \ref{notcohop})

\smallskip

\item[(3)] Free right nilpotent skew braces of class $n$ are residually finite and Hopfian (see \cref{classnresfinhopfia}).

\smallskip

\item[(4)] We can identify the regular subgroup corresponding to free right nilpotent skew braces of class $2$ on one element (see Proposition \ref{descriptionregular}). Recall that skew braces can also be identified with regular subgroups of the automorphism group of their additive group.

The reader should not be misled by the words ‘‘on one element’’. In fact, such structures may be as complicated as those with more than one element. For example, a construction of the free right nilpotent skew brace {\it of abelian type} on one element has been given in \cite{BallesterEstebanKurdachenkoPerez2025}, but from that none of the above-mentioned consequences follow directly.
    
\end{itemize}

\smallskip

In turn, Point (3) together with Theorem B and The\-o\-rem~\ref{injectivisation} have an obvious consequence which we wish to highlight more as follows.

\bigskip

\noindent{\bf Theorem D}\\
{\it The free injective solution in the category $\mathcal{RNS}_n$ satisfies~\emph{Conjecture~$(\star)$.}}

\bigskip

We even have more than that. In fact, combining Points (1) and (3), we have that the obvious Word Problem for free right nilpotent skew braces of class $2$ is solvable.

\bigskip

\noindent{\bf Theorem E}\\
{\it The free injective solution in the category $\mathcal{RNS}_n$ has a solvable Word Pro\-blem.}

\bigskip

We should mention that there have been other attempts to describe very particular free objects in certain skew brace categories: \cite{MR4950409} deals with free {\it left}-nilpotent skew braces of class $2$ of abelian type (see also \cite{1genln2Kurd}), \cite{LetFreeComm} with free skew braces with an abelian multiplicative group, and \cite{KN1} with free skew braces of abelian type with an abelian multiplicative group. 

To provide a broader perspective on free constructions in the categories of
skew braces and solutions, we include several supplementary sections devoted to more conclusive investigations of other relevant categories of skew braces. We briefly summarize the content of these sections as follows (for a more comprehensive account the reader should check the corresponding sections):

\begin{itemize}
    \item Section \ref{twosided}: Free two-sided skew braces of abelian type on a finite set. These are residually finite (see Corollary \ref{cortwosidedbrace}). In this context, we should also note the relevant Proposition \ref{two sided brace abelian generated is abelian }, which shows that the multiplicative group of a one-generator two-sided skew brace is abelian, so in this case, the above-mentioned difficulties in understanding one-generator objects do not arise.

\smallskip
    
    \item Section \ref{commutative}: Free skew braces with an abelian multiplicative group. These are residually finite (see Proposition \ref{resfincomm}), their multiplicative groups are free abelian (see Proposition \ref{freeabeliangroup}), and their additive groups are torsion-free (see Proposition \ref{additivegrouptorsionfree}).

\smallskip
    
    \item Section \ref{radicalring}: Free skew braces whose additive and multiplicative groups are abelian (these are known to be equivalent to commutative radical rings). These are residually finite (see Theorem \ref{resfiniradicalring}). Their multiplicative groups are free abelian (see Corollary~\ref{corradicalring}), they have sub-objects that are not free (see~Proposition~\ref{nielsen}). Last but not least, these objects are extensions of free abelian groups (see Theorem \ref{previousproof}).

\smallskip

\item Section \ref{centrallynilponte}: Free centrally nilpotent skew braces of class $2$ (of abelian type). Recall that central nilpotency is the strongest nilpotency concept for skew braces so far introduced, so for example it implies right nilpotency. It turns out that these free objects are residually finite (see Theorem \ref{resfinfinall}), and that they have isomorphic additive and multiplicative groups (see Theorem~\ref{risultatofinale}).
\end{itemize}

\bigskip

\section{Preliminaries}\label{preliminaries}

In this section we first provide some preliminaries on solutions to the Yang--Baxter equation and skew braces (see \cite{cedo2026groups} for a more detailed account on the subject). In doing this, we introduce a new characterization of skew braces of right nilpotent class at most $n$ which is completely based on the lambda-action (see~Pro\-po\-sition~\ref{lambda action for rclass n}). In~Section~\ref{secsolutions}, we apply this description to introduce a novel class of solutions to the Yang--Baxter equation generalizing a well-known class of solutions, namely ‘‘multipermutation solutions’’ (see \cite{cedo2026groups}). Note also that in the last part of this section, we introduce ‘‘relative’’ free skew braces, proving a result that will allow us to obtain that certain ‘‘relative’’ free skew braces are Hopfian (see \cref{hopfian}).

\medskip

A \emph{set-theoretic solution to the Yang--Baxter Equation} (YBE, for short) consists of a non-empty set~$X$
and a bijection $r: X^2 \to X^2$ that satisfies on $X^3$ the equation
\begin{equation}\label{YBE}
  (r \times \id_X) (\id_X \times r)(r \times \id_X) = (\id_X \times r)(r \times \id_X) (\id_X \times r). \tag{$\dagger$}
\end{equation}
Writing such a solution $(X,r)$ as 
$r(x,y) = (\lambda_x(y), \rho_y(x))$,
it is said to be  \emph{non-degenerate} 
if the mappings $\lambda_x, \rho_x$ are bijective for all $x \in X$.
\begin{rem}
\label{nondeg implies bij}
    In \cite{DiagonalsJedPil} it is proven that maps satisfying \cref{YBE} and which are non-degenerate are necessarily bijective.
\end{rem}

\begin{convention*}
    From now on, we use the phrase \emph{solutions to the Yang--Baxter Equation}, or simply \emph{solutions}, to refer to non-degenerate set-theoretic solutions to the Yang--Baxter Equation.
\end{convention*}

A \emph{morphism of solutions} $f:(X,r)\to (Y,s)$
is a map $f:X\to Y$ such that $(f\times f)r=s(f\times f)$.
Morphisms of solutions of the Yang--Baxter Equation are closed under composition 
and include identity maps, making these objects into a category.

\medskip

Solutions of the Yang--Baxter Equation are closely related to skew braces, 
which provide an algebraic framework for their study.
Recall that a ({\it left}) \emph{skew brace} is a triple
$(B,+,\circ)$, where $(B,+)$ and $(B,\circ)$ are groups, satisfying the {\it left skew distributivity}:
\begin{equation}\label{E:LeftSkewDistr}
a\circ (b+c)=a\circ b-a+a\circ c,
\end{equation}
for all $a,b,c\in B$; here, $-a$ denotes the inverse of $a$ in $(B,+)$. We say that $B$ is \emph{two-sided} if it also satisfies the {\it right skew distributivity}:
\begin{equation}\label{E:RightSkewDistr}
(b+c)\circ a= b\circ a-a+ c\circ a,
\end{equation} for every $a,b,c\in B$.
The group $(B,+)$ is called the \emph{additive group of $B$}, while $(B,\circ)$ is called the \emph{multiplicative group of~$B$} and $\bar{a}$ denotes the inverse of $a$ in $(B,\circ)$.
A \emph{morphism of skew braces} $f:(A,+,\circ)\to (B,+,\circ)$
is a map $f:A\to Y=B$ 
that is both a morphism of the additive and the multiplicative groups.
Morphisms of skew braces are closed under composition 
and include identity maps, making these objects into a category.

We say that the skew brace is \emph{of type~$\chi$}
if the additive group satisfies some property~$\chi$, so
for example, skew braces with an abelian additive group are called \emph{skew braces of abelian type}. If~$(B,\circ)$ is abelian, then we say that $B$ is {\it commutative}.

We use the notations $[B,B]_+$ and $[B,B]_\circ$ to denote the commutator subgroup of the groups $(B,+)$ and~$(B,\circ)$, respectively. More generally, the subscripts $+$ and $\circ$ indicate which of the two group structures is being considered when using group-theoretic notation. Any group $(G,\cdot)$ yields naturally two skew braces:~$(G,\cdot,\cdot)$, called the \emph{trivial skew brace on $G$} (sometimes simply denoted $\Triv (G)$) and $(G,\cdot^{\mathrm{op}},\cdot)$, called the \emph{almost trivial skew brace on $G$}. 

Every skew brace $B$ defines an action 
by automorphisms of $(B,\circ)$ on $(B,+)$
called the \emph{$\lambda$-action} and 
defined by
\[
\lambda_a(b)=-a+a\circ b,
\]
for all $a,b\in B$. Thus, the {\it $\lambda$-map} $a \mapsto \lambda_a$ defines a homomorphism from $(B,\circ)$ to~$\Aut(B,+)$, so that $B$ embeds as a regular subgroup of the holomorph $\Hol(B,+)=(B,+)\rtimes \Aut(B,+)$. (Recall that a subgroup of $\Hol(B,+)$ is called \emph{regular} if it acts regularly, i.e. freely and transitively, on $(B,+)$.) It turns out that every skew brace arises in this way.

\begin{thm}[see {\cite[Theorem 4.3]{Skew}}]
\label{skew braces and regular subgroups}
Let $(B,+)$ be a group. Then there is a one-to-one correspondence between 
skew brace structures $(B,+,\circ)$ and regular subgroups of the holomorph 
$\mathrm{Hol}(B,+)$. Under this correspondence, the multiplicative group~$(B,\circ)$ is identified with a regular subgroup of $\mathrm{Hol}(B,+)$ via 
$a \mapsto (a,\lambda_a)$.
\end{thm}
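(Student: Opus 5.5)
The plan is to build explicit maps in both directions and check they are mutually inverse. Fix a group $(B,+)$ and write $\Hol(B,+)$ as pairs $(a,f)$ with $a\in B$ and $f\in\Aut(B,+)$. The product is $(a,f)(b,g)=(a+f(b),fg)$, and the action on $B$ is $(a,f)\cdot x=a+f(x)$. Regularity of a subgroup $G$ means exactly this: the map $G\to B$, $(a,f)\mapsto (a,f)\cdot 0=a$, is a bijection. In other words, $G$ is the graph $\{(a,\lambda_a):a\in B\}$ of some function $\lambda\colon B\to \Aut(B,+)$.

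\emph{From skew braces to regular subgroups.} Let $(B,+,\circ)$ be a skew brace. From left skew distributivity we get
\[
\lambda_a(b+c)=-a+a\circ b-a+a\circ c=\lambda_a(b)+\lambda_a(c),
\]
so each $\lambda_a$ is additive. Next, applying distributivity to $a\circ(b+\lambda_b(c))$ gives $\lambda_{a\circ b}=\lambda_a\lambda_b$. Since $\lambda_0=\id$ (here $0$ is also the $\circ$-identity, as $a\circ 0=a\circ 0-a+a\circ 0$), each $\lambda_a$ is invertible. We then check directly that
\[
(a,\lambda_a)(b,\lambda_b)=(a+\lambda_a(b),\lambda_a\lambda_b)=(a\circ b,\lambda_{a\circ b}).
\]
Thus $a\mapsto(a,\lambda_a)$ is an injective homomorphism from $(B,\circ)$ into $\Hol(B,+)$. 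Its image is a subgroup, and it is regular by the graph description above.

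\emph{From regular subgroups to skew braces.} Let $G$ be a regular subgroup, viewed as the graph of $\lambda$. Transport the group structure of $G$ to $B$ via the bijection $(a,f)\mapsto a$. This gives
\[
a\circ b := a+\lambda_a(b), \qquad \lambda_{a\circ b}=\lambda_a\lambda_b,
\]
and $(B,\circ)$ is a group because $G$ is. Left skew distributivity now follows from additivity of $\lambda_a$:
\[
a\circ(b+c)=a+\lambda_a(b)+\lambda_a(c)=a\circ b-a+a\circ c.
\]
Finally, $-a+a\circ b=\lambda_a(b)$, so the $\lambda$-map of this skew brace is exactly the given $\lambda$.

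\emph{Mutual inverses.} Starting from a brace, forming $G$, and reconstructing gives back $a+\lambda_a(b)=a\circ b$, the original operation. Starting from $G$, reconstructing the brace and forming its subgroup gives back the graph of $\lambda$, which is $G$. So the two constructions are inverse bijections, and by construction $(B,\circ)$ is identified with the regular subgroup via $a\mapsto(a,\lambda_a)$.

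The only nontrivial verification is the homomorphism identity $\lambda_{a\circ b}=\lambda_a\lambda_b$. Here one has to expand $a\circ b\circ c$ in two ways using distributivity, and be careful with the non-commutative additive group. Everything else is bookkeeping.
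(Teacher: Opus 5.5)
Your argument is correct and complete, and it is the standard one. Regular subgroups are exactly the graphs $\{(a,\lambda_a)\}$ of maps $B\to\Aut(B,+)$; left skew distributivity gives additivity of each $\lambda_a$ and the identity $\lambda_{a\circ b}=\lambda_a\lambda_b$; and the converse direction is transport of structure along $(a,\lambda_a)\mapsto a$. The paper does not prove this statement itself but quotes it from the cited source, where essentially this same verification is carried out.
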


Let $X$ be a subset of the skew brace $(B,+,\circ)$. One says  that $X$ is a {\it sub-skew brace} of $B$ if $X$ is both an additive and a multiplicative subgroup of $B$. If $X$ is an additive subgroup (resp. a normal additive subgroup) which is also {\it $\lambda$-invariant} (that is, $\lambda_b(X)=X$ for every~\hbox{$b\in B$),} then $X$ is said to be a {\it left-ideal} (resp. {\it strong left-ideal}). (Note that every left-ideal is a sub-skew brace.) Finally, $X$ is an {\it ideal} if it is a strong left-ideal which is also a multiplicatively normal subgroup. An ideal $I$ of $B$ allows us to consider the quotient skew brace $B/I$ with the induced operations.

\medskip

Skew braces give rise to set-theoretic solutions to the Yang--Baxter Equation in the following way.
If $B$ is a skew brace, then $(B,r_B)$ is a solution,
where
\[
r_B(a,b)=(\lambda_a(b),\overline{\lambda_a(b)}\circ a\circ b),
\]
for every $a,b \in B$.  Moreover, given a set-theoretic solution to the Yang--Baxter Equation $(X,r)$, one can define a skew brace  as follows. First, one defines the \emph{structure group of $(X,r)$} as 
\[
(G(X,r),\circ)=\langle X\mid x\circ y =u\circ v\text{ where }r(x,y)=(u,v)\rangle.
\]
Next, one notices that the position $x+y=x\circ \lambda_x^{-1}(y)$ defines a second group structure on $G(X,r)$ making $(G(X,r),+,\circ)$ a skew brace. This is the {\it structure skew brace of $(X,r)$}.
In this framework, it is convenient to recall that  for all $y\in X$ one has a bijection
\[
\sigma_y:X\to X \qquad \sigma_y:x\mapsto\lambda_y\left(\rho_{\lambda_x^{-1}(y)}(x)\right),
\]
which allows to present the group $(G(X,r),+)$ 
in the following way
\[
(G(X,r),+)=\langle X\mid x+y=y+\sigma_y(x) \rangle.
\]

The following universal property of the structure skew brace associated with a set-theoretic solution of the Yang--Baxter Equation was established in \cite[Theorem~9]{LuYanZhu} and later reformulated in the language of skew braces in \cite[Theorem~4.5]{SmokVen}. Here we denote by $$g_X:X\to G(X,r)$$ the canonical map $x\mapsto x$.

\begin{pro}[Universal property of the structure skew brace] \label{univ prop structure sb}
Let $(Y,s)$ be a solution to the Yang--Baxter Equation. For every skew brace $B$ 
    and every morphism of solutions $f:(Y,s)\to (B,r_B)$,
    there exists a unique homomorphism of skew braces
    \hbox{$G(f):G(Y,s)\to B$} such that $G(f)\circ g_Y=f$.
\end{pro}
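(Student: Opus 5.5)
The plan is to build $G(f)$ in two stages: first as a homomorphism of multiplicative groups, then check that it also respects addition. Write $s(x,y)=(\lambda_x(y),\rho_y(x))$ on $Y$. By definition, $(G(Y,s),\circ)$ is presented by generators $Y$ and relations $x\circ y=u\circ v$ whenever $s(x,y)=(u,v)$. Since $f$ is a morphism of solutions, $(f\times f)s=r_B(f\times f)$. Hence if $s(x,y)=(u,v)$, then $f(u)=\lambda_{f(x)}(f(y))$ and $f(v)=\overline{\lambda_{f(x)}(f(y))}\circ f(x)\circ f(y)$. It follows that $f(u)\circ f(v)=f(x)\circ f(y)$ in $(B,\circ)$. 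By von Dyck's theorem, $f$ therefore extends to a unique group homomorphism $\varphi\colon (G(Y,s),\circ)\to(B,\circ)$ with $\varphi\circ g_Y=f$. Uniqueness of $G(f)$ is then immediate: $g_Y(Y)$ generates $(G(Y,s),\circ)$, so any skew brace morphism agreeing with $f$ on $Y$ must equal $\varphi$.

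It remains to show that $\varphi$ is additive, which is the main step. Since $B$ is a skew brace, $a+b=a\circ\lambda_a^{-1}(b)$ in $B$, and by construction the same formula defines the addition of $G=G(Y,s)$. So it suffices to prove the equivariance
\[
\varphi(\lambda_g(h))=\lambda_{\varphi(g)}(\varphi(h))\qquad\text{for all } g,h\in G.
\]
Given this, we get $\varphi(g+h)=\varphi(g)\circ\varphi(\lambda_g^{-1}(h))=\varphi(g)\circ\lambda_{\varphi(g)}^{-1}(\varphi(h))=\varphi(g)+\varphi(h)$.

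To prove the equivariance, I will first treat generators. For $x,y\in Y$, the first component of $(f\times f)s=r_B(f\times f)$ gives $f(\lambda_x(y))=\lambda_{f(x)}(f(y))$. Here $\lambda_x(y)\in Y$ coincides with the brace $\lambda$-action in $G$ of $x$ on $y$; this is part of the standard construction of the structure skew brace, which I take as known together with the existence of $(G,+,\circ)$. Next I extend from $g\in Y$ to arbitrary $g$ by induction on word length in $(G,\circ)$. This works because both sides define actions: $g\mapsto \lambda_g$ and $a\mapsto\lambda_a$ are homomorphisms $(G,\circ)\to\Aut(G,+)$ and $(B,\circ)\to\Aut(B,+)$, and $\varphi$ is multiplicative. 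The inverse case follows from the forward case, since the $\lambda_x$ are bijective on $Y$ by non-degeneracy.

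The delicate point is extending from $h\in Y$ to arbitrary $h$. The obstruction is that we do not yet know $\varphi$ is additive, while $\lambda_g$ is additive and not multiplicative. I will use induction on the length of $h$ as a word in $(G,+)$. This is legitimate because $Y$ also generates $(G,+)$: $G$ is $\circ$-generated by $Y$, and $g\circ y=g+\lambda_g(y)$ with $\lambda_g(y)\in Y$ for $g$ in the subgroup generated by $Y$. The induction must prove additivity and equivariance simultaneously. Concretely, I will show by induction on $n$ that for any sum $h=\epsilon_1 y_1+\dots+\epsilon_n y_n$ with $y_i\in Y$ and signs $\epsilon_i=\pm1$, both $\varphi(h)=\sum\epsilon_i f(y_i)$ and $\varphi(\lambda_g(h))=\lambda_{\varphi(g)}(\varphi(h))$ hold for all $g\in G$.

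For the inductive step, write $h=h'+\epsilon y$. Then $h=h'\circ\lambda_{h'}^{-1}(\epsilon y)$, so
\[
\varphi(h)=\varphi(h')\circ\varphi(\lambda_{h'}^{-1}(\epsilon y))=\varphi(h')\circ\lambda_{\varphi(h')}^{-1}(\epsilon f(y))=\varphi(h')+\epsilon f(y),
\]
where the middle equality uses equivariance on single generators. To justify it for $\epsilon=-1$, note that $\varphi(-y)=-f(y)$: indeed $-y=\lambda_y(\bar y)$, and equivariance in $g$ then gives $\varphi(-y)=\lambda_{f(y)}(\overline{f(y)})=-f(y)$. Applying $\lambda_g$ to $h$ and using additivity of $\lambda$ on both sides then yields equivariance for $h$. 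This completes the proof, with $G(f)=\varphi$.
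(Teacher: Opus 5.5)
Your construction of $\varphi$ on $(G,\circ)$, the uniqueness argument, the reduction of additivity to $\lambda$-equivariance, and the extension of equivariance in the $g$-variable (with the second argument in $g_Y(Y)$) are all sound. The gap is in your justification of $\varphi(-y)=-f(y)$.

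You write $-y=\lambda_y(\bar y)$ and then invoke ``equivariance in $g$'' to get $\varphi(\lambda_y(\bar y))=\lambda_{f(y)}(\varphi(\bar y))$. But here the \emph{second} argument is $\bar y$, which is not in $g_Y(Y)$. Equivariance has so far only been proved when the second argument is a generator. Since $\bar y=-\lambda_y^{-1}(y)$ is the negative of a generator, equivariance at $\bar y$ already presupposes that $\varphi$ respects additive inverses, which is exactly what you are trying to show. The step is therefore circular. It is the same obstruction you identified yourself: $\lambda_g$ is not multiplicative, so equivariance does not transfer from $y$ to $\bar y$.

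The repair is short: put the arbitrary element in the \emph{first} argument instead. For every $a\in G$ and $z\in g_Y(Y)$ you already have $\varphi(\lambda_a^{-1}(z))=\varphi(\lambda_{\bar a}(z))=\lambda_{\varphi(a)}^{-1}(\varphi(z))$. Hence
\[
\varphi(a+z)=\varphi\bigl(a\circ\lambda_a^{-1}(z)\bigr)=\varphi(a)\circ\lambda_{\varphi(a)}^{-1}(\varphi(z))=\varphi(a)+\varphi(z).
\]
Taking $a=-z$ gives $0=\varphi(-z)+\varphi(z)$. More generally, taking $a=c-z$ gives $\varphi(c-z)=\varphi(c)-\varphi(z)$ for all $c\in G$. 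Since $g_Y(Y)$ generates $(G,+)$, additivity of $\varphi$ follows at once, so the simultaneous induction and equivariance for general $h$ are unnecessary.

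For that additive generation, the relation $g\circ y=g+\lambda_g(y)$ alone does not suffice. You also need $g\circ\bar y=g+\lambda_g(\bar y)=g-\lambda_{g\circ\bar y}(y)$, which uses $\bar y=-\lambda_y^{-1}(y)$.

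For comparison, the paper does not prove this proposition; it quotes it from Lu--Yan--Zhu and Smoktunowicz--Vendramin.
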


Note that the map $g_X$ does not need to be injective.
Solutions for which it is injective are called \emph{injective solutions}. 
Moreover, for every solution $(X,r)$ the image $g_X(X)$
is an injective subsolution of $(G(X,r),r_{G(X,r)})$,
called the \emph{injectivisation} of $(X,r)$ and denoted by 
$\Inj(X,r)$. We will also denote by $\Inj_X$ the map $g_X$ with codomain restricted to its image.

We now recall the notions of nilpotency we shall deal with throughout the paper. In order to define right nilpotency, we also need  to recall that the $\lambda$-map of a skew brace $(B,+,\circ)$ naturally leads to a third operation (called the {\it star operation}):
\[
a\ast b=\lambda_a(b)-b
\]
for all $a,b\in B$. In some sense, this third operation measures the distance between the addition and the multiplication. Note that $(B,+,\ast)$ is a radical ring if and only if $B$ is two-sided of abelian type (see \cite{Brace}). If $X$ and $Y$ are subsets of a skew brace $B$, then we denote by $X\ast Y$ the subgroup of $(B,+)$ generated by~\hbox{$\{x\ast y\mid x\in X, y\in Y\}$.} With this convention, we recursively define the following subsets of $B$ (which turn out to be ideals): $B^{(n+1)}=B^{(n)}*B$ for every $n\geq 1$ and $B^{(1)}=B$.
Then $B$ is \emph{right nilpotent of class $n$} if 
$B^{(n+1)}=\{0\}$ and $B^{(n)}\neq\{0\}$. The category of right nilpotent skew braces of right nilpotent class at most $n$ is denoted by $\mathcal{RN}_n$.  The descending series $(B^{(n)})_{n\geq 1}$
measures how far the skew brace is from being trivial.
In particular, small nilpotency class imposes strong restrictions on the interaction between the two operations.
For example, $B$ is right nilpotent of class $1$ if and only if it is a trivial skew brace.

We show that the right nilpotency of a skew brace~$B$ can be characterised in terms of the 
``nested" $\lambda$-automorphisms of $(B,+)$
\[
\lambda^{\alpha_n}_{
  \lambda^{\alpha_{n-1}}_{
    \ddots_{
      \lambda^{\alpha_1}_{v_1}(v_2)
    }\iddots
  }(v_n)
}(-)
\]
where $n\geq1$ is an integer, $(\alpha_1,\dots ,\alpha_n)\in \{\pm1\}^{\times n}$ and $(v_1,\dots,v_n)\in B^{\times n}$.
Note that by $B^{\times n}$ we denote the cartesian product of $n$ copies of $B$. We use this notation to avoid confusion with products in the skew brace $B$.

To work efficiently, we introduce notations to represent these automorphisms.

    For a set $E$ and integers $k,i\geq2$, the concatenation of two elements $x= (x_1,\dots,x_k)\in E^{\times k}$ and $y=(y_1,\dots,y_i)\in E^{\times i}$ is denoted by $$x\smallfrown y= (x_1,\dots,x_k,y_1,\dots,y_i).$$ 
    If $k$ or $i$ equal $1$, then we usually omit the parentheses in the corresponding $1$-tuple, so for example we write $x\smallfrown (y_1,\dots,y_i)$ instead of $(x)\smallfrown (y_1,\dots,y_i)$.
    We respectively denote by $\mathfrak{l}x$ and $\mathfrak{r}x$  the left and right truncations $(x_2,\dots,x_k)$ and $(x_1,\dots,x_{k-1})$. If $(E,e)$ is a pointed set, we make the convention that $E^{\times0}=\{e\}$ and $\mathfrak{r}z=\mathfrak{l}z=e$ for all $z\in E$. In what follows, the set $\{\pm 1\}$ is considered as pointed in $1$, while the underlying set of a skew brace is pointed in $0$.

\begin{defn}
    \label{def:nested}
    Let $B$ be a skew brace. For $k\geq 1$ an integer, $\alpha=(\alpha_1,\dots,\alpha_k)\in\{\pm 1\}^{\times k}$ and $v=(v_1,\dots,v_k)\in B^{\times k}$ we define the map $\lambda_v^\alpha\colon B\to B$ recursively by putting
    \[\lambda_v^\alpha=\lambda^{\alpha_k}_{\lambda_{\mathfrak{r}v}^{\mathfrak{r}\alpha}(v_k)}.\]
    Note that in the case $k=1$, this is the usual action $\lambda^{\alpha_1}_{v_1}$ of the skew brace $B$.
\end{defn}

\begin{rem}\label{remarkconcatenation}
    Let $B$ be a skew brace, $k,i\geq 1$ be integers, $\alpha\in\{\pm 1\}^{\times k}$, $\beta\in \{\pm 1\}^{\times i}$, $v=(v_1,\dots,v_k)\in B^{\times k}$ and $u\in B^{\times i}$. 
    We have the following formula:
    \[\lambda_{u\smallfrown v}^{\beta\smallfrown\alpha}=\lambda^{\alpha}_{(\lambda^{\beta}_{u}(v_1))\smallfrown\mathfrak{l}v}.\]
\end{rem}

\begin{defn}
    Let $B$ be a skew brace. Let $K_0(B)=\{0\}$ and, for all integers $i\geq 1$, let $K_i(B)$ be the set of the elements $b\in B$ such that $\lambda_{b\smallfrown v}^{\epsilon\smallfrown \alpha}=\lambda_v^\alpha$ for all $\epsilon\in \{\pm 1\}$, $\alpha\in \{\pm 1\}^{\times(i-1)}$ and $v\in B^{\times(i-1)}$.
\end{defn}

\begin{lem}\label{lem:concatinvarience}
Let $B$ be a skew brace. For every integer $k\geq 0$,  $K_k(B)\subseteq K_{k+1}(B)$ for all $k\geq 0$. More precisely, if $i\geq 1$ is an integer, \hbox{$v=(v_1,\dots, v_i)\in B^{\times i}$,} and $\alpha\in \{\pm1\}^{\times i}$, then     \begin{equation}
    \label{eq:concatenationinvarience}
        \lambda_{u\smallfrown v}^{\beta\smallfrown \alpha}=\lambda^\alpha_v
    \end{equation}
for all integers $k\geq 1$, $u\in K_{i+k}(B)\times K_{i+k-1}(B)\times\ldots\times K_{i+1}(B)$ and $\beta\in \{\pm1\}^{\times k}$.
\end{lem}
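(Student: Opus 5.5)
We prove both claims directly from \cref{def:nested}: the inclusion $K_k(B)\subseteq K_{k+1}(B)$ by splitting off the last entry of the tuple, and \eqref{eq:concatenationinvarience} by induction on the length $k$ of the prefix $u$. Throughout we use the conventions for empty tuples: $\lambda^{1}_{0}=\id$ is the value of the nested map on the empty tuple, and $K_0(B)=\{0\}$.

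\emph{The inclusion.} First take $k=0$. We need $0\in K_1(B)$, and then $K_0(B)=\{0\}\subseteq K_1(B)$. By definition $K_1(B)=\{b\mid \lambda^\epsilon_b=\id \text{ for } \epsilon=\pm1\}$, and $\lambda_0=\id$ since $0$ is the identity of $(B,\circ)$. So this case is immediate.

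Now let $k\geq 1$ and $b\in K_k(B)$. Fix $\epsilon\in\{\pm1\}$, $\alpha=(\alpha_1,\dots,\alpha_k)$ and $v=(v_1,\dots,v_k)\in B^{\times k}$. By \cref{def:nested}, applied to the tuple $b\smallfrown v$ of length $k+1$,
\[
\lambda^{\epsilon\smallfrown\alpha}_{b\smallfrown v}
=\lambda^{\alpha_k}_{\lambda^{\epsilon\smallfrown\mathfrak r\alpha}_{b\smallfrown\mathfrak r v}(v_k)} .
\]
Here $\mathfrak r\alpha$ and $\mathfrak r v$ have length $k-1$. Since $b\in K_k(B)$, we have $\lambda^{\epsilon\smallfrown\mathfrak r\alpha}_{b\smallfrown\mathfrak r v}=\lambda^{\mathfrak r\alpha}_{\mathfrak r v}$. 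When $k=1$ this reads $\lambda^\epsilon_b=\id$, which matches the empty-tuple convention. Substituting gives $\lambda^{\alpha_k}_{\lambda^{\mathfrak r\alpha}_{\mathfrak r v}(v_k)}=\lambda^\alpha_v$, again by \cref{def:nested}. Hence $b\in K_{k+1}(B)$.

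\emph{The formula \eqref{eq:concatenationinvarience}.} Fix $i\geq1$, $v$ and $\alpha$, and induct on $k\geq1$.

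For $k=1$ we have $u=(u_1)$ with $u_1\in K_{i+1}(B)$ and $\beta=(\beta_1)$. The tuples $\alpha$ and $v$ have length $i=(i+1)-1$, so the identity $\lambda^{\beta_1\smallfrown\alpha}_{u_1\smallfrown v}=\lambda^\alpha_v$ is exactly the defining property of $K_{i+1}(B)$.

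For $k\geq2$, write $u=u_1\smallfrown u'$ and $\beta=\beta_1\smallfrown\beta'$. Then $u_1\in K_{i+k}(B)$, and $u'\in K_{i+k-1}(B)\times\dots\times K_{i+1}(B)$ has length $k-1$. The tuples $\gamma=\beta'\smallfrown\alpha$ and $w=u'\smallfrown v$ have length $(k-1)+i=(i+k)-1$. The defining property of $K_{i+k}(B)$ therefore gives
\[
\lambda^{\beta\smallfrown\alpha}_{u\smallfrown v}=\lambda^{\beta_1\smallfrown\gamma}_{u_1\smallfrown w}=\lambda^{\gamma}_{w}=\lambda^{\beta'\smallfrown\alpha}_{u'\smallfrown v}.
\]
The right-hand side equals $\lambda^\alpha_v$ by the induction hypothesis for $k-1$.

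The only delicate point is bookkeeping: one must check that the tuple lengths match the index of the relevant $K$ at each step, and handle the empty-tuple convention in the base cases. Neither proof uses \cref{remarkconcatenation}; each uses only \cref{def:nested} and the definition of the sets $K_j(B)$.
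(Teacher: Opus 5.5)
Your proof is correct and follows the paper's argument. The inclusion comes from peeling off the last entry via the recursive definition, and the formula comes from induction on $k$: apply the defining property of $K_{i+k}(B)$ to $u_1$, then the induction hypothesis. You also make explicit the case $K_0(B)=\{0\}\subseteq K_1(B)$ and the empty-tuple convention, which the paper leaves implicit.
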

\begin{proof}
    Let $k\geq 1$ be an integer, $b\in K_k(B)$, $\epsilon\in \{\pm1\}$, $\alpha=(\alpha_1,\dots,\alpha_k)\in \{\pm1\}^{\times k}$ and $v=(v_1,\dots,v_k)\in B^{\times k}$. Then 
    $$\lambda_{b\smallfrown v}^{\epsilon\smallfrown\alpha}=\lambda^{\alpha_k}_{\lambda_{b\smallfrown\mathfrak{r}v}^{\epsilon\smallfrown\mathfrak{r}\alpha}(v_k)}=\lambda^{\alpha_k}_{\lambda_{\mathfrak{r}v}^{\mathfrak{r}\alpha}(v_k)}=\lambda^\alpha_v.$$
    Therefore, $b\in K_{k+1}(B)$, which proves that we have an ascending chain of subsets. 

We now prove formula~\eqref{eq:concatenationinvarience}. Write
$u=(u_1,\dots,u_k)$ and $\beta=(\beta_1,\dots,\beta_k)$. We proceed by
induction on $k$. If $k=1$, then $u_1\in K_{i+1}(B)$, and the previous
argument, applied to the tuple $v$ of length $i$, gives
$$
\lambda_{u_1\smallfrown v}^{\beta_1\smallfrown\alpha}
=
\lambda_v^\alpha.
$$

Assume now that $k>1$. Since $u_1\in K_{i+k}(B)$ and
$(u_2,\dots,u_k)\smallfrown v$ has length $i+k-1$, the previous argument gives
$$
\lambda_{u\smallfrown v}^{\beta\smallfrown\alpha}
=
\lambda_{(u_2,\dots,u_k)\smallfrown v}^{(\beta_2,\dots,\beta_k)\smallfrown\alpha}.
$$
By the induction hypothesis applied to $(u_2,\dots,u_k)$, we get
$$
\lambda_{(u_2,\dots,u_k)\smallfrown v}^{(\beta_2,\dots,\beta_k)\smallfrown\alpha}
=
\lambda_v^\alpha.
$$
Therefore
$
\lambda_{u\smallfrown v}^{\beta\smallfrown\alpha}=\lambda_v^\alpha,
$ as required.
\end{proof}

\begin{pro}
\label{pro:ascendingchainofideals}
    Let $B$ be a skew brace. For every integer $i\geq  0$, the set $K_i(B)$ is a sub-skew brace of $B$, and an ideal of $K_{i+1}(B)$ such that $K_{i+1}(B)\ast B\subseteq K_i(B)$. 
\end{pro}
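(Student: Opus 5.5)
The plan is an induction on $i$, driven by the observation (Remark~\ref{remarkconcatenation}) that for $b\in B$, $\epsilon\in\{\pm1\}$, $\alpha\in\{\pm1\}^{\times(i-1)}$ and $v\in B^{\times(i-1)}$ we have
\[
\lambda^{\epsilon\smallfrown\alpha}_{b\smallfrown v}=\lambda^{\alpha}_{\lambda^{\epsilon}_b(v_1)\smallfrown\mathfrak{l}v}.
\]
For $j\geq 0$ and $x\in B$, let the \emph{$j$-profile} $P_j(x)$ be the family of maps $\lambda^{\beta}_{x\smallfrown w}$, indexed by $\beta\in\{\pm1\}^{\times(j+1)}$ and $w\in B^{\times j}$. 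Then the displayed formula says exactly that
\[
b\in K_i(B)\iff P_{i-1}(\lambda^{\epsilon}_b(x))=P_{i-1}(x)\ \text{ for all } x\in B,\ \epsilon=\pm1 .
\]
So the whole statement reduces to understanding when two elements have the same profile.

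I will prove by simultaneous induction on $j\geq 0$ the following three assertions.
\begin{enumerate}
\item[(a)] $K_j(B)$ is a left ideal of $B$, i.e. an additive subgroup that is $\lambda$-invariant. In particular it is a sub-skew brace.
\item[(b)] $P_j(x)=P_j(y)$ if and only if $-x+y\in K_{j+1}(B)$.
\item[(c)] $K_j(B)$ is an ideal of $K_{j+1}(B)$ and $K_{j+1}(B)\ast B\subseteq K_j(B)$.
\end{enumerate}
The base case is $K_0=\{0\}$ and $K_1=\ker\lambda$. This uses the standard facts that $\ker\lambda$ is a left ideal, that $\lambda_x=\lambda_y$ iff $\bar x\circ y\in\ker\lambda$ iff $-x+y\in\ker\lambda$ (because $x\circ k=x+\lambda_x(k)$ and $\ker\lambda$ is $\lambda$-invariant), and that $b\ast c=0$ for $b\in\ker\lambda$.

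For the inductive step, combine the displayed characterization with (b) at level $i-1$. This gives
\[
K_i(B)=\{\,b\in B : -x+\lambda^{\pm1}_b(x)\in K_{i-1}(B)\ \text{for all } x\,\}.
\]
From this description, closure of $K_i$ under $\circ$ and inverses follows from $\lambda_{a\circ b}=\lambda_a\lambda_b$ together with the fact that $K_{i-1}$ is an additive subgroup stable under all $\lambda_a$ (by (a)). Indeed, $-x+\lambda_a\lambda_b(x)=(-x+\lambda_a(x))+\lambda_a(-x+\lambda_b(x))$ after rewriting $-\lambda_a(x)+\lambda_a\lambda_b(x)=\lambda_a(-x+\lambda_b(x))$.

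For $K_{i+1}\ast B\subseteq K_i$: take $b\in K_{i+1}$, so $\lambda_b(c)\in c+K_i$. Then
\[
b\ast c=\lambda_b(c)-c=c+k-c\in K_i
\]
with $k\in K_i$, provided $K_i$ is normal in $(B,+)$. If it is not, I would instead use the equivalent conjugated form: $-x+y\in K_i$ should also be equivalent to $y-x\in K_i$ via (b) applied symmetrically. Additive closure of $K_i$ and $\lambda$-invariance of $K_i$ will be derived from (b) by comparing profiles of $x$, $x+k$ and $\lambda_a(x)$. The ideal property of $K_{i-1}$ inside $K_i$ then follows from the description above: $\lambda_b$ for $b\in K_i$ preserves $K_{i-1}$-cosets, normality inside $K_i$ is read off from the same coset condition, and $b\ast K_{i-1}\subseteq K_{i-1}$.

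The main obstacle is (b) at level $i$, and specifically its $\lambda$-invariance part. One must show that the profile $P_i(x)$ depends only on the coset $x+K_{i+1}$, and that $P_i(\lambda_a(x))$ is compatible with it. Here the nested structure $\lambda^{\alpha}_{\lambda^{\beta}_{x}(w_1)\smallfrown\cdots}$ interacts with $\lambda_a$ only through $\lambda_{\lambda_a(x)}$, which has no simple expression in terms of $\lambda_a$ and $\lambda_x$. My first attempt would be to write $\lambda_a(x)=-a+a\circ x$ and use (b) at level $i-1$ applied to the first coordinate $\lambda^{\beta}_{\cdot}(w_1)$, so as to reduce everything to the level-$(i-1)$ statements already established. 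If invariance under all of $B$ fails, I would weaken (a) to invariance under $K_{i+1}$, which is all that the statement requires.
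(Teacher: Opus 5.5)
Your framework rests on two assertions that are false for general skew braces, so the induction already breaks at the base. You use that $\ker\lambda$ is a left ideal, and that $\lambda_x=\lambda_y$ iff $-x+y\in\ker\lambda$. The paper itself warns that $\ker\lambda$ need not be a left ideal.

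Concretely, let $H=\langle(12)\rangle$ and $C=\langle(123)\rangle$, so that $S_3=HC$ is an exact factorization. Turn $S_3$, with its group law written as $+$, into a skew brace by setting $g\circ h=uhc$ for $g=uc$ with $u\in H$, $c\in C$. Then $\lambda_g(h)=c^{-1}hc$, so $K_1=\ker\lambda=H$. This subgroup is neither normal in $(S_3,+)$ nor $\lambda$-invariant. Moreover, $x=(123)$ and $y=(12)(123)$ satisfy $\lambda_x=\lambda_y$, although $-x+y=x^{-1}y$ is a transposition different from $(12)$. The correct statement is $\lambda_x=\lambda_y\iff y-x\in\ker\lambda$. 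This follows because $\ker\lambda$ is normal in $(B,\circ)$ and $k\circ x=k+x$ for $k\in\ker\lambda$.

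So (a) fails at $j=1$ and (b) fails at $j=0$. Your fallback, that $-x+y\in K_i$ and $y-x\in K_i$ are interchangeable, is also false, since $K_i$ need not be normal in $(B,+)$. There is also an index slip: since $\mathfrak{l}v$ has length $i-2$, membership in $K_i$ is governed by $P_{i-2}$, not $P_{i-1}$. The $\circ$-closure step is easily repaired, because it follows directly from the definition of $K_i$. The rest of the argument is not repaired so easily.

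More seriously, the step you single out as the main obstacle is false in the generality your induction needs. Equality of profiles is not preserved by $\lambda_a$ for arbitrary $a\in B$. In the same example, with $0$ the identity permutation, $\lambda_0=\lambda_{(12)}=\id$. However, $\lambda_{(123)}((12))$ is a transposition outside $H$, so $\lambda_{\lambda_{(123)}(0)}\neq\lambda_{\lambda_{(123)}((12))}$. Hence the induction cannot close as designed. Your fallback of restricting to invariance under $K_{i+1}$ is the right instinct, but no argument is given for it. As a result, nothing in the proposal actually proves additive closure of $K_i$ or the inclusion $K_{i+1}\ast B\subseteq K_i$, and these are the substantive parts of the statement.

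The paper never asks for invariance under all of $B$. It uses Lemma~\ref{lem:concatinvarience}, which says that a prefix lying in $K_{i+k}\times\cdots\times K_{i+1}$ can be deleted, together with Remark~\ref{remarkconcatenation}. It applies these to carefully chosen tuples:
\begin{itemize}
\item the tuple $(c,b)$ with $c\in K_{i+1}$ and $b\in K_i$ gives $\lambda_c(b)\in K_i$;
\item flipping the sign gives $\bar b\in K_i$, and then $-b=\lambda_b(\bar b)\in K_i$;
\item writing $a+b=a\circ\lambda_a^{-1}(b)$ and using the tuple $(a,b)$ with signs $(-1,1)$ gives $a+b\in K_i$;
\item writing $c\ast x=d\circ p$ with $d=\lambda_c(x)$ and $p=\lambda_d^{-1}(-x)$ lets every nested map be rewritten as one beginning with $c$ followed by a tuple of length $i$, from which $c$ can then be removed.
\end{itemize}
An argument of this kind, restricted to $c\in K_{i+1}$, is the missing idea.
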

\begin{proof}
    For $i=0$, it is clear. Let $i\geq 1$. We start by showing that $K_i(B)$ is stable under the lambda action of $K_{i+1}(B)$. Indeed, let $c\in K_{i+1}(B)$, $b\in K_i(B)$, $\epsilon\in \{\pm1\}$, $\alpha\in \{\pm 1\}^{\times(i-1)}$ and $v=(v_1,\dots,v_{i-1})\in B^{\times(i-1)}$. We claim that
$$
\lambda_{\lambda_c(b)\smallfrown v}^{\epsilon\smallfrown\alpha}
=
\lambda^\alpha_v.
$$
To see this, we first use Remark \ref{remarkconcatenation} with $u=(c)$,
$\beta=(1)$ and $(b)\smallfrown v$ in place of~$v$. This gives
$$
\lambda^{(1,\epsilon)\smallfrown \alpha}_{(c,b)\smallfrown v}
=
\lambda^{\epsilon\smallfrown\alpha}_{\lambda_c(b)\smallfrown v}.
$$
On the other hand, since $c\in K_{i+1}(B)$ and $b\in K_i(B)$, we may apply
Lemma~\ref{lem:concatinvarience} to the tuple $(c,b)\in
K_{i+1}(B)\times K_i(B)$ and to $v\in B^{\times(i-1)}$. Therefore
$$
\lambda^{(1,\epsilon)\smallfrown \alpha}_{(c,b)\smallfrown v}
=
\lambda^\alpha_v.
$$
Combining the two equalities, we get
$$
\lambda_{\lambda_c(b)\smallfrown v}^{\epsilon\smallfrown\alpha}
=
\lambda^{(1,\epsilon)\smallfrown \alpha}_{(c,b)\smallfrown v}
=
\lambda^\alpha_v.
$$
Hence $\lambda_c(b)\in K_i(B)$.

Because $b\in K_i(B)$ and since $\lambda_{\bar{b}\smallfrown v}^{\epsilon\smallfrown \alpha}=\lambda_{b\smallfrown v}^{(-\epsilon)\smallfrown \alpha}=\lambda_v^\alpha$ we obtain that also $\bar{b}\in K_i(B)$.
 As $-b=\lambda_b(\bar{b})$ we thus obtain that $-b\in K_i(B)$.
 To show that $K_i(B)$ is a skew brace, it is left to prove that it is closed under addition. 
To do so, assume also $a\in K_i(B)$. By the recursive definition of the maps $\lambda^\alpha_v$,
$$
\lambda_{(a+b)\smallfrown v}^{1\smallfrown\alpha}
=
\lambda^{\alpha}_{\lambda_{a+b}(v_1)\smallfrown\mathfrak{l}v}
=
\lambda^{\alpha}_{\lambda_a\left(\lambda_{\lambda_a^{-1}(b)}(v_1)\right)\smallfrown\mathfrak{l}v}.
$$
Now observe that
$$
\lambda_{(a,b)}^{(-1,1)}
=
\lambda^1_{\lambda_a^{-1}(b)}
=
\lambda_{\lambda_a^{-1}(b)}.
$$
Thus
$$
\lambda^{\alpha}_{\lambda_a\left(\lambda_{\lambda_a^{-1}(b)}(v_1)\right)\smallfrown\mathfrak{l}v}
=
\lambda^{1\smallfrown \alpha}_{a\smallfrown \lambda_{(a,b)}^{(-1,1)}(v_1)\smallfrown \mathfrak{l}v}.
$$ Put
$
c=\lambda_{(a,b)}^{(-1,1)}(v_1)=\lambda_{\lambda_a^{-1}(b)}(v_1).
$
Since $a\in K_i(B)$ and $c\smallfrown\mathfrak{l}v$ has length $i-1$, by the
definition of $K_i(B)$ we have
$$
\lambda^{1\smallfrown \alpha}_{a\smallfrown c\smallfrown \mathfrak{l}v}
=
\lambda^\alpha_{c\smallfrown \mathfrak{l}v}.
$$
On the other hand, by the recursive definition of the maps $\lambda^\alpha_v$,
we have
$$
\lambda_{(a,b)\smallfrown v}^{(-1,1,\alpha)}
=
\lambda^\alpha_{\lambda_{(a,b)}^{(-1,1)}(v_1)\smallfrown\mathfrak{l}v}
=
\lambda^\alpha_{c\smallfrown\mathfrak{l}v}.
$$
Therefore
$$
\lambda^{1\smallfrown \alpha}_{a\smallfrown c\smallfrown \mathfrak{l}v}
=
\lambda_{(a,b)\smallfrown v}^{(-1,1,\alpha)}.
$$ Since $a,b\in K_i(B)$ and $K_i(B)\subseteq K_{i+1}(B)$, we may apply
Lemma~\ref{lem:concatinvarience} to the tuple $(a,b)\in K_{i+1}(B)\times K_i(B)$.
Hence
$$
\lambda_{(a,b)\smallfrown v}^{(-1,1,\alpha)}
=
\lambda_v^\alpha.
$$
Therefore
$\lambda_{(a+b)\smallfrown v}^{1\smallfrown\alpha}=
\lambda_v^\alpha$.
Similarly, one obtains
$\lambda_{(a+b)\smallfrown v}^{(-1)\smallfrown\alpha}=
\lambda_v^\alpha.$
Hence indeed $a+b\in K_i(B)$.

Then, to show that $K_{i+1}(B)\ast B\subseteq K_i(B)$, 
it is enough to prove that the additive generators  $c\ast x$ of $K_{i+1}(B)\ast B$ are contained in $K_i(B)$, with 
$x\in B$ and $c\in K_{i+1}(B)$. Put
$$
d=\lambda_c(x),\qquad p=\lambda_d^{-1}(-x).
$$
Then $\lambda_d(p)=-x$, and therefore
$$
d\circ p=d+\lambda_d(p)=d-x.
$$
Since $c*x=\lambda_c(x)-x$ and $d=\lambda_c(x)$, we get
$$
c*x=d-x=d\circ p.
$$
Let now $\alpha\in\{\pm1\}^{\times(i-1)}$ and
$v=(v_1,\dots,v_{i-1})\in B^{\times(i-1)}$. Put also
$
z=\lambda_p(v_1).
$
We claim that
$$
\lambda_{(c\ast x)\smallfrown v}^{1\smallfrown\alpha}=\lambda_v^\alpha.
$$
Because $c\ast x=d\circ p$, we get
\[
\lambda_{(c\ast x)\smallfrown v}^{1\smallfrown\alpha}
=
\lambda^\alpha_{\lambda_{c\ast x}(v_1)\smallfrown\mathfrak{l}v}
=
\lambda^\alpha_{\lambda_d(\lambda_p(v_1))\smallfrown\mathfrak{l}v}
=
\lambda^\alpha_{\lambda_d(z)\smallfrown\mathfrak{l}v}.
\]
Now, by Remark \ref{remarkconcatenation} applied to $u=(c,x)$, $\beta=(1,1)$ and
$z\smallfrown\mathfrak{l}v$ in place of $v$, we have
$$
\lambda^{(1,1)\smallfrown\alpha}_{c\smallfrown(x,z)\smallfrown\mathfrak{l}v}
=
\lambda^\alpha_{\lambda^{(1,1)}_{(c,x)}(z)\smallfrown\mathfrak{l}v}.
$$
Since
$
\lambda^{(1,1)}_{(c,x)}=\lambda_{\lambda_c(x)}=\lambda_d,
$
this gives
$$
\lambda^{(1,1)\smallfrown\alpha}_{c\smallfrown(x,z)\smallfrown\mathfrak{l}v}
=
\lambda^\alpha_{\lambda_d(z)\smallfrown\mathfrak{l}v}.
$$
Moreover, since $c\in K_{i+1}(B)$ and the tuple
$(x,z)\smallfrown\mathfrak{l}v$ has length $i$, the definition of
$K_{i+1}(B)$ gives
$$
\lambda^{(1,1)\smallfrown\alpha}_{c\smallfrown(x,z)\smallfrown\mathfrak{l}v}
=
\lambda^{1\smallfrown\alpha}_{(x,z)\smallfrown\mathfrak{l}v}.
$$
Furthermore, since
$$
x\circ \lambda_x^{-1}(-x)=x-x=0,
$$
we have
$
\lambda_x\lambda_{\lambda_x^{-1}(-x)}=\id,
$
and hence
$
\lambda_x=\lambda^{-1}_{\lambda_x^{-1}(-x)}.
$
Therefore
\[
\lambda^\alpha_{\lambda_x(z)\smallfrown\mathfrak{l}v}
=
\lambda^\alpha_{\lambda^{-1}_{\lambda_x^{-1}(-x)}(z)\smallfrown\mathfrak{l}v}.
\]
We now compare $\lambda_x^{-1}(-x)$ with $p=\lambda_d^{-1}(-x)$. By the recursive
definition of the maps $\lambda^\alpha_v$, we have
\[
\lambda^\alpha_{\lambda^{-1}_{\lambda_x^{-1}(-x)}(z)\smallfrown\mathfrak{l}v}
=
\lambda^{(-1,-1)\smallfrown\alpha}_{(x,-x,z)\smallfrown\mathfrak{l}v}.
\]
On the other hand, again using $c\in K_{i+1}(B)$ and applying
Lemma~\ref{lem:concatinvarience}, we get
\[
\lambda^{(-1,-1)\smallfrown\alpha}_{(x,-x,z)\smallfrown\mathfrak{l}v}
=
\lambda^{1\smallfrown(-1,-1)\smallfrown\alpha}_{c\smallfrown(x,-x,z)\smallfrown\mathfrak{l}v}.
\]
By the recursive definition, the last term is
\[
\lambda^{1\smallfrown(-1,-1)\smallfrown\alpha}_{c\smallfrown(x,-x,z)\smallfrown\mathfrak{l}v}
=
\lambda^\alpha_{\lambda^{-1}_{\lambda_d^{-1}(-x)}(z)\smallfrown\mathfrak{l}v}
=
\lambda^\alpha_{\lambda_p^{-1}(z)\smallfrown\mathfrak{l}v}.
\]
Since $z=\lambda_p(v_1)$, we have $\lambda_p^{-1}(z)=v_1$. Thus
\[
\lambda_{(c\ast x)\smallfrown v}^{1\smallfrown\alpha}
=
\lambda^\alpha_{v_1\smallfrown\mathfrak{l}v}
=
\lambda_v^\alpha.
\]
This proves the claim.

The case of $-1$ is analogous. Indeed, using again
$c\ast x=d\circ p$ and $p=\lambda_d^{-1}(-x)$, one obtains
\[
\lambda_{(c\ast x)\smallfrown v}^{(-1)\smallfrown\alpha}
=
\lambda^\alpha_{\lambda^{-1}_{c*x}(v_1)\smallfrown\mathfrak{l}v}
=
\lambda^\alpha_{\lambda^{-1}_{p}\lambda^{-1}_{d}(v_1)\smallfrown\mathfrak{l}v}.
\]
Repeating the previous argument with inverse lambda maps gives
\[
\lambda^\alpha_{\lambda^{-1}_{p}\lambda^{-1}_{d}(v_1)\smallfrown\mathfrak{l}v}
=
\lambda^\alpha_{v_1\smallfrown\mathfrak{l}v}
=
\lambda_v^\alpha.
\]
Hence $c\ast x\in K_i(B)$, and therefore
$
K_{i+1}(B)\ast B\subseteq K_i(B).
$

That $K_i(B)$ is an ideal of $K_{i+1}(B)$ now follows from
$K_{i+1}(B)*K_i(B)\subseteq K_i(B)$ and
$
K_i(B)*K_{i+1}(B)\subseteq K_{i+1}(B)*K_{i+1}(B)\subseteq K_i(B).$
\end{proof}

 We are now in a position to prove a characterisation of right nilpotent skew braces in terms of the $\lambda$-action

\begin{pro}\label{lambda action for rclass n}
Let $n\geq 1$ be an integer. A skew brace $B$ is right nilpotent of class at most $n$ if and only if
$\lambda_v^\alpha =\lambda_{\mathfrak{l}v}^{\mathfrak{l}\alpha}$
for all $\alpha\in \{\pm1\}^{\times n}$ and $v\in B^{\times n}$. In other words, $B$ is right nilpotent of class at most $n$ if and only if $K_n(B)=B$.
\end{pro}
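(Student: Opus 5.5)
The plan is to prove the two implications separately, using the filtration $K_0(B)\subseteq K_1(B)\subseteq\cdots$ of Proposition~\ref{pro:ascendingchainofideals} on one side and the ideals $B^{(k)}$ on the other. The aim is to show that, for every $k$ with $1\le k\le n+1$, the two filtrations are interlocked as
\[
B^{(k)}\subseteq K_{n+1-k}(B) \qquad\text{or}\qquad K_{n+1-k}(B)\subseteq B^{(k)}\ \text{(in the appropriate sense)},
\]
depending on the direction.

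\emph{($\Leftarrow$)} Assume $K_n(B)=B$. I will show by induction on $j=1,\dots,n+1$ that $B^{(j)}\subseteq K_{n+1-j}(B)$.
\begin{itemize}
\item For $j=1$ this is the hypothesis $B^{(1)}=B=K_n(B)$.
\item For the inductive step, $B^{(j+1)}$ is the additive subgroup generated by the elements $b\ast x$ with $b\in B^{(j)}\subseteq K_{n+1-j}(B)$ and $x\in B$. By Proposition~\ref{pro:ascendingchainofideals}, $K_{n+1-j}(B)\ast B\subseteq K_{n-j}(B)$. Since $K_{n-j}(B)$ is a sub-skew brace, and in particular an additive subgroup, it contains the whole subgroup $B^{(j+1)}$.
\end{itemize}
At $j=n+1$ this gives $B^{(n+1)}\subseteq K_0(B)=\{0\}$, so $B$ is right nilpotent of class at most $n$.

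\emph{($\Rightarrow$)} Assume $B^{(n+1)}=0$. I will prove by descending induction on $k$ that $B^{(k)}\subseteq K_{n+1-k}(B)$ for $k=n+1,n,\dots,1$. The case $k=n+1$ is trivial, and $k=1$ gives $K_n(B)=B$.

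For the inductive step, suppose $B^{(k+1)}\subseteq K_m(B)$ with $m=n-k$. Take $b\in B^{(k)}$, a sign $\epsilon$, and a tuple $v$ of length $m$. Then
\[
\lambda^{\epsilon\smallfrown\alpha}_{b\smallfrown v}=\lambda^{\alpha}_{\lambda_b^{\epsilon}(v_1)\smallfrown\mathfrak{l}v}.
\]
Since the $B^{(k)}$ are ideals, $\bar b\in B^{(k)}$. Using $\lambda_b^{-1}=\lambda_{\bar b}$, in both cases $\lambda_b^{\epsilon}(v_1)=w+v_1$ with $w=b'\ast v_1\in B^{(k+1)}\subseteq K_m(B)$, where $b'\in\{b,\bar b\}$. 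It therefore suffices to prove the following \emph{translation lemma}: if $w\in K_m(B)$, then
\[
\lambda^\alpha_{(w+x)\smallfrown y}=\lambda^\alpha_{x\smallfrown y}
\]
for all tuples $x\smallfrown y$ of length $m$. The conclusion is then exactly that $b\in K_{m+1}(B)$.

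The translation lemma is the step I expect to be the main obstacle, mostly for index bookkeeping. I would argue as in the additivity part of the proof of Proposition~\ref{pro:ascendingchainofideals}.
\begin{itemize}
\item Write $w+x=w\circ u$ with $u=\lambda_w^{-1}(x)$, so that $\lambda_{w+x}^{\pm1}$ is the corresponding composite of $\lambda_w^{\pm1}$ and $\lambda_u^{\pm1}$.
\item Take the case $\alpha_1=1$. The first entry of the next level is $\lambda_w(\lambda_u(y_1))$. By Remark~\ref{remarkconcatenation}, this nested map equals $\lambda^{1\smallfrown\mathfrak{l}\alpha}_{w\smallfrown\lambda_u(y_1)\smallfrown\mathfrak{l}y}$.
\item Since $w\in K_m(B)$ and the remaining tuple has length $m-1$, we can strip $w$ to get $\lambda^{\alpha}_{u\smallfrown y}$.
\item This equals $\lambda^{(-1)\smallfrown\alpha}_{(w,x)\smallfrown y}$. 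By Lemma~\ref{lem:concatinvarience}, using $w\in K_m(B)\subseteq K_{m+1}(B)$, it equals $\lambda^\alpha_{x\smallfrown y}$.
\end{itemize}
Here one must check that each stripped tuple has exactly the length required by the definition of $K_m(B)$. The case $\alpha_1=-1$ is handled symmetrically, using the inverse composite and $\lambda^{-1}_w$.
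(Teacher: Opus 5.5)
Your $(\Leftarrow)$ direction is essentially the paper's argument. Your reduction of $(\Rightarrow)$ to the translation lemma is correct, and so is the case $\alpha_1=1$ of that lemma. This is a genuinely different route from the paper. The paper proves $(\Rightarrow)$ by induction on $n$, applying the hypothesis to $B/B^{(n+1)}$ and using $B^{(n+1)}\subseteq\ker\lambda$. Your argument stays inside $B$ and yields the finer inclusions $B^{(k)}\subseteq K_{n+1-k}(B)$.

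\textbf{The gap.} The case $\alpha_1=-1$ of the translation lemma is \emph{not} symmetric. There $\lambda_{w+x}^{-1}=\lambda_u^{-1}\lambda_w^{-1}$, so $\lambda_w^{-1}$ is the innermost factor. Membership $w\in K_m(B)$ only lets you delete $w$ when it is the first entry of the tuple, i.e.\ when $\lambda_w^{\pm1}$ is the outermost map applied to the next entry. If you push your chain through anyway (using $w\in K_{m+1}(B)$ to trade $u$ for $x$), you reach $\lambda^{\mathfrak l\alpha}_{\lambda_x^{-1}\lambda_w^{-1}(y_1)\smallfrown\mathfrak l y}$. Its equality with $\lambda^{\mathfrak l\alpha}_{\lambda_x^{-1}(y_1)\smallfrown\mathfrak l y}$ for all $y_1,\alpha,y$ is equivalent to $\overline{x}\circ\overline{w}\circ x\in K_m(B)$. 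So for an arbitrary $w\in K_m(B)$ your lemma amounts to normality of $K_m(B)$ in $(B,\circ)$. Nothing available gives this: the paper only shows that $K_m(B)$ is an ideal of $K_{m+1}(B)$.

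\textbf{The repair.} State the lemma only for $w$ in the ideal $B^{(k+1)}\subseteq K_m(B)$, which is all you actually use. For $\alpha_1=-1$ and $m\geq 2$, write $w+x=x+(-x+w+x)=x\circ w''$ with $w''=\lambda_x^{-1}(-x+w+x)$. Since $B^{(k+1)}$ is additively normal and $\lambda$-invariant, $w''\in B^{(k+1)}\subseteq K_m(B)$. Now $\lambda_{w+x}^{-1}=\lambda_{w''}^{-1}\lambda_x^{-1}$ has the $K_m$-element outermost, so
\[
\lambda^{\alpha}_{(w+x)\smallfrown y}=\lambda^{(-1)\smallfrown\mathfrak l\alpha}_{w''\smallfrown\lambda_x^{-1}(y_1)\smallfrown\mathfrak l y}=\lambda^{\mathfrak l\alpha}_{\lambda_x^{-1}(y_1)\smallfrown\mathfrak l y}=\lambda^{\alpha}_{x\smallfrown y}.
\]
Equivalently, $\overline{x}\circ\overline{w}\circ x\in B^{(k+1)}$ because ideals are $\circ$-normal.

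Your nested formulas need $m\geq 2$, so two edge cases should be handled separately. For $m=0$, $B^{(n)}\subseteq\ker\lambda$ follows directly from $B^{(n)}\ast B=0$. For $m=1$, the case $\alpha_1=-1$ follows by inverting $\lambda_{w+x}=\lambda_x$. With these changes your argument is complete.
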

\begin{proof}
    For $n=1$, it is clear that both statements are equivalent to the skew brace being trivial.
    Suppose by induction that the result is true for a positive  integer $n$.
    Assume that $B$ is right nilpotent of class at most $n+1$. Let $\alpha=(\alpha_1,\dots,\alpha_{n+1})\in \{\pm 1\}^{\times (n+1)}$ and $(v_1,\dots,v_{n+1})\in B^{\times (n+1)}$.

By the induction hypothesis applied to the quotient $B/B^{(n+1)}$, we have
that
$$
\lambda_{\overline{\mathfrak rv}}^{\mathfrak r\alpha}
=
\lambda_{\overline{\mathfrak l\mathfrak rv}}^{\mathfrak l\mathfrak r\alpha}
$$
in $B/B^{(n+1)}$, where bars denote the images in the quotient. Therefore the
two elements
$$
x=\lambda_{\mathfrak rv}^{\mathfrak r\alpha}(v_{n+1})
\qquad\text{and}\qquad
y=\lambda_{\mathfrak l\mathfrak rv}^{\mathfrak l\mathfrak r\alpha}(v_{n+1})
$$
have the same image in $B/B^{(n+1)}$. Hence there exists
$z\in B^{(n+1)}$ such that
$
x=z\circ y.
$
Since $B$ is right nilpotent of class at most $n+1$, 
$B^{(n+1)}\subseteq\ker(\lambda)$, so $\lambda_z=\id$
and
$
\lambda_x=\lambda_{z\circ y}=\lambda_z\lambda_y=\lambda_y.
$
Consequently also $\lambda_x^{-1}=\lambda_y^{-1}$. Hence, for both possible
values of $\alpha_{n+1}\in\{\pm1\}$, we have
$
\lambda_x^{\alpha_{n+1}}=\lambda_y^{\alpha_{n+1}}.
$
Using the recursive definition of~$\lambda_v^\alpha$, we obtain
\begin{align*}
\lambda_v^\alpha
=
\lambda^{\alpha_{n+1}}_{\lambda_{\mathfrak rv}^{\mathfrak r\alpha}(v_{n+1})}
=
\lambda_x^{\alpha_{n+1}}
=
\lambda_y^{\alpha_{n+1}}
=
\lambda^{\alpha_{n+1}}_{\lambda_{\mathfrak l\mathfrak rv}^{\mathfrak l\mathfrak r\alpha}(v_{n+1})}.
\end{align*}
Finally, since
$$
\mathfrak l\mathfrak rv=\mathfrak r\mathfrak lv
\qquad\text{and}\qquad
\mathfrak l\mathfrak r\alpha=\mathfrak r\mathfrak l\alpha,
$$
the last term is precisely
$
\lambda_{\mathfrak lv}^{\mathfrak l\alpha}
$ by the recursive definition. 
Thus $\lambda_v^\alpha=\lambda_{\mathfrak lv}^{\mathfrak l\alpha}$.

 Conversely, assume that $K_{n+1}(B)=B$.  Since $B^{(1)}=B=K_{n+1}(B)$, we claim by induction on $j$ that
$$
B^{(j)}\subseteq K_{n+2-j}(B)
$$
for every $1\leq j\leq n+2$. The case $j=1$ is clear. If
$B^{(j)}\subseteq K_{n+2-j}(B)$, then, by Proposition~\ref{pro:ascendingchainofideals},
$$
B^{(j+1)}
=
B^{(j)}*B
\subseteq
K_{n+2-j}(B)*B
\subseteq
K_{n+1-j}(B).
$$
Thus the claim follows. Taking $j=n+2$, we obtain
$$
B^{(n+2)}\subseteq K_0(B)=\{0\}.
$$
Hence $B$ is right nilpotent of class at most $n+1$.
\end{proof}

\begin{cor}\label{lambda action for rclass 2}
A skew brace $B$ is right nilpotent of class at most $2$ if and only if~\hbox{$\lambda_b=\lambda_{\lambda_a(b)}$} for every $a,b\in B$.
In particular, in such a skew brace,  $\lambda_{a+b}=\lambda_a\lambda_b$ for every $a,b\in B$.
\end{cor}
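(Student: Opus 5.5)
The plan is to specialise Proposition~\ref{lambda action for rclass n} to $n=2$ and then simplify the resulting family of identities. For $n=2$, $\alpha=(\alpha_1,\alpha_2)$ and $v=(a,b)$, the definition gives
\[
\lambda^\alpha_v=\lambda^{\alpha_2}_{\lambda^{\alpha_1}_a(b)},\qquad \lambda^{\mathfrak l\alpha}_{\mathfrak lv}=\lambda^{\alpha_2}_b .
\]
So $B$ is right nilpotent of class at most $2$ if and only if $\lambda^{\alpha_2}_{\lambda^{\alpha_1}_a(b)}=\lambda^{\alpha_2}_b$ for all signs $\alpha_1,\alpha_2$ and all $a,b\in B$. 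It therefore suffices to show that this four-fold family of identities is equivalent to the single identity $\lambda_{\lambda_a(b)}=\lambda_b$ for all $a,b$.

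One direction is immediate: taking $\alpha_1=\alpha_2=1$ gives the stated identity. For the converse, the case $\alpha_2=-1$ follows from $\alpha_2=1$ by inverting both sides, so only $\alpha_1=-1$ needs attention. I would use $\lambda_a^{-1}=\lambda_{\bar a}$, which holds because $\lambda$ is a homomorphism from $(B,\circ)$. Then
\[
\lambda_{\lambda_a^{-1}(b)}=\lambda_{\lambda_{\bar a}(b)}=\lambda_b,
\]
by the identity applied to the pair $(\bar a,b)$. Hence the single identity implies all four cases, and the equivalence follows from Proposition~\ref{lambda action for rclass n}.

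For the ``in particular'' part, I would recall that $a+b=a\circ\lambda_a^{-1}(b)$, which follows from $a\circ c=a+\lambda_a(c)$. This gives
\[
\lambda_{a+b}=\lambda_a\lambda_{\lambda_a^{-1}(b)}=\lambda_a\lambda_b ,
\]
where the last step uses the case $\alpha_1=-1$ established above. None of these steps is a genuine obstacle. The only point needing care is reducing the negative-exponent cases, and that is settled by $\lambda_a^{-1}=\lambda_{\bar a}$.
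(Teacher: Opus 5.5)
Your proof is correct and takes essentially the same route as the paper. You specialise Proposition~\ref{lambda action for rclass n} to $n=2$, and you also spell out the reduction of the sign cases $\alpha_1=-1$ and $\alpha_2=-1$ via $\lambda_a^{-1}=\lambda_{\bar a}$, which the paper leaves implicit by calling the first part immediate. You then obtain $\lambda_{a+b}=\lambda_a\lambda_b$ from $a+b=a\circ\lambda_a^{-1}(b)$, exactly as the paper does.
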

\begin{proof}
The first part is an immediate consequence of Proposition~\ref{lambda action for rclass n}.
The second part is obtained as follows:
$\lambda_{a+b} =\lambda_{a\circ\lambda_{a}^{-1}(b)}= \lambda_a \lambda_{\lambda_{\overline{a}}(b)}=\lambda_a\lambda_b$.
\end{proof}

\begin{rem}
Note that nested conditions of the kind of Definition \ref{def:nested} have already been
used in the study of multipermutation solutions; see, for instance,
\cite[Theorem~7.10]{gateva2012multipermutation} for square-free involutive solutions, and \cite{DiagonalsJedPil} for related nested conditions involving
diagonal maps.

{\rm In the context of Corollary \ref{lambda action for rclass 2}, it is worth mentioning that the authors of \cite{MR4634954} studied the class of $2$-reductive solutions $(X,r)$. These solutions satisfy the stronger conditions
\[
\lambda_{\lambda_x(y)}=\lambda_y
\qquad\text{and}\qquad
\rho_{\rho_x(y)}=\rho_y,
\]
and are therefore substantially more restrictive than the setting considered here. Indeed, it is shown in \cite{MR4634954} that these identities force the group
\[
\langle \lambda_x,\rho_x \mid x\in X\rangle
\]
to be abelian. In contrast, this need not be the case for skew braces of right nilpotency class~$2$; examples of order~$24$ already provide counterexamples (e.g.\ \texttt{SmallSkewbrace(24,813)}).
}
\end{rem}

For our purposes, we should also recall a nilpotency concept for skew braces that is stronger than right nilpotency and that reduces to the usual nilpotency for groups in trivial skew braces. In order to do this, we preliminarily need to introduce certain relevant sub-structures that often pop-up in studying a skew brace. Let 
$(B,+,\circ)$ be a skew brace. First, it is well-known that the kernel $\ker(\lambda)$ of the lambda-function is a trivial sub-skew brace (besides obviously being a multiplicatively normal subgroup) that may also not be a left-ideal. On the other hand, the intersection of~$\ker(\lambda)$ with the additive centre $Z(B,+)$ is an ideal of $B$, called the \emph{socle} of $B$, and denoted by $\operatorname{Soc}(B)$. The socle is very relevant in the context of right nilpotency. In fact, having recursively defined the socle series of $B$ by putting $\operatorname{Soc}_0(B)=\{0\}$ and $\operatorname{Soc}_{i+1}(B)/\operatorname{Soc}_i(B)=\operatorname{Soc}(B/\operatorname{Soc}_i(B))$, one has that $B$ is right nilpotent of nilpotent type if and only if $B=\operatorname{Soc}_n(B)$ for some non-negative integer~$n$ (see \cite{MR4627847}).

 We take this opportunity to recall the relation between right nilpotent skew
braces of nilpotent type and multipermutation solutions. If $(X,r)$ is a
solution, its retraction is the solution $\Ret(X,r)$ on the quotient
$X/{\sim}$, where
$x\sim y$
if and only if
$\lambda_x=\lambda_y$
and
$\rho_x=\rho_y.$
Iterating this construction, we put
$\Ret^0(X,r)=(X,r)$ and
$\Ret^{m+1}(X,r)=\Ret(\Ret^m(X,r)).$
The solution $(X,r)$ is called \emph{multipermutation} if
$\Ret^m(X,r)$
is the trivial solution on a singleton for some $m\geq 0$. With this terminology, it is known that a solution $(X,r)$ is
multipermutation if and only if its structure skew brace $G(X,r)$ is right nilpotent of nilpotent type (see the introduction of \cite{MR4627847}).

The intersection of $\operatorname{Soc}(B)$ with the multiplicative centre $Z(B,\circ)$ is again an ideal of $B$, which is called the {\it centre} of $B$ (or also the {\it annihilator} of $B$) and is denoted by~$Z(B)$ (or by $\operatorname{Ann}(B)$). Similarly to the socle series, one recursively defines the {\it upper central series} of $B$ as follows: $Z_0(B)=\{0\}$ and $Z_{i+1}(B)/Z_i(B)=Z(B/Z_i(B))$. Then $B$ is {\it centrally nilpotent} if $B=Z_n(B)$ for some non-negative integer $n$. Clearly, if $B$ is centrally nilpotent, then it is also right nilpotent of nilpotent type, but the converse does not hold in general. Centrally nilpotent skew braces possess several remarkable properties that make them easier to handle. For a general overview of these features, we refer the reader to \cite{MR4884002}.

\medskip

Let $\mathcal{K}$ be a category with a forgetful functor $U_\mathcal{K}:\mathcal{K}\to \mathrm{Set}$ 
to the category of sets and let $X$ be a set.
A \emph{free object on $X$ in $\mathcal{K}$} is an object $F$ of $\mathcal{K}$
with a morphism of sets $i:X\to U(F)$ such that 
for every object $K$ of $\mathcal{K}$ and every morphism of sets $f:X\to U(K)$
there exists a unique morphism 
$\varphi:F\to Y$ in $\mathcal{K}$ such that $\varphi\circ i=f$.
In the categories of skew braces and set-theoretic solutions considered in this paper,
there is a natural forgetful functor to the category of sets.
We will use the same notation for an object and its underlying set when no confusion arises.

Since a number of different types of free objects will appear throughout the paper, we fix the following convention. If $\star$ is any operation symbol, then $$\operatorname{F}_{\star}(X) \text{ and } \operatorname{S}_\star(X)$$ denote respectively 
the free group and the free monoid on $X$ whose operation is written as $\star$.
The corresponding free abelian versions are written as
$$\operatorname{F}_{*}^{\operatorname{ab}}(X) \text{ and }
\operatorname{S}_{*}^{\operatorname{ab}}(X).$$ The categories we are going to consider are: 
\begin{itemize}
    \item $\mathcal B$, the category of skew braces,
    \item $\mathcal{T}$, the category of two-sided skew braces,
    \item $\mathcal C$, the category of commutative skew braces,
    \item $\mathcal{RN}_n$, the category of right nilpotent skew braces of class at most $n$,
    \item $\mathcal{CN}_n$, the category of centrally nilpotent skew braces of class at most $n$.
\end{itemize}

The free object on $X$  within the category $\chi$ of skew braces (resp. braces) will be denoted by 
 $$\operatorname{FSB}_{\chi, X} \text{ (resp. } \operatorname{FB}_{\chi ,X}).$$ 
The free object generated on $X$ in the category of all skew braces $\mathcal{B}$ (resp. all braces)  will be denoted $\operatorname{FSB}_{ X}$ (resp. $\operatorname{FB}_{X}$).

\medskip

As we have already noted in the introduction, on some occasions, properties of the free objects may give interesting information about {\it all} the objects of that category. For example, the fact that a free group is residually finite means that every {\it law} holding in finite groups must also hold in all groups. Also, as a consequence of residual finiteness, one immediately obtains  that free groups of finite rank are {\it Hopfian} 
(that is, they are not isomorphic to any of their proper images); in turn, this has some relevant consequences in terms of generating sets of a free group. This explains the relevance of the property of being residually finite in the category of groups, and why it should be important to prove similar properties in other categories, such as that of skew braces. In the latter context, we say that a skew brace is {\it residually finite} if the intersection of the ideals with finite quotient is zero. In the next sections, we will frequently prove that a free skew brace object is residually finite, and in order to keep the paper as short as possible, we prove here a couple of general statements that provide interesting consequences for these free skew brace objects for which the residual finiteness have been obtained. For the sake of the reader, we recall that a skew brace is {\it Hopfian} if it is not isomorphic to any of its proper quotients, and is {\it finitely generated} if it contains a finite subset for which the only sub-skew brace containing it is the whole skew brace; note that the sub-skew brace generated by a set $X$ (that is, the smallest sub-skew brace containing $X$ with respect to inclusion) will be denoted by $\langle X\rangle$.

\begin{thm}\label{hopfian}
Let $B$ be a finitely generated skew brace which is also residually finite. Then $B$ is Hopfian.
\end{thm}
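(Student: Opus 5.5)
I would adapt Mal'cev's classical argument for groups to skew braces. Suppose, for a contradiction, that $\varphi\colon B\to B$ is a surjective skew brace homomorphism with nontrivial kernel. Quotients of $B$ by ideals correspond to kernels of homomorphisms, so it suffices to show that every surjective endomorphism of $B$ is injective. Pick $0\neq b\in\ker\varphi$. Since $B$ is residually finite, there is an ideal $I$ of $B$ with $B/I$ finite and $b\notin I$. Let $\pi\colon B\to Q=B/I$ be the projection, so that $\pi(b)\neq 0$.

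The key finiteness step is this: because $B$ is generated as a skew brace by a finite set $X$, and $Q$ is finite, there are only finitely many skew brace homomorphisms $B\to Q$. Indeed, a homomorphism is determined by its values on $X$. The set of elements on which two homomorphisms agree is a sub-skew brace, being closed under $+$, additive inverses, $\circ$ and multiplicative inverses. Hence two homomorphisms that agree on $X$ agree on $\langle X\rangle=B$. This gives at most $|Q|^{|X|}$ homomorphisms.

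Now consider the maps $\psi_k=\pi\circ\varphi^k\colon B\to Q$ for $k\geq 0$. Since $\varphi$ is surjective, each $\varphi^k$ is surjective, so $\psi_k\neq\psi_l$ would follow from $\pi\neq\pi'$ after precomposition. More precisely, I would argue that the $\psi_k$ are pairwise distinct. Suppose $\psi_k=\psi_l$ with $k<l$. By surjectivity of $\varphi^k$, choose $a$ with $\varphi^k(a)=b$. Then
$$\psi_k(a)=\pi(b)\neq 0,$$
while
$$\psi_l(a)=\pi(\varphi^{l-k}(b))=\pi(0)=0,$$
since $b\in\ker\varphi\subseteq\ker\varphi^{l-k}$. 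This is a contradiction. So we would obtain infinitely many distinct homomorphisms $B\to Q$, contradicting the finiteness step. Hence $\ker\varphi=0$.

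The only potential obstacle is making sure the relevant notions match the skew brace setting. Residual finiteness is defined here via ideals with finite quotient, and the quotient $B/I$ is a genuine skew brace, so $\pi$ is a homomorphism. It also requires checking that the equalizer of two homomorphisms is a sub-skew brace. Both points are routine, so I expect the proof to be short.
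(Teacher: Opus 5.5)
Your proposal is correct and follows essentially the same Mal'cev-type argument as the paper. You show the maps $\pi\circ\varphi^k$ into the finite quotient $B/I$ are pairwise distinct by evaluating them on a preimage of $b$ under $\varphi^k$, which contradicts the finiteness of homomorphisms from a finitely generated skew brace into a finite one. You also justify that finiteness, via equalizers of homomorphisms being sub-skew braces, whereas the paper leaves it implicit.
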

\begin{proof}
Suppose not. Then there exists an epimorphism $\varphi:B\rightarrow B$ whose kernel contains a non-zero element $b$, say. Since $B$ is residually finite, there is an ideal $I$ such that $B/I$ is finite and $b+I\neq I$. Let $\psi:B\rightarrow B/I$ be the natural  epimorphism. Since $B$ is finitely generated, the number of epimorphisms of $B$ onto $B/I$ is finite. But 
$\{\psi \varphi^n \mid n \text{ a positive integer}\}$ is  an infinite family of morphisms $B\rightarrow B/I$. 
 Indeed, let $b_0=b$ and $b_{n+1}\in B $ be a pre-image of $b_n\in B$ by $\varphi$ for all non-negative integers $n$, then  $\psi\varphi^n(b_n)=b+I\neq I=\psi\varphi^k(b_n)$ for all $k>n\geq 0$. This contradiction completes the proof.
\end{proof}

\begin{cor}\label{corfree}
Let $X$ be a finite set, $\mathfrak X$ a property pertaining to skew braces, and~$B:=\FSB_{\mathfrak X, X}$.
Suppose $B$ is residually finite. If $Y$ is any subset of $B$ such that $B=\langle Y\rangle$ and $|Y|\leq |X|$, then $B$ is free on $Y$.
\end{cor}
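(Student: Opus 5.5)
Let $X'$ be a set with $|X'|=|X|$ and fix an injection $\iota\colon Y\to X'$, chosen so that its image has size $|Y|$. Since $B$ depends on $X$ only up to cardinality, $B$ is free on $X$ and also free on $X'$ via a bijection $\theta\colon X'\to X$. The aim is to build an epimorphism $\varphi\colon B\to B$ and then apply Theorem~\ref{hopfian}, which holds because $B$ is finitely generated (by the finite set $X$) and residually finite.

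First, extend $\iota$ to a surjection $g\colon X\to Y\subseteq B$. This is where the hypothesis $|Y|\leq|X|$ is used: when $Y\neq\emptyset$ we can map $|Y|$ elements of $X$ bijectively onto $Y$ and send the remaining elements of $X$ to any element of $Y$. The degenerate case $Y=\emptyset$ forces $B=\langle\emptyset\rangle=\{0\}$ and is handled separately. By the universal property of the free object $B=\FSB_{\mathfrak X,X}$, the map $g$ induces a morphism of skew braces $\varphi\colon B\to B$ with $\varphi|_X=g$. The image $\varphi(B)$ is a sub-skew brace of $B$ containing $g(X)=Y$, so it contains $\langle Y\rangle=B$. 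Hence $\varphi$ is surjective, and Theorem~\ref{hopfian} makes $\varphi$ an automorphism of $B$.

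Next, I show that the restriction of $\varphi$ to $X$ is injective and that $B$ is free on $Y$. Suppose $g(x)=g(x')$ with $x\neq x'$. Then $\varphi$ factors through the quotient of $B$ by the ideal generated by $x-x'$. For $\varphi$ to be injective this ideal must be zero, so $x=x'$ in $B$, which is a contradiction. The only exception is when the canonical map $X\to B$ is itself non-injective; in that case $B$ is trivial and the statement is immediate. So $g$ is a bijection $X\to Y$. Now let $K$ be any object with property $\mathfrak X$ and let $f\colon Y\to K$ be any map. The morphism $\psi\circ\varphi^{-1}$, where $\psi$ is the extension of $f\circ g$ given by freeness on $X$, extends $f$. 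It is the unique such extension, because any two morphisms agreeing on $Y$ agree on $\langle Y\rangle=B$. Hence $B$ is free on $Y$.

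The main obstacle I expect is the bookkeeping around degenerate cases: $Y$ empty, or $X$ not embedding in $B$. These force $B$ to be trivial, and in those cases one checks directly that freeness on $Y$ still makes sense in the category. The core argument is the standard Hopfian one above.
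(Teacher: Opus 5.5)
Your proof is correct and follows the paper's route: you build an epimorphism $\varphi\colon B\to B$ sending $X$ onto $Y$ and invoke Theorem~\ref{hopfian} to make it an automorphism, after which freeness on $Y$ transports along $\varphi$. The auxiliary set $X'$, the injection $\iota$ and the bijection $\theta$ in your first paragraph play no role (and ``extend $\iota$ to a surjection $g\colon X\to Y$'' is ill-typed as written), so you can drop them, since the surjection $g$ you then describe is all that is needed.
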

\begin{proof}
By freeness of $B$ and the hypothesis $|Y|\leq|X|$, there exists an epimorphism from $B$ to $B$ mapping $X$ onto $Y$. This must be injective by Theorem \ref{hopfian}.
\end{proof}

\section{Free solutions and their structure skew braces}\label{secsolutions}

In this section we study free objects in suitable categories of solutions of the Yang--Baxter equation and compare them with the corresponding free skew braces. We first recall the universal-algebraic framework that guarantees the
existence of free solutions in equational classes (see~Sub\-section \ref{subsection1}). We then discuss
injectivisation and describe free injective solutions inside structure skew braces (see Subsection \ref{subsection2}). Finally, we introduce the classes of right nilpotent solutions and relate them to left multipermutation solutions and to right nilpotent skew braces (see Subsection \ref{subsection3}).

\subsection{Equational classes of solutions and free objects}\label{subsection1}

To establish the existence of the corresponding free objects, one needs to interpret these solutions from the perspective of universal algebra. For the reader's convenience, we now recall some basic definitions and results in this context (for a more comprehensive account, see \cite{Univ}).

For a non-empty set $A$ and a non-negative integer $n$,
an \emph{$n$-ary operation on $A$} is a function
$f:\underbrace{A\times \ldots\times A}_{n \text{ times}}\to A$.
The integer $n$ is called the \emph{arity of $f$}.
Note that an operation with arity $0$ is simply a constant $a\in A$.
A \emph{signature} $(\mathcal{F}, \textrm{ar})$ is a set of operation symbols
$\mathcal{F}$ together with a map 
$\textrm{ar}: \mathcal{F}\to \N$.
Fix a signature $\mathcal{F}$, then
an \emph{$\mathcal{F}$-algebra} is a pair
$\left(A,\{f^A\mid f\in \mathcal{F} \}\right)$, where $A$ is a non-empty set
and $f^A$ is an $\textrm{ar}(f)$-ary operation on $A$.

Let $A$ and $B$ be two $\mathcal{F}$-algebras.
A \emph{homomorphism from $A$ to $B$} is 
a map of sets $\alpha:A\to B$
such that
\[
\alpha \left(f^A(a_1,\dots, a_n)\right)=f^B\left(\alpha(a_1),\dots, \alpha(a_n)\right),
\]
for all $n$-ary operations
$f\in \mathcal{F}$ and for
every $a_1,\dots, a_n\in A$.
Moreover, we say that $A$ is a \emph{subalgebra} of $B$ 
if $A\subseteq B$ and
the inclusion map is a homomorphism,
i.e. $f^B(a_1,\dots , a_n)\in A$
for every $a_1,\dots, a_n\in A$ and every $f\in \mathcal{F}$
of arity $n$.
We can also define the \emph{direct product $A\times B$ of $A$ and $B$}
as the algebra on the set $A\times B$
such that 
\[
f^{A\times B}\left((a_1,b_1),\dots, (a_n, b_n)\right)
=\left(f^A(a_1,\dots, a_n),f^B(b_1,\dots, b_n)\right),
\]
for every $a_1,\dots, a_n\in A$, $b_1,\dots, b_n\in B$ and $f\in \mathcal{F}$ of arity $n$.
More generally, given 
$(A_i)_{i\in I}$ a family of $\mathcal{F}$-algebras,
the \emph{direct product}
$A=\prod_{i\in I}A_i$
is the algebra on the set 
$\prod_{i\in I}A_i$
 with 
\[
f^A(a_1,\dots, a_n)(i)
=f^{A_i}(a_1(i),\dots, a_n(i))
\]
for every $i\in I$, $a_1,\dots, a_n\in \prod_{i\in I}A_i$, and $f\in \mathcal{F}$ of arity $n$.

\begin{defn}
    A non-empty class of $\mathcal{F}$-algebras is called a \emph{variety} 
    if it is closed under subalgebras, homomorphic images and direct products.
\end{defn}

A class of $\mathcal{F}$-algebras can often be described either by closure properties (subalgebras, homomorphic images, and direct products) or by identities. These two points of view are related by Birkhoff’s theorem, which will be used later.

Given an algebra $A$ we can consider more functions besides the ones in the signature 
that are compatible with its structure.
This leads us to the concept of terms.

Given a set of \emph{variables} $V$ and a signature $\mathcal{F}$, then the set $T(V)$ of
\emph{terms of type~$\mathcal{F}$ over $V$}
is the smallest set such that
\begin{enumerate}
    \item $V\subseteq T(V)$;
    \item $f\in T(V)$ for every $f\in \mathcal{F}$ with $\textrm{ar}(f)=0$;
    \item if $t_1,\dots, t_k\in T(V)$, then also the ``string" $f(t_1,\dots,t_k)\in T(V)$, 
    for every
    $f\in \mathcal{F}$ with $\textrm{ar}(f)=k$.
\end{enumerate}
For $t\in T(V)$ we write $t(v_1,\dots, v_n)$ 
to indicate that the elements of $V$ occurring in~$t$ are among $v_1,\dots, v_n$.
Moreover, given a term
$t(v_1,\dots, v_n)$ 
and an $\mathcal{F}$-algebra,
we define 
\[
t^A:\underbrace{A\times \ldots\times A}_{n \text{ times}}\to A
\]
as follows:
\begin{enumerate}
    \item if $t\in V$, say $t=v_i\in V$,
    then
    \[
    t^A(a_1,\dots, a_n)=a_i,
    \]
    for every $a_1,\dots, a_n\in A$;
    \item otherwise, $t=f(t_1(v_1,\dots,v_n),\dots, t_k(v_1,\dots,v_n))$
    for some $f\in \mathcal{F}$ of arity $k$ and $t_1,\dots, t_k\in T(V)$.
    Then 
    \[
    t^A(a_1,\dots, a_n)=
    f^A\left(t^A_1(a_1,\dots,a_n),\dots, t^A_k(a_1,\dots,a_n)\right).
    \]
\end{enumerate}

\begin{defn}
   Given a signature $\mathcal{F}$ and a set $V$, an \emph{identity of type $\mathcal{F}$ over $V$} is an expression of the form
   \(
   p\approx q,
   \)
   where $p,q\in T(V)$.

   If we denote by $\textrm{Id}(V)$ 
   the set of all the identities of type $\mathcal{F}$ over $V$, then we say that an algebra $A$ of type $\mathcal{F}$
   \emph{satisfies an identity} 
   $p(v_1,\dots,v_n)\approx q(v_1,\dots,v_n)$ 
   if for every choice of $a_1,\dots, a_n\in A$ we have
   $p^A(a_1,\dots,a_n)=q^A(a_1,\dots,a_n)$.
\end{defn}

In universal algebra, free algebras are defined as
quotients of the term algebra by a congruence 
(see \cite[Chapter II, §10]{Univ}).
Theorem 10.8 in \cite{Univ}
establishes that this construction satisfies the so called universal mapping property, 
i.e. it agrees with the categorical notion of a free object.

\begin{thm}[see {\cite[Theorem 10.12]{Univ}}]
    Let $\mathcal{V}$ be a variety of $\mathcal{F}$-algebras. Then for every set $X$ there exists a free algebra in $\mathcal{V}$ on $X$. 
\end{thm}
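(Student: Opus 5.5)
The plan is the classical term-algebra construction, as in \cite[Chapter II, \S10]{Univ}. Fix the variety $\mathcal V$ and the set $X$, and form the term algebra $T(X)$ of type $\mathcal F$ over $X$. The operations on $T(X)$ are the formal ones: $f^{T(X)}(t_1,\dots,t_k)$ is the string $f(t_1,\dots,t_k)$. By induction on the construction of terms, $T(X)$ is the absolutely free $\mathcal F$-algebra on $X$. This means that every map $X\to A$ into an $\mathcal F$-algebra $A$ extends uniquely to a homomorphism $T(X)\to A$, and the extension sends $t(x_1,\dots,x_n)$ to $t^A(a_1,\dots,a_n)$.

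Next, let $\Phi$ be the set of congruences $\theta$ on $T(X)$ with $T(X)/\theta\in\mathcal V$, and put $\theta_{\mathcal V}=\bigcap\Phi$. To see that $F:=T(X)/\theta_{\mathcal V}$ lies in $\mathcal V$, embed it into $\prod_{\theta\in\Phi}T(X)/\theta$ via $t\theta_{\mathcal V}\mapsto(t\theta)_\theta$. This map is a well-defined injective homomorphism precisely because $\theta_{\mathcal V}$ is the intersection. So $F$ is a subalgebra of a direct product of members of $\mathcal V$, and the closure of $\mathcal V$ under products and subalgebras gives $F\in\mathcal V$. A set-theoretic subtlety has to be handled here: $\Phi$ is a set, since it consists of subsets of $T(X)^2$, so the product is legitimate. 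If $\Phi$ is empty, the intersection is $T(X)^2$ and the empty product is the trivial algebra. That algebra lies in $\mathcal V$ because $\mathcal V$ is non-empty and closed under homomorphic images, and the one-point algebra is a homomorphic image of any algebra.

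For the universal property, let $A\in\mathcal V$ and let $g:X\to A$ be a map, extended to $\hat g:T(X)\to A$. The image $\hat g(T(X))$ is a subalgebra of $A$, so it lies in $\mathcal V$. By the homomorphism theorem it is isomorphic to $T(X)/\ker\hat g$, so $\ker\hat g\in\Phi$ and hence $\theta_{\mathcal V}\subseteq\ker\hat g$. Therefore $\hat g$ factors through $F$, giving $\varphi:F\to A$ with $\varphi\circ i=g$, where $i:X\to F$ is $x\mapsto x\theta_{\mathcal V}$. Uniqueness holds because $F$ is generated by $i(X)$: each element is $t^F(i(x_1),\dots,i(x_n))$, so two homomorphisms that agree on $i(X)$ agree everywhere, by induction on terms.

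The main point needing care is the closure argument in the second paragraph: checking that the diagonal map into the product is an injective homomorphism and that $\Phi$ is a set. Note also that $i$ need not be injective; for example, if $\mathcal V$ contains only trivial algebras, $i$ collapses everything. The categorical definition of a free object recalled earlier does not demand injectivity, so nothing further is needed.
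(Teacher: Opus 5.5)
Your proposal is correct and is the argument the paper relies on when it cites \cite{Univ}. There, $F=T(X)/\theta_{\mathcal V}$ with $\theta_{\mathcal V}$ the intersection of all congruences of $T(X)$ whose quotient lies in $\mathcal V$; membership in $\mathcal V$ comes from the subdirect embedding into $\prod_{\theta\in\Phi}T(X)/\theta$, and the universal property from factoring $\hat g$ through its kernel.

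The one caveat is that, as in \cite{Univ} (where $T(X)$ is assumed to exist), you need $T(X)\neq\emptyset$, i.e.\ $X\neq\emptyset$ or a nullary symbol. For $X=\emptyset$ and the constant-free signatures used in the paper, $T(X)$ is empty and so not an algebra under the paper's non-emptiness convention, and the statement can genuinely fail there. Conversely, whenever $T(X)\neq\emptyset$ the full congruence $T(X)^2$ already belongs to $\Phi$, so your case $\Phi=\emptyset$ never arises.
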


Many of the most popular classes of algebras are defined by
identities.
\begin{defn}
    Let $\mathcal{F}$  be a signature and $\Sigma$ a set of identities of type $\mathcal{F}$.
    Then we denote by $M_{\mathcal F}(\Sigma)=M(\Sigma)$ the class of $\mathcal F$-algebras satisfying $\Sigma$.

    We say that a class $K$ of $\mathcal F$-algebras is an \emph{equational class} if there is a set of identities $\Sigma$
    such that $K=M(\Sigma)$.
\end{defn}

We now recall Birkhoff’s theorem, which appears as Theorem 11.9 in \cite{Univ}.
\begin{thm}[Birkhoff]\label{birkoff}
    $\mathcal{V}$ is an equational class if and only if it is a variety.
\end{thm}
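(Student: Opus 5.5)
The statement is Birkhoff's theorem, and I will prove the two implications separately. The forward direction is a direct verification. The reverse direction rests on the existence of free algebras in a variety (\cite[Theorem 10.12]{Univ}) together with an analysis of which identities those free algebras satisfy.

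\emph{Equational class $\Rightarrow$ variety.} Suppose $\mathcal V=M(\Sigma)$; it is non-empty since it contains the trivial one-element algebra. First, by induction on the construction of terms, I show that homomorphisms commute with terms: for every homomorphism $\alpha\colon A\to B$ and term $t(v_1,\dots,v_n)$,
\[
\alpha\bigl(t^A(a_1,\dots,a_n)\bigr)=t^B\bigl(\alpha(a_1),\dots,\alpha(a_n)\bigr).
\]
Since operations on a direct product are computed coordinatewise, terms are too: $t^{\prod A_i}(a_1,\dots,a_n)(i)=t^{A_i}(a_1(i),\dots,a_n(i))$. With these two facts, each closure property follows quickly.
\begin{enumerate}
\item Subalgebras: term values in a subalgebra coincide with those in the ambient algebra, so every identity of $\Sigma$ persists.
\item Products: an identity $p\approx q$ holding in every factor holds coordinatewise, hence in the product.
\item Homomorphic images: given a surjective $\alpha\colon A\to B$, lift arbitrary $b_1,\dots,b_n\in B$ to preimages $a_1,\dots,a_n\in A$. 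Then
\[
p^B(b_1,\dots,b_n)=\alpha\bigl(p^A(a_1,\dots,a_n)\bigr)=\alpha\bigl(q^A(a_1,\dots,a_n)\bigr)=q^B(b_1,\dots,b_n).
\]
\end{enumerate}

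\emph{Variety $\Rightarrow$ equational class.} Let $\mathcal V$ be a variety, and let $\Sigma$ be the set of all identities satisfied by every member of $\mathcal V$, over a countably infinite set of variables. Clearly $\mathcal V\subseteq M(\Sigma)$, so the work is the reverse inclusion. Take $A\in M(\Sigma)$ and a set $X$ in bijection with $A$.
\begin{enumerate}
\item By \cite[Theorem 10.12]{Univ} there is a free algebra $F=F_{\mathcal V}(X)$ in $\mathcal V$. In the universal-algebra construction, $F=T(X)/\theta$, where $\theta$ is the intersection of all congruences $\phi$ on the term algebra $T(X)$ with $T(X)/\phi\in\mathcal V$. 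Hence $F$ embeds into a product of members of $\mathcal V$, so $F\in\mathcal V$ by closure under subalgebras and products.
\item I then establish the key claim: $(p,q)\in\theta$ if and only if $\mathcal V$ satisfies $p\approx q$.
\begin{itemize}
\item For ``if'': each quotient $T(X)/\phi$ with $\phi$ as above lies in $\mathcal V$, so it satisfies $p\approx q$. Evaluating at the classes of the variables gives $(p,q)\in\phi$ for every such $\phi$, hence $(p,q)\in\theta$.
\item For ``only if'': take any $B\in\mathcal V$ and any elements of $B$. Use the universal property of $F$ to send the variables of $p,q$ to these elements. The two terms become equal in $F$, and homomorphisms preserve term values, so they are equal in $B$.
\end{itemize}
Since only finitely many variables occur in $p$ and $q$, they can be renamed into the fixed countable variable set, so the claim really refers to identities in $\Sigma$.
\item Now let $\pi\colon T(X)\to A$ be the surjective homomorphism extending the bijection $X\to A$. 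Let $(p,q)\in\theta$. By the claim, $p\approx q$ belongs to $\Sigma$ up to renaming, so $A$ satisfies it and $\pi(p)=\pi(q)$. Thus $\theta\subseteq\ker\pi$, and $\pi$ factors through a surjection $F\to A$.
\item Therefore $A$ is a homomorphic image of $F\in\mathcal V$, so $A\in\mathcal V$. This gives $M(\Sigma)\subseteq\mathcal V$, hence $\mathcal V=M(\Sigma)$.
\end{enumerate}

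The main obstacle is the key claim in step 2, which characterizes $\theta$ exactly as the set of identities of $\mathcal V$, together with the fact that $F$ genuinely lies in $\mathcal V$. Both points are where closure under $S$ and $P$ is used essentially; the rest is routine manipulation of terms.
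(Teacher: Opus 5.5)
Your proof is correct. The paper gives no proof of its own and simply cites \cite[Theorem 11.9]{Univ}; your argument is essentially the standard one found there: terms commute with homomorphisms and are evaluated coordinatewise in products, which gives closure of $M(\Sigma)$ under subalgebras, products and homomorphic images, while conversely every algebra satisfying all identities of $\mathcal{V}$ is a homomorphic image of the free algebra $T(X)/\theta$, which lies in $\mathcal{V}$ because it embeds in a product of members of $\mathcal{V}$.
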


Now, we show that the class of solutions to the Yang--Baxter Equation is an equational class,
and thus a variety according to Birkhoff's theorem (\cref{birkoff}). 
Although one could also verify the closure properties in the definition of a variety,
we instead work with identities,
as this provides a more concrete and tractable description of the structure.

Note that this approach was taken in \cite[Corollary 5.1-5.2]{ChiMil} as well, which also showed the existence of a free solution of the Yang-Baxter equation on an arbitrary set. We provide a detailed proof for the convenience of the reader and as a basis to use in a later proof.

\begin{thm}
\label{solutions are variety}
    The solutions to the Yang--Baxter Equation form an equational class, and so they are a variety.
\end{thm}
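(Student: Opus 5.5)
We will encode a non-degenerate solution as an algebra in a suitable signature and show that the axioms are identities. The key issue is that non-degeneracy (bijectivity of $\lambda_x$ and $\rho_x$) is not directly equational. We therefore introduce the inverses as separate operations. The signature $\mathcal F$ consists of four binary operation symbols $\lambda,\rho,\hat\lambda,\hat\rho$; for readability we write $\lambda_x(y)$ for $\lambda(x,y)$, $\rho_y(x)$ for $\rho(y,x)$, and similarly for the hatted symbols.

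A solution $(X,r)$ with $r(x,y)=(\lambda_x(y),\rho_y(x))$ then determines an $\mathcal F$-algebra by setting $\hat\lambda_x=\lambda_x^{-1}$ and $\hat\rho_x=\rho_x^{-1}$. Since inverses are unique, morphisms of solutions correspond exactly to $\mathcal F$-homomorphisms. Indeed, if $f(\lambda_x(y))=\lambda_{f(x)}(f(y))$ for all $x,y$, then applying this with $y$ replaced by $\lambda_x^{-1}(y)$ gives $f(\lambda_x^{-1}(y))=\lambda_{f(x)}^{-1}(f(y))$, and the same argument works for $\rho$.

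The set $\Sigma$ of identities will be the following, in variables $x,y,z$.
\begin{enumerate}
\item Non-degeneracy:
\[
\lambda_x(\hat\lambda_x(y))\approx y,\quad \hat\lambda_x(\lambda_x(y))\approx y,\quad \rho_x(\hat\rho_x(y))\approx y,\quad \hat\rho_x(\rho_x(y))\approx y.
\]
These say precisely that each $\lambda_x$ and each $\rho_x$ is a bijection with inverse $\hat\lambda_x$, respectively $\hat\rho_x$.
\item The Yang--Baxter equation \eqref{YBE}. Expanding both sides on $(x,y,z)$ and comparing the three coordinates gives the standard three identities:
\[
\lambda_x(\lambda_y(z))\approx\lambda_{\lambda_x(y)}(\lambda_{\rho_y(x)}(z)),
\]
\[
\lambda_{\rho_{\lambda_y(z)}(x)}(\rho_z(y))\approx\rho_{\lambda_{\rho_y(x)}(z)}(\lambda_x(y)),
\]
\[
\rho_z(\rho_y(x))\approx\rho_{\rho_z(y)}(\rho_{\lambda_y(z)}(x)).
\]
\end{enumerate}
Bijectivity of $r$ itself need not be imposed, thanks to \cref{nondeg implies bij}.

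Two checks then complete the argument. First, every solution gives an algebra in $M(\Sigma)$. Second, every algebra in $M(\Sigma)$ defines a solution via $r(x,y)=(\lambda_x(y),\rho_y(x))$: it is non-degenerate by (1) and satisfies \eqref{YBE} by (2). Together with the correspondence of morphisms, this shows that the category of solutions is isomorphic to the equational class $M(\Sigma)$. \Cref{birkoff} then shows that $M(\Sigma)$ is a variety.

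The only delicate step is the coordinate computation of \eqref{YBE}, which must be done carefully. We apply $r\times\id$, then $\id\times r$, then $r\times\id$ to $(x,y,z)$, do the same for the right-hand side, and match the first, second and third components. This yields exactly the three identities listed in (2).
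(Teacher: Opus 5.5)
Your proposal is correct and follows the same route as the paper. Both arguments adjoin operation symbols for the inverses $\lambda_x^{-1}$ and $\rho_y^{-1}$, impose the four inverse identities together with the three coordinatewise Yang--Baxter identities (which you have computed correctly), and rely on \cref{nondeg implies bij} to avoid imposing bijectivity of $r$; your added check that solution morphisms coincide with $\mathcal F$-homomorphisms makes explicit a point the paper leaves implicit.
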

\begin{proof}
Recall from \cref{nondeg implies bij} that a solution 
can be described by a map
$r(x,y) = (\lambda_x(y), \rho_y(x))$, satisfying \cref{YBE}
where for each $x,y \in X$ the maps
$\lambda_x, \rho_y : X \to X$ are bijections.

For convenience, we introduce binary operations
\[
\lambda(x,y) = \lambda_x(y), \qquad \rho(x,y) = \rho_y(x),
\]
so that $r(x,y) = (\lambda(x,y), \rho(x,y))$.
In this way, we can rewrite the fact that $r$
satisfies \cref{YBE} as
\begin{align*}
\lambda(\lambda(x,y),\,\lambda(\rho(x,y),z)) 
&=\lambda(x,\lambda(y,z)),\\
\rho(\lambda(x,y),\,\lambda(\rho(x,y),z)) 
&=
\lambda(\rho(x,\lambda(y,z)),\,\rho(y,z)),\\
\rho(\rho(x,\lambda(y,z)),\rho(y,z))
&=\rho(\rho(x,y),z), 
\end{align*}
for all $x,y,z\in X$.

To encode the non-degeneracy,
we can introduce maps $\tilde{\lambda},\tilde{\rho}:X\times X\to X$
such that for every $x,y\in X$
\[
\tilde{\lambda}(x, \cdot)=\lambda(x, \cdot)^{-1} \qquad \tilde{\rho}( \cdot, y)=\rho(\cdot,y)^{-1}.
\]
Or equivalently for every $x,y\in X$
\begin{align*}
    \tilde{\lambda}(x,\lambda(x,y)) &= y,\quad
\lambda(x,\tilde{\lambda}(x,y)) = y,\\
\rho(\tilde{\rho}(x,y),y) &= x,\quad
\tilde{\rho}(\rho(x,y),y) = x.
\end{align*}

Therefore, we can consider the  signature 
$\mathcal{F}=\{\lambda, \rho, \hat{\lambda}, \hat{\rho}, \tilde{\lambda}, \tilde{\rho}\}$
and the following sets of identities $\Sigma=\Sigma_{YB}
\cup \Sigma_{nd}$ over $V=\{v_1,v_2,v_3\}$, where
\[
\Sigma_{YB}=\left\{
\begin{aligned}
\lambda(\lambda(v_1,v_2),\,\lambda(\rho(v_1,v_2),v_3)) &\approx \lambda(v_1,\lambda(v_2,v_3)),\\
\rho(\lambda(v_1,v_2),\,\lambda(\rho(v_1,v_2),v_3)) &\approx \lambda(\rho(v_1,v_2),\rho(v_2,v_3)),\\
\rho(\rho(v_1,\lambda(v_2,v_3)),\rho(v_2,v_3))
&\approx \rho(\rho(v_1,v_2),v_3) \end{aligned}
\right\},
\]
\[
\Sigma_{nd}=\left\{
\begin{aligned}
\tilde{\lambda}(v_1,\lambda(v_1,v_2)) &\approx v_2,\quad
\lambda(v_1,\tilde{\lambda}(v_1,v_2)) \approx v_2,\\
\rho(\tilde{\rho}(v_1,v_2),v_2) &\approx v_1,\quad
\tilde{\rho}(\rho(v_1,v_2),v_2) \approx v_1
\end{aligned}
\right\}.
\]
    It is now easy to see that the class of 
    solutions to the Yang--Baxter Equation 
    is an equational class with signature $\mathcal{F}$
    and set of identities over $V$ given by $\Sigma$.
\end{proof}

A direct consequence is that the category of solutions to the Yang--Baxter Equation admits free objects. Before proceeding further, we note that also in case of free objects in categories of solutions, we adhere to certain general conventions: the free solution on $X$ in the category of {\it all} solutions is denoted by 
$$(\operatorname{FSol}_X,\; r_{\operatorname{FSol}}),$$ while the free solution within a subcategory consisting of all objects satisfying property~$\chi$ 
is denoted by
$$(\operatorname{FSol}_{\chi,X},\; r_{\operatorname{FSol_{\chi}}}).$$

\cref{univ prop structure sb}
implies that $G:\mathcal Sol\to\mathcal{B}$ (which associates with a solution $(Y,s)$ the structure skew brace $G(Y,s)$) is a functor and that for every solution $(Y,s)$ and for every skew brace $B$, the composition with $g_Y$ induces a natural bijection
\[
\Hom_{\mathcal{B}}(G(Y,s), B)\cong \Hom_{\mathcal{S}ol}((Y,s), (B,r_B)).
\]
(Note that this also yields  $\Hom_{\mathcal{S}ol}((Y,s), (B,r_B))\cong \Hom_{\mathcal{S}ol}(\Inj(Y,s), (B,r_B))$.)

This means that $G$
is left adjoint to the functor $S:\mathcal{B}\to \mathcal Sol$ 
that maps a skew brace $B$
to the associated solution $(B,r_B)$.
Recall that free objects are characterized by a universal property, or
equivalently by the fact that the corresponding free functor is left adjoint
to the forgetful functor.
So we have two adjunctions as follows.
\begin{center}
\begin{tikzcd}[column sep=huge]
\mathcal{S}et
  \arrow[r, "F_{\mathcal Sol}"{name=A}, bend left=15]
& \mathcal Sol
  \arrow[l, "U_{\mathcal Sol}"{name=B}, bend left=15]
  \arrow[r, "G"{name=C}, bend left=15]
& \mathcal B
  \arrow[l, "S"{name=D}, bend left=15]
\arrow[from=A, to=B, phantom, "\dashv" sloped]
\arrow[from=C, to=D, phantom, "\dashv" sloped]
\end{tikzcd}    
\end{center}
Since adjunctions compose (see for example \cite[Chapter IV, §8, Theorem 1]{MacLane}), 
the composition $G\circ F_{\mathcal Sol}$
is left adjoint to $S\circ U_{\mathcal Sol}$, which is the forgetful functor
$U_{\mathcal E_{\mathrm{Inj}}}$.
This immediately yields the following theorem.

\color{black}
\begin{thm}\label{freesolutionisfreeskew}
    Let $X$ be a non-empty set and let $(\FSol_X, r_{\FSol_X})$ 
    be the free solution 
    on $X$ with inclusion map $i:X\to \FSol_X$.
    Then its structure skew brace 
     $G(\FSol_X,r_{\FSol_X})$ with inclusion map $g_{\FSol_X}\circ i:X\to G(\FSol_X, r_{\FSol_X})$ 
    is isomorphic to the free skew brace $\FSB_{X}$ on $X$.
\end{thm}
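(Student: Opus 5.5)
The plan is to deduce the statement from the two adjunctions displayed just before it. I will not compare explicit constructions. First I record that $G\dashv S$: by \cref{univ prop structure sb}, for every solution $(Y,s)$ and every skew brace $B$, precomposition with $g_Y$ gives a bijection
\[
\Hom_{\mathcal B}(G(Y,s),B)\;\cong\;\Hom_{\mathcal Sol}((Y,s),(B,r_B)).
\]
This bijection is natural in both variables. Naturality in $B$ holds because skew brace morphisms $B\to B'$ are also morphisms of solutions $(B,r_B)\to(B',r_{B'})$, since $r_B$ is defined by a formula in the skew brace operations. Naturality in $(Y,s)$ follows from the uniqueness part of \cref{univ prop structure sb}. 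Second, $F_{\mathcal Sol}\dashv U_{\mathcal Sol}$, which is the universal property of $\FSol_X$; this free object exists because solutions form a variety by \cref{solutions are variety}.

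Next I compose the two bijections for a set $X$ and a skew brace $B$:
\[
\Hom_{\mathcal B}(G(\FSol_X),B)\cong\Hom_{\mathcal Sol}(\FSol_X,(B,r_B))\cong\Hom_{\mathcal Set}(X,U_{\mathcal Sol}(B,r_B)).
\]
The underlying set of $(B,r_B)$ is just $B$, so $U_{\mathcal Sol}\circ S$ is the forgetful functor $\mathcal B\to\mathcal Set$. The composite bijection sends $\varphi$ to $\varphi\circ g_{\FSol_X}\circ i$. Put concretely, for every map $f\colon X\to B$ there is a unique skew brace morphism $\varphi\colon G(\FSol_X,r_{\FSol_X})\to B$ with $\varphi\circ g_{\FSol_X}\circ i=f$. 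To check this directly: extend $f$ uniquely to a solution morphism $\tilde f\colon \FSol_X\to (B,r_B)$, then extend $\tilde f$ uniquely to $\varphi=G(\tilde f)$. Uniqueness holds because any such $\varphi$ makes $\varphi\circ g_{\FSol_X}$ a solution morphism extending $f$, so it equals $\tilde f$, and then $\varphi$ is determined by \cref{univ prop structure sb}.

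This says exactly that $(G(\FSol_X,r_{\FSol_X}),g_{\FSol_X}\circ i)$ is a free skew brace on $X$. Since free objects are unique up to a unique isomorphism compatible with the structure maps from $X$, it is isomorphic to $\FSB_X$ with its inclusion of $X$.

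I expect no serious obstacle. The only point to check is that $g_{\FSol_X}$ is a morphism of solutions into $(G,r_G)$, which is implicit in the universal property. Note also that $g_{\FSol_X}\circ i$ need not be injective a priori; the free object $\FSB_X$ is defined via a set map, so the argument does not need injectivity.
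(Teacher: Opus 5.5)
Your proposal is correct and follows the paper's own route. The paper also derives the theorem by composing the adjunctions $F_{\mathcal Sol}\dashv U_{\mathcal Sol}$ and $G\dashv S$, with the latter coming from \cref{univ prop structure sb}, and observing that $U_{\mathcal Sol}\circ S$ is the forgetful functor on skew braces; your explicit unwinding of the universal property is the same argument the paper writes out later in the proof of \cref{StrGrpFSBn} for the class $\mathcal{RNS}_n$.
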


Since involutive solutions are always injective (see \cite{StrGr_LebVen}) and a solution is involutive if and only if its
structure skew brace is of abelian type, we also get the following result.
\begin{cor}
    Let $X$ be a non-empty set and let $(\FSol_{\Inv,X}, r_{\FSol_{\Inv,X}})$ 
    be the free involutive solution 
    on $X$ with inclusion map $i:X\to \FSol_{\Inv,X}$.
    Then its structure skew brace 
     $G(\FSol_{\Inv,X},r_{\FSol_{\Inv,X}})$ with inclusion map $g_{\FSol_{\Inv,X}}\circ i:X\to G(\FSol_{\Inv,X}, r_{\FSol_{\Inv,X}})$ 
    is isomorphic to the free skew brace of abelian type $\FB_{X}$ on~$X$. 
\end{cor}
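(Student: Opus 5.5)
The corollary should follow from \cref{freesolutionisfreeskew} by restricting the adjunction $G\dashv S$ to the involutive solutions and the skew braces of abelian type. I would compose it with the free–forgetful adjunction for involutive solutions, exactly as in the paragraph preceding \cref{freesolutionisfreeskew}.

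First, I need the free involutive solution to exist. Involutivity is the identity $r^2=\id$, which in the signature of \cref{solutions are variety} reads
\begin{align*}
\lambda(\lambda(v_1,v_2),\rho(v_1,v_2))&\approx v_1,\\
\rho(\lambda(v_1,v_2),\rho(v_1,v_2))&\approx v_2.
\end{align*}
Adding these two identities to $\Sigma$ gives an equational class, so by Birkhoff's theorem it is a variety and $\FSol_{\Inv,X}$ exists. Its left adjoint $F_{\Inv}$ is then adjoint to the forgetful functor $U_{\Inv}$.

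Second, I would check that both functors restrict to these subcategories. If $B$ is of abelian type, then $r_B$ is involutive. This is the cited equivalence, or directly: $\lambda$ is a group action and $r_B$ is involutive exactly when the additive group is abelian. Conversely, if $(Y,s)$ is involutive, then $G(Y,s)$ is of abelian type, again by the cited equivalence. Hence $G$ maps involutive solutions into $\FB$-type objects and $S$ maps skew braces of abelian type to involutive solutions. Both subcategories are full, so the natural bijection
\[
\Hom_{\mathcal B}(G(Y,s),B)\cong\Hom_{\mathcal Sol}((Y,s),(B,r_B))
\]
of \cref{univ prop structure sb} restricts verbatim.

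Composing the two adjunctions, $G\circ F_{\Inv}$ is left adjoint to $U_{\Inv}\circ S$. The latter is the forgetful functor from skew braces of abelian type to sets. So $G(\FSol_{\Inv,X},r)$, with the map $g\circ i$, has the universal property of $\FB_X$, and uniqueness of free objects gives the isomorphism.

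The only real step is the second one: the two-way compatibility between involutivity and abelian additive group. I would simply invoke the quoted equivalence rather than reprove it. Injectivity from \cite{StrGr_LebVen} is not actually needed for the isomorphism. It only ensures that $X\to\FB_X$ embeds the solution, and I would mention this as a remark rather than use it.
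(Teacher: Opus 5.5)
Your proposal is correct and follows the paper's route. As with \cref{freesolutionisfreeskew}, the corollary comes from restricting the adjunction $G\dashv S$ to involutive solutions and skew braces of abelian type and composing it with the free--forgetful adjunction, using exactly the two facts you isolate: the structure skew brace of an involutive solution is of abelian type, and $r_B$ is involutive when $B$ is of abelian type. The paper also cites the injectivity of involutive solutions at this point, but, as you observe, it plays no role in the isomorphism.
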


With respect to Theorem \ref{freesolutionisfreeskew}, we note that the map $i: X \to \FSol_X$ is injective, 
since distinct elements $x,y$ of $X$ 
can be considered as a solution $\{x,y\}$ with the flip map.
By the universal property, considering a map $f:X\to \{x,y\}$
that separates $x$ and~$y$
(for example $f(y)=y$ and $f(a)=x$ for every $a\in X\setminus\{y\}$)
we obtain that $i(x)\neq i(y)$.

\smallskip

Moreover, $g_{\FSol_X}\circ i: X \to G(\FSol_X,r)$ is injective too.
In fact, given a skew brace $B$ containing $X$, since $(\FSol_X,r)$ is free on $X$,
there exists a morphism of solutions $f:(\FSol_X,r)\to (B,r_B)$ such that $f\circ i=\mathrm{id}_X$. By the universal property of the structure skew brace, there exists a homomorphism $G(f):G(\FSol_X,r)\to B$ such that $G(f)\circ g_{\FSol_X}=f$. Hence $G(f)\circ g_{\FSol_X}\circ i=\id_X,$
and $g_{\FSol_X}\circ i$ is injective. 

\smallskip

On the other hand, the injectivity of \(g_{\FSol_X}\) appears to be subtler. The difficulty is that the passage from solutions to skew braces may lose information. This phenomenon already occurs in a natural category of solutions considered below, where the corresponding free objects turn out to be non-injective; see Theorem~\ref{solnotinjective}.

\begin{question}\label{questionquestion}
Is $g_{\FSol_X}$ injective? In other words, is $\FSol_X$ an injective solution?
\end{question}

\subsection{Injectivisation and free injective solutions}\label{subsection2}

We now turn to injective solutions. The structure skew brace provides a natural way to pass from an arbitrary solution to an injective one, and this process satisfies the expected universal property. The next result makes this precise:
it is the solution-theoretic analogue of the universal property of the structure skew brace, and its proof relies on that result. It will be used in the proof of the subsequent theorem, which mirrors \cref{freesolutionisfreeskew} in the setting of solutions via injectivisation, and in the more general framework of arbitrary equational classes of solutions.

\begin{lem}\label{inj_universal_property}
Let $(X,r)$ be a solution.
Then for every injective solution $(Y,s)$ and every morphism of solutions $f:(X,r)\to (Y,s)$
there exists a unique morphism of solutions $\bar f:\Inj(X,r)\to (Y,s)$
such that $f=\bar f\circ \Inj_X$.
\end{lem}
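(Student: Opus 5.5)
The plan is to factor $f$ through the structure skew brace of $(Y,s)$, using that $(Y,s)$ is injective, and then restrict to the injectivisations.

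\emph{Construction.} Since $G$ is a functor, $f$ induces a skew brace homomorphism $G(f)\colon G(X,r)\to G(Y,s)$ with $G(f)\circ g_X = g_Y\circ f$. Explicitly, apply \cref{univ prop structure sb} to the morphism of solutions $g_Y\circ f\colon (X,r)\to (G(Y,s),r_{G(Y,s)})$; recall that $g_Y$ is a morphism of solutions onto the subsolution $g_Y(Y)$. Restricting $G(f)$ to $g_X(X)=\Inj(X,r)$ yields a map into $g_Y(Y)=\Inj(Y,s)$. This restriction is a morphism of solutions, because $\Inj(X,r)$ and $\Inj(Y,s)$ are subsolutions of $(G(X,r),r_{G(X,r)})$ and $(G(Y,s),r_{G(Y,s)})$, and every skew brace homomorphism $\varphi$ satisfies $(\varphi\times\varphi)r_B=r_{B'}(\varphi\times\varphi)$. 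This last identity holds since $\varphi$ preserves $\lambda$ and $\circ$-inverses.

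Since $(Y,s)$ is injective, $\Inj_Y\colon (Y,s)\to \Inj(Y,s)$ is a bijective morphism of solutions. Its inverse is therefore also a morphism of solutions: $(h\times h)s=r'(h\times h)$ with $h$ bijective gives $(h^{-1}\times h^{-1})r'=s(h^{-1}\times h^{-1})$. We then set
\[
\bar f=\Inj_Y^{-1}\circ G(f)|_{g_X(X)}.
\]
For $x\in X$ we get $\bar f(\Inj_X(x))=\Inj_Y^{-1}(g_Y(f(x)))=f(x)$, so $f=\bar f\circ\Inj_X$.

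\emph{Uniqueness.} $\Inj_X$ is surjective onto $\Inj(X,r)$ by definition, so $\bar f$ is determined on every element by the requirement $f=\bar f\circ\Inj_X$.

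The only point needing care, and so the main obstacle, is checking that the restricted map lands in $g_Y(Y)$ and respects the solution structure on the subsolutions. Both follow from $G(f)\circ g_X=g_Y\circ f$ and the compatibility of skew brace homomorphisms with $r_B$ noted above.
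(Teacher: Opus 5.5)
Your proof is correct and is essentially the paper's argument. Both rest on the identity $G(f)\circ g_X=g_Y\circ f$, coming from the universal property of the structure skew brace, together with injectivity of $g_Y$. Your map $\bar f=\Inj_Y^{-1}\circ G(f)|_{g_X(X)}$ is exactly the map the paper gets by observing that $f$ is constant on the fibres of $\Inj_X$.

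The only difference is presentational. Writing $\bar f$ as a composite of morphisms of solutions makes it automatic that $\bar f$ is itself a morphism of solutions. The paper leaves this point implicit; there it follows from the surjectivity of $\Inj_X\times\Inj_X$.
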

\begin{proof}
By \cref{univ prop structure sb}, there exists a homomorphism of skew braces
\[
G(f):G(X,r)\to G(Y,s)
\]
such that $g_Y\circ f=G(f)\circ g_X$.
If $\Inj_X(a)=\Inj_X(b)$, then $g_X(a)=g_X(b)$, and hence
\[
g_Y(f(a))
=
G(f)(g_X(a))
=
G(f)(g_X(b))
=
g_Y(f(b)).
\]
Since $(Y,s)$ is injective, $g_Y$ is injective, and therefore $f(a)=f(b)$.

Thus $f$ is constant on the fibres of $\Inj_X$, so it factors uniquely through $\Inj_X$.
\end{proof}

Let $\mathcal E$ be an equational class of solutions and let
$$\mathcal E_{\Inj} \text{ denote its full subcategory of injective
solutions}.$$
\cref{inj_universal_property} implies that for every
$(X,r)\in\mathcal E$ and every injective solution $(Y,s)\in\mathcal E$,
composition with the injectivisation map $\Inj_X$
induces a natural bijection
\[
\Hom_{\mathcal E}((X,r),(Y,s))
\cong
\Hom_{\mathcal E_{\Inj}}(\Inj(X,r),(Y,s)).
\] Consequently, the injectivisation functor
is left adjoint to the inclusion \hbox{$I:\mathcal E_{\Inj}\hookrightarrow \mathcal E.$}
Recall that free objects are characterized by a universal property, or
equivalently by the fact that the corresponding free functor is left adjoint
to the forgetful functor.
So we have two adjunction as follows.
\begin{center}
\begin{tikzcd}[column sep=huge]
\mathcal{S}et
  \arrow[r, "F_{\mathcal E}"{name=A}, bend left=15]
& \mathcal E
  \arrow[l, "U_{\mathcal E}"{name=B}, bend left=15]
  \arrow[r, "\Inj"{name=C}, bend left=15]
& \mathcal E_{\mathrm{Inj}}
  \arrow[l,hook', "I"{name=D}, bend left=15]
\arrow[from=A, to=B, phantom, "\dashv" sloped]
\arrow[from=C, to=D, phantom, "\dashv" sloped]
\end{tikzcd}    
\end{center}
Since adjunctions compose (see for example \cite[Chapter IV, §8, Theorem 1]{MacLane}), 
the composition $\Inj\circ F_\varepsilon$
is left adjoint to $I\circ U_\mathcal{E}$, which is the forgetful functor
$U_{\mathcal E_{\mathrm{Inj}}}$.
This implies the following result.

\begin{thm}\label{injectivisation}
Let $\mathcal E$ be an equational class of solutions. Let $X$ be a
non-empty set and $i\colon X\hookrightarrow \FSol_{\mathcal E,X}$
be the canonical inclusion.
Then $\Inj(\FSol_{\mathcal E,X})$ is the
free injective solution in 
$\mathcal{E}_{\Inj}$ on 
$\Inj_{\FSol_{\mathcal E,X}}(i(X))$.

Moreover, if $\mathcal E$ contains an injective solution with at least two elements, then $|X|=|\Inj_{\FSol_{\mathcal E,X}}(i(X))|$.
\end{thm}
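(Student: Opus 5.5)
The plan has two parts: freeness, which is formal, and the cardinality claim, which needs a separating argument.

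For freeness I would verify the universal property directly rather than only citing the composition of adjunctions, since the generating set must be identified as $\Inj_{\FSol_{\mathcal E,X}}(i(X))$. First I need $\Inj(\FSol_{\mathcal E,X})$ to lie in $\mathcal E_{\Inj}$. It is injective by construction, being a subsolution of $(G,r_G)$ with $G=G(\FSol_{\mathcal E,X})$. Membership in $\mathcal E$ holds because it is the image of $\FSol_{\mathcal E,X}$ under the surjective morphism $\Inj_{\FSol_{\mathcal E,X}}$, and equational classes are closed under homomorphic images by \cref{birkoff}. One point needs care: the inverse operations $\tilde\lambda,\tilde\rho$ of the signature must be preserved. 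This holds because a morphism of non-degenerate solutions automatically commutes with $\lambda_x^{-1}$ and $\rho_y^{-1}$; indeed, if $f\lambda_x=\lambda_{f(x)}f$, then $f\lambda_x^{-1}=\lambda_{f(x)}^{-1}f$.

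Next, let $(Y,s)\in\mathcal E_{\Inj}$ and let $h\colon \Inj_{\FSol_{\mathcal E,X}}(i(X))\to Y$ be a map of sets. Composing with $\Inj_{\FSol_{\mathcal E,X}}\circ i$ gives a map $X\to Y$. Freeness of $\FSol_{\mathcal E,X}$ then yields a morphism $f\colon\FSol_{\mathcal E,X}\to Y$ extending it, and \cref{inj_universal_property} factors $f=\bar f\circ\Inj_{\FSol_{\mathcal E,X}}$. The morphism $\bar f$ extends $h$. For uniqueness, two morphisms agreeing on $\Inj(i(X))$ give, after precomposition with $\Inj_{\FSol_{\mathcal E,X}}$, two morphisms out of $\FSol_{\mathcal E,X}$ agreeing on $i(X)$. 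These coincide by freeness, and since $\Inj_{\FSol_{\mathcal E,X}}$ is surjective the original morphisms coincide. A subtle point is that $h$ is defined on the image set, not on $X$, so the argument must ensure that $\Inj\circ i$ is injective. This is exactly the second statement, which I therefore prove first and then feed into the first.

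For the cardinality claim, let $(Y_0,s_0)\in\mathcal E$ be injective with distinct elements $a,b$. Given $x\neq y$ in $X$, I define $X\to Y_0$ by sending $y\mapsto b$ and every other element to $a$. Freeness gives $f\colon \FSol_{\mathcal E,X}\to Y_0$, and \cref{inj_universal_property} gives $\bar f$ with $\bar f(\Inj(i(x)))=a\neq b=\bar f(\Inj(i(y)))$. Hence $\Inj\circ i$ is injective, and the two sets have the same cardinality. I expect no real obstacle. The only delicate points are the closure of $\mathcal E$ under images with respect to the full signature, including the inverse operations, and the ordering of the argument so that the universal property is stated over a set in bijection with $X$.
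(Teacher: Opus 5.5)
Your proposal is correct and is essentially the paper's argument. Your direct check of the universal property unwinds the paper's composition of the adjunctions $F_{\mathcal E}\dashv U_{\mathcal E}$ and $\Inj\dashv I$, the latter resting on \cref{inj_universal_property}. Your check that $\Inj(\FSol_{\mathcal E,X})$ lies in $\mathcal E$ as a homomorphic image, with the inverse operations preserved, makes explicit a point the paper leaves implicit. Your separating map is the paper's proof of the cardinality claim.

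One correction: the first part does not need $\Inj_{\FSol_{\mathcal E,X}}\circ i$ to be injective. This map is onto $\Inj_{\FSol_{\mathcal E,X}}(i(X))$, so $\bar f$ restricts to $h$, and uniqueness uses only surjectivity of $\Inj_{\FSol_{\mathcal E,X}}$. Feeding the second part into the first would wrongly restrict the first statement to classes containing an injective solution with at least two elements. Your argument as written already proves it without that hypothesis.
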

\begin{proof}
    The first part is already proven by the previous discussion.

    For the second part,
    assume that $\mathcal E$ contains an injective solution with at least
two elements and for simplicity let $\Inj=\Inj_{\FSol_{\mathcal E,X}}$. We claim that $\Inj\circ i\colon X\to \mathrm{Inj}(\FSol_{\mathcal E,X})$
is injective. Let $x,y\in X$ with $x\neq y$. Choose an injective solution
$(Y,s)\in\mathcal E_{\Inj}$ and two distinct elements $u,v\in Y$. Let
$\alpha\colon X\to Y$
be a map such that $\alpha(x)=u$ and $\alpha(y)=v$. By the universal property
of $\FSol_{\mathcal E,X}$, $\alpha$ extends to a morphism $f\colon \FSol_{\mathcal E,X}\to Y.$
Since $Y$ is injective, the morphism $f$ factors through $\Inj$. Hence, if
$\Inj(i(x))=\Inj(i(y))$, then
\[
u=\alpha(x)=f(i(x))=f(i(y))=\alpha(y)=v,
\]
a contradiction. Therefore $\Inj \circ i$ is injective.
\end{proof}

The injectivisation of a solution can be described explicitly inside its
structure skew brace.
In particular, starting from the free solution, we
obtain the following description of the free object in the subcategory of
injective solutions.

\begin{pro}
\label{free injective in structure brace}
    Let $X$ be a non-empty set and $\mathcal{E}$ be an equational class of solutions.
    Then \[
\widetilde{X}_{\mathcal{E}}=\left\{ a+\lambda_b(x)-a\mid x\in X\text{ and } a,b\in G\left(\FSol_{\mathcal{E},X},r_{\FSol_{\mathcal{E},X}}\right) \right\}
\]
is a subsolution of $G\left(\FSol_{\mathcal{E},X},r_{\FSol_{\mathcal{E},X}}\right)$
that is the 
free \textnormal(injective\textnormal) solution in $\mathcal{E}_{\Inj}$. 
\end{pro}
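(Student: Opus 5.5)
The plan is to reduce the statement to \cref{injectivisation}. Write $G=G(\FSol_{\mathcal E,X},r_{\FSol_{\mathcal E,X}})$, $g=g_{\FSol_{\mathcal E,X}}$ and $S=g(\FSol_{\mathcal E,X})$. By definition $\Inj(\FSol_{\mathcal E,X})$ is the subsolution $S$ of $(G,r_G)$. By \cref{injectivisation} it is the free object of $\mathcal E_{\Inj}$ on $g(i(X))$, and we identify $g(i(X))$ with $X$ as the paper does. It therefore suffices to prove $S=\widetilde X_{\mathcal E}$; that $\widetilde X_{\mathcal E}$ is a subsolution then comes for free. I will prove the two inclusions separately, using only the skew brace formulas in $G$.

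For $S\subseteq \widetilde X_{\mathcal E}$: the algebra $\FSol_{\mathcal E,X}$ is generated by $i(X)$ under $\lambda,\rho,\tilde\lambda,\tilde\rho$, and $g$ is a morphism of solutions. Hence $S$ is the smallest subset of $G$ containing $X$ and closed under $\lambda_a^{\pm1}(b)$ and $\rho_b^{\pm1}(a)$ for $a,b$ in the set; here the maps are computed in $r_G$, with $\rho_b^{-1}$ read as the inverse of $\rho_b$ within the subsolution. Clearly $X\subseteq\widetilde X_{\mathcal E}$, so I check closure.
\begin{itemize}
\item $\lambda_c^{\pm1}(a+\lambda_b(x)-a)=\lambda_c^{\pm1}(a)+\lambda_{c^{\pm}\circ b}(x)-\lambda_c^{\pm1}(a)$, where $c^{+}=c$ and $c^{-}=\bar c$.
\item For $\rho$, put $c=\lambda_a(b)$. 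From $c\circ z=c+\lambda_c(z)$ and $a\circ b=a+c$ one gets
\[
\rho_b(a)=\bar c\circ a\circ b=\lambda_c^{-1}(-c+a+c),
\]
which again lies in $\widetilde X_{\mathcal E}$.
\item For $\rho^{-1}$, suppose $\rho_b(a')=a\in\widetilde X_{\mathcal E}$ and put $u=\lambda_{a'}(b)$. Then $a'\circ b=u\circ a$, so additively $a'+u=u+\lambda_u(a)$, i.e. $a'=u+\lambda_u(a)-u\in\widetilde X_{\mathcal E}$.
\end{itemize}

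For $\widetilde X_{\mathcal E}\subseteq S$: first, $(G,\circ)$ is generated by $g(X)\subseteq S$. So for $b\in G$ the map $\lambda_b$ is a composition of maps $\lambda_s^{\pm1}$ with $s\in S$, and these preserve $S$ because $S$ is a subsolution; thus $\lambda_b(x)\in S$. Second, I need $S$ to be stable under additive conjugation by every $a\in G$. Since $(G,+)$ is generated by $S$, it suffices to handle conjugation by $c$ and by $-c$ for $c\in S$. The additive presentation gives $s+c=c+\sigma_c(s)$, i.e. $-c+s+c=\sigma_c(s)$. Here $\sigma_c$ is built from $\lambda$, $\rho$ and $\lambda^{-1}$ of the solution $S$, so it is a bijection of $S$. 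Hence both $-c+s+c=\sigma_c(s)$ and $c+s-c=\sigma_c^{-1}(s)$ lie in $S$.

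The main obstacle I expect is exactly this last point: conjugation in both directions, together with the inverse $\rho$-maps. Both must be expressed purely through the solution operations on $S$. The key is to use that $\sigma_c$, and not merely $\rho_c$, is a bijection of the subsolution $S$, which is part of the setup recalled in \cref{preliminaries}. Once both inclusions hold, $\widetilde X_{\mathcal E}=S=\Inj(\FSol_{\mathcal E,X})$, and \cref{injectivisation} finishes the proof.
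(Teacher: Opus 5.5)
Your proof is correct. Its first half, the stability of $S$ under every $\lambda_b$ and under additive conjugation via the bijections $\sigma_c$ (giving $\widetilde X_{\mathcal E}\subseteq S$), is exactly the paper's argument. Where you differ is in how you obtain equality.

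\emph{The paper's route.} The paper asserts that $\widetilde X_{\mathcal E}$ is a subsolution and then invokes the universal property of $\Inj(\FSol_{\mathcal E,X})$ from \cref{injectivisation}. The morphism $\Inj(\FSol_{\mathcal E,X})\to\widetilde X_{\mathcal E}$ fixing the generators, followed by the inclusion, is an endomorphism of the free object fixing $X$. It is therefore the identity, so the inclusion is onto.

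\emph{Your route.} You use that $S$ is the subalgebra of $(G,r_G)$ generated by the image of $X$: a free algebra is generated by its free generators, and $g$ is a homomorphism, automatically compatible with $\tilde\lambda,\tilde\rho$. You then check by hand that $\widetilde X_{\mathcal E}$ is closed under $\lambda^{\pm1}$, $\rho$ and $\rho^{-1}$. For $\rho$ you use $\rho_b(a)=\lambda_c^{-1}(-c+a+c)$ with $c=\lambda_a(b)$. For $\rho^{-1}$ you use $a'=u+\lambda_u(a)-u$ with $u=\lambda_{a'}(b)$.

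\emph{Comparison.} Your route supplies the closure verification that the paper only states, and the paper's step needs it too, since $\widetilde X_{\mathcal E}$ must be an object of $\mathcal E_{\Inj}$ to receive that morphism. Your route also avoids checking separately that $\widetilde X_{\mathcal E}$ lies in $\mathcal E$ and is injective, because both follow from $\widetilde X_{\mathcal E}=S$. The paper's route is shorter once the subsolution property is granted. Underneath, the two are the same endomorphism trick applied at different levels: in $\mathcal E_{\Inj}$ for the paper, and in $\mathcal E$ (then pushed forward along $g$) for you.

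One slip to fix: $(G,\circ)$ is generated by $S=g(\FSol_{\mathcal E,X})$, not by the image of $X$ alone. The latter is false in general. For $\mathcal E=\mathcal{RNS}_2$ and $X=\{x_0\}$, the multiplicative group of $G\cong\FSB_{\mathcal{RN}_2,\{x_0\}}$ is free of infinite rank. Your argument only needs multiplicative generators lying in $S$, so replacing $g(X)$ by $S$ in that sentence repairs it with no other change.
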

\begin{proof}
Recall first that for any solution $(Y,s)$, the structure skew brace 
$G\left(Y,s\right)$ is additively generated by $g_Y(Y)$.
Moreover,
for every  $x,y\in g_Y(Y)$,
\[
\rho_y(x)
=\overline{\lambda_x(y)}\circ \left(x+\lambda_x(y)\right)
=\overline{\lambda_x(y)}\circ x-\overline{\lambda_x(y)}
=\overline{\lambda_x(y)}+\lambda_{\overline{\lambda_x(y)}}(x)-\overline{\lambda_x(y)},
\]
so recalling from the preliminaries that  $\sigma_y(x)=\lambda_{y}\left(\rho_{\lambda^{-1}_x(y)}(x)\right)
$, we have that
\[
\sigma_y(x)=\lambda_y\left(\overline{y}+\lambda_{\overline{y}}(x)-\overline{y}\right)
=-y+x-y.
\]
Since $g_Y(Y)=\Inj(Y,s)$ is a subsolution  
of 
$(G(Y,s),r_{G(Y,s)})$, it is stable under all $\lambda$-maps
of $G(Y,s)$. Therefore $\lambda_b(x)\in g_Y(Y)$
for every $b\in G(Y,s)$ and every $x\in g_Y(Y)$.

Moreover we claim that 
$g_Y(Y)$ is also stable under additive
conjugation with elements of $G(Y,s)$. To show this, first note that  $-y+x-y=\sigma_y(x)$ and $y+x-y=\sigma_y^{-1}(x)\in g_Y(Y)$.
Thus $g_Y(Y)$ is stable under additive conjugation by elements of $g_Y(Y)$.
Since $G(Y,s)$ is additively generated by $g_Y(Y)$, every element
$a\in G(Y,s)$ can be written as a finite sum of elements of $g_Y(Y)$
and their additive inverses. 
Repeated use of  the above conjugation
stability yields $a+z-a\in g_Y(Y)$
for all $a\in G(Y,s)$ and all $z\in g_Y(Y)$.

Therefore, given a non-empty subset $T$ of $g_Y(Y)$, 
\[
\left\{ a+\lambda_b(t)-a\mid t\in T,\; a,b\in G\left(Y,s\right) \right\}\subseteq g_Y(Y)=\Inj(Y,s)
\]
and it is a subsolution.
In particular,
\[
\widetilde{X}_{\mathcal{E}}=\left\{ a+\lambda_b(x)-a\mid x\in X,\; a,b\in G\left(\FSol_{\mathcal{E},X},r_{\FSol_{\mathcal{E},X}}\right) \right\}
\]
is a subsolution of $\Inj\left(\FSol_{\mathcal{E},X},r_{\FSol_{\mathcal{E},X}}\right)$.

Furthermore,
denoting by $i\colon X\hookrightarrow \FSol_{\mathcal E,X}$
the canonical inclusion,
by \cref{injectivisation}, 
there is a unique morphism of solutions
$\varphi:\Inj\left(\FSol_{\mathcal{E},X},r_{\FSol_{\mathcal{E},X}}\right)\to \widetilde{X}_{\mathcal{E}}$
such that $\varphi\circ \Inj\circ i=\Inj\circ i$.
Therefore the unique map of solutions $$\psi:\Inj\left(\FSol_{\mathcal{E},X},r_{\FSol_{\mathcal{E},X}}\right)\to \Inj\left(\FSol_{\mathcal{E},X},r_{\FSol_{\mathcal{E},X}}\right)$$
such that 
\begin{center}
\begin{tikzcd}
     & \Inj\left(\FSol_{\mathcal{E},X},r_{\FSol_{\mathcal{E},X}}\right)\arrow[d,"\varphi" description]\arrow[dd, dashed, bend left=40, "\exists!\psi"]\\
     X \arrow[r,"\Inj\circ i" description]\arrow[ru,"\Inj\circ i" description]\arrow[rd,"\Inj\circ i" description]& \widetilde{X}_{\mathcal{E}}\arrow[d, phantom, "\mathrel{\rotatebox{-90}{$\subseteq$}}"]\\
      & \Inj\left(\FSol_{\mathcal{E},X},r_{\FSol_{\mathcal{E},X}}\right) .
\end{tikzcd}    
\end{center}
has to be the identity, and thus $\widetilde{X}_{\mathcal{E}}=\Inj\left(\FSol_{\mathcal{E},X},r_{\FSol_{\mathcal{E},X}}\right)$.
\end{proof}

Since involutive solutions are always injective (see \cite{StrGr_LebVen}), we can apply \cref{injectivisation} to $\mathcal{E}=\Inv$, the class of involutive solutions, and get the following result. 
\begin{cor}
\label{free injective in G}
    Let $X$ be a non-empty set.
    Then \[
\widetilde{X}_{\Inv}=\left\{\lambda_b(x)\mid x\in X\text{ and } b\in G\left(\FSol_{\Inv,X},r_{\FSol_{\Inv,X}}\right) \right\}
\]
is a subsolution of $G\left(\FSol_{\Inv,X},r_{\FSol_{\Inv,X}}\right)$
that is the 
free involutive solution on~$X$.
\end{cor}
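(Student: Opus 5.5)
The corollary is the special case $\mathcal{E}=\Inv$ of Proposition~\ref{free injective in structure brace}, followed by a simplification of the set $\widetilde{X}_{\mathcal{E}}$. The plan is as follows.

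\emph{Step 1: the class $\Inv$ is an admissible equational class.} Involutive solutions form an equational class of solutions: add to $\Sigma$ the identities $\lambda(\lambda(v_1,v_2),\rho(v_1,v_2))\approx v_1$ and $\rho(\lambda(v_1,v_2),\rho(v_1,v_2))\approx v_2$, which express $r^2=\id$. So Theorem~\ref{injectivisation} and Proposition~\ref{free injective in structure brace} apply.

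\emph{Step 2: the injective and involutive free objects coincide.} Every involutive solution is injective \cite{StrGr_LebVen}, so $\Inv_{\Inj}=\Inv$. Hence the free object in $\Inv_{\Inj}$ is the free involutive solution. In particular $\FSol_{\Inv,X}$ is itself injective and coincides with its injectivisation. Moreover $\Inv$ contains the flip on two points, which is involutive and hence injective. Theorem~\ref{injectivisation} therefore guarantees that the free generating set has cardinality $|X|$ and is identified with $X$.

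\emph{Step 3: drop the additive conjugations.} Since $\FSol_{\Inv,X}$ is involutive, its structure skew brace $G=G(\FSol_{\Inv,X},r_{\FSol_{\Inv,X}})$ is of abelian type. Then $a+\lambda_b(x)-a=\lambda_b(x)$ for all $a,b\in G$ and $x\in X$. So the set $\widetilde{X}_{\Inv}$ of Proposition~\ref{free injective in structure brace} reduces to $\{\lambda_b(x)\mid x\in X,\ b\in G\}$, as in the statement.

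\emph{Step 4: conclude.} Proposition~\ref{free injective in structure brace} already gives that this set is a subsolution of $G$ and is the free object in $\Inv_{\Inj}$. By Step~2 that object is the free involutive solution on $X$.

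The only point needing care is the abelian-type claim in Step~3, namely that involutivity of a solution forces its structure skew brace to have abelian additive group. This is the standard equivalence quoted before the corollary preceding Subsection~\ref{subsection2}. If a direct check is wanted, use the presentation $(G,+)=\langle X\mid x+y=y+\sigma_y(x)\rangle$. For an involutive solution, $\sigma_y=\id$, because $\rho_{\lambda_x^{-1}(y)}(x)=\lambda_y^{-1}(x)$ follows from $r^2=\id$. Hence the additive generators commute, and $(G,+)$ is abelian.
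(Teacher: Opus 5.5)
Your proposal is correct and follows the paper's route. You specialize Proposition~\ref{free injective in structure brace} to $\mathcal{E}=\Inv$, use that involutive solutions are injective (so $\Inv_{\Inj}=\Inv$), and drop the additive conjugations because the structure skew brace is of abelian type. You also supply details the paper leaves implicit, namely that $\Inv$ is an equational class and the direct check that $\sigma_y=\id$ for involutive solutions.
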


\subsection{Right nilpotent solutions and left multipermutation solutions}\label{subsection3}

We now turn to the class of solutions that reflects right nilpotency of skew braces. More precisely, our next aim is to introduce a family of solutions corresponding to skew braces of right nilpotency class at most $n$ (see
Lemmas~\ref{eqsolskewbrace}, \ref{lemmmino-class-n}, and
Theorem~\ref{StrGrpFSBn}). This requires some preliminary notions, including the monoid $W(X)$, the left retract, and the left multipermutation level. These will also allow us to compare right nilpotent solutions with left multipermutation solutions.

\medskip

Let $(X,r)$ be a solution, and consider the subgroup $P(X)$ of $\Sym(X)$ generated by the elements $\lambda_x$ and $\rho_x$ for  $x\in X$. Let $W(X)$ be the submonoid of $X^X$ generated by $P(X)$ and for all $z\in X$, the elements $D_z\colon X\to X$ defined by $D_z(x)=\rho_x(z)$ for all $x\in X$. We denote the canonical action of the monoid $W(X)$ on $X$ by $m\cdot x$ for all $x\in X$ and $m\in W(X)$. We define an equivalence relation $\sim_l$ on $X$ as follows:
$$x\sim_ly \quad \text{ if and only if } \quad \lambda_{m\cdot x}=\lambda_{m\cdot y} \text{ for all } m\in W(X).$$

\begin{rem}\label{rem:small}
Clearly, $x\sim_ly$ imply $m\cdot x\sim_l m\cdot y$ for all $m\in W(X)$. 
\end{rem}

\begin{lem} \label{inducedretract}
Let $(X,r)$ be a solution of the YBE. Then $(X/\sim_l,\tilde r)$, where \[\tilde{r}([x],[y])=([\lambda_x(y)],[\rho_x(y)]),\] is a solution as well, called the \emph{left retract} of $(X,r)$, denoted $\operatorname{Ret}_l(X,r)$. 
\end{lem}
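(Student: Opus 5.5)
The plan is to show three things in turn: that $\sim_l$ is an equivalence relation, that $\tilde r$ is well defined on classes, and that the induced map is a non-degenerate solution. The Yang--Baxter identity will then be inherited from $(X,r)$ through the canonical projection $\pi\colon X\to X/{\sim_l}$. Throughout I read the second component of $\tilde r([x],[y])$ as the class of the second component of $r(x,y)=(\lambda_x(y),\rho_y(x))$, i.e. $[\rho_y(x)]$. That $\sim_l$ is an equivalence relation is immediate, since it is defined by equalities $\lambda_{m\cdot x}=\lambda_{m\cdot y}$ indexed by $m\in W(X)$.

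For well-definedness, take $x\sim_l x'$ and $y\sim_l y'$.

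First component. Taking $m=\id\in W(X)$ gives $\lambda_x=\lambda_{x'}$, so $\lambda_x(y)=\lambda_{x'}(y)$. Since $\lambda_{x'}\in P(X)\subseteq W(X)$, Remark~\ref{rem:small} gives $\lambda_{x'}(y)\sim_l\lambda_{x'}(y')$. Hence $\lambda_x(y)\sim_l\lambda_{x'}(y')$.

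Second component. I split the change into two steps. Since $\rho_y\in P(X)\subseteq W(X)$, Remark~\ref{rem:small} gives $\rho_y(x)\sim_l\rho_y(x')$. For the change in the subscript, write $\rho_y(x')=D_{x'}(y)$ and $\rho_{y'}(x')=D_{x'}(y')$. Since $D_{x'}\in W(X)$ and $y\sim_l y'$, Remark~\ref{rem:small} gives $D_{x'}(y)\sim_l D_{x'}(y')$. By transitivity, $\rho_y(x)\sim_l\rho_{y'}(x')$.

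This second-component step is the only genuinely delicate point. Varying the index of $\rho$ is not an action of an element of $P(X)$, and this is exactly why the maps $D_z$ were built into $W(X)$; the proof just has to use them at this moment.

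Next, $\pi$ intertwines the two maps, $(\pi\times\pi)\,r=\tilde r\,(\pi\times\pi)$, by construction. The same then holds on triples for $r\times\id$, $\id\times r$ and their $\tilde r$ analogues. Since $\pi\times\pi\times\pi$ is surjective, applying it to both sides of \eqref{YBE} for $r$ yields \eqref{YBE} for $\tilde r$.

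For non-degeneracy, $\tilde\lambda_{[x]}([y])=[\lambda_x(y)]$ is surjective because $\lambda_x$ is. It is injective because $\lambda_x(y)\sim_l\lambda_x(y')$ implies $y\sim_l y'$: apply Remark~\ref{rem:small} with $m=\lambda_x^{-1}\in P(X)$. The same argument with $\rho_y^{-1}\in P(X)$ shows that $[x]\mapsto[\rho_y(x)]$ is bijective. By Remark~\ref{nondeg implies bij}, $\tilde r$ is then bijective, so $(X/{\sim_l},\tilde r)$ is a solution.
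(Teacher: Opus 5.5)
Your proof is correct, and your well-definedness argument is the paper's: Remark~\ref{rem:small} with elements of $P(X)$, plus the maps $D_z$ to change the index of $\rho$. You also correctly read the second component as $[\rho_y(x)]$. The one difference is non-degeneracy. The paper checks that $\lambda^{-1}$ and $\rho^{-1}$ are also compatible with $\sim_l$ (the $\rho^{-1}$ case needs an auxiliary preimage), so the quotient is again an algebra in the equational class of solutions. You instead show directly that the induced maps are injective and surjective, and obtain \eqref{YBE} from the surjectivity of $\pi\times\pi\times\pi$. This avoids that extra step and makes the inheritance of the Yang--Baxter identity explicit.
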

\begin{proof}
By \cref{nondeg implies bij}, it is enough to prove that the maps
$\lambda$, $\lambda^{-1}$, $\rho$ and $\rho^{-1}$ induce well-defined maps on $X/\sim_l$. Let \hbox{$x,y,u,v$} be elements of $X$ such that $x\sim_l y$ and $u\sim_lv$. We must show that $\lambda_x(u)\sim_l\lambda_y(v)$, $\lambda^{-1}_x(u)\sim_l\lambda^{-1}_y(v)$, $\rho_x(u)\sim_l\rho_y(v)$
        and $\rho^{-1}_x(u)\sim_l\rho^{-1}_y(v)$.

Since $\lambda_x,\lambda_x^{-1}\in P(X)$, we have by Remark~\ref{rem:small} that 
\[
\lambda_x(u)
\sim_l
\lambda_x(v)
=
\lambda_y(v)\quad \text{and}\quad\lambda_x^{-1}(u)
\sim_l
\lambda_x^{-1}(v)
=
\lambda_y^{-1}(v).
\]

Similarly, since $\rho_x\in P(X)$, one has $\rho_x(u)\sim_l\rho_x(v)$. We also have $D_v(x)\sim_l D_v(y)$ which means, $\rho_x(v)\sim_l\rho_y(v)$. Thus,
\[
\rho_x(u)\sim_l\rho_y(v).
\]
Put $a=\rho_y^{-1}(v)$. Then $\rho_y(a)=v$. Since $x\sim_l y$, by the above 
\begin{equation}
\label{eq:rhoinv}
\rho_x(a)\sim_l\rho_y(a)=v.
\end{equation}
Because \(\rho_x^{-1}\in P(X)\), we may apply \(\rho_x^{-1}\) to~\eqref{eq:rhoinv} and we obtain
\[
a
=
\rho_x^{-1}\bigl(\rho_x(a)\bigr)
\sim_l
\rho_x^{-1}(v).
\]
Since \(a=\rho_y^{-1}(v)\), this means
\[
\rho_x^{-1}(v)\sim_l\rho_y^{-1}(v).
\]

In addition, $u\sim_l v$ and $\rho_x^{-1}\in P(X)$ imply $\rho_x^{-1}(u)\sim_l\rho_x^{-1}(v)$.
Hence, we obtain $\rho_x^{-1}(u)\sim_l\rho_y^{-1}(v)$.
This completes the proof.
\end{proof}

\begin{rem}
{\it Among all equivalence relations $\equiv$ on $X$  satisfying  $$x\equiv y \Longrightarrow \lambda_x=\lambda_y,$$ the relation $\sim_l$ is the largest one for which the formula
\[
\overline r([x]_\equiv,[y]_\equiv)=([\lambda_x(y)]_\equiv,[\rho_y(x)]_\equiv)
\]
is well-defined and is a solution on the quotient $X/{\equiv}$.}
\end{rem}
\begin{proof}
For such an equivalence relation, it is clear that we should have $x\equiv y\implies m\cdot x\equiv m\cdot y$ for all $x,y\in X$ and $m\in W(X)$ since for all $x\in X$ the maps \hbox{$[y]_\equiv\mapsto [\lambda_x(y)]_\equiv$,} $[y]_\equiv\mapsto [\lambda^{-1}_x(y)]_\equiv$, $[y]_\equiv\mapsto [\rho_x(y)]_\equiv$, $[y]_\equiv\mapsto [\rho^{-1}_x(y)]_\equiv$ and\linebreak \hbox{$[y]_\equiv\mapsto [\rho_y(x)]_\equiv$} should be well defined on the quotient. Hence the relation $\equiv$ has to be a refinement of the relation $\sim_l$.
\end{proof}

In the following we write
$\operatorname{Ret}_l^0(X,r)=(X,r)$ and, for every $n\geq 1$, we denote by
$\operatorname{Ret}^n_l(X,r)$ the solution obtained after left retracting $n$ times.

\begin{defn}
    Let $(X,r)$ be a solution. We say that $(X,r)$ is {\it left multipermutation of level $n$} 
    if there exists an integer $n\geq 0$ such that $\operatorname{Ret}_l^n(X,r)$ reduces to a singleton while $|\operatorname{Ret}^{n-1}_l(X,r)|\geq2$.
\end{defn}

Thus a left multipermutation solution is a solution whose iterated left retract eventually becomes the trivial solution on a singleton. This notion is a left-handed analogue  of the multipermutation solutions, mentioned in Section \ref{preliminaries}, 
of the usual notion of a multipermutation solution, where the retraction $\Ret^n(X,r)$  
is defined using the  equivalence relation identifying elements with the same left and right actions.

\medskip

As for right nilpotent skew braces, we show that these left multipermutation solutions can be characterised in terms of the bijections
\[
\lambda^{\alpha_n}_{
  m_n\cdot\left(\lambda^{\alpha_{n-1}}_{
    \ddots_{
      m_2\cdot \left(\lambda^{\alpha_1}_{v_1}(v_2)\right)
    }\iddots
  }(v_n)\right)
}(-)
\]
where $n\geq1$ is an integer, $(\alpha_1,\dots ,\alpha_n)\in \{\pm1\}^{\times n}$, $(v_1,\dots,v_n)\in B^{\times n}$ and \\$(m_2,\dots, m_{n})\in W(X)^{n-1}$. Therefore  we extend the notations introduced in Definition~\ref{def:nested} to represent these maps.  Note that here we use the convention that
    $W(X)^{\times 0}$ consists of the empty tuple, denoted by $\emptyset$.

\begin{defn}
    Let $(X,r)$ be a solution. 
    For $k\geq 1$, let
    $\alpha=(\alpha_1,\dots,\alpha_k)\in\{\pm 1\}^{\times k}$,
    $v=(v_1,\dots,v_k)\in X^{\times k}$, and
    $m=(m_2,\dots,m_k)\in W(X)^{\times(k-1)}$.
    We define the map $\lambda_{m,v}^{\alpha}\colon X\to X$ recursively as
    follows.

    If $k=1$, then $m=\emptyset$ and
    \[
    \lambda_{\emptyset,(v_1)}^{(\alpha_1)}
    =
    \lambda_{v_1}^{\alpha_1}.
    \]
    If $k>1$, then
    \[
    \lambda_{m,v}^{\alpha}
    =
    \lambda^{\alpha_k}_{m_k\cdot
    \lambda_{\mathfrak r m,\mathfrak r v}^{\mathfrak r\alpha}(v_k)}.
    \]
    Note that in the case $m_i=\id$ for all $2\leq i\leq k$, we recover the
    maps defined in~Definition~\ref{def:nested}.
\end{defn}

For notational purposes, we extend the action of $W(X)$ on $X$ to an action on~$X^{\times k}$ for all integer $k\geq 1$ as follows
\[m\cdot (v_1,v_2,\dots,v_k)=(mv_1,v_2,\dots,v_k).\]

\begin{defn}
A solution $(X,r)$ is right nilpotent of class at most $1$ if $\lambda_x=\id$ for all $x\in X$. For $n>1$, 
we say that a solution $(X,r)$ is {\it right nilpotent of class at most $n$}, if  $\lambda_{m,v}^\alpha =\lambda_{\mathfrak{l}m,m_2\cdot\mathfrak{l}v}^{\mathfrak{l}\alpha}$
for all $\alpha\in \{\pm1\}^{\times n}$, $m=(m_2,\dots, m_n)\in W(X)^{\times {n-1}}$ and $v\in X^{\times n}$. 
The category consisting of these solutions, and their morphisms, is denoted $\mathcal{RNS}$. The full subcategory of right nilpotent solutions of class at most $n$ is denoted by $\mathcal{RNS}_{n}$.
\end{defn}

\begin{rem}
{\it A solution $(X,r)$ is right nilpotent of class at most $2$ if and only if 
\begin{equation}\label{eq:class2}
\lambda_{\lambda_x(y)}=\lambda_y
\end{equation}
for all $x,y\in X$.}
\end{rem} 
\begin{proof}
    It is clear that for a right nilpotent solution of class at most $2$, the equation~\eqref{eq:class2}  holds.
    Conversely, assume~\eqref{eq:class2} holds. 
    Notice that then for all $x,y\in X$, we have 
    \[\lambda_y=\lambda_{\lambda_x(\lambda^{-1}_x(y))}=\lambda_{\lambda_x^{-1}(y)}\]
    and 
    \[\lambda_{\rho_x(y)}=\lambda^{-1}_{\lambda_y(x)}\lambda_y\lambda_x=\lambda^{-1}_x\lambda_y\lambda_x.
    \]
    Thus 
    \[
    \lambda_y=\lambda_{\rho_x(\rho_x^{-1}(y))}=\lambda_{x}^{-1}\lambda_{\rho_x^{-1}(y)}\lambda_x,
    \]
    hence,
    \[\lambda_{\rho_x^{-1}(y)}=\lambda_x\lambda_y\lambda_x^{-1}.\]
    We will now show by induction on the length of $m\in W(X)$ that 
    \[
    \lambda_{m\cdot \lambda^{\alpha}_x(y)}=\lambda_{m\cdot y}
    \]
    for all $x,y\in X$ and $\alpha\in \{\pm 1\}$. When $m$ is the identity, this is clear by the above. Suppose $m=\sigma_z m'$ with $\sigma\in \{\lambda,\rho,\lambda^{-1},\rho^{-1},D\}$ and the statement holds for $m'$. If $\sigma=\lambda^{\pm 1}$ then 
    \[\lambda_{m\cdot \lambda_x(y)}=\lambda_{m'\cdot \lambda_x(y)}=\lambda_{m'\cdot y}=\lambda_{m\cdot y}.\]
    If $\sigma =\rho$, then
    \[
    \lambda_{m\cdot \lambda_x(y)}=\lambda_{z}\lambda_{m'\cdot \lambda_x(y)}\lambda_z^{-1}=\lambda_{z}\lambda_{m'\cdot y}\lambda_z^{-1}=\lambda_{m\cdot y}.
    \]
    A similar argument applies to the case $\sigma=\rho^{-1}$. Finally, if $\sigma =D$, then
    \[
    \lambda_{m\cdot \lambda_x(y)}=\lambda_{\rho_{m'\cdot \lambda_x(y)}(z)}=\lambda_{m'\cdot \lambda_x(y)}^{-1}\lambda_z\lambda_{m'\cdot \lambda_x(y)}=\lambda_{m'\cdot y}^{-1}\lambda_z\lambda_{m'\cdot y}=\lambda_{m\cdot y}.
    \] The statement is proved.
\end{proof}

\begin{pro}\label{propmult}
    Let $n\geq 0$ be an integer. A solution $(X,r)$ is right nilpotent of class at most $n+1$ if and only if $(X,r)$ is left multipermutation of level at most $n$.
\end{pro}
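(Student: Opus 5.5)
We argue by induction on $n$, in parallel with the proof of Proposition~\ref{lambda action for rclass n}, but replacing the quotient $B/B^{(n+1)}$ by the left retract $\operatorname{Ret}_l(X,r)$ of Lemma~\ref{inducedretract}. For $n=0$ both conditions say $\lambda_x=\id$ for all $x\in X$. Indeed, $\operatorname{Ret}_l^0(X,r)=(X,r)$ is a singleton only if $X$ is, and a singleton solution has $\lambda_x=\id$. Conversely, if all $\lambda_x=\id$, then every two elements are $\sim_l$-equivalent, so $\operatorname{Ret}_l(X,r)$ is a singleton. The level-$0$ edge case needs a small convention check and should be stated consistently with the definitions.

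The key step is a ``shift'' lemma, for $n\geq 1$: $(X,r)$ is right nilpotent of class at most $n+1$ if and only if $\operatorname{Ret}_l(X,r)$ is right nilpotent of class at most $n$. To compare nested maps in $X$ and in the retract, we first need:
\begin{enumerate}
\item The canonical projection $\pi\colon X\to X/{\sim_l}$ is a morphism of solutions.
\item Every generator of $W(X)$ (the maps $\lambda^{\pm1}_z$, $\rho^{\pm1}_z$, $D_z$) descends to the corresponding generator of $W(X/{\sim_l})$, so $\pi(m\cdot x)=\bar m\cdot\pi(x)$ with $m\mapsto\bar m$ surjective onto $W(X/{\sim_l})$.
\item Consequently $\pi\circ\lambda^{\alpha}_{m,v}=\lambda^{\alpha}_{\bar m,\pi v}\circ\pi$.
\end{enumerate}
The engine is the definition of $\sim_l$: $a\sim_l b$ if and only if $\lambda_{m\cdot a}=\lambda_{m\cdot b}$ for all $m\in W(X)$.

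For the forward direction of the shift lemma, assume $(X,r)$ has class at most $n+1$ and write, for $\alpha,m,v$ of length $n+1$,
\[
a=\lambda^{\mathfrak r\alpha}_{\mathfrak r m,\mathfrak r v}(v_{n+1}),\qquad b=\lambda^{\mathfrak l\mathfrak r\alpha}_{\mathfrak l\mathfrak r m,m_2\cdot\mathfrak l\mathfrak r v}(v_{n+1}).
\]
The class $n+1$ identity, applied with $v_{n+1}$ and the tuple $m$ extended by an arbitrary $m'\in W(X)$ in the last slot, gives $\lambda_{m'\cdot a}=\lambda_{m'\cdot b}$ for all $m'$. Hence $a\sim_l b$, which is exactly the class $n$ identity in the retract. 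Conversely, if the retract has class at most $n$, then $\pi(a)=\pi(b)$, so $a\sim_l b$; taking $m'=m_{n+1}$ gives $\lambda_{m_{n+1}\cdot a}=\lambda_{m_{n+1}\cdot b}$, which is the class $n+1$ identity in $X$. The case $n=1$ must be treated separately, since class $1$ is defined differently: here the retract being trivial means $\lambda_x(y)\sim_l y$ for all $x,y$.

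With the shift lemma, induction finishes the proof. $(X,r)$ has class at most $n+1$ iff $\operatorname{Ret}_l(X,r)$ has class at most $n$, iff (by induction) $\operatorname{Ret}_l(X,r)$ is left multipermutation of level at most $n-1$, iff $(X,r)$ is left multipermutation of level at most $n$.

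The main obstacle is the index bookkeeping in the shift lemma. We must check that the recursive definition of $\lambda^{\alpha}_{m,v}$, together with the convention $m\cdot(v_1,\dots)=(mv_1,\dots)$ in $\lambda^{\mathfrak l\alpha}_{\mathfrak l m,m_2\cdot\mathfrak l v}$, truly matches ``$\lambda$ of $m'$ applied to the penultimate nested element''. The simplest route is to first prove, by induction on length, a concatenation formula analogous to Remark~\ref{remarkconcatenation} for the maps $\lambda^{\alpha}_{m,v}$, and then read off both directions from it.
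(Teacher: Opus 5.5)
Your shift lemma is correct and is essentially the paper's argument. Its forward half (extend $\mathfrak r m$ by an arbitrary $m'$ in the last slot to get $\lambda_{m'\cdot a}=\lambda_{m'\cdot b}$, i.e.\ $a\sim_l b$) is the paper's proof that $\operatorname{Ret}_l(X,r)$ has class one lower. Its backward half is the paper's first direction. Your deduction of the ``if'' direction (level $\leq n\Rightarrow$ class $\leq n+1$) is also sound. The gap is the base case, and it is not a convention check. Since $\operatorname{Ret}^0_l(X,r)=(X,r)$, level at most $0$ means $|X|=1$, while class at most $1$ only means $\lambda_x=\id$ for all $x$. What your ``conversely'' actually proves is that $\operatorname{Ret}_l(X,r)$ is a singleton, i.e.\ level at most $1$. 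The flip $r(x,y)=(y,x)$ on a two-element set has class $1$ and level $1$, so ``class $\leq 1\Rightarrow$ level $\leq 0$'' is false. Your chain for the ``only if'' direction then inherits this off-by-one at every $n$.

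No re-indexing rescues the equivalence. Feeding the correct base facts into your shift lemma gives exactly two implications: level $\leq n\Rightarrow$ class $\leq n+1$, and class $\leq n\Rightarrow$ level $\leq n$ for $n\geq 1$. Neither can be sharpened.
\emph{First implication:} a permutation solution $r(x,y)=(\sigma(y),\sigma^{-1}(x))$ with $\sigma\neq\id$ has level $1$ but class exactly $2$.
\emph{Second implication:} take the brace $(\Z/4\Z,+,\circ)$ with $a\circ b=a+b+2ab$. Its solution satisfies $\lambda_{\lambda_a(b)}=\lambda_b$, so it lies in $\mathcal{RNS}_2$. But $\lambda_0=\id\neq\lambda_1$ gives $0\not\sim_l 1$, so its level is $2$, not at most $1$.

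Hence the ``only if'' half of the statement is false, already for $n=0$ and $n=1$, and cannot be established. The paper's proof contains the same error: its converse begins ``for $n=1$ it is clear since $\lambda_x=\id$'', which is precisely the false step class $\leq 1\Rightarrow$ level $\leq 0$.
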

\begin{proof}
    Assume first that $(X,r)$ is left multipermutation of level at most $n$. We proceed by induction on $n$. If $n=0$, then $X$ is a singleton and the statement is trivial. Assume $n>0$ and that the statement is true for $n-1$. Then, $\operatorname{Ret}_l(X,r)$ is left multipermutation of class at most $n-1$, so by induction hypothesis, it is right nilpotent of class at most $n$. Let $\alpha\in \{\pm1\}^{\times (n+1)}$, $m=(m_2,\dots, m_{n+1})\in W(X)^{\times n}$ and $v=(v_1,\dots,v_{n+1})\in X^{\times (n+1)}$. Hence, 
     $$\lambda_{\mathfrak{r}m,\mathfrak{r}v}^{\mathfrak{r}\alpha}(v_{n+1})\sim_l \lambda_{\mathfrak{l}(\mathfrak{r}m),m_2\cdot \mathfrak{l}(\mathfrak{r}v)}^{\mathfrak{l}(\mathfrak{r}\alpha)}(v_{n+1}),$$
    since they coincide in the quotient. 

Put
\[
A=\lambda_{\mathfrak{r}m,\mathfrak{r}v}^{\mathfrak{r}\alpha}(v_{n+1})
\qquad\text{and}\qquad
B=
\lambda_{\mathfrak{l}(\mathfrak{r}m),\,m_2\cdot\mathfrak{l}(\mathfrak{r}v)}
^{\mathfrak{l}(\mathfrak{r}\alpha)}(v_{n+1}),
\] so $A\sim_l B$.
Since \(m_{n+1}\in W(X)\), Remark~\ref{rem:small} gives
\[
m_{n+1}\cdot A\sim_l m_{n+1}\cdot B.
\]
Taking the identity element of \(W(X)\) in the definition of \(\sim_l\), we get
\[
\lambda_{m_{n+1}\cdot A}
=
\lambda_{m_{n+1}\cdot B}.
\]
Therefore also
\[
\lambda_{m_{n+1}\cdot A}^{\alpha_{n+1}}
=
\lambda_{m_{n+1}\cdot B}^{\alpha_{n+1}},
\]
because, if \(\alpha_{n+1}=-1\), equality of the maps implies equality of their
inverses. Thus
\begin{align*}
\lambda^{\alpha_{n+1}}_{m_{n+1}\cdot
\lambda_{\mathfrak{r}m,\mathfrak{r}v}^{\mathfrak{r}\alpha}(v_{n+1})}
&=
\lambda^{\alpha_{n+1}}_{m_{n+1}\cdot
\lambda_{\mathfrak{l}(\mathfrak{r}m),\,m_2\cdot\mathfrak{l}(\mathfrak{r}v)}
^{\mathfrak{l}(\mathfrak{r}\alpha)}(v_{n+1})}  \\
&=
\lambda^{\alpha_{n+1}}_{m_{n+1}\cdot
\lambda_{\mathfrak{r}(\mathfrak{l}m),\,
\mathfrak{r}(m_2\cdot\mathfrak{l}v)}
^{\mathfrak{r}(\mathfrak{l}\alpha)}(v_{n+1})} \\
&=
\lambda_{\mathfrak{l}m,\,m_2\cdot\mathfrak{l}v}^{\mathfrak{l}\alpha}.
\end{align*}
Here we used the identities
\[
\mathfrak{l}(\mathfrak{r}m)=\mathfrak{r}(\mathfrak{l}m),
\qquad
m_2\cdot\mathfrak{l}(\mathfrak{r}v)
=
\mathfrak{r}(m_2\cdot\mathfrak{l}v),
\qquad
\mathfrak{l}(\mathfrak{r}\alpha)=\mathfrak{r}(\mathfrak{l}\alpha),
\]
and the last equality is precisely the recursive definition of the maps $\lambda_{m,v}^{\alpha}$

Since $\alpha$, $m$ and $v$ were arbitrary, this proves that \[ \lambda_{m,v}^{\alpha}=\lambda_{\mathfrak l m,\,m_2\cdot\mathfrak l v}^{\mathfrak l\alpha}\]
for every $\alpha\in\{\pm1\}^{\times(n+1)}$, every
$m\in W(X)^{\times n}$, and every $v\in X^{\times(n+1)}$. Hence $(X,r)$ is right
nilpotent of class at most $n+1$.

For the converse, assume that $(X,r)$ is right nilpotent of class at most $n$. As before, we
proceed by induction on $n$. For $n=1$, it is clear since $\lambda_x=\id$ for
all $x\in X$. Assume that $n>1$.
We claim that the left retract  $\operatorname{Ret}_l(X,r)$ is right nilpotent of class at most $n-1$.
To prove this, let $\beta=(\beta_1,\dots,\beta_{n-1})\in\{\pm1\}^{\times(n-1)}$,
let $M=(M_2,\dots,M_{n-1})\in W(\operatorname{Ret}_l(X,r))^{\times(n-2)}$, and let
$V=(V_1,\dots,V_{n-1})\in (\operatorname{Ret}_l(X,r))^{\times(n-1)}$. Choose representatives
$v_i\in X$ such that $V_i=[v_i]_{\sim_l}$. Also choose elements
$m_i\in W(X)$ inducing $M_i$ on the quotient, for every \hbox{$i=2,\dots,n-1$.} 
This is possible because the quotient map $X\to \operatorname{Ret}_l(X,r)$ is a morphism of
solutions and the monoid $W(-)$ is generated functorially by
$\lambda,\lambda^{-1},\rho,\rho^{-1}$ and the maps $D_z$.
We have to show that the following equality of maps on the left retract  solution $\operatorname{Ret}_l(X,r)$:
\[
\lambda_{M,V}^{\beta}
=
\lambda_{\mathfrak l M,\,M_2\cdot\mathfrak l V}^{\mathfrak l\beta}.
\]
To prove this, we evaluate these maps on an arbitrary class
$[t]_{\sim_l}\in \operatorname{Ret}_l(X,r)$.
Put
\[
A=\lambda_{m,v}^{\beta}(t)
\qquad\text{and}\qquad
B=\lambda_{\mathfrak l m,\,m_2\cdot\mathfrak l v}^{\mathfrak l\beta}(t),
\]
where $m=(m_2,\dots,m_{n-1})$ and $v=(v_1,\dots,v_{n-1})$. We need to show
that $A\sim_l B$.

By definition of $\sim_l$, we need to show that
$$\lambda_{q\cdot A}=\lambda_{q\cdot B},$$ for every $q\in W(X)$. Fix such a
$q$. Since $(X,r)$ is right nilpotent of class at most $n$, we apply the
defining identity to the sign tuple $(\beta_1,\dots,\beta_{n-1},1)$, to the
tuple $(v_1,\dots,v_{n-1},t)\in X^{\times n}$, and to the element
$(m_2,\dots,m_{n-1},q)\in W(X)^{\times(n-1)}$. Hence
\[
\lambda_{(m_2,\dots,m_{n-1},q),(v_1,\dots,v_{n-1},t)}
^{(\beta_1,\dots,\beta_{n-1},1)}
=
\lambda_{(m_3,\dots,m_{n-1},q),\,m_2\cdot(v_2,\dots,v_{n-1},t)}
^{(\beta_2,\dots,\beta_{n-1},1)}.
\]
By the recursive definition of the maps $\lambda_{m,v}^{\alpha}$, the left
hand side is $\lambda_{q\cdot \lambda_{m,v}^{\beta}(t)}=\lambda_{q\cdot A}$, 
whereas the right hand side is $\lambda_{q\cdot
\lambda_{\mathfrak l m,\,m_2\cdot\mathfrak l v}^{\mathfrak l\beta}(t)}
=
\lambda_{q\cdot B}$. Thus, indeed,  $\lambda_{q\cdot A}=\lambda_{q\cdot B}$ for every $q\in W(X)$. Hence
$A\sim_l B$. This proves that
\[
\lambda_{M,V}^{\beta}([t])
=
\lambda_{\mathfrak l M,\,M_2\cdot\mathfrak l V}^{\mathfrak l\beta}([t])
\]
for every $[t]\in \operatorname{Ret}_l(X,r)$. Therefore
$\lambda_{M,V}^{\beta}
=
\lambda_{\mathfrak l M,\,M_2\cdot\mathfrak l V}^{\mathfrak l\beta}$, and
$\operatorname{Ret}_l(X,r)$ is right nilpotent of class at most $n-1$.

By the induction hypothesis, $\operatorname{Ret}_l(X,r)$ is left multipermutation of level at most $n-2$. Hence $(X,r)$ is left multipermutation of level at most
$n-1$.
\end{proof}

We now show that right nilpotency of solutions and right nilpotency of skew braces are strongly linked.

\begin{lem}\label{eqsolskewbrace}
    A skew brace $B$ is right nilpotent of class at most $n$ if and only if its associated solution $(B,r_B)$ is right nilpotent of class at most $n$.
\end{lem}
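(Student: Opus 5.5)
The plan is to compare the two nested conditions: Proposition~\ref{lambda action for rclass n} for the skew brace $B$ (equivalently $K_n(B)=B$), and the defining identity of $\mathcal{RNS}_n$ for the solution $(B,r_B)$. The only difference between them is the insertion of the monoid elements $m_i\in W(B)$. The maps $\lambda_x$ of the solution $(B,r_B)$ are the $\lambda$-maps of the skew brace. Hence, taking all $m_i=\id$ in the solution condition gives exactly $\lambda_v^\alpha=\lambda_{\mathfrak l v}^{\mathfrak l\alpha}$ for all $\alpha\in\{\pm1\}^{\times n}$ and $v\in B^{\times n}$. By Proposition~\ref{lambda action for rclass n}, this shows that if $(B,r_B)$ is right nilpotent of class at most $n$, then so is $B$. (For $n=1$ both conditions simply say $\lambda_x=\id$ for all $x$.)

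For the converse, assume $B\in\mathcal{RN}_n$. I will prove the solution condition by induction on $n$, mirroring the first half of the proof of Proposition~\ref{lambda action for rclass n}.

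\textbf{Key structural fact.} I need that the generators of $W(B)$ act compatibly with the ideals $B^{(k)}$ and with $\ker\lambda$. Explicitly, $\rho_y(x)=\overline{\lambda_x(y)}\circ x\circ y$ and $D_z(x)=\rho_x(z)=\overline{\lambda_z(x)}\circ z\circ x$. Consequently every $m\in W(B)$ induces a well-defined map on $B/I$ for any ideal $I$, and this induced map is the corresponding element of $W(B/I)$. This holds because the quotient map $B\to B/I$ is a morphism of solutions, and it is surjective, so each $D_{z+I}$ is hit.

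\textbf{Inductive step.} Fix $\alpha$, $m=(m_2,\dots,m_n)$ and $v$, and set $I=B^{(n)}$, which lies in $\ker\lambda$ since $B^{(n+1)}=0$.

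\begin{enumerate}
\item Apply the induction hypothesis in $B/B^{(n)}\in\mathcal{RN}_{n-1}$ to the tuples $\mathfrak r\alpha$, $\mathfrak r m$, $\mathfrak r v$, read in the quotient. This yields
\[
x:=\lambda^{\mathfrak r\alpha}_{\mathfrak r m,\mathfrak r v}(v_n)\equiv y:=\lambda^{\mathfrak l\mathfrak r\alpha}_{\mathfrak l\mathfrak r m,\,m_2\cdot\mathfrak l\mathfrak r v}(v_n)\pmod{I}.
\]
Here I use that nested maps on the quotient are induced from nested maps on $B$, which again follows because the quotient map is a morphism of solutions.
\item Since $m_n$ descends to $B/I$, we also get $m_n\cdot x\equiv m_n\cdot y \pmod I$.
\item Since $I\subseteq\ker\lambda$ is a normal multiplicative subgroup, elements congruent mod $I$ have equal $\lambda$-maps. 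Hence $\lambda^{\alpha_n}_{m_n\cdot x}=\lambda^{\alpha_n}_{m_n\cdot y}$.
\item By the recursive definition and the truncation identities already used in the proof of Proposition~\ref{propmult}, this last equality is exactly the required one.
\end{enumerate}

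\textbf{Main obstacle.} The delicate step is the base/compatibility claim used in steps 1 and 2: the elements $m_i$ (in particular the maps $D_z$, which are not permutations) must respect the congruence modulo $B^{(n)}$, and the induction must be run for an arbitrary skew brace of class $n-1$ rather than only for $B$. I will handle this by proving the slightly more general statement directly, namely that any surjective skew brace morphism $\pi$ satisfies $\pi(m\cdot b)=\bar m\cdot\pi(b)$, by induction on the word length of $m$. With that in hand, the argument is identical to the proof of Proposition~\ref{lambda action for rclass n}.
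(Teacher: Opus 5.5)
Your proposal is correct and follows the paper's proof. For the converse you take all $m_i=\id$ and invoke Proposition~\ref{lambda action for rclass n}; for the forward direction you induct on $n$ through $B/B^{(n)}$ and use $B^{(n)}\subseteq\ker(\lambda)$ to identify $\lambda^{\alpha_n}_{m_n\cdot x}$ with $\lambda^{\alpha_n}_{m_n\cdot y}$. You also explicitly check that each $m\in W(B)$, including the maps $D_z$ and the inverses in $P(B)$, descends along the surjective solution morphism $B\to B/I$, which usefully justifies a step the paper only asserts.
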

\begin{proof}
    We proceed by induction. For $n=1$, $B$ is a trivial skew brace if and only if $\lambda_b=\id$ for all $b\in B$, so the statement becomes trivial. Suppose  that $n>1$.
    Assume first that $B$ is right nilpotent of class at most $n$. Let $\alpha\in \{\pm1\}^{\times n}$,
    $m=(m_2,\dots, m_{n})\in W(B)^{\times (n-1)}$
    and $v=(v_1,\dots,v_{n})\in B^{\times n}$.
    By the induction hypothesis, the solution associated to  $B/B^{(n)}$ is nilpotent of class at most $n-1$. 
    The maps in $W(B)$ induce maps on the quotient $B/B^{(n)}$. Hence, applying the induction hypothesis in~$B/B^{(n)}$, the elements
    \[
    m_n\cdot \lambda_{\mathfrak{r}m,\mathfrak{r}v}^{\mathfrak{r}\alpha}(v_n)
    \quad\text{and}\quad
    m_n\cdot
    \lambda_{\mathfrak{l}(\mathfrak{r}m),\,m_2\cdot\mathfrak{l}(\mathfrak{r}v)}
    ^{\mathfrak{l}(\mathfrak{r}\alpha)}(v_n)
    \]
    have the same image in $B/B^{(n)}$.
    Hence, there exists $z\in B^{(n)}\subseteq \ker(\lambda)$ such that 
    \[
    \lambda^{\alpha_{n}}_{m_{n}\cdot \lambda_{\mathfrak{r}m,\mathfrak{r}v}^{\mathfrak{r}\alpha}(v_{n})}
    =
    \lambda^{\alpha_{n}}_{z\circ\left(m_{n}\cdot \lambda_{\mathfrak{l}(\mathfrak{r}m),m_2\cdot \mathfrak{l}(\mathfrak{r}v)}^{\mathfrak{l}(\mathfrak{r}\alpha)}(v_{n})\right)}
    =
    \lambda^{\alpha_{n}}_{m_{n}\cdot \lambda_{\mathfrak{r}(\mathfrak{l}m), \mathfrak{r}(m_2\cdot\mathfrak{l}v)}^{\mathfrak{r}(\mathfrak{l}\alpha)}(v_{n})}
    =
    \lambda_{\mathfrak{l}m,m_2\cdot\mathfrak{l}v}^{\mathfrak{l}\alpha}.
    \]
    Therefore, the solution of $B$ is right nilpotent of class at most $n$.

    Conversely, assume that the associated solution of $B$ is right nilpotent of class at most $n$. We prove that $B$ is right nilpotent of class at most $n$. By Proposition~\ref{lambda action for rclass n}, it is enough to show that
    \[
    \lambda_v^\alpha=\lambda_{\mathfrak l v}^{\mathfrak l\alpha}
    \]
    for every $\alpha\in\{\pm1\}^{\times n}$ and every $v\in B^{\times n}$. Since the associated solution of $B$ is right nilpotent of class at most $n$, we may take $m_i=\id$ for every $i=2,\dots,n$ in the defining identities of right nilpotency for solutions. Then
    \[
    \lambda_{m,v}^{\alpha}
    =
    \lambda_{\mathfrak l m,\,m_2\cdot\mathfrak l v}^{\mathfrak l\alpha}.
    \]
    With this choice of $m$, the left-hand side is precisely $\lambda_v^\alpha$, while the right-hand side is precisely $\lambda_{\mathfrak l v}^{\mathfrak l\alpha}$. Hence $\lambda_v^\alpha=\lambda_{\mathfrak l v}^{\mathfrak l\alpha}$
    or all $\alpha$ and $v$. Therefore, $B$ is right nilpotent of class at most~$n$.
\end{proof}

\begin{lem}\label{lemmaybecanberemoved}
    Let $(X,r)$ be a solution and let $\iota \colon X\to G(X,r)$ be the
    canonical map. If~\hbox{$x,y\in X$} are such that $x\sim_l y$, then, for
    every $m\in W(X)$,
    \[
    \lambda_{\iota(m\cdot x)}=\lambda_{\iota(m\cdot y)}
    \]
    as automorphisms of the additive group of $G(X,r)$. In particular,
    $\lambda_{\iota(x)}=\lambda_{\iota(y)}$.
\end{lem}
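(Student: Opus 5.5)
The plan is to reduce the statement to an equality of permutations of $X$. The tool is that the canonical map $\iota\colon X\to G(X,r)$ is a morphism of solutions $(X,r)\to (G(X,r),r_{G(X,r)})$.

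\emph{Step 1: the $\lambda$-action on generators.} For $a,b\in X$ I will show
\[
\lambda_{\iota(a)}(\iota(b))=\iota(\lambda_a(b)).
\]
In $(G(X,r),\circ)$ the defining relation gives $\iota(a)\circ\iota(b)=\iota(\lambda_a(b))\circ\iota(\rho_b(a))$. By the definition of the additive structure, $x+y=x\circ\lambda_x^{-1}(y)$, and the skew brace $\lambda$-map of the structure skew brace restricts on $\iota(X)$ to the $\lambda$ of the solution. This is exactly the fact that $g_X(X)=\Inj(X,r)$ is a subsolution of $(G(X,r),r_{G(X,r)})$ and that $g_X$ is a morphism of solutions, both used in the paper, for instance in the proof of Proposition~\ref{free injective in structure brace}. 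So this step is essentially bookkeeping with the standard structure-skew-brace conventions.

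\emph{Step 2: transfer $\sim_l$ along $m$.} Let $x\sim_l y$ and $m\in W(X)$. By Remark~\ref{rem:small}, $m\cdot x\sim_l m\cdot y$. Taking the identity of $W(X)$ in the definition of $\sim_l$ then gives $\lambda_{m\cdot x}=\lambda_{m\cdot y}$ as permutations of $X$. Put $a=m\cdot x$ and $a'=m\cdot y$. Step 1 yields, for every $b\in X$,
\[
\lambda_{\iota(a)}(\iota(b))=\iota(\lambda_a(b))=\iota(\lambda_{a'}(b))=\lambda_{\iota(a')}(\iota(b)).
\]

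\emph{Step 3: extend to all of $G(X,r)$.} Both $\lambda_{\iota(a)}$ and $\lambda_{\iota(a')}$ are automorphisms of $(G(X,r),+)$. They agree on $\iota(X)$, which generates $(G(X,r),+)$, as recalled in the proof of Proposition~\ref{free injective in structure brace}. Hence they agree everywhere, which is the claim. The particular case is obtained by taking $m=\id$.

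The only point requiring care is Step 1, namely identifying the skew-brace $\lambda$ of $G(X,r)$ on generators with the solution's $\lambda$ pushed through $\iota$. This is standard and follows from the universal property and the construction of the additive group; the remaining steps are formal.
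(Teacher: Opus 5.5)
Your proof is correct and follows the paper's argument. You use that $\iota$ intertwines the solution's $\lambda$ with the skew brace $\lambda$ on generators, that $\lambda_{m\cdot x}=\lambda_{m\cdot y}$ as permutations of $X$, and that two additive automorphisms agreeing on the generating set $\iota(X)$ coincide. The only difference is cosmetic: you obtain $\lambda_{m\cdot x}=\lambda_{m\cdot y}$ via Remark~\ref{rem:small} and the identity of $W(X)$, whereas this equality is literally the definition of $x\sim_l y$.
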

\begin{proof}
Let $m\in W(X)$. Since $x\sim_l y$, by definition, we have $\lambda_{m\cdot x}=\lambda_{m\cdot y}$ as permutations of $X$.

Put $G=G(X,r)$. The lambda action of this  structure skew brace extends the
left action of the solution on $\iota (X)$. Hence, for every $z\in X$, we have
\[
\lambda_{\iota(m\cdot x)}(\iota(z))
=
\iota(\lambda_{m\cdot x}(z))
=
\iota(\lambda_{m\cdot y}(z))
=
\lambda_{\iota(m\cdot y)}(\iota(z)).
\]
Thus $\lambda_{\iota(m\cdot x)}$ and $\lambda_{\iota(m\cdot y)}$ coincide on
$\iota(X)$. Since the additive group of $G(X,r)$ is generated by $\iota(X)$,
the two additive automorphisms coincide on all elements of $G(X,r)$. Therefore $\lambda_{\iota(m\cdot x)}=\lambda_{\iota(m\cdot y)}$.
\end{proof}

\begin{lem}\label{lemx}
    Let $(X,r)$ be a left multipermutation solution of class at most $n$, then its structure skew brace is right nilpotent of class at most $n+1$.
\end{lem}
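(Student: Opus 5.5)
We argue by induction on $n$, working with the socle-type kernel $\ker(\lambda)$ of $G=G(X,r)$ and the left retract. Write $\iota\colon X\to G$ for the canonical map. For $n=0$, $X$ is a singleton $\{x\}$ with $\lambda_x=\id$ (it is a bijection of a one-point set), so $\lambda_{\iota(x)}$ fixes the additive generator $\iota(x)$. Hence $\lambda_{\iota(x)}=\id$, and $G$ is a trivial skew brace, i.e.\ of class at most $1$.

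For the inductive step, let $(X,r)$ be left multipermutation of level at most $n\geq1$. Then $Y=\operatorname{Ret}_l(X,r)$ is left multipermutation of level at most $n-1$, so by induction $G(Y)$ is right nilpotent of class at most $n$. The quotient map $\pi\colon X\to Y$ is a morphism of solutions. By \cref{univ prop structure sb} it therefore induces a surjective skew brace homomorphism $G(\pi)\colon G\to G(Y)$; surjectivity holds because $G(Y)$ is generated by $Y$. Let $I=\ker G(\pi)$. Then $G/I\cong G(Y)$ is of class at most $n$, so $G^{(n+1)}\subseteq I$. It then suffices to show $I\subseteq\ker(\lambda)$, because then $G^{(n+2)}=G^{(n+1)}\ast G\subseteq I\ast G=\{0\}$, since $a\ast b=\lambda_a(b)-b=0$ for $a\in\ker\lambda$.

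To prove $I\subseteq\ker\lambda$, the plan is to describe $I$ through its generators. $I$ is the smallest ideal of $G$ containing the elements $\iota(x)\circ\overline{\iota(y)}$ with $x\sim_l y$. This holds because $G/J$, for $J$ that ideal, satisfies the universal property of $G(Y)$: the relations of $G(Y)$ are images of those of $G$, and the map $Y\to G/J$ is a well-defined morphism of solutions because $\sim_l$ is compatible with $r$ (\cref{inducedretract}). By \cref{lemmaybecanberemoved}, each such generator lies in $\ker\lambda$. The main obstacle is that $\ker\lambda$ is only a normal multiplicative subgroup, not an ideal, so we must check that the normal closure and the additive closure under conjugation and $\lambda$-images stay inside $\ker\lambda$. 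This is where the full strength of \cref{lemmaybecanberemoved} (for all $m\in W(X)$) enters. I would instead show that the set
\[
N=\{(\iota(x_1)\circ\overline{\iota(y_1)})\circ\dots \mid x_i\sim_l y_i\}\cdot\text{(multiplicative subgroup)}
\]
is already an ideal. Concretely, I would check that for $g\in\iota(X)$ and $x\sim_l y$, the elements $\lambda_g(\iota(x))$ and $\lambda_g(\iota(y))$ are again of the form $\iota(m x),\iota(m y)$ with $m x\sim_l m y$ (\cref{rem:small}). I would also check that additive conjugates $\iota(z)+\iota(x)-\iota(z)$ correspond to $\sigma$-images, which are $W(X)$-images built from $\lambda^{\pm1},\rho^{\pm1}$. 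It follows that the subgroup $H$ of $(G,\circ)$ generated by all $\iota(x)\circ\overline{\iota(y)}$, together with its conjugates, is $\lambda$-invariant and additively normal. Moreover, on $\ker\lambda$ addition and multiplication agree, so $H\subseteq\ker\lambda$ is an additive subgroup, hence an ideal. Thus $I=H\subseteq\ker\lambda$, which completes the induction.
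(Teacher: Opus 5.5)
Your proof has the same architecture as the paper's: induction through $\operatorname{Ret}_l(X,r)$, the kernel $I$ of the induced epimorphism $G\to G(\operatorname{Ret}_l(X,r))$, the inclusion $G^{(n+1)}\subseteq I$, and the reduction to $I\subseteq\ker(\lambda)$ via Lemma~\ref{lemmaybecanberemoved} on the generators. The difference is in how $I$ is identified. You characterize $I$, correctly, as the smallest \emph{ideal} containing the elements $\iota(x)\circ\overline{\iota(y)}$ with $x\sim_l y$. That is exactly what produces your ``main obstacle'': you must then prove that the normal closure of these elements in $(G,\circ)$ is already an ideal.

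The paper never faces this. As a set, $I$ is the kernel of the group homomorphism $(G,\circ)\to(G(\operatorname{Ret}_l(X,r)),\circ)$. The defining relations of the structure group of the retract are the images of those of $(G,\circ)$, together with the identifications $\iota(x)=\iota(y)$ for $x\sim_l y$. Hence this kernel is precisely the normal closure $N$ of those elements in $(G,\circ)$. So $N$ is automatically an ideal, and $N\subseteq\ker(\lambda)$ simply because $\ker(\lambda)$ is normal in $(G,\circ)$. Your route is longer, but once completed it gives a more explicit additive description: $I$ is the normal closure in $(G,+)$ of $\{\iota(x)-\iota(y)\mid x\sim_l y\}$.

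As written, your key step is only a sketch and needs tightening.

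\begin{itemize}
\item The displayed set $N$ is not well defined.
\item To let $\lambda_g$ act on the generators, you need $\iota(x)\circ\overline{\iota(y)}=\iota(x)-\iota(y)$. This holds because $\lambda_{\iota(x)}=\lambda_{\iota(y)}$.
\item Your additive-conjugation check is incomplete. One has $\iota(z)+\iota(x)-\iota(z)=\iota(\sigma_z^{-1}(x))$, and in this direction you have not exhibited a common $m\in W(X)$ for $x$ and $y$. One would have to use that $\sigma_z$ induces the bijection $\sigma_{[z]}$ of the retract.
\end{itemize}

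In fact the conjugation check can be dropped entirely. For $k\in\ker(\lambda)$ one has $a\circ k\circ\overline{a}=a+\lambda_a(k)-a$. Let $S=\{\iota(x)-\iota(y)\mid x\sim_l y\}$. Then $S$ is stable under every $\lambda_{\iota(z)}^{\pm1}$, by Remark~\ref{rem:small}, since $\lambda_{\iota(z)}^{\pm1}(\iota(x)-\iota(y))=\iota(\lambda_z^{\pm1}(x))-\iota(\lambda_z^{\pm1}(y))$. Hence $S$ is stable under every $\lambda_g$. It follows that the normal closure of $S$ in $(G,\circ)$ coincides with its normal closure in $(G,+)$. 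That subgroup is visibly $\lambda$-invariant, so it is an ideal contained in $\ker(\lambda)$.
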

\begin{proof}
We argue by induction on $n$. 
If $n=0$, then the result is clear.
Assume now that \(n>0\), and suppose that the result holds for left
multipermutation solutions of class at most \(n-1\). Put $G=G(X,r)$ and let $p\colon X\to \operatorname{Ret}_l(X,r)=X/{\sim_l}$
be the canonical projection. Because of Lemma~\ref{inducedretract},
the left retract
$\operatorname{Ret}_l(X,r)$ carries a canonical solution,
and \(p\) is a morphism of solutions. Hence, by the universal property of the
structure skew brace, \(p\) induces a surjective skew brace morphism
\[
\Phi\colon G(X,r)\longrightarrow G(\operatorname{Ret}_l(X,r))
\]
such that
\[
\Phi(\iota(x))=\iota_2([x]_{\sim_l})
\]
for every \(x\in X\), where
\[
\iota_2\colon \operatorname{Ret}_l(X,r)\to G(\operatorname{Ret}_l(X,r))
\]
is the canonical map.

Set $I=\ker(\Phi)$. Then \(I\) is an ideal of \(G\). We claim that $I\subseteq \ker(\lambda_G)$. Indeed, at the level of multiplicative groups, the structure group
\((G(X,r),\circ)\) is generated by the elements \(\iota(x)\), with \(x\in X\),
subject to the defining relations coming from the solution. Similarly,
\((G(\operatorname{Ret}_l(X,r)),\circ)\) is generated by the elements
\(\iota_2([x]_{\sim_l})\), with the corresponding relations in the left retract solution.
The morphism
\[
\Phi\colon (G(X,r),\circ)\longrightarrow (G(\operatorname{Ret}_l(X,r)),\circ)
\]
is induced by the map
\[
\iota(x)\longmapsto \iota_2([x]).
\]

Let $N$ be the normal subgroup of $(G,\circ)$ generated by the elements
$\overline{\iota(x)}\circ\iota(y)$, $x\sim_l y$.
Clearly $N\subseteq\ker(\Phi)$. Conversely, since the projection
$p\colon X\to \operatorname{Ret}_l(X,r)$ is a morphism of solutions, the defining relations of
the structure group of $\operatorname{Ret}_l(X,r)$ are precisely the images of the defining
relations of the structure group of $X$, after identifying $\iota(x)$ with
$\iota(y)$ whenever $x\sim_l y$. Thus the map
$\iota_2([x]_{\sim_l})\mapsto \iota(x)N$
is a well-defined group homomorphism
$(G(\operatorname{Ret}_l(X,r)),\circ)\to (G,\circ)/N$
inverse to the homomorphism induced by $\Phi$. Hence
$(G,\circ)/N\simeq (G(\operatorname{Ret}_l(X,r)),\circ),$
and so $\ker(\Phi)=N$.

By \cref{lemmaybecanberemoved}, if \(x\sim_l y\), then $\lambda_{\iota(x)}=\lambda_{\iota(y)}$. Hence
\[
\lambda_{\overline{\iota(x)}\circ\iota(y)}
=
\lambda_{\iota(x)}^{-1}\lambda_{\iota(y)}
=
\id.
\]
Thus each generator of the multiplicative kernel of \(\Phi\) belongs to
\(\ker(\lambda_G)\). Since
\[
\lambda\colon (G,\circ)\to\Aut(G,+)
\]
is a group homomorphism, \(\ker(\lambda_G)\) is a normal subgroup of
\((G,\circ)\). Therefore $I\subseteq \ker(\lambda_G)$, as claimed.

Now, since \((X,r)\) is left multipermutation of class at most \(n\), the
solution $\operatorname{Ret}_l(X,r)$ is left multipermutation of class at most
\(n-1\). By the induction hypothesis, $G(\operatorname{Ret}_l(X,r))$ is right nilpotent of class at most \(n\). Since
\[
G/I\simeq G(\operatorname{Ret}_l(X,r)),
\]
we have $(G/I)^{(n+1)}=0$, and thus  also $G^{(n+1)}\subseteq I$.
Using \(I\subseteq \ker(\lambda_G)\), we obtain
\[
G^{(n+2)}
=
G^{(n+1)}*G
\subseteq
I*G
\subseteq
\ker(\lambda_G)*G
=
0.
\]
Thus \(G\) is right nilpotent of class at most \(n+1\). This completes the
induction.
\end{proof}

\begin{lem}\label{lemmmino-class-n}
Let $(X,r)$ be a solution in the category $\mathcal{RNS}_n$. Then its
structure skew brace $G(X,r)$ is right nilpotent of class at most $n$.
\end{lem}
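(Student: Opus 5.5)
The natural route is to combine the results already proved. First, by Proposition~\ref{propmult}, a solution in $\mathcal{RNS}_n$ is left multipermutation of level at most $n-1$. Second, Lemma~\ref{lemx} says that the structure skew brace of a left multipermutation solution of level at most $n-1$ is right nilpotent of class at most $n$. The statement would then follow at once, so the plan is mostly to check that these two results compose without an index shift or a degenerate case slipping through.

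Concretely, I would proceed as follows. Let $(X,r)\in\mathcal{RNS}_n$ with $n\geq 1$. Apply Proposition~\ref{propmult} with $n-1$ in place of $n$ to get that $\operatorname{Ret}_l^{n-1}(X,r)$ is a singleton, i.e.\ $(X,r)$ is left multipermutation of level at most $n-1$. Then apply Lemma~\ref{lemx} with $n-1$ in place of $n$, which gives $G(X,r)^{(n+1)}=0$.

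The base case $n=1$ deserves a separate check, since there the level-$0$ condition in Proposition~\ref{propmult} literally means $X$ is a singleton, which a class-$1$ solution need not be. I would therefore treat $n=1$ directly. If $\lambda_x=\id$ for all $x\in X$, then the $\lambda$-maps of $G(X,r)$ fix the additive generators $\iota(X)$, so they are trivial, and $G(X,r)$ is a trivial skew brace. This is the same argument as in Lemma~\ref{lemmaybecanberemoved}.

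That same concern about the bottom of the induction is the main obstacle for general $n$ as well: both Proposition~\ref{propmult} and Lemma~\ref{lemx} reduce at their base to a singleton, and this is too strong. So I would not rely on them blindly. Instead I would rerun the induction of Lemma~\ref{lemx} directly in terms of right nilpotency of solutions, with base case ``class at most $1$ implies a trivial structure skew brace'' as above. The inductive step goes as follows.
\begin{itemize}
\item From the second half of the proof of Proposition~\ref{propmult}, $\operatorname{Ret}_l(X,r)$ lies in $\mathcal{RNS}_{n-1}$.
\item By induction, $G(\operatorname{Ret}_l(X,r))$ is right nilpotent of class at most $n-1$.
\item The kernel $I$ of $\Phi\colon G(X,r)\to G(\operatorname{Ret}_l(X,r))$ lies in $\ker\lambda$, exactly as in the proof of Lemma~\ref{lemx}, using Lemma~\ref{lemmaybecanberemoved}.
\item Hence $G^{(n)}\subseteq I$, and so $G^{(n+1)}\subseteq I\ast G=0$.
\end{itemize}
The step I would verify most carefully is the third one, the identification $\ker\Phi=N\subseteq\ker\lambda$, but it is already established in the proof of Lemma~\ref{lemx}.
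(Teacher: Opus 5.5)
Your final argument is correct, but it is not the paper's route. The paper's proof is exactly your first paragraph: it composes the forward direction of Proposition~\ref{propmult} (class at most $n$ implies left multipermutation level at most $n-1$) with Lemma~\ref{lemx}. Your suspicion of that composition is justified, and in fact it breaks it, because that direction of Proposition~\ref{propmult} is off by one:
\begin{itemize}
\item The trivial solution on two points has all $\lambda_x=\id$, but its level is $1$, not $0$.
\item For any non-trivial skew brace $B$ of right nilpotency class $2$, we have $(B,r_B)\in\mathcal{RNS}_2$ by Lemma~\ref{eqsolskewbrace}. Yet $\lambda_0=\id\neq\lambda_b$ for some $b$, so $0\not\sim_l b$ and the level is not $\leq 1$.
\end{itemize}
What that induction actually yields is ``class at most $n$ implies level at most $n$''. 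Feeding this into Lemma~\ref{lemx} only gives $G(X,r)^{(n+2)}=0$.

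One correction to your diagnosis: Lemma~\ref{lemx} is fine as stated, since its singleton base case is a hypothesis, not a conclusion. The defect is only the base case ``for $n=1$ it is clear'' in the converse half of Proposition~\ref{propmult}.

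The ingredients you reuse are sound:
\begin{itemize}
\item the inductive step of that converse half, namely $\operatorname{Ret}_l(X,r)\in\mathcal{RNS}_{n-1}$ for $(X,r)\in\mathcal{RNS}_n$ with $n\geq 2$;
\item $\ker\Phi=N\subseteq\ker\lambda$, from the proof of Lemma~\ref{lemx}.
\end{itemize}
You run the induction on the solution class, with base case ``$\lambda_x=\id$ on $X$ forces $G(X,r)$ to be trivial''. This holds because each $\lambda_{\iota(x)}$ fixes the additive generators $\iota(X)$, and then every $\lambda_g$ is trivial since $\iota(X)$ generates $(G,\circ)$. This avoids the multipermutation level altogether and gives the sharp bound $G^{(n+1)}=0$.

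The paper's route would buy a conceptual factorisation through the left retraction level. With the indices as they stand, however, it does not prove the lemma; yours does.
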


\begin{proof}
The case $n=1$ is clear.  Assume now that $n>1$. Since $(X,r)$ belongs to~$\mathcal{RNS}_n$, \cref{propmult} gives that $(X,r)$ is left multipermutation of class at most $n-1$. By Lemma~\ref{lemx}, we obtain that $G(X,r)$
is right nilpotent of class at most $(n-1)+1=n$.
This completes the proof.
\end{proof}

\begin{pro}
The category $\mathcal{RNS}_n$ forms an equational class. In particular, it is
a subvariety of the variety of solutions.
\end{pro}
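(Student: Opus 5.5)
Since the variety of solutions is already presented in Theorem~\ref{solutions are variety} as $M_{\mathcal F}(\Sigma)$ with $\Sigma=\Sigma_{YB}\cup\Sigma_{nd}$, the plan is to add a further set of identities $\Sigma_n$ encoding right nilpotency of class at most $n$, and to show that $\mathcal{RNS}_n=M_{\mathcal F}(\Sigma\cup\Sigma_n)$. Birkhoff's Theorem~\ref{birkoff} then gives that it is a variety, and it is a subvariety because $\Sigma\subseteq\Sigma\cup\Sigma_n$. The subtlety is that the defining condition quantifies over $m\in W(X)^{\times(n-1)}$, i.e.\ over elements of a monoid of functions, and equality of maps $\lambda^\alpha_{m,v}=\lambda^{\mathfrak l\alpha}_{\mathfrak lm,m_2\cdot\mathfrak lv}$ is an equality of functions. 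Neither is directly a first-order identity, so both quantifications have to be turned into quantifications over elements of $X$.

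Equality of maps is handled by evaluating at a fresh variable $v_{n+1}$. For the monoid elements, note that every $m\in W(X)$ is a finite composite of generators $\lambda_z,\lambda_z^{-1},\rho_z,\rho_z^{-1},D_z$ with $z\in X$. Each generator applied to $x$ is a term in $x$ and an extra variable $z$, namely $\lambda(z,x)$, $\tilde\lambda(z,x)$, $\rho(x,z)$, $\tilde\rho(x,z)$ and $\rho(z,x)$ respectively. So for every word $w$ in these five generator symbols, of any length $\ell\ge0$, there is a term $t_w(x;z_1,\dots,z_\ell)$ such that $m\cdot x=t_w^X(x;z_1,\dots,z_\ell)$, and every $m\in W(X)$ arises this way. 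Next, using the recursive definition of $\lambda^\alpha_{m,v}$, I will define by recursion a term $L^{\alpha}_{w_2,\dots,w_k}(v_1,\dots,v_k;\vec z;u)$ representing $\lambda^\alpha_{m,v}(u)$. The exponent $-1$ is realised by $\tilde\lambda$, and the entry $m_k\cdot(\ldots)$ by substituting into $t_{w_k}$ with its own fresh block of $z$-variables.

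For each $\alpha\in\{\pm1\}^{\times n}$ and each tuple of words $(w_2,\dots,w_n)$, the set $\Sigma_n$ then contains the identity equating $L^\alpha_{w_2,\dots,w_n}(v_1,\dots,v_n;\vec z;u)$ with the corresponding term for $\mathfrak l\alpha$, $(w_3,\dots,w_n)$ and $(t_{w_2}(v_2;\vec z),v_3,\dots,v_n)$. The variables used are the shared $z$-blocks for $w_3,\dots,w_n$ and the block for $w_2$. For $n=1$ the identity is simply $\lambda(v_1,u)\approx u$. The set $\Sigma_n$ is infinite, which is allowed, and it uses countably many variables; if one insists on a fixed variable set, one simply enlarges $V$.

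Finally I check the two inclusions. If $(X,r)\in\mathcal{RNS}_n$, then any assignment of the variables produces some $v$, some $m\in W(X)^{\times(n-1)}$ and some $u$, so the defining equality of maps, evaluated at $u$, gives the identity. Conversely, if the identities hold, then given $\alpha$, $m$, $v$, write each $m_i$ as a word $w_i$ with parameters $\vec z$. The identity evaluated at every $u$ gives equality of the maps. The main point requiring care, and the main obstacle, is checking that the term-level recursion genuinely matches the definition of $\lambda^\alpha_{m,v}$ with the action convention $m\cdot(v_1,\dots)=(mv_1,\dots)$. I will verify this by induction on $k$. Surjectivity of the words onto $W(X)$ holds because $P(X)$ is generated as a group, hence as a monoid, by the $\lambda_z^{\pm1}$ and $\rho_z^{\pm1}$.
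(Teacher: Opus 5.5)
Your proof is correct and follows the paper's route: unary terms for the elements of $W(X)$ with auxiliary variables, a recursively defined term for $\lambda^{\alpha}_{m,v}$ evaluated at a fresh variable, and one identity for each sign tuple and each tuple of monoid terms, followed by Birkhoff's theorem. Your generator terms $\lambda(z,x)$, $\widetilde\lambda(z,x)$, $\rho(x,z)$, $\widetilde\rho(x,z)$, $\rho(z,x)$ are in fact the accurate ones under the convention $\rho(x,y)=\rho_y(x)$. The paper's closure rule instead includes $\widetilde\rho(t,M(z))=\rho^{-1}_{M(z)}(t)$, which is not a generator of $W(X)$, and omits $\widetilde\rho(M(z),t)$, which gives $\rho_t^{-1}$.
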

\begin{proof}
We use the same signature $\mathcal F=
\{\lambda,\rho,
\widetilde{\lambda},\widetilde{\rho}\}$
as in the proof of \cref{solutions are variety}, so the operations
$\widetilde{\lambda}$ and $\widetilde{\rho}$ encode the inverse maps $\lambda_x^{-1}$ and $\rho_x^{-1}$.

Let $\mathcal W$ be the smallest set of unary $\mathcal F$-terms in the
variable $z$ such that $z\in\mathcal W$ and such that, whenever
$M(z)\in\mathcal W$ and $t$ is a variable, the terms
\[
\lambda(t,M(z)),\quad
\widetilde{\lambda}(t,M(z)),\quad
\rho(t,M(z)),\quad
\widetilde{\rho}(t,M(z)),\quad
\rho(M(z),t)
\]
also belong to $\mathcal W$. In every solution $(X,r)$, after assigning values
to the auxiliary variables, the interpretations of the terms in~$\mathcal W$
are precisely the elements of the monoid $W(X)$.
Here the last term $\rho(M(z),t)$ represents the map
$D_t$, since $D_t(x)=\rho_x(t)$.

If $M(z)\in\mathcal W$ is a unary term and $T$ is another
term, we write $M(T)$ for the term obtained from $M(z)$ by substituting $T$ for
the distinguished variable $z$.

For every integer $k\geq 1$, every
\[
\alpha=(\alpha_1,\dots,\alpha_k)\in\{\pm1\}^{\times k},
\]
every
\[
v=(v_1,\dots,v_k),
\]
and every tuple of unary terms
\[
M=(M_2,\dots,M_k)\in\mathcal W^{\times(k-1)},
\]
we define recursively a term $\Lambda_{M,v}^{\alpha}(z)$ which represents the value of the map $\lambda_{m,v}^{\alpha}$ at $z$.
Here $v_1,\dots,v_k,z$, together with the auxiliary
variables occurring in the terms $M_i$, are variables of the equational
language.

For $k=1$, we put
\[
\Lambda_{\emptyset,(v_1)}^{(\alpha_1)}(z)
=
\lambda^{\alpha_1}(v_1,z),
\]
where
\[
\lambda^{1}=\lambda,
\qquad
\lambda^{-1}=\widetilde{\lambda}.
\]
For $k>1$, we put
\[
\Lambda_{M,v}^{\alpha}(z)
=
\lambda^{\alpha_k}
\left(
M_k\bigl(
\Lambda_{\mathfrak r M,\mathfrak r v}^{\mathfrak r\alpha}(v_k)
\bigr),
z
\right).
\]
This is an $\mathcal F$-term, and its interpretation in a solution is exactly $\lambda_{m,v}^{\alpha}(z)$.

Now, for $n=1$, the defining condition of $\mathcal{RNS}_1$ is simply $\lambda_x=\id_X$ for every $x\in X$, and this is encoded by the identity $\lambda(v_1,z)\approx z$.

Assume now that $n\geq 2$. By definition, a solution belongs to
$\mathcal{RNS}_n$ if and only if $\lambda_{m,v}^{\alpha}
=
\lambda_{\mathfrak l m,\,m_2\cdot\mathfrak l v}^{\mathfrak l\alpha}$
for every $\alpha\in\{\pm1\}^{\times n}$, $v=(v_1,\dots,v_n)\in X^{\times n}$, and every $m=(m_2,\dots,m_n)\in W(X)^{\times(n-1)}$.
Equivalently, for every tuple of unary terms
\[
M=(M_2,\dots,M_n)\in\mathcal W^{\times(n-1)},
\]
we impose the identity
\[
\Lambda_{M,v}^{\alpha}(z)
\approx
\Lambda_{\mathfrak l M,\,M_2\cdot\mathfrak l v}^{\mathfrak l\alpha}(z),
\]
where
\[
M_2\cdot\mathfrak l v
=
\bigl(M_2(v_2),v_3,\dots,v_n\bigr).
\]

This is an identity of $\mathcal F$-terms in the variables
$v_1,\dots,v_n,z$ and in the auxiliary variables occurring in the terms
$M_2,\dots,M_n$.

Therefore the defining conditions of $\mathcal{RNS}_n$ are identities in the
signature $\mathcal F$.

Hence the class of solutions in $\mathcal{RNS}_n$ is axiomatized by the
identities $\Sigma$ defining solutions, together with the above family of
identities. Thus $\mathcal{RNS}_n$ is an equational class, and hence a
subvariety of the variety of solutions.
\end{proof}

A direct consequence is that the category $\mathcal{RNS}_n$ admits free objects (as already stated, we denote by $\FSol_{\mathcal{RNS}_n, X}$ the free object on $X$ in the category $\mathcal{RNS}_n$). In the following theorem, we show that the structure group of the free solution $\FSol_{\mathcal{RNS}_n, X}$ on a set $X$  is the free skew brace on $X$ in the category of skew braces that are right nilpotent of class at most $n$.

\begin{thm}\label{StrGrpFSBn}
    Let $X$ be a non-empty set and let $(\FSol_{\mathcal{RNS}_n, X}, r_{\FSol_{\mathcal{RNS}_n, X}})$ 
    be the free solution 
    on $X$ in the category $\mathcal{RNS}_n$ with inclusion map $i:X\to \FSol_{\mathcal{RNS}_n, X}$. Then its structure skew brace 
     $G(\FSol_{\mathcal{RNS}_n, X},r_{\FSol_{\mathcal{RNS}_n, X}})$ with ca\-nonical 
     map $g_{\FSol_{\mathcal{RNS}_n, X}}\circ i:X\to G(\FSol_{\mathcal{RNS}_n, X}, r_{\FSol_{\mathcal{RNS}_n, X}})$ 
    is isomorphic to the free right nilpotent skew brace of class at most $n$ on $X$.   
\end{thm}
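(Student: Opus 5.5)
The plan is to mimic the adjunction argument of Theorem~\ref{freesolutionisfreeskew}, restricted to the subcategories $\mathcal{RNS}_n$ and $\mathcal{RN}_n$. We will show that the functor $G$, restricted to $\mathcal{RNS}_n$, lands in $\mathcal{RN}_n$ and is left adjoint to $S$ restricted to $\mathcal{RN}_n$. Composing with the free-forgetful adjunction $\mathcal{S}et\rightleftarrows\mathcal{RNS}_n$ then exhibits $G\circ F_{\mathcal{RNS}_n}$ as left adjoint to the forgetful functor $\mathcal{RN}_n\to\mathcal{S}et$. This forgetful functor equals $U_{\mathcal{RNS}_n}\circ S$, since $S$ does not change the underlying set. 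Uniqueness of left adjoints then identifies $G(\FSol_{\mathcal{RNS}_n,X})$, together with $g\circ i$, with the free object of $\mathcal{RN}_n$ on $X$.

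Both functors restrict correctly. By Lemma~\ref{lemmmino-class-n}, every $(X,r)\in\mathcal{RNS}_n$ has $G(X,r)\in\mathcal{RN}_n$. By Lemma~\ref{eqsolskewbrace}, every $B\in\mathcal{RN}_n$ gives $(B,r_B)\in\mathcal{RNS}_n$. Both subcategories are full, so the natural bijection
\[
\Hom_{\mathcal{B}}(G(Y,s),B)\cong\Hom_{\mathcal{S}ol}((Y,s),(B,r_B))
\]
coming from Proposition~\ref{univ prop structure sb} restricts verbatim to a natural bijection $\Hom_{\mathcal{RN}_n}(G(Y,s),B)\cong\Hom_{\mathcal{RNS}_n}((Y,s),(B,r_B))$.

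For a hands-on check that avoids the adjunction formalism, we verify the universal property directly. Let $B\in\mathcal{RN}_n$ and let $f\colon X\to B$ be a map of sets. Since $(B,r_B)\in\mathcal{RNS}_n$, freeness of $\FSol_{\mathcal{RNS}_n,X}$ gives a unique morphism of solutions $\tilde f$ with $\tilde f\circ i=f$. Proposition~\ref{univ prop structure sb} then gives a skew brace morphism $G(\tilde f)$ with $G(\tilde f)\circ g=\tilde f$, and hence $G(\tilde f)\circ g\circ i=f$.

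For uniqueness, suppose $\varphi,\psi\colon G(\FSol_{\mathcal{RNS}_n,X})\to B$ are skew brace morphisms agreeing on $g(i(X))$. Then $\varphi\circ g$ and $\psi\circ g$ are morphisms of solutions $\FSol_{\mathcal{RNS}_n,X}\to(B,r_B)$, because a skew brace morphism preserves $r_B$. They agree on $i(X)$, so freeness gives $\varphi\circ g=\psi\circ g$. The uniqueness in Proposition~\ref{univ prop structure sb} then forces $\varphi=\psi$.

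The only substantive input is that $G(\FSol_{\mathcal{RNS}_n,X})$ actually lies in $\mathcal{RN}_n$. That is exactly Lemma~\ref{lemmmino-class-n}, which rests on Proposition~\ref{propmult} and Lemma~\ref{lemx}, so once those are granted no real obstacle remains.
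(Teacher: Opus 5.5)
Your proof is correct and is essentially the paper's own argument: existence comes from freeness of $\FSol_{\mathcal{RNS}_n,X}$ (applicable because $(B,r_B)\in\mathcal{RNS}_n$ by Lemma~\ref{eqsolskewbrace}) together with Proposition~\ref{univ prop structure sb}, uniqueness comes from precomposing with $g$, and Lemma~\ref{lemmmino-class-n} places $G(\FSol_{\mathcal{RNS}_n,X})$ in $\mathcal{RN}_n$; the adjunction language merely repackages this. One caution about your closing remark: Proposition~\ref{propmult} is off by one in the direction used for Lemma~\ref{lemmmino-class-n} (the solution $(B,r_B)$ of any skew brace of right nilpotency class exactly $2$ lies in $\mathcal{RNS}_2$, yet $0\not\sim_l a$ whenever $\lambda_a\neq\id$, so it is not of level at most $1$), but the lemma itself still holds by inducting directly on the class---$\operatorname{Ret}_l$ of a solution of class at most $n$ has class at most $n-1$ (as shown inside the proof of Proposition~\ref{propmult}), and the kernel of $G(X,r)\to G(\operatorname{Ret}_l(X,r))$ lies in $\ker(\lambda)$ (as in Lemma~\ref{lemx}), whence $G(X,r)^{(n)}\subseteq\ker(\lambda)$---so your argument is unaffected.
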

\begin{proof}
Let $(F,r_F)=(\FSol_{\mathcal{RNS}_n, X}, r_{\FSol_{\mathcal{RNS}_n, X}})$ and let $B$ be a skew brace that is right nilpotent of class at most $n$ with a map $f : X \to B$. By Lemma~\ref{lemmmino-class-n}, the structure skew brace $G(F,r_F)$ is right nilpotent of class at most $n$.
The skew brace $B$ gives rise to a solution $(B,r_B)$, which belongs to $\mathcal{RNS}_n$ by~Lem\-ma~\ref{eqsolskewbrace}.
By the freeness of $F$, there exists a unique morphism of solutions
\[
\varphi : (F,r_F) \to (B,r_B)
\]
such that $\varphi\circ i= f$.
By the universal property of the structure skew brace,
there exists a unique skew brace homomorphism 
$G(\varphi): G(F,r_F) \to B$
such that 
\[
G(\varphi)\circ g_F=\varphi.
\]
So 
\[
G(\varphi)\circ (g_F\circ i)=\varphi\circ i=f.
\]
Moreover, if $\psi : G(F,r_F) \to B$ is another skew brace homomorphism such that $\psi\circ (g_F\circ i)= f$,
then $\psi\circ g_F = \varphi$, by the freeness of $(F,r_F)$.
Hence $\psi\circ g_F = G(\varphi)\circ g_F$ and by the universal property of the structure skew brace, $\psi=G(\varphi)$.

Therefore, $G(F,r_F)$ satisfies the universal property of the free skew brace on $X$ in $\mathcal{RN}_n$, so 
\(
G(F,r_F) \simeq \FSB_{\mathcal{RN}_n,X},
\)
which proves the statement.
\end{proof}

Under the natural correspondence between skew braces and solutions, the following remark shows that freeness is inherited only in the direction as described in \cref{freesolutionisfreeskew} and \cref{StrGrpFSBn}: the free skew brace is too large to serve as the free solution.

\begin{rem} 
\label{freeskewbracenotfreesolution}
Let $X$ be a non-empty set.
Then the associated solutions to
$\FSB_X$ and $\FSB_{\mathcal{RN}_n,X}$ are both not 
the free solutions in the respective category.
Indeed, we can consider
the trivial solution 
$(T,r_T)$ on $T=\{0,1\}$ and 
construct two different morphism
of solutions
$f_0,f_1:\FSB_X\to T$ (or $f_0,f_1:\mathcal{FSB}_{\mathcal{RN}_n,X}\to T$ )
that extend
the map $\alpha:X\to T$, $x\mapsto 0$.
Namely $f_0:b\mapsto 0$ and $f_1:b\mapsto
\begin{cases}
1 & \text{if } b=0,\\
0 & \text{if } b\neq 0
\end{cases}$.\\
One can state this in the following more general context.
Let $\mathcal C$ be a category of solutions containing the trivial solution on $T=\{ 0,1\}$. Let $B$ be a skew brace such that $(B,r_B)$ belongs to~$\mathcal C$, and let $\iota\colon X\to B$ be a map such that 
$0\notin \iota(X)$. Then $(B,r_B)$, together with~$\iota$, 
is not the free solution on $X$ in~$\mathcal C$. 
\end{rem}

Using the explicit description of $\FSB_{\mathcal{RN}_{n},X}$, as a structure skew brace, together with \cref{free injective in structure brace}, we can now derive a concrete description of the free solution in $\mathcal{RN}_{n,\mathrm{Inj}}$
in this case.
Indeed, 
by \cref{StrGrpFSBn}, $\FSB_{\mathcal{RN}_{n},X}$
is isomorphic to $G\left(\FSol_{\mathcal{RN}_{n},X},r_{\FSol_{\mathcal{RN}_{n},X}}\right)$.
Moreover, by \cref{free injective in structure brace}, we obtain the following.

\begin{cor}\label{freerightnilpotentsoldescriptinj}
Let $X$ be a non-empty set. Then,
    $(\widetilde{X}_{\mathcal{RN}_{n}}, \widetilde{r})$,
with $\widetilde{r}$ the restriction of $r_{G\left(FSol_{\mathcal{RN}_{n},X}\right)}$ on 
\[
\widetilde{X}_{\mathcal{RN}_{n}}=\{a+\lambda_b(x_0)-a\mid a,b\in G\left(\FSol_{\mathcal{RN}_{n},X},r_{\FSol_{\mathcal{RN}_{n},X}}\right)\} 
\]
is the free solution in $\mathcal{RN}_{n,\Inj}$ on $X$.
\end{cor}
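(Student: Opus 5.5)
The corollary follows by specialising \cref{free injective in structure brace} to the equational class $\mathcal E=\mathcal{RNS}_n$ and then rewriting the resulting subset using the specific structure of this class. I will read $\mathcal{RN}_n$ in the statement as the solution category $\mathcal{RNS}_n$, and $x_0$ as ranging over $X$.

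First, $\mathcal{RNS}_n$ is an equational class by the proposition preceding \cref{StrGrpFSBn}. So \cref{free injective in structure brace} applies directly. It shows that
\[
\widetilde X_{\mathcal{RNS}_n}=\{a+\lambda_b(x)-a\mid x\in X,\ a,b\in G(\FSol_{\mathcal{RNS}_n,X},r)\}
\]
is a subsolution of $G(\FSol_{\mathcal{RNS}_n,X},r)$, equal to $\Inj(\FSol_{\mathcal{RNS}_n,X})$. By \cref{injectivisation}, it is the free object of $\mathcal{RNS}_{n,\Inj}$ on the image of $X$.

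Second, I need the basis to be in bijection with $X$. By the second part of \cref{injectivisation}, it suffices to exhibit an injective solution with at least two elements in $\mathcal{RNS}_n$. The trivial solution on $\{0,1\}$ has $\lambda=\id$, so it lies in $\mathcal{RNS}_1\subseteq\mathcal{RNS}_n$. It is injective, since it is the solution $(B,r_B)$ of the trivial skew brace $\Triv(\mathbb Z/2)$ restricted to a generating set, and distinct points stay distinct in $G$ because $G$ maps onto $\mathbb Z/2$. Hence $X\to\widetilde X$ is injective.

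Third, I would use \cref{StrGrpFSBn} to identify $G(\FSol_{\mathcal{RNS}_n,X},r)$ with $\FSB_{\mathcal{RN}_n,X}$, so that $\widetilde r$ is the restriction of $r_B$ for $B$ the free right nilpotent skew brace. This lets the explicit construction of Theorem C be used to describe the elements concretely.

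The only subtle point is checking that the class really contains an injective two-element solution, i.e.\ that the hypothesis of \cref{injectivisation} is met. Everything else is direct substitution into already-proved results.
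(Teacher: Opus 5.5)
Your proposal is correct and follows the paper's own argument: specialise \cref{free injective in structure brace} to the equational class $\mathcal{RNS}_n$, and use \cref{StrGrpFSBn} to identify the structure skew brace with $\FSB_{\mathcal{RN}_n,X}$. Your extra check that $X$ embeds in $\widetilde{X}$ is also correct. It uses the trivial two-element solution and the second part of \cref{injectivisation}, a detail the paper leaves implicit.
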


\medskip

We end this section with a result on free braces that are right nilpotent of class at most $n$.

\begin{pro}\label{propertieskrnn-general-X}
Let $n\geq 1$, let $X$ be a non-empty set, let
$B=\FSB_{\mathcal{RN}_{n},X}$, and let $C$ be
the ideal of $B$ generated by $[B,B]_+$. Denote by $[a]$ the class of an
element $a\in B$ in $B/C$. Then\textnormal:
\begin{enumerate}
\item[\textnormal{(1)}] The skew brace $B/C$ is the free brace on $X$ in the category 
of braces  that are right nilpotent of  class at most
$n$, i.e. we have $B/C=\FB_{\mathcal{RN}_n,X}$.
\item[\textnormal{(2)}] The subset
\[
\widetilde X=\{\, [\lambda_b(x)]\mid x\in X,\ b\in B\,\}
\]
is a subsolution of the associated solution of $B/C$, and it is the free
injective solution on $X$ in the full subcategory of involutive solutions in
$\mathcal{RNS}_{n}$.
\item[\textnormal{(3)}] The structure skew brace $G(\widetilde X,r_{B/C}|_{\widetilde X})$ is the
free brace on $X$ in the category  of  braces that are
right nilpotent of class at most~$n$. Hence
\[
G(\widetilde X,r_{B/C}|_{\widetilde X})\cong B/C.
\]
\end{enumerate}
\end{pro}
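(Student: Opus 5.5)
For (1), I will verify the universal property of $B/C$ directly. First, $C$ is an ideal and right nilpotency of class at most $n$ passes to quotients, since $(B/I)^{(k)}$ is the image of $B^{(k)}$. Hence $B/C$ is a brace, because its additive group is abelian, and it is right nilpotent of class at most $n$. Next, let $A$ be a brace in $\mathcal{RN}_n$ and let $f\colon X\to A$ be a map. Freeness of $B$ gives a unique skew brace morphism $\varphi\colon B\to A$ extending $f$. Since $(A,+)$ is abelian, $[B,B]_+\subseteq\ker\varphi$. Because $\ker\varphi$ is an ideal, the ideal $C$ generated by $[B,B]_+$ is also contained in it, so $\varphi$ factors through $B/C$. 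Uniqueness of the factorisation holds because $B/C$ is generated as a skew brace by the image of $X$. Finally, $X\to B/C$ is injective: the map $X\to\Triv(\Z^{(X)})$ sending $x$ to a basis vector separates the points of $X$, and this trivial brace lies in $\mathcal{RN}_1\subseteq\mathcal{RN}_n$.

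For (2), I will apply \cref{free injective in structure brace} to the equational class $\mathcal E$ of involutive solutions in $\mathcal{RNS}_n$. This class is equational because involutivity, $r^2=\id$, is a pair of identities in $\lambda,\rho$. The proposition then identifies the free injective solution of $\mathcal E$ with $\{a+\lambda_b(x)-a\}$ inside $G(\FSol_{\mathcal E,X})$. Since the additive group is abelian there, this set reduces to $\{\lambda_b(x)\}$, exactly as in \cref{free injective in G}. Involutive solutions are injective by \cite{StrGr_LebVen}, so $\mathcal E_{\Inj}=\mathcal E$.

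It therefore remains to show that $G(\FSol_{\mathcal E,X})\cong B/C$ compatibly with $X$. I will copy the proof of \cref{StrGrpFSBn}, using three facts. First, $G(\FSol_{\mathcal E,X})$ is of abelian type, because its solution is involutive, and it is in $\mathcal{RN}_n$ by \cref{lemmmino-class-n}. Second, for any brace $A\in\mathcal{RN}_n$, the solution $(A,r_A)$ is involutive and lies in $\mathcal{RNS}_n$ by \cref{eqsolskewbrace}. Third, the universal property of the structure skew brace holds. Together these give that $G(\FSol_{\mathcal E,X})$ is free in the category of braces in $\mathcal{RN}_n$, so by (1) it is isomorphic to $B/C$. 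Under this isomorphism $\{\lambda_b(x)\}$ corresponds to $\widetilde X$, and the surjectivity of $b\mapsto[b]$ ensures that $b$ ranges over all of $B/C$. In particular $\widetilde X$ is a subsolution, being stable under $\lambda$, and it is free injective in $\mathcal E$ on $X$.

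For (3), I will use that $\widetilde X$ is the free object of $\mathcal E$, because $\mathcal E$ consists of injective solutions. The argument of (2) shows that the structure brace of this free object is the free brace in $\mathcal{RN}_n$ on $X$, and (1) identifies the latter with $B/C$. The main obstacle I expect is the bookkeeping identification in (2): checking that the isomorphism $G(\FSol_{\mathcal E,X})\cong B/C$ carries $\widetilde X_{\mathcal E}$ onto $\widetilde X$, and that $\mathcal E$ is genuinely an equational subclass, so that \cref{injectivisation} applies.
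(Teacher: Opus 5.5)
Your proposal is correct and follows essentially the same route as the paper. Part (1) factors the unique morphism $B\to A$ through $B/C$ because $C\subseteq\ker$. Part (2) reruns the argument of \cref{StrGrpFSBn} in the involutive subclass of $\mathcal{RNS}_n$, uses (1), and applies \cref{free injective in structure brace}, where abelianness collapses the additive conjugates. Part (3) then follows from (1) and (2). The extra checks you include are points the paper leaves implicit, and they are sound: the involutive subclass is equational, $\mathcal E_{\Inj}=\mathcal E$, and $X\to B/C$ is injective.
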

\begin{proof}
We first prove (1). Since $C$ contains $[B,B]_+$, the additive group of the skew brace  $B/C$
is abelian. Moreover, since $B$ is right nilpotent of class at most
$n$, also
$B/C$ is right nilpotent of class at most $n$. Thus $B/C$
belongs to the class
of skew braces of abelian type (i.e. braces)  that are right nilpotent of  class at most
$n$.

Let $A$ be a brace that is right nilpotent  of  class at most
$n$, and let $f:X\to A$ be a map. Since
$A\in\mathcal{RN}_{n}$, by the freeness
of $B=\FSB_{\mathcal{RN}_{n},X}$ there exists a unique
skew brace homomorphism
$\widehat f:B\to A$ extending $f$. Since $(A,+)$ is abelian, we have
$\widehat f([B,B]_+)=0$. Hence the ideal generated by $[B,B]_+$ is contained
in $\ker(\widehat f)$, that is, $C\subseteq\ker(\widehat f)$. Therefore
$\widehat f$ factors uniquely through~$B/C$. This proves that $B/C$ is the
free brace on $X$ in the class of braces that are right nilpotent of class at
most $n$.

We now prove (2). Let $\mathcal{IRNS}_{n}$ be the full
subcategory of
$\mathcal{RNS}_{n}$ consisting of involutive solutions. 
The same argument as
in the proof of \cref{StrGrpFSBn}, but applied to the category
$\mathcal{IRNS}_{n}$, the structure
skew brace of the free solution in $\mathcal{IRNS}_{n}$ on
$X$ is the free skew
brace on $X$ in the class of skew braces of abelian type and of right
nilpotency class at most $n$. By (1), this skew brace is
$B/C$.

Now apply \cref{free injective in structure brace} to the category
$\mathcal{IRNS}_{n}$. The free injective solution is the
subsolution of $B/C$
given by
\[
\{\, [a]+ \lambda_{[b]}([x])-[a]\mid x\in X,\ a,b\in B\,\}.
\]
Since the additive group of $B/C$ is abelian, this set is equal to
\[
\{\, \lambda_{[b]}([x])\mid x\in X,\ b\in B\,\}
=
\{\, [\lambda_b(x)]\mid x\in X,\ b\in B\,\}
=
\widetilde X.
\]
Therefore $\widetilde X$ is a subsolution of the associated solution of $B/C$,
and it is the free injective solution on $X$ in
$\mathcal{IRNS}_{n}$.

Finally, (3) follows from (1) and (2). Indeed, by the universal property of the
structure skew brace, $G(\widetilde X,r_{B/C}|_{\widetilde X})$ is the free
brace on $X$ in the category of  right nilpotent braces that are of class  at most $n$. By (1), the same universal
property is satisfied
by $B/C$. Hence
\[
G(\widetilde X,r_{B/C}|_{\widetilde X})\cong B/C.
\]
The statement is proved.
\end{proof}

\section{Free right nilpotent skew brace of class \texorpdfstring{$n$}{n}}\label{sec: right nilp}

In this section we give an explicit construction  of the free object $\FSB_{\mathcal{RN}_n,X}$ on a non-empty set $X$ in the category $\mathcal{RN}_n$ of right nilpotent skew braces of right nilpotent class at most $n$. This explicit construction shows, in particular, that the additive and multiplicative groups of such a free object are free groups, and provides a great deal of information in the case of right nilpotency class at most $2$ (see \cref{rightnilclass2}).

Before outlining the structure of this section we need to introduce some notations. In particular, several  free group and monoid structures defined on the same underlying set will be considered.
To distinguish between these structures, we use the following notation: 
we denote by $\Fg_{\star}(T)$ (resp., $\Fm_\star(T)$) the free group (resp. the free monoid) on $T$ whose operation is written as $\star$. 
Let $Y$ be a set. Then we write $\pm Y$ for the set $Y \cup -Y$, where $-Y$ is a disjoint copy of $Y$, whose elements are elements of $Y$ with a $-$ prefix. To ease formulas later on, we make the convention that for $x,y \in Y$ we set $-(-x)=x$ and $-(x+y)=-y-x$.
Similarly, we denote $Y^{\pm}$ for the set $Y \cup \overline{Y}$, where $\overline{Y}$ is a disjoint copy of $Y$, whose elements are elements of $Y$ with an overline. We make the convention that if $x, y \in Y$, then $\overline{\overline{x}}=x$ and $\overline{(x\circ y)}=\overline{y}\circ\overline{x}$.

The outline of the section will consist of subsections, numbered 4.1-4.5.
\begin{enumerate}
\item[(4.1)] We recursively introduce the sets $A_n$, with $A_1=X$, and we define a homomorphism 
$\Fm_\circ(A_n^{\pm})\rightarrow \End(\Fm_+(\pm A_n))$.
\item[(4.2)] We define a map $\varphi^{(n)}:\Fm_+(\pm A_n) \rightarrow \Fm_{\circ}(A_n^{\pm})$. 
\item[(4.3)]  We show that we  obtain an induced mapping
$\varphi^{(n)}\colon \Fg_{+}(A_n) \rightarrow \Fg_{\circ}(A_n)$
and group homomorphism $\lambda\colon \Fg_{\circ}(A_n) \rightarrow \Aut(\Fg_+(A_n))$.
This requires a delicate proof. As a consequence we obtain that 
 the group $\Fg_{\circ}(A_n)$ acts via $\lambda$ on $\Fg_+(A_n)$.
 \item[(4.4)] Construction of a bijective $1$-cocycle $\pi^{(n)}\colon \Fg_{\circ}(A_n)\rightarrow \Fg_+(A_n)$. In order to do so we prove that $\varphi^{(n)}$ is the inverse map of $\pi^{(n)}$.
 \item[(4.5)] Construction of the free skew brace.  The  bijective $1$-cocycle $\pi^{(n)}$ yields  a skew  brace, denoted  $\FSB_{\mathcal{RN}_n,X}$, with additive group $\Fg_+(A_n)$ and multiplicative group $\Fg_{\circ}(A_n)$. This skew brace is generated by $X$ and is shown to be  right nilpotent of class $n$. We finish by proving it is the free object in the considered category.
\end{enumerate}

\subsection{The sets   \texorpdfstring{$A_n$ }{}  and  the map  \texorpdfstring{
$\lambda^{(n)}: \Fm_\circ(A_n^{\pm})\rightarrow \End(\Fm_+(\pm A_n))$}{}}

$\;\; $

\vspace{6pt}
Let $X$ be a non-empty set. 
We define the sets $X^{(i)}$ and $A_i$ for every positive integer $i$ recursively. 
Set $A_0=\emptyset$, $X^{(1)}=A_1=X$, and suppose we have already defined the sets $X^{(i)}\subseteq A_i$.
We wish to define $X^{(i+1)}$ and $A_{i+1}$.
First, define $F_{i+1}$ as the set of all elements $w$ of the free group $\Fg_{\circ}(A_i)$ whose reduced form ends with an element of $X^{(i)}\cup \overline{X^{(i)}}$ (here the elements with a bar take the role of the inverses),
so in particular $1\notin F_i$. Next, define 
$$X^{(i+1)} \text{ as the set of symbols } x^{(i+1)}_{f,t} \text{ with } f \in F_{i+1} \text{ and } t\in A_i$$ (this set is chosen to be disjoint from $A_i$), and then put~\hbox{$A_{i+1}=A_i \cup X^{(i+1)}$.} This defines $X^{(i)}$ and $A_i$ for every $i$ with the property that $X^{(i)}\subseteq A_i\subseteq A_{i+1}$.

Now, let $n\geq 2$ be an integer, and consider the free monoid $\Fm_+(\pm A_n)$ 
on the set $\pm A_n$ and the free monoid $\Fm_{\circ}(A_n^{\pm})$ on the set $A_{n}^{\pm}$.
For every~\hbox{$c\in A_n$,} we let $k(c)$ be the unique integer such that $c \in X^{(k(c))}$; also, we put $k(\overline{c})=k(-c)=k(c)$, where $\overline c\in \Fm_\circ(A_n^{\pm})$ and $-c\in \Fm_+(\pm A_n)$. 
We define for every~\hbox{$a \in A_{n-1}^{\pm}$} an endomorphism~$\lambda_a^{(n)}$ of the free monoid $\Fm_+(\pm A_n)$ as the extension of the following rule: 
 \begin{align*}
     \lambda_a^{(n)}(b) &= \begin{cases}
     x^{(k(a)+1)}_{a,b} + b, &\text{ if } k(a) \geq k(b),\\[0.2cm]
     x^{(k(b))}_{a\circ f,t}, &\text{ if } k(a) < k(b) \text{ and } b=x^{(k(b))}_{f,t} \text{ with }f \neq \overline{a},\\[0.2cm]
    - x^{(k(b))}_{a,t}, &\text{ if } k(a) < k(b) \text{ and } b=x^{(k(b))}_{f,t} \text{ with }f = \overline{a},
     \end{cases}\\
    \lambda^{(n)}_a(-b)&=-\lambda_a(b),
 \end{align*}
 for every $b\in A_n$,  where $\overline{a}$ denotes the inverse of $a$ in $\Fg_{\circ}(A_{k(a)})$. Note, for example, in the third case, that $b=x^{(k(b))}_{\overline a,t}$ implies $k(b)=k(a)+1$.

 For $a\in \big(X^{(n)}\big)^{\pm}$, we put $\lambda_a^{(n)}= \textup{id}_{\Fm_+(\pm A_n)}$ (this will ultimately allow us to obtain a skew brace of right nilpotency class $n$).

\smallskip

 (For the sake of notation, we will often omit the superscript $(n)$ when it is clear from the context.)

 \smallskip

Since, by definition,  $\Fm_\circ(A_n^{\pm})$ is a free monoid on $A_n\cup \overline{A_n}$, the assignment  we have  defined so far  extends to a monoid  homomorphism:
\[
\lambda^{(n)}\colon a\in \Fm_\circ(A_n^{\pm})\longmapsto \lambda_{a}^{(n)}\in\End(\Fm_+(\pm A_n)).
\] 

\begin{lem}\label{level of lamda images}
Let $1<k\leq n$. If $a\in A_n\cup \overline{A_n}$ and $w\in \Fm_+(\pm (A_n\setminus A_{k-1}))$, then 
\[
\lambda_a^{(n)}(w)\in \Fm_+(\pm (A_n\setminus A_{k-1})).
\]
\end{lem}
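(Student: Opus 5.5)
Since $\lambda_a^{(n)}$ is a monoid endomorphism of $\Fm_+(\pm A_n)$ satisfying $\lambda_a^{(n)}(-b)=-\lambda_a^{(n)}(b)$, the submonoid $\Fm_+(\pm (A_n\setminus A_{k-1}))$ is stable under $\lambda_a^{(n)}$ as soon as $\lambda_a^{(n)}(b)$ lies in it for every letter $b\in A_n\setminus A_{k-1}$. So the plan is to reduce to generators and check the defining cases one by one. The relevant letters are those $b$ with $k(b)\geq k$, since $A_{k-1}=X^{(1)}\cup\dots\cup X^{(k-1)}$.

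First I dispose of the trivial case. If $a\in (X^{(n)})^{\pm}$, then $\lambda_a^{(n)}$ is the identity and there is nothing to prove. Otherwise $a\in A_{n-1}^{\pm}$, and I fix a letter $b$ with $k(b)\geq k$. I then go through the three cases of the definition.

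\emph{Case $k(a)\geq k(b)$.} Here $\lambda_a(b)=x^{(k(a)+1)}_{a,b}+b$. The second summand is $b$ itself, which lies in $A_n\setminus A_{k-1}$ by assumption. The first summand lies in $X^{(k(a)+1)}$, and $k(a)+1>k(b)\geq k$, so it is not in $A_{k-1}$. Since $a\in A_{n-1}^{\pm}$, we also have $k(a)+1\leq n$, so this symbol indeed belongs to $A_n$.

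\emph{Case $k(a)<k(b)$ with $b=x^{(k(b))}_{f,t}$ and $f\neq\overline a$.} The image is $x^{(k(b))}_{a\circ f,t}$, which has the same level $k(b)\geq k$. The one point to verify is that this symbol is well defined, i.e.\ that $a\circ f\in F_{k(b)}$. Since $k(a)<k(b)$, the letter $a$ lives in $\Fg_\circ(A_{k(b)-1})$. Left multiplication by $a$ does not change the last letter of the reduced form of $f$ unless $f$ has length one and $f=\overline a$, and that situation is excluded. Hence the reduced word of $a\circ f$ still ends in $X^{(k(b)-1)}\cup\overline{X^{(k(b)-1)}}$.

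\emph{Case $f=\overline a$.} The image is $-x^{(k(b))}_{a,t}$, a negative letter of level $k(b)\geq k$. Here $a\in F_{k(b)}$ because $k(b)=k(a)+1$, so $a$ is a one-letter word ending in $X^{(k(a))}$.

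The main (mild) obstacle is the bookkeeping: one must confirm that each symbol produced by the definition is a genuine element of $A_n$, which requires checking membership in the appropriate $F_j$ and that the level never exceeds $n$. The level-preservation claim itself is immediate in every case, because the level of each output letter is either $k(b)$ or strictly larger than $k(b)$.
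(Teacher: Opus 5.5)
Your proposal is correct and is essentially the paper's proof: you verify the claim on letters $b\in A_n\setminus A_{k-1}$ by running through the cases of the definition ($a$ of level $n$, $k(a)<k(b)$, $k(a)\geq k(b)$), and then extend using that $\lambda_a^{(n)}$ is a monoid endomorphism compatible with formal negation. Your extra well-definedness checks ($a\circ f\in F_{k(b)}$, levels never exceeding $n$) are left implicit in the paper's definition; one small correction is that in the case $f=\overline a$ the letter $a$ may itself be barred, so it ends in $X^{(k(a))}\cup\overline{X^{(k(a))}}$ rather than in $X^{(k(a))}$, which is still what membership in $F_{k(a)+1}$ requires.
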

\begin{proof}
First, let $b\in A_n\setminus A_{k-1}$.
If $k(a)<k(b)$, then, by definition,
\[
\lambda_a(b),\,\lambda_{\overline{a}}(b),\, \lambda_a(-b),\,\lambda_{\overline{a}}(-b)\in \pm(A_n\setminus A_{k-1})
\]
and
$k(\lambda_a(b))=k(b)$. If $k(a)=n$, then $\lambda_a$ is the identity. If $n>k(a)\geq k(b)$, then \[
\begin{array}{c}
  \lambda_a(b)=x^{(k(a)+1)}_{a,b}+b,\quad  
  \lambda_a(-b)=-b-x^{(k(a)+1)}_{a,b},\quad
  \lambda_{\overline{a}}(b)=x^{(k(a)+1)}_{\overline{a},b}+b,\quad\textnormal{and}\\[0.25cm]
  \lambda_{\overline{a}}(-b)=-b-x^{(k(a)+1)}_{\overline{a},b}
\end{array}\] belong to $\Fm_+(\pm (A_n\setminus A_{k-1}))$. This proves the statement for an element of $A_n\setminus A_{k-1}$. The complete result now follows using the fact that $\lambda_a$
is a monoid homomorphism and that 
$\Fm_+(\pm (A_n\setminus A_{k-1}))$ is a submonoid of $\Fm_+( \pm A_n)$.   
\end{proof}

\begin{lem}\label{lambdapusheslevelupevenmore}
Let $1\leq k,l<n$ be integers, $a \in \Fm_{\circ}((A_n \setminus A_{l-1})^{\pm})$ and
\hbox{$c\in \pm X^{(k)}$.}
If $m=\max\{l,k\}$, then there exist $w,t\in \Fm_+(\pm (A_n\setminus A_m))$ and $c' \in \pm X^{(k)}$  such that $\lambda_a^{(n)}(c) = w+c'+t$. Moreover, if $l\geq k$, then one can take $c'=c$. 
\end{lem}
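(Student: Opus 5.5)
The plan is induction on the length of the word $a$ in the free monoid $\Fm_\circ((A_n\setminus A_{l-1})^{\pm})$, using that $\lambda^{(n)}$ is a monoid homomorphism. Write $a=a_1\circ a'$ with $a_1\in (A_n\setminus A_{l-1})^{\pm}$ a single letter, so that $\lambda_a(c)=\lambda_{a_1}(\lambda_{a'}(c))$. If $a$ is empty, then $\lambda_a(c)=c$ and we take $w=t=0$ (empty words) and $c'=c$.

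First settle the base case of a single letter $a_1$, with $k(a_1)\geq l$ (since $a_1\notin A_{l-1}^{\pm}$). Write $c=\pm b$ with $b\in X^{(k)}$. The cases follow directly from the definition of $\lambda^{(n)}$.
\begin{enumerate}
\item If $k(a_1)=n$, the map is the identity.
\item If $n>k(a_1)\geq k$, then $\lambda_{a_1}(b)=x^{(k(a_1)+1)}_{a_1,b}+b$ and $\lambda_{a_1}(-b)=-b-x^{(k(a_1)+1)}_{a_1,b}$. The new symbol has level $k(a_1)+1>\max\{l,k\}=m$, so it lies in $\pm(A_n\setminus A_m)$, and $c'=c$.
\item If $k(a_1)<k$, then $l\leq k(a_1)<k$, so $m=k$. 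Here $\lambda_{a_1}(\pm b)=\pm x^{(k)}_{a_1\circ f,t}$ or $\mp x^{(k)}_{a_1,t}$, which is a single element $c'\in\pm X^{(k)}$, and $w=t=0$.
\end{enumerate}
When $l\geq k$, case (3) cannot occur, since $k(a_1)\geq l\geq k$. Hence $c'=c$ in every case, which gives the ``moreover'' part for a single letter.

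For the inductive step, assume $\lambda_{a'}(c)=w'+c''+t'$ with $w',t'\in\Fm_+(\pm(A_n\setminus A_m))$ and $c''\in\pm X^{(k)}$. Then
\[
\lambda_{a}(c)=\lambda_{a_1}(w')+\lambda_{a_1}(c'')+\lambda_{a_1}(t').
\]
By \cref{level of lamda images}, applied with $k=m+1$ (valid since $m<n$ and $m+1\leq n$), the words $\lambda_{a_1}(w')$ and $\lambda_{a_1}(t')$ remain in $\Fm_+(\pm(A_n\setminus A_m))$. The base case applied to $a_1$ and $c''$ gives $\lambda_{a_1}(c'')=w_1+c'+t_1$ with $w_1,t_1\in\Fm_+(\pm(A_n\setminus A_m))$ and $c'\in\pm X^{(k)}$, with $c'=c''$ when $l\geq k$. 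We then set $w=\lambda_{a_1}(w')+w_1$ and $t=t_1+\lambda_{a_1}(t')$. Since $\Fm_+(\pm(A_n\setminus A_m))$ is a submonoid, both lie in it. The ``moreover'' part follows because $c''=c$ by induction and $c'=c''$.

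The only delicate point is the level bookkeeping in the base case. One must check that the newly created symbols $x^{(k(a_1)+1)}$ have level strictly greater than $m$, and in case (2) this uses $k(a_1)\geq\max\{l,k\}$. One must also check that case (3) preserves the level $k$ of $c$, which holds because the third defining clause forces $k(b)=k(a_1)+1$. Everything else is formal from the homomorphism property and \cref{level of lamda images}.
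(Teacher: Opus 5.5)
Your proof is correct and follows essentially the same route as the paper. Both handle a single letter $a_1$ by case analysis straight from the definition of $\lambda^{(n)}$, then extend to arbitrary words via the homomorphism property of $\lambda$ and \cref{level of lamda images} (applied with $m+1$ in place of $k$). Your first-letter induction on word length and your explicit tracking of $c'=c$ when $l\geq k$ are slightly more careful than the paper's general concatenation step, which leaves the ``moreover'' clause implicit.
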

\begin{proof}
    First, let $a\in (A_n\setminus A_{l-1})^{\pm}$.
    Suppose that $c\in X^{(k)}$.
    If $k(a)=n$, then we have nothing to prove.
    If $k(a)<n$, then $\lambda_a(c)\in \pm X^{(k)}$ for $k(a)< k$,
    or $\lambda_a(c)=x^{k(a)+1}_{a,c}+c$ for $k(a)\geq k$.
    In the former case, we can easily conclude with $\lambda_a(c)=c'$.
    In the latter case, $ x^{(k(a)+1)}_{a,c} \in X^{(k(a)+1)}\subseteq \Fm_+(\pm (A_n\setminus A_m))$, since $k(a)\geq m$ as one has that $k(a)\geq l$ and $k(a)\geq k$.
    So we can conclude with $\lambda_a(c)=w+c'$ 
    for $w=x^{k(a)+1}_{a,c}$ and $c'=c$.
    Now, if $c=-d$ for some $d\in X^{(k)}$, we can use the
    previous calculation for $d$ and obtain that there exist $w_1,t_1\in \Fg_+(A_n\setminus A_m)$ and $c_1 \in X^{(k)}\cup -X^{(k)}$  such that $\lambda_a^{(n)}(d) = w_1+c_1+t_1$.
    
    So $\lambda_a^{(n)}(c)=\lambda_a^{(n)}(-d) = -t_1-c_1-w_1$ and we can conclude 
    with $w=-t_1$, $c'=c_1$ and $t=-w_1$.

    Therefore, for every $a\in (A_n\setminus A_{l-1})^{\pm}$ and every $c\in \pm X^{(k)}$ there exist $w,t\in \Fm_+( \pm (A_n\setminus A_m))$ and $c' \in \pm X^{(k)}$  such that $\lambda_a^{(n)}(c) = w+c'+t$.

    Finally, we can conclude the same 
    for any $a\in \Fm_\circ((A_n \setminus A_{l-1})^{\pm})$ observing that if we have the thesis for two words $a_1,a_2\in \Fm_\circ((A_n \setminus A_{l-1})^{\pm})$, then we can also conclude for $a_1\circ a_2$.
    More precisely, let $w_2,t_2\in \Fm_+(\pm (A_n\setminus A_m))$ and $c' \in \pm X^{(k)}$  be such that $\lambda_{a_2}(c) = w_2+c_2+t_2$.
    Then we can apply the thesis for $\lambda_{a_1}(c_2)$ and obtain $w_1,t_1\in \Fm_+(\pm (A_n\setminus A_m))$ and $c_1 \in \pm X^{(k)}$  such that $\lambda_{a_1}(c_2) = w_1+c_1+t_1$. 
    With this, we have
    \[
    \lambda_{a_1\circ a_2}(c)
    =\lambda_{a_1}(w_2)+w_1+c_1+t_1+\lambda_{a_1}(t_2).
    \]
    Hence we can also conclude, using \cref{level of lamda images}, that $\lambda_{a_1\circ a_2}(c)=w+c'+t$ for
    $w=\lambda_{a_1}(w_2)+w_1\in \Fm_+(\pm (A_n\setminus A_m))$,
    $t=t_1+\lambda_{a_1}(t_2)\in \Fm_+(\pm (A_n\setminus A_m))$
    and $c'=c_1\in \pm X^{(k)}$.
\end{proof}

\begin{lem}\label{lambdapusheslevelup}
Let $1\leq k\leq l<n$ and $0\leq i$ be integers,
and \hbox{$a \in \Fm_{\circ}((A_n \setminus A_{l-1})^{\pm})$.}
If $t\in \Fm_+(\pm(A_n \setminus A_{k-1}))$ has at most $i$ occurrences 
of elements in $\pm X^{(k)}$,
and \hbox{$c\in \pm X^{(k)}$,} then there exist $w\in \Fm_+(\pm(A_n\setminus A_l))$ and $t_1 \in \Fm_+(\pm(A_n\setminus A_{k-1}))$ with at most $i$ occurrences of elements in $\pm X^{(k)}$ such that $\lambda_a^{(n)}(c+t) = w+c+t_1$ 
\end{lem}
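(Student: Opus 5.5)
The plan is to apply the previous two lemmas separately to $c$ and to $t$, using that $\lambda_a^{(n)}$ is a monoid endomorphism of $\Fm_+(\pm A_n)$:
\[
\lambda_a^{(n)}(c+t)=\lambda_a^{(n)}(c)+\lambda_a^{(n)}(t).
\]

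For the first summand we use Lemma~\ref{lambdapusheslevelupevenmore} with the given $k\leq l$, so that $m=\max\{l,k\}=l$. The \emph{moreover} part then applies and gives $w,t'\in \Fm_+(\pm(A_n\setminus A_l))$ with
\[
\lambda_a^{(n)}(c)=w+c+t'.
\]
Since $l\geq k$, every letter of $t'$ has level $>l\geq k$. Hence $t'\in \Fm_+(\pm(A_n\setminus A_{k-1}))$ and contains no occurrence of an element of $\pm X^{(k)}$.

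For the second summand we set $t_1=t'+\lambda_a^{(n)}(t)$. Since $a\in \Fm_\circ((A_n\setminus A_{l-1})^{\pm})\subseteq \Fm_\circ(A_n^{\pm})$, applying Lemma~\ref{level of lamda images} letter by letter (case $k=1$ is trivial) shows that $\lambda_a^{(n)}(t)\in \Fm_+(\pm(A_n\setminus A_{k-1}))$. It remains to check that it has at most $i$ occurrences of elements of $\pm X^{(k)}$. Write $t=s_1+\dots+s_r$ with each $s_j\in \pm(A_n\setminus A_{k-1})$.

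If $k(s_j)>k$, Lemma~\ref{level of lamda images} applied with $k+1$ in place of $k$ gives $\lambda_a^{(n)}(s_j)\in \Fm_+(\pm(A_n\setminus A_k))$. Such an image contributes no letters of level $k$.

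If $s_j\in\pm X^{(k)}$, Lemma~\ref{lambdapusheslevelupevenmore} gives $\lambda_a^{(n)}(s_j)=w_j+s_j+t_j$ with $w_j,t_j$ of level $>l\geq k$. So exactly one letter of level $k$ appears.

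Summing over $j$, the number of occurrences of $\pm X^{(k)}$ in $\lambda_a^{(n)}(t)$ equals that in $t$, hence is at most $i$. Then $\lambda_a^{(n)}(c+t)=w+c+t_1$ as required.

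The main care point is the count of level-$k$ letters. In the free monoid nothing cancels, so counting occurrences is legitimate, and the only source of new level-$k$ letters would be the term $x^{(k(a)+1)}_{a,b}$. Since every letter of $a$ has $k(\cdot)\geq l\geq k$, such a term has level $\geq l+1>k$, so no new level-$k$ letters are created. Because of this, the parameter $i$ is not needed for an induction; the direct splitting above suffices.
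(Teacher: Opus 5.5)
Your proof is correct and takes essentially the same route as the paper. Like the paper, you split $c+t$ into level-$k$ letters and higher-level material, apply Lemma~\ref{level of lamda images} to the latter and the ``moreover'' part of Lemma~\ref{lambdapusheslevelupevenmore} to each level-$k$ letter, and count occurrences in the free monoid. Your letter-by-letter version is slightly more careful than the paper's block decomposition: you use $m=\max\{l,k\}=l$ explicitly to get $w\in \Fm_+(\pm(A_n\setminus A_l))$, and you treat ``at most $i$'' occurrences rather than exactly $i$.
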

\begin{proof}
Write $c_1=c$ and $t=w_2 + c_2 + \ldots +w_{i+1}+c_{i+1}+w_{i+2}$,
for some $c_2,\ldots,c_{i+1}\in\pm X^{(k)}$ and $w_2,\ldots,w_{i+2}\in \Fm_+(\pm (A_n\setminus A_k))$. Then, for any $a \in \Fm_{\circ}(\pm (A_n \setminus A_{l-1}))$, one has by \cref{level of lamda images} that $\lambda_a(w_v)\in \Fm_+(\pm (A_n\setminus A_k))$, for any $2\leq v \leq i+2$. Moreover, by \cref{lambdapusheslevelupevenmore}, for any $1 \leq v \leq i+1$ that there exist $w_v',t_v' \in \Fm_+(\pm (A_n\setminus A_k))$ such that $\lambda_a(c_v) = w_v'+c_v+t_v'$. 
Hence,
\begin{align*}\lambda_a(c+t) &= \lambda_a(c) + \lambda_a(w_2)+ \lambda_a(c_2) + \ldots +\lambda_a(w_{i+1}) + \lambda_a(c_{i+1})+\lambda_a(w_{i+2})\\ 
&= w_1'+c+t_1' + \lambda_a(w_2) + w_2'+c_2+t_2'+\ldots \\ &\qquad\qquad\qquad\qquad\ldots+\lambda_{a}(w_{i+1}) +w_{i+1}'+c_{i+1}+t_{i+1}'+\lambda_a(w_{i+2}),\end{align*}
from which the result follows.
\end{proof}

\vspace{10pt}
\subsection{The map   \texorpdfstring{$\varphi^{(n)}:\Fm_+(\pm A_n) \rightarrow \Fm_{\circ}(A_n^{\pm})$}{}}

$\;\; $

\vspace{6pt}
Now, we define a map $\varphi^{(n)}:\Fm_+(\pm A_n) \rightarrow \Fm_{\circ}(A_n^{\pm})$ in the following way. Let $w=c_1+c_2+\ldots+c_s\in \Fm_+(\pm A_n)$, where $c_1,\dots,c_s\in \pm A_n$.
\begin{enumerate}
    \item[(1)] If $w=0$, then we define $\varphi(w)=1$.
    \item[(2)] If $s=1$ and $c_1=w\in A_n$, then we define
    $\varphi(c_1)=c_1$; while, if $s=1$ and $c_1=-a$, for some $a\in A_n$, then we define
    \[
\varphi(-a)=\overline{a_1}\circ \overline{a_2}\circ\ldots\circ \overline{a_{n-k(a)}}\circ \overline{a_{n-k(a)+1}},
    \]
    where $a_1=a$ and $a_{k+1}=x_{a_k,a_k}^{(k(a_k)+1)}$ for every $1\leq k\leq n-k(a)$; this range of values can be better understood by noting that $k(a_{n-k(a)+1})=n$.
    \item[(3)] If $s>1$, then we define $\varphi(w)$ using the following procedure: 
    \begin{itemize}
        \item[(3a)] Write \[
    \varphi(w)=\varphi(c_1)\circ \varphi(w'),
    \] where $w'=\lambda_{\overline{\varphi(c_1)}}(c_2+\ldots+c_s)$;
        \item[(3b)] Apply (1), (2) and (3) to compute $\varphi(w')$.
    \end{itemize}
\end{enumerate}

Of course, we need to show that the procedure is well-defined, or in other words that the recursion procedure described in (3) stops after finitely many steps and thus provides a well-defined value for $\varphi(w)$.

\begin{lem}
\label{pre phi of sum}
    Let $w\in \Fm_+(\pm A_n)$, say
    $w=c_1+c_2+\ldots+c_s$
    where $c_1,\dots,c_s\in \pm A_n$ 
    and let 
    $m$ be a positive integer.
    Denote
    $t=c_1+\ldots +c_m$ and $p=c_{m+1}+\ldots +c_s$.
    Suppose that 
    the procedure stops for $t$ and for $\lambda_{\overline{\varphi(t)}}(p)$. 
    Then it also stops for $w$ and 
    \[
\varphi(w)=\varphi(t+p)
=\varphi(t)\circ\varphi\left(\lambda_{\overline{\varphi(t)}}(p)\right).
    \]
\end{lem}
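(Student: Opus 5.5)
Induction on $m$, the length of the prefix $t$, will do it, unfolding the recursive definition (3) one step at a time. The fact used repeatedly is that $\lambda$ is a monoid homomorphism from $\Fm_\circ(A_n^{\pm})$ to $\End(\Fm_+(\pm A_n))$. From this,
\[
\lambda_{\overline{a\circ b}}=\lambda_{\overline b\circ\overline a}=\lambda_{\overline b}\lambda_{\overline a}
\]
for words $a,b$, using the convention $\overline{(x\circ y)}=\overline y\circ\overline x$ extended to words. In addition, each $\lambda_a$ is additive on $\Fm_+(\pm A_n)$.

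\emph{Base case $m=1$.} Here $t=c_1$, and $\varphi(c_1)$ is given directly by (1) or (2). If $s=1$ there is nothing to prove. If $s>1$, step (3a) gives $\varphi(w)=\varphi(c_1)\circ\varphi(w')$ with $w'=\lambda_{\overline{\varphi(c_1)}}(p)$. The procedure stops for $w'$ by hypothesis, so it stops for $w$ and the formula holds. The edge case $m=s$, where $p=0$, is handled by (1): $\varphi(0)=1$.

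\emph{Inductive step.} Suppose $m>1$ and the claim holds for shorter prefixes. Write $t=c_1+t'$ with $t'=c_2+\dots+c_m$, and set $u=\lambda_{\overline{\varphi(c_1)}}(t')$ and $q=\lambda_{\overline{\varphi(c_1)}}(p)$. Since $t$ has length $m>1$, rule (3a) gives
\[
\varphi(t)=\varphi(c_1)\circ\varphi(u),
\]
and the hypothesis that the procedure stops for $t$ means precisely that it stops for $u$. By additivity, $\lambda_{\overline{\varphi(c_1)}}(t'+p)=u+q$. Then
\[
\lambda_{\overline{\varphi(u)}}(q)=\lambda_{\overline{\varphi(u)}}\lambda_{\overline{\varphi(c_1)}}(p)=\lambda_{\overline{\varphi(c_1)\circ\varphi(u)}}(p)=\lambda_{\overline{\varphi(t)}}(p),
\]
and the procedure stops for this element by hypothesis.

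\emph{The main obstacle.} To apply the induction hypothesis to $u+q$ with prefix $u$, the prefix must have the right length. But $u$ is a word in $\pm A_n$ whose length may exceed $m-1$, because $\lambda$ can double letters (for example $b\mapsto x+b$). So induction on $m$ as stated fails. I would fix this by strengthening the statement to induct on the number of recursion steps the procedure takes on $t$, rather than on the letter-length of $t$. Define $N(t)$ as the number of applications of (3a) before termination; it is finite since the procedure stops for $t$. If $N(t)=0$, then $t$ is a single letter, which is the base case. Otherwise $N(u)=N(t)-1$, and the strengthened hypothesis applied to the pair $(u,q)$ shows that the procedure stops for $u+q$ with
\[
\varphi(u+q)=\varphi(u)\circ\varphi\bigl(\lambda_{\overline{\varphi(t)}}(p)\bigr).
\]

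\emph{Conclusion.} Since $w=c_1+(t'+p)$ has length greater than $1$, rule (3a) applied to $w$ gives $\varphi(w)=\varphi(c_1)\circ\varphi(u+q)$, so the procedure stops for $w$. Substituting the previous display,
\[
\varphi(w)=\varphi(c_1)\circ\varphi(u)\circ\varphi\bigl(\lambda_{\overline{\varphi(t)}}(p)\bigr)=\varphi(t)\circ\varphi\bigl(\lambda_{\overline{\varphi(t)}}(p)\bigr),
\]
as required.
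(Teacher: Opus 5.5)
Your proof is correct and is essentially the paper's argument once you replace induction on $m$ by induction on the number $N(t)$ of recursion steps for $t$: split off $c_1$, set $u=\lambda_{\overline{\varphi(c_1)}}(t')$ and $q=\lambda_{\overline{\varphi(c_1)}}(p)$, use that $\lambda$ is a monoid homomorphism to get $\lambda_{\overline{\varphi(u)}}(q)=\lambda_{\overline{\varphi(t)}}(p)$, and apply the hypothesis to $(u,q)$. You were right to drop induction on the letter-length $m$, since $\lambda$ can lengthen words; the paper inducts on the number of steps for $t$ from the start.
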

\begin{proof}
    We proceed by induction on the number of steps needed
    to define 
    $\varphi(t)$.
    If the procedure requires only one step, it means
    that $t=0$ or $m=1$.
    In both cases, the claim is trivial. 
    Suppose now that $m>1$ and that the procedure requires $k$ steps 
    for $t$. In particular, 
    it requires $k-1$ steps to stop for 
    $t'=\lambda_{\overline{\varphi(c_1)}}(c_2+\ldots +c_m)$
    and $\varphi(t)=\varphi(c_1)\circ\varphi(t')$. 
    Let $p'=\lambda_{\overline{\varphi(c_1)}}(p)$, and note that 
    \[
   \lambda_{\overline{\varphi(t')}}(p')
    =\lambda_{\overline{\varphi(t')}}\left(\lambda_{\overline{\varphi(c_1)}}(p)\right)
    =\lambda_{\overline{\varphi(c_1)\circ \varphi(t')}}(p)
    =\lambda_{\overline{\varphi(t)}}(p),
    \]
    for which we know, by assumption, that the procedure stops.
    So, by inductive hypothesis, the procedure also stops for 
    $t'+p'$ 
    and 
    \begin{align*}
        \varphi(t'+p')
    &=\varphi(t')\circ\varphi\left(\lambda_{\overline{\varphi(t')}}(p')\right)\\
    &=\varphi(t')\circ\varphi\left(\lambda_{\overline{\varphi(t')}}\left(\lambda_{\overline{\varphi(c_1)}}(p)\right)\right)\\
    &=\varphi(t')\circ\varphi\left(\lambda_{\overline{\varphi(t)}}(p)\right).
    \end{align*}
    Hence the procedure stops for $t+p$ and
    \[
    \varphi(t+p)
    =\varphi(c_1)\circ\varphi\left(t'+p'\right)
    =\varphi(c_1)\circ\varphi(t')\circ\varphi\left(\lambda_{\overline{\varphi(t)}}(p)\right)
    =\varphi(t)\circ\varphi\left(\lambda_{\overline{\varphi(t)}}(p)\right),
    \]
    as required.
\end{proof}

We are now in a position to show that the procedure stops for any element
$w=c_1+c_2+\ldots+c_s\in \Fm_+(\pm A_n)$, where $c_1,\dots,c_s\in \pm A_n$.
Suppose first that $w\in \Fm_+(\pm (A_n\setminus A_{n-1}))$,
so $c_1,\dots, c_s\in \pm X^{(n)}$.
If $w=0$ or $s=1$, then we are done by definition. Otherwise, \[
\varphi(w)=\varphi(c_1)\circ\varphi(c_2)\circ\ldots\circ\varphi(c_s)
\] because by the definitions of $\lambda$ and $\varphi$, we have 
$\lambda_{\varphi(c_i)}=\id$ for every $i\in\{1,\ldots,n\}$. Note that $\varphi(w)\in S_\circ((A_n\setminus A_{n-1})^\pm)$.

Suppose now that there exists $1\leq k\leq n-1$ such that the procedure stops for every element of $\Fm_+(A_n\setminus A_k)$, and that if $u\in \Fm_+(\pm(A_n\setminus A_i))$ with $k\leq i\leq n-1$, then $\varphi(u)\in \Fm_\circ((A_n\setminus A_i)^\pm))$. Let $w\in \Fm_+(\pm (A_n\setminus A_{k-1}))$. We show that the procedure stops for $w$ by induction 
on the number $i$ of occurrences of elements in $\pm X^{(k)}$ in the expression $w=c_1+\ldots+c_s$. If $i=0$, then $w\in \Fm_+(\pm(A_n\setminus A_k))$ and the procedure stops by induction hypothesis. If $i>0$, then there exist $w_1 \in \Fm_+(\pm (A_n\setminus A_k))$, $c\in\pm X^{(k)}$, and $t_1 \in \Fm_+(\pm (A_n\setminus A_{k-1}))$ with at most $i-1$ occurrences of elements in $\pm X^{(k)}$, such that $w=w_1+c+t_1$.

Using \cref{lambdapusheslevelup}, 
starting from $w_1$ and $t_1$, we recursively define  elements 
$w_{j+1}\in \Fm_+(\pm (A_n\setminus A_{k+j}))$, and 
$t_{j+1}\in \Fm_+((A_n\setminus A_{k-1}))$ with at most $i-1$ occurrences of elements in $\pm X^{(k)}$, such that
\[
\lambda_{\overline{\varphi(w_j)}}(c+t_j)=w_{j+1}+c+t_{j+1}
\] (the additional induction hypothesis is used here to correctly apply \cref{lambdapusheslevelup}).
Because of the inductive hypothesis, we
know that the procedure stops for the elements $w_j$ and $t_j$, where $j\in\{1,\dots, n-k+1\}$.
Note that $w_{n-k+1}$ belongs to $\Fm_+(\pm (A_n\setminus A_{n-1}))$, so $\lambda_{\overline{\varphi(w_{n-k+1})}}=\id$.
Hence, by applying the procedure
to $w_{n-k+1}+c+t_{n-k+1}$, we have
$$
\varphi(w_{n-k+1}+c+t_{n-k+1})=
\varphi(w_{n-k+1})\circ\varphi(c)\circ\varphi\left(\lambda_{\overline{\varphi(c)}}(t_{n-k+1})\right).
$$ But, by induction, the procedure stops for $t_{n-k+1}$, so also for $\lambda_{\overline{\varphi(c)}}(t_{n-k+1})$ by \cref{lambdapusheslevelup}, and hence even for $w_{n-k+1}+c+t_{n-k+1}$.

Since
\[
\lambda_{\overline{\varphi(w_{n-k})}}(c+t_{n-k})=w_{n-k+1}+c+t_{n-k+1},
\]
so the procedure stops for $\lambda_{\overline{\varphi(w_{n-k})}}(c+t_{n-k})$.
Hence, by \cref{pre phi of sum}, it also stops for
$w_{n-k}+c+t_{n-k}$.
Continuing in this way, we have that the procedure stops for $w=w_1+c+t_1$ and 
\begin{align*}
\varphi(w)
&=\varphi(w_1)\circ\varphi\left(w_2+c+t_2\right)
=\varphi(w_1)\circ\varphi(w_2)\circ\varphi\left(w_3+c+t_3\right)
=\dots\\
&\ldots=\varphi(w_1)\circ\varphi(w_2)\circ\ldots\circ\varphi(w_{n-k+1})\circ\varphi(c)\circ\varphi\left(\lambda_{\overline{\varphi(c)}}(t_{n-k+1})\right).
\end{align*}
Note also that $\varphi(w)\in \Fm_\circ((A_n\setminus A_{k-1})^{\pm})$.
This finishes the proof of the definition of the map $\varphi^{(n)}:\Fm_{+}(\pm A_n) \rightarrow \Fm_{\circ}(A_n^\pm)$.

\medskip

\vspace{10pt}
\subsection{Induced maps \texorpdfstring{$\lambda^{(n)}\colon \Fg_{\circ}(A_n) \rightarrow \Aut(\Fg_+(A_n))$}{} and \texorpdfstring{$\varphi^{(n)}\colon \Fg_+(A_n) \rightarrow \Fg_{\circ}(A_n)$}{}}

$\;$

The following lemmas will show that the functions $\lambda^{(n)}$ and $\varphi^{(n)}$ naturally induce mappings (which will be denoted with the same symbols) that are defined in terms of free groups instead of free monoids. In the following, let 
$$s\colon \Fm_{\circ}(A_n^{\pm})\rightarrow \Fg_{\circ}(A_n) \text{ and }s'\colon S_{+}(\pm A_n)\rightarrow \Fg_+(A_n)$$ stand for the canonical epimorphisms. 

\begin{lem}\label{lambda of a reduced word}
Let $t=y_1+\ldots +y_l$, where $y_l \in \pm A_n$, and $w \in \Fm_{\circ}((A_n\setminus A_k)^{\pm})$ such that $s(w) = 1$. Then there exist $w_1,\ldots,w_{l+1} \in \Fm_+(\pm( A_n \setminus A_{k}))$  with $s'(w_i)=0$ for all $i$ such that $$ \lambda_{w}(t) = w_1 + y_1+w_2+\ldots +w_l+y_l+w_{l+1}.$$
\end{lem}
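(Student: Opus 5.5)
We argue by induction on the number of free reductions needed to bring $w$ to the empty word. Since $s(w)=1$, the word $w\in\Fm_\circ((A_n\setminus A_k)^{\pm})$ is either empty, or it can be written as $w=u\circ a\circ\overline{a}\circ v$ or $w=u\circ \overline{a}\circ a\circ v$ with $a\in A_n\setminus A_k$ and $s(u\circ v)=1$, where $u\circ v$ needs one reduction fewer. If $w$ is empty, then $\lambda_w=\id$ and we take all $w_i=0$.

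For the inductive step, $\lambda_w=\lambda_u\lambda_a\lambda_{\overline a}\lambda_v$. The plan is to show first the \emph{base claim}: for every $a\in A_n\setminus A_k$ and every $y\in\pm A_n$, the element $\lambda_a\lambda_{\overline a}(y)$ (and likewise $\lambda_{\overline a}\lambda_a(y)$) has the form $w'+y+w''$ with $w',w''\in\Fm_+(\pm(A_n\setminus A_k))$ and $s'(w')=s'(w'')=0$. Since $\lambda_a\lambda_{\overline a}$ is a monoid endomorphism, applying the base claim letter by letter to a word $w_1+y_1+\dots+y_l+w_{l+1}$ of the required shape returns a word of the same shape. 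Here we use that $\lambda_a\lambda_{\overline a}$ maps $\Fm_+(\pm(A_n\setminus A_k))$ into itself by \cref{level of lamda images}, and that the images of the old $w_i$ are still in $\ker s'$. The last point holds because $\lambda_b$ commutes with formal inversion ($\lambda_b(-c)=-\lambda_b(c)$), so each $\lambda_b$ descends to an endomorphism of $\Fg_+(A_n)$ compatible with $s'$, hence preserves $\ker s'$.

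Combining this, we apply the induction hypothesis to $\lambda_v(t)$ by first writing $\lambda_u\lambda_v$ appropriately. More cleanly, we write $\lambda_w(t)=\lambda_u\bigl(\lambda_a\lambda_{\overline a}(\lambda_v(t))\bigr)$ and use the conjugation trick $\lambda_u\lambda_a\lambda_{\overline a}\lambda_v=(\lambda_u\lambda_a\lambda_{\overline a}\lambda_u^{-1})$ on the image. Since $\lambda_u$ need not be invertible at the monoid level, we would instead induct on the stronger statement: for all words $w$ with $s(w)=1$, and all $t$, $\lambda_w(t)$ has the form above. We peel the cancelling pair innermost-first. That is, we choose the decomposition with $v$ reduced-free, so that we may also take $v$ to be arbitrary and treat $\lambda_v(t)=z_1+\dots+z_r$. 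Then $\lambda_a\lambda_{\overline a}(\lambda_v(t))$ equals $\lambda_v(t)$ up to inserted kernel words, and hence $\lambda_w(t)$ equals $\lambda_{u\circ v}(t)$ up to inserted $\lambda_u$-images of kernel words. Those images are kernel words in $\Fm_+(\pm(A_n\setminus A_k))$ by the previous paragraph. Finally, we apply the induction hypothesis to $u\circ v$ to get the form with respect to $y_1,\dots,y_l$, merging adjacent kernel words.

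The main obstacle is the base claim, a case analysis on $k(a)$ versus $k(y)$ using the three defining cases of $\lambda$. If $k(a)=n$, then both maps are the identity. If $k(a)\ge k(y)$, then
\[
\lambda_a\lambda_{\overline a}(y)=\lambda_a\bigl(x_{\overline a,y}+y\bigr)=\lambda_a(x_{\overline a,y})+x_{a,y}+y .
\]
Here $k(x_{\overline a,y})=k(a)+1>k(a)$ and the subscript $f=\overline a$, so the third case gives $\lambda_a(x_{\overline a,y})=-x_{a,y}$. Hence the prefix is $-x_{a,y}+x_{a,y}$, which lies in $\ker s'$ and in $\Fm_+(\pm(A_n\setminus A_k))$ since $k(a)+1>k$. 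If $k(a)<k(y)$ with $y=x_{f,t}$, then $\lambda_{\overline a}(y)$ is $x_{\overline a\circ f,t}$ or $-x_{\overline a,t}$ (when $f=a$), and applying $\lambda_a$ returns $x_{a\circ\overline a\circ f,t}$. We must check that this is $y$ itself, interpreting subscripts as elements of $\Fg_\circ$ so that $a\circ\overline a\circ f=f$. In the subcase $f=a$ we get $\lambda_a(-x_{\overline a,t})=-(x_{a,t}+x_{\overline a,t}\ldots)$; this needs care, and we expect the third-case formula to return exactly $x_{a,t}=y$. The case $-y$ follows by the antihomomorphic rule, and $\lambda_{\overline a}\lambda_a$ is symmetric.
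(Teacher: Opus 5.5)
Your argument is correct and is essentially the paper's. Both rest on the single-pair computation $\lambda_a\lambda_{\overline a}(y)=-x_{a,y}+x_{a,y}+y$ for $k(y)\le k(a)<n$ (and $=y$ for $k(a)<k(y)$), together with Lemma~\ref{level of lamda images} and the fact that each $\lambda_b$ sends $\ker s'$ into $\ker s'$. The only difference is how the induction is organised. You delete an adjacent pair, $u\circ a\circ\overline a\circ v\mapsto u\circ v$, and insert the kernel words $\lambda_u(p_i)$ at the cut points of $\lambda_{u\circ v}(t)$. This is legitimate, since each cut point lies inside or next to some $w_j$, and a kernel word inserted into a kernel word stays in $\ker s'$. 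The paper instead splits $w$ into two shorter trivial words or writes it as $\overline a\circ w_1\circ a$.

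The one loose end is the subcase $y=x_{a,t}$ with $k(a)<k(y)$, where your displayed expression $-(x_{a,t}+x_{\overline a,t}\ldots)$ is wrong. Since $x_{\overline a,t}$ has index $\overline a$ and level $k(a)+1>k(a)$, the third clause gives $\lambda_a(x_{\overline a,t})=-x_{a,t}$. Hence $\lambda_a(-x_{\overline a,t})=x_{a,t}=y$, exactly as you expected. In the subcase $f\neq a$, the second clause applies to $x_{\overline a\circ f,t}$ because $\overline a\circ f\neq\overline a$, as $f\neq 1$.
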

\begin{proof}
    Let $a\in A_n^{\pm}$. Then it is easily checked that for $t \in A_n$ one has $$\lambda_{\overline{a}\circ a}(\pm t) = \begin{cases}
    \pm(-x_{\overline{a},t}^{(k(a)+1)}+x_{\overline{a},t}^{(k(a)+1)}+t),  & \text{ if }k(a)\geq k(t),\\
        \pm t & \text{ if } k(a)<k(t).
    \end{cases}$$

Thus, if $t'=y_1+\ldots+y_l \in \Fm_+(\pm A_n)$ with $y_i \in \pm A_n$, then there exist
$$w_1 \dots, w_{l+1}\in \Fm_+(\pm A_n\setminus A_{k(a)})$$
such that $s'(w_1)=\ldots=s'(w_{l+1})=0$ and 
$$ \lambda_{\overline{a} \circ a}(t) = \lambda_{\overline{a} \circ a}(y_1)+\ldots + \lambda_{\overline{a} \circ a}(y_l) = w_1+y_1+w_2+\ldots +w_l+y_l+w_{l+1}.$$
This proves the result when the length of $w$ is either $0$ or $2$ (which are the smallest possible values for the length of $w$).
We continue by induction on the length of $w$. 

    Let $t \in A_n$ and $w \in \Fm_{\circ}((A_n\setminus A_k)^{\pm})$ be such that $s(w)=1$. 
    Then, either there exist elements $w_1,t_1 \in \Fm_{\circ}((A_n\setminus A_k)^{\pm})$
    with $s(w_1)=s(t_1)=1$ such that $w=w_1\circ t_1$ 
    or there exists an element $w_1 \in  \Fm_{\circ}((A_n\setminus A_k)^{\pm})$ with $s(w_1)=1$ and $a \in (A_n\setminus A_k)^{\pm}$
    such that $w=\overline{a} \circ w_1 \circ a$.
    First, we deal with the former case.
    By the induction hypothesis, there exist $y,y',z,z' \in \Fm_+(\pm A_n\setminus A_k)$ with $s'(y)=s'(y')=s'(z)=s'(z')=0$ 
    such that $\lambda_{t_1}(t)=y+t+y'$ and $\lambda_{w_1}(t)=z+t+z'$.
    Then, 
    $$ \lambda_{w_1 \circ t_1}(t)
    =\lambda_{w_1}(y)+\lambda_{w_1}(t)+\lambda_{w_1}(y') 
    = \lambda_{w_1}(y)+z+t+z'+\lambda_{w_1}(y').$$
    Note that since $\lambda_{w_1}$ is an endomorphism of $\Fm_+(A_n)$ such that 
    $\lambda_{w_1}(-c)=-\lambda_{w_1}(c)$, it follows that $s'(\lambda_{w_1}(y))=0$. 
    Thus, the former case is shown.

    Suppose now that $w=\overline{a}\circ w_1 \circ a.$ Again, there exist \hbox{$y,y',z,z' \in S_+(\pm (A_n\setminus A_k))$} with $s'(y)=s'(y')=s'(z)=s'(z')=0$ such that $\lambda_{w_1}(t)=y+t+y' $ and $\lambda_{w_1}(x_{a,t}^{(k(a)+1)})=z+x_{a,t}^{(k(a)+1)}+z'$. If $k(a)\geq k(t)$, then \begin{align*} \lambda_{\overline{a}\circ w_1 \circ a}(t) &= \lambda_{\overline{a}\circ w_1}(x_{a,t}^{(k(a)+1)}+t) \\ &=\lambda_{\overline{a}}(z+x_{a,t}^{(k(a)+1)}+z'+y+t+y') \\ &=\lambda_{\overline{a}}(z)-x_{\overline{a},t}^{(k(a)+1)}+\lambda_{\overline{a}}(z')+\lambda_{\overline{a}}(y)+x_{\overline{a},t}^{(k(a)+1)}+t+\lambda_{\overline{a}}(y').\end{align*} Since $$ s'(\lambda_{\overline{a}}(z)-x_{\overline{a},t}^{(k(a)+1)}+\lambda_{\overline{a}}(z')+\lambda_{\overline{a}}(y)+x_{\overline{a},t}^{(k(a)+1)}) =0,$$ we are done.  If $k(a)<k(t)$, then $\lambda_{\overline{a}\circ w_1 \circ a}(t)=\lambda_{\bar a}(y)+t+\lambda_{\bar a}(y')$ and similarly  we are done. This completes the proof of the latter case.

    The general result now follows from the fact that $\lambda$ is an additive morphism.
\end{proof}

Next, we need to show that for $w,t \in \Fm_+(\pm A_n)$ with $s'(w)=s'(t)$ one has  $s(\varphi(w)) = s(\varphi(t))$. 
For any $w\in \Fm_+(\pm A_n)$, there exists $\widetilde w\in \Fm_+(\pm A_n)$ of smallest length (which we call the {\it reduced form} of $w$) such that $s'(w)=s'(\widetilde w)$. Clearly, if for $t,w \in \Fm_+(\pm A_n)$ one has  $s'(t)=s'(w)$, then $\widetilde t=\widetilde w$. Hence, it is sufficient to prove that for $w \in \Fm_+(\pm A_n)$ and $\widetilde w$ its reduced form one has  $s(\varphi(w)) = s(\varphi(\widetilde w))$. Since $\widetilde w$ is obtained from $w$ by subsequently removing pieces of the form $a-a$ from its standard expression, by induction, it is sufficient to prove that $s(\varphi(w)) = s(\varphi(t))$ for words $w, t \in \Fm_+(\pm A_n)$ where $t$ is obtained from $w$ by removing a piece of the form $a-a$.

Concretely, we show for $w,t\in \Fm_+(\pm A_n)$ and $a \in \pm A_n$ that 
$$ s(\varphi(w+a-a+t)) = s(\varphi(w+t)),$$ 
by induction on $k(a)$ downwards.
We start with the basis step $a \in \pm X^{(n)}$. 
Then, denoting $a'=\lambda_{\overline{\varphi(w)}}(a)\in \pm X^{(n)}$ one has  \begin{align*}
    \varphi(w +a -a+t) &= \varphi(w) \circ \varphi(a'-a'+\lambda_{\overline{\varphi(w)}}(t)) \\ &=\varphi(w) \circ \varphi(a') \circ \varphi(-a'+\lambda_{\overline{\varphi(w)}}(t)) \\ &=\varphi(w) \circ \varphi(a') \circ \overline{\varphi(a')} \circ \varphi(\lambda_{\overline{\varphi(w)}}(t)),
\end{align*} 
where we used that 
$\lambda_{\varphi(a')}=\lambda_{\varphi(-a)}=\textup{id}.$
Hence, we find that 
 \begin{align*}
    s(\varphi(w+a-a+t)) &= s(\varphi(w) \circ \varphi(a')\circ\overline{\varphi(a')} \circ \varphi(\lambda_{\overline{\varphi(w)}}(t))) \\
    &= s(\varphi(w)\circ \varphi(\lambda_{\overline{\varphi(w)}}(t))) \\ 
    &=s(\varphi(w+t)),
\end{align*}
which shows the base case.

We now formulate the induction hypothesis. 
Suppose that 
\begin{equation}
\label{IH1}
\tag{IH1}
\begin{aligned}
    s(\varphi(u+c-c+v)) &= s(\varphi(u+v))\\
    \text{ for any }
    u,v \in \Fm_+(\pm A_n)
    &\text{ and }
    c \in \pm (A_n\setminus A_k)
\end{aligned}
\end{equation}

Note that this implies that 
\begin{equation}
\label{IH1'}    
\tag{IH1'}
\begin{aligned}
    s(\varphi(u+q+v)) &= s(\varphi(u+v))\\
    \text{ for any }
    u,v \in \Fm_+(\pm A_n)
    \text{ and }
    &q \in \Fm_+(\pm A_n\setminus A_k)
    \text{ such that } s'(q)=0.
\end{aligned}
\end{equation}

Let $w,t\in \Fm_+(\pm A_n)$
and $a \in X^{(k)}$. 
We have to prove that
\[
s(\varphi(w+a-a+t)) = s(\varphi(w+t)).
\]
We will first start with the case $w=t=0$.

First, since
$$\varphi(-a)=\overline{a_1} \circ \overline{a_2}\circ \ldots \circ\overline{a_{n-k(a)+1}},$$ 
where $a_1=a$ and $a_{i+1}=x_{a_i,a_i}^{(k+i)}$,  we see that 
 \begin{equation}
 \label{phimina}
 \varphi(-a)=\overline{a} \circ \overline{a_2}\circ \ldots\circ\overline{a_{n-k(a)+1}}=\overline{a}\circ \varphi(-a_2). 
\end{equation}
Hence, 
\begin{align*}
    \varphi(a-a) &= \varphi(a) \circ \varphi\left(-\lambda_{\overline{a}}(a)\right) \\ 
    &=a \circ \varphi\left(-a-x_{\overline{a},a}^{(k+1)}\right) \\
    &=a \circ \varphi(-a) \circ \varphi\left(-\lambda_{\overline{\varphi(-a)}}\left(x_{\overline{a},a}^{(k+1)}\right)\right)\\
    &=a\circ \overline{a}\circ \varphi(-a_2)\circ\varphi\left(-\lambda_{\overline{\varphi(-a_2)}\circ a}\left(x_{\overline{a},a}^{(k+1)}\right)\right)\\ 
    &=a\circ \overline{a}\circ \varphi(-a_2)\circ\varphi\left(\lambda_{\overline{\varphi(-a_2)}}(x_{a,a}^{(k+1)})\right) \\
    &=a \circ \overline{a} \circ \varphi(-a_2+a_2),
\end{align*}
where we used that $x_{a,a}^{(k+1)}=a_2$.
By the induction hypothesis (\ref{IH1}) with $u=v=0$ and $c=-a_2\in -X^{(k+1)}\subseteq \pm(A_n\setminus A_k)$,
it follows that $s(\varphi(-a_2+a_2))=1$. 
Hence,
\begin{align*}
s(\varphi(a-a))=s(a\circ \overline{a}\circ \varphi(-a_2+a_2))
=s(\varphi(-a_2+a_2))=1.
\end{align*}

Now we proceed with the case $w=0$ and  $t \in \Fm_+(\pm A_n)$. Let $t=y_1+\ldots +y_l$ with $y_i \in \pm A_n$.
By \cref{lambda of a reduced word}, 
there exist $w_1,\ldots, w_{l+1} \in \Fm_+(\pm ( A_n \setminus A_k))$ with $s'(w_i)=0$ such that 
$$ \lambda_{\overline{\varphi(a-a)}}(t)=w_1+y_1+\ldots +w_l+y_l+w_{l+1}.$$ Then, 
\begin{align*} 
\varphi(a-a+t) &= \varphi(a-a) \circ \varphi\left(\lambda_{\overline{\varphi(a-a)}}(t)\right)\\
&= \varphi(a-a) \circ \varphi(w_1+y_1+\ldots +w_l+y_l+w_{l+1}),
\end{align*}

Note that the induction hypothesis (\ref{IH1'}) applied $l+1$ times (where the $i+1$-th time is using $u=y_i$, $v=y_{i+1}$ and $q=w_{i+1}$ with $0\leq i\leq l+1$ and the convention $y_0=y_{l+1}=0$)
implies that 
$$ s(\varphi(w_1+y_1+\ldots +w_l+y_l+w_{l+1})) = s(\varphi(y_1+\ldots+y_l)).$$

Thus, 
\begin{align*}
    s(\varphi(a-a+t)) &= s(\varphi(a-a) \circ \varphi(w_1+y_1+\ldots +w_l+y_l+w_{l+1})) \\ 
    &= s\big(s(\varphi(a-a)) \circ s(\varphi(w_1+y_1+\ldots +w_l+y_l+w_{l+1}))\big)\\ 
    &= s\big(1 \circ s(\varphi(y_1+\ldots+y_l))\big)\\ 
    &= s(\varphi(t)).
\end{align*}

Before we show the general case, we first need to  consider the dual versions of the previous, i.e., when $a\in -X^{(k)}$, say $a=-b$ for some $b\in X^{(k)}$.
As before, we first deal with the case $w=t=0$.

Recall from \cref{phimina} that $\varphi(-b)=\overline{b}\circ\varphi(-b_2)$.
Then, we find that 
\begin{align*}
    \varphi(a-a)&
    =\varphi(-b+b) = \varphi(-b) \circ \varphi( \lambda_{\overline{\varphi(-b)}}(b)) \\ 
    &= \overline{b} \circ \varphi(-b_2) \circ \varphi(\lambda_{\overline{\varphi(-b_2)}\circ b}(b)) \\
    &=\overline{b} \circ \varphi(-b_2) \circ \varphi(\lambda_{\overline{\varphi(-b_2)}}(b_2+b))\\
    &=\overline{b}\circ \varphi(-b_2+b_2+b).
\end{align*}
By the induction hypothesis (\ref{IH1}) with $u=0$, $v=b$ and $c=-b_2\in -X^{(k+1)}\subseteq \pm(A_n\setminus A_k)$,
it holds that $s(\varphi(-b_2+b_2+b))=s(\varphi(b))=b$.
Hence, 
$$ s(\varphi(a-a))=s\left(\overline{b}\circ \varphi(-b_2+b_2+b)\right)=s\big(\overline{b}\circ s(\varphi(-b_2+b_2+b))\big) = s(\overline{b}\circ b)=1.$$

Next, we deal with the case $a=-b\in -X^{(k)}$, $w=0$, and 
$t=y_1+\ldots +y_l\in \Fm_+(\pm A_n)$ for some elements $y_i \in \pm A_n$. 
By \cref{lambda of a reduced word},
there exist $w_1,\ldots, w_{l+1} \in \Fm_+(\pm ( A_n \setminus A_k))$ 
with $s'(w_i)=0$ such that 
$$ \lambda_{\overline{\varphi(a-a)}}(t)=w_1+y_1+\ldots +w_l+y_l+w_{l+1}.$$ 
Recall that the induction hypothesis (\ref{IH1'}) applied $l+1$ times (where the $i+1$-th time is using $u=y_i$, $v=y_{i+1}$ and $q=w_{i+1}$ with $0\leq i\leq l+1$ and the convention $y_0=y_{l+1}=0$) yields that $$ s(\varphi(w_1+y_1+\ldots +w_l+y_l+w_{l+1}))=s(\varphi(y_1+\ldots+y_l)) = s(\varphi(t)).$$
Then we have that  \begin{align*}
    s(\varphi(a-a+t)) 
    &=s(\varphi(-b+b+t) 
    =s\big(s(\varphi(-b+b)) \circ s(\varphi(\lambda_{\overline{\varphi(-b+b)}}(t)))\big)\\ &=s\big(1 \circ s(\varphi(w_1+y_1+\ldots +w_l+y_l+w_{l+1}))\big)\\ 
    &=s(\varphi(y_1+\ldots +y_l))\\
    &=s(\varphi(t)).
\end{align*}
We have now proved that
$$ s(\varphi( a-a+t))=s(\varphi(t)).$$

We are left to deal with the general case, i.e., to show that
$$ s(\varphi( w +a-a+t))=s(\varphi(w+t))$$ for $a\in \pm X^{(k)}$
We proceed by downward induction on the minimal level of an element that appears in $w$.

First, let $w \in \Fm_+(\pm X^{(n)})$. Then $\lambda_{\overline{\varphi(w)}}=\textup{id}$, and hence, 
$$ s(\varphi(w+a-a+t)) 
= s\big(s(\varphi(w))\circ s(\varphi(a-a+t))\big)
= s\big(s(\varphi(w))\circ s(\varphi(t))\big)
=s(\varphi(w+t)),$$ which proves the induction basis.

We formulate the (second) induction hypothesis. Suppose that 
\begin{equation}
\label{IH2}
\tag{IH2}
\begin{aligned}
    s(\varphi(u+c-c+v)) &= s(\varphi(u+v))\\
    \text{ for any }
    u \in \Fm_+(A_n\setminus A_m)&\text{, }v \in \Fm_+(\pm A_n)
    \text{ and }
    c \in \pm X^{(k)}.
\end{aligned}
\end{equation}

Let $w \in \Fm_+(A_n\setminus A_{m-1})$. Denote $M=\max(k,m)$.  Then, by \cref{lambdapusheslevelupevenmore}, there exist 
$w_2,q_2\in \Fm_+(\pm (A_n \setminus A_M))$ and $a' \in \pm X^{(k)}$ such that 
$$ \lambda_{\overline{\varphi(w)}}(a)=w_2+a'+q_2.$$ 
Then,  
$$ \varphi(w+a-a+t)
=\varphi(w)\circ\varphi( w_2+a'+q_2-q_2-a'-w_2+\lambda_{\overline{\varphi(w)}}(t)).$$ As $M\geq k$, then 
$q_2-q_2\in \Fm_+(\pm (A_n \setminus A_M))\subseteq \Fm_+(\pm (A_n \setminus A_k))$ and $s'(q_2-q_2)=0$.
So we can use the inductive hypothesis (\ref{IH1'})
with $u=w_2+a',$ $v=-a'-w_2+\lambda_{\overline{\varphi(w)}}(t)$ 
and $q=q_2-q_2$ to obtain
$$ s(\varphi( w_2+a'+q_2-q_2-a'-w_2+\lambda_{\overline{\varphi(w)}}(t))=s(\varphi( w_2+a'-a'-w_2+\lambda_{\overline{\varphi(w)}}(t))).$$ As $M\geq m$, then 
$w_2\in S_+(\pm (A_n \setminus A_M))\subseteq \Fm_+(\pm (A_n \setminus A_m))$.
So we can use the inductive hypothesis (\ref{IH2})
with $u=w_2,$ $v=-w_2+\lambda_{\overline{\varphi(w)}}(t)$ 
and $c=a'$ to obtain
$$ s(\varphi( w_2+a'-a'-w_2+\lambda_{\overline{\varphi(w)}}(t)))= s(\varphi( w_2-w_2+\lambda_{\overline{\varphi(w)}}(t))).$$ Since 
$M\geq k$, then 
$w_2-w_2\in \Fm_+(\pm (A_n \setminus A_M))\!\subseteq\! \Fm_+(\pm (A_n \setminus A_k))$ and \hbox{$s'(w_2-w_2)\!=\!0$.}
So we can use the inductive hypothesis (\ref{IH1'})
with $u=0,$ $v=\lambda_{\overline{\varphi(w)}}(t)$ 
and $q=w_2-w_2$ to obtain
$$ s(\varphi( w_2-w_2+\lambda_{\overline{\varphi(w)}}(t))) = s(\varphi( \lambda_{\overline{\varphi(w)}}(t))).$$ Putting all of this together, we find that 
\begin{align*}
    s(\varphi(w+a-a+t)) &=s(\varphi(w)\circ\varphi( w_2+a'+q_2-q_2-a'-w_2+\lambda_{\overline{\varphi(w)}}(t))) \\
    &=s\big(s(\varphi(w)) \circ s(\varphi(\lambda_{\overline{\varphi(w)}}(t)))\big)\\
    &=s(\varphi(w+t)),
\end{align*}
which shows the claim. Thus, we have shown the following.

\begin{lem}
    The map $\varphi^{(n)}\colon \Fm_+(\pm A_n) \rightarrow \Fm_{\circ}(A_n^{\pm})$ 
    induces a natural map     
    $$\varphi^{(n)}\colon \Fg_+(A_n) \rightarrow \F_{\circ}(A_n)$$
    such that the following diagram is commutative
$$    \begin{tikzcd}
        \Fm_+(\pm A_n)\arrow[r,"\varphi^{(n)}"] \arrow[d,"s'"] &  \Fm_{\circ}(A_n^{\pm})\arrow[d,"s"] \\
        \Fg_+(A_n)\arrow[r,"\varphi^{(n)}"]& \Fg_{\circ}(A_n)
    \end{tikzcd}
$$
\end{lem}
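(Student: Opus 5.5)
The statement says that $\varphi^{(n)}$ is constant on the fibres of $s'$, so that it descends to the free group. The plan is a reduction to one elementary cancellation, followed by the inductive verification carried out in the preceding discussion.

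\textbf{Reduction.} The lemma amounts to showing that $s'(w)=s'(t)$ implies $s(\varphi(w))=s(\varphi(t))$ for all $w,t\in\Fm_+(\pm A_n)$. Once this holds, we can define $\varphi^{(n)}(s'(w)):=s(\varphi^{(n)}(w))$; this is well defined because $s'$ is surjective, and the square commutes by construction. Every word has a unique reduced form $\widetilde w$ with $s'(w)=s'(\widetilde w)$. Moreover, $\widetilde w$ is reached from $w$ by finitely many deletions of adjacent pairs $a-a$ with $a\in\pm A_n$. It therefore suffices to prove
\[
s(\varphi(w+a-a+t))=s(\varphi(w+t))\qquad\text{for all } w,t\in\Fm_+(\pm A_n),\ a\in\pm A_n .
\]

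\textbf{Outer induction.} I would argue by downward induction on $k(a)$.
\begin{itemize}
\item[(i)] Base case $k(a)=n$. Here $\lambda_{\varphi(a')}=\id$, so the recursive definition of $\varphi$ gives $\varphi(w)\circ\varphi(a')\circ\overline{\varphi(a')}\circ\cdots$, which collapses in the free group.
\item[(ii)] Inductive step. Assume (IH1) for all letters of level $>k$, and let $a\in\pm X^{(k)}$. Using \eqref{phimina} and the recursion, $\varphi(a-a)$ reduces to $a\circ\overline a\circ\varphi(-a_2+a_2)$ when $a\in X^{(k)}$. When $a=-b$ with $b\in X^{(k)}$, it reduces to $\overline b\circ\varphi(-b_2+b_2+b)$. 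In both cases (IH1) applies to the level-$(k+1)$ letter $a_2$ (respectively $b_2$), giving $s(\varphi(a-a))=1$.
\end{itemize}

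\textbf{Handling the context.} To get $s(\varphi(a-a+t))=s(\varphi(t))$, I would use \cref{pre phi of sum} together with \cref{lambda of a reduced word}. The latter shows that $\lambda_{\overline{\varphi(a-a)}}(t)$ differs from $t$ only by inserting words $w_i$ of level $>k$ with $s'(w_i)=0$. These insertions are then removed by (IH1'), the iterated form of (IH1).

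For general $w$, I would run a second downward induction (IH2) on the minimal level $m$ of the letters in $w$. By \cref{lambdapusheslevelupevenmore}, $\lambda_{\overline{\varphi(w)}}(a)=w_2+a'+q_2$, where $w_2,q_2$ have level $>\max(k,m)$ and $a'\in\pm X^{(k)}$. Then:
\begin{itemize}
\item[(a)] delete $q_2-q_2$ by (IH1');
\item[(b)] delete $a'-a'$ by (IH2);
\item[(c)] delete $w_2-w_2$ by (IH1');
\item[(d)] finish with \cref{pre phi of sum}.
\end{itemize}

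\textbf{Main obstacle.} The delicate point is the bookkeeping in the inductions. Each cancellation must be justified by a hypothesis about strictly higher levels, or about a word with strictly larger minimal level. For this, the level-raising properties of $\lambda$ (Lemmas~\ref{level of lamda images}--\ref{lambdapusheslevelup}) are essential. The base case $w=t=0$, i.e.\ computing $\varphi(a-a)$ explicitly through the chain $a_1,a_2,\dots$, is where the specific design of $\lambda$ and of $\varphi(-a)$ gets used.
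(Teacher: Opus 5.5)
Your proposal is correct and follows the paper's own argument essentially step for step. The shared steps are: the reduction to deleting a single pair $a-a$; the outer downward induction on $k(a)$, with $\varphi(a-a)$ computed explicitly via \eqref{phimina}; \cref{lambda of a reduced word} together with (IH1') to handle the right context $t$; and the inner induction (IH2) on the minimal level of $w$, using \cref{lambdapusheslevelupevenmore} and the same three cancellations.

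One small precision, which the paper also glosses over: $\overline{\varphi(a-a)}$ contains the level-$k$ letters $a$ and $\overline a$, so \cref{lambda of a reduced word} as literally stated (which requires letters of level $>k$) does not apply. What is needed, and what its proof actually gives, is that a word trivial in $\Fg_\circ(A_n)$ with letters of level $\geq k$ inserts only words that are trivial in $\Fg_+(A_n)$ and have level $>k$.
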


\medskip

We now need to show an analogue result for $\lambda$. We start by noting that if $w=t_1+c-c+t_2\in \Fm_+(\pm A_n)$ for some $t_1,t_2\in \Fm_+(\pm A_n)$ and $c\in\pm A_n$, then
\[
\lambda_a(w)
=\lambda_a(t_1+c-c+t_2)
=\lambda_a(t_1)+\lambda_a(c)-\lambda_a(c)+\lambda_a(t_2),
\]
for every $a\in A_n^{\pm}$.
Hence $s'(\lambda_a(w))=s'(\lambda_a(t))$
for every $a\in A_n^{\pm}$,
which implies that for any $u,v  \in \Fm_+(\pm A_n)$
with $s'(u)=s'(v)$ one has that $s'(\lambda_a(u))=s'(\lambda_a(v))$
for every $a\in A_n^{\pm}$.
Therefore for any $a\in A_n^{\pm}$ the morphism $\lambda_a$ induces an endomorphism of $\Fg_+(A_n)$ such that the following diagram
\begin{center}
    \begin{tikzcd}
        \Fm_+(\pm A_n)\arrow[r,"\lambda_a"] \arrow[d,"s'"] &  \Fm_+(\pm A_n)\arrow[d,"s'"] \\
        \Fg_+(A_n)\arrow[r,"\lambda_a"]& \Fg_+(A_n)
    \end{tikzcd}
\end{center}
is commutative.

Moreover, for every $a\in A_n^{\pm}$, $\lambda_a$ is invertible as an endomorphism of $\Fg_+(A_n)$ with inverse $\lambda_{\overline{a}}$.
This is because
$\Fg_+(A_n)$ is the free group on $A_n$ and,
by definition, for every $t \in A_n$ 
\[
\lambda_{\overline{a}}\left(\lambda_{a}(t)\right) =
\begin{cases}
        s'\left(-x_{\overline{a},t}^{(k(a)+1)}+x_{\overline{a},t}^{(k(a)+1)}+t\right) =t & \text{ if }k(a)\geq k(t)\text{ and }k(t)\neq n\\
        s'(t)=t & \text{ if } k(a)<k(t)\text{ or }k(t)=n
    \end{cases}
\]
and 
\[
\lambda_{a}\left(\lambda_{\overline{a}}(t)\right) =
\begin{cases}
        s'\left(-x_{a,t}^{(k(a)+1)}+x_{a,t}^{(k(a)+1)}+t\right)=t  & \text{ if }k(a)\geq k(t)\text{ and }k(t)\neq n\\
        s'(t)=t & \text{ if } k(a)<k(t)\text{ or }k(t)=n
    \end{cases}
\]
Therefore, $\lambda_w\in \Aut(\Fg_+(A_n))$ for every $w\in \Fm_\circ(A_n^{\pm})$.

Finally, by \cref{lambda of a reduced word} it follows that if $w\in \Fm_\circ(A_n^{\pm})$ with $s(w)=1$, then $\lambda_w\in \Aut(\Fg_+(A_n))$ is the identity.
Hence, for every 
$u,v\in \Fm_\circ(A_n^{\pm})$
such that $u$ is obtained by applying exactly one reduction on $v$,
i.e., $v=u_1\circ w\circ u_2$ where $u=u_1\circ u_2$ and $s(w)=1$,
one has
\[
\lambda_v
=\lambda_{u_1\circ w\circ u_2}
=\lambda_{u_1}\circ \lambda_w\circ \lambda_{u_2}
=\lambda_{u_1}\circ \lambda_{u_2}
=\lambda_u.
\]
Therefore
$\lambda\colon \Fg_{\circ}(A_n) \rightarrow \Aut(\Fg_+(A_n))$ with $\lambda(w)=\lambda_w$ is a group morphism.  In summary, by \cref{pre phi of sum} one obtains the following result.

\begin{pro}
\label{phi of sum}
    The group $\Fg_{\circ}(A_n)$ acts via $\lambda$ on $\Fg_+(A_n)$.
    Moreover, 
    $$ \varphi^{(n)}(t+p) = \varphi^{(n)}(t) \circ \varphi^{(n)}(\lambda_{\overline{\varphi(t)}}(p)),$$
    for every $t,p \in \Fg_+(A_n)$.
\end{pro}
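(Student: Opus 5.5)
The plan is to assemble the statement from the facts established in the preceding discussion of \S4.3, transporting everything from free monoids to free groups through the canonical epimorphisms $s$ and $s'$.

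\textbf{Step 1: $\lambda$ is an action.} For each letter $a\in A_n^{\pm}$, $\lambda_a$ induces an endomorphism of $\Fg_+(A_n)$ compatible with $s'$. Since $\lambda_{\overline a}$ inverts it on the free generators $A_n$, this endomorphism is an automorphism. Because $\lambda^{(n)}$ is a monoid homomorphism on $\Fm_\circ(A_n^{\pm})$, every word $w$ gives $\lambda_w\in\Aut(\Fg_+(A_n))$.

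To descend along $s$, apply Lemma~\ref{lambda of a reduced word} with $k=0$: if $s(w)=1$, then $\lambda_w(t)$ differs from $t$ only by inserted subwords $w_i$ with $s'(w_i)=0$. Hence $\lambda_w$ is the identity on $\Fg_+(A_n)$. An elementary reduction $v=u_1\circ w\circ u_2\mapsto u_1\circ u_2$ therefore does not change $\lambda_v$, so $\lambda$ factors through $s$. This gives a group homomorphism $\Fg_\circ(A_n)\to\Aut(\Fg_+(A_n))$, that is, an action.

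\textbf{Step 2: the cocycle-type identity.} Take $t,p\in\Fg_+(A_n)$ and choose lifts $\hat t,\hat p\in\Fm_+(\pm A_n)$. Since the procedure defining $\varphi$ terminates for every word, Lemma~\ref{pre phi of sum} applies to the concatenation $\hat t+\hat p$ and yields
\[
\varphi(\hat t+\hat p)=\varphi(\hat t)\circ\varphi\bigl(\lambda_{\overline{\varphi(\hat t)}}(\hat p)\bigr)
\]
in the monoid. Applying $s$ gives the following:
\begin{itemize}
\item the left side becomes $\varphi^{(n)}(t+p)$, by the preceding lemma on the induced map $\varphi^{(n)}$;
\item the first factor on the right becomes $\varphi^{(n)}(t)$;
\item for the second factor, $s'\bigl(\lambda_{\overline{\varphi(\hat t)}}(\hat p)\bigr)=\lambda_{s(\overline{\varphi(\hat t)})}(p)=\lambda_{\overline{\varphi(t)}}(p)$, by Step~1 and the commuting square for $\lambda$. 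Applying $\varphi^{(n)}$ on the group level then gives the required term.
\end{itemize}

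One point needs checking: that $s(\overline{u})$ is the group inverse of $s(u)$ for a monoid word $u$. This holds by the convention $\overline{x\circ y}=\overline y\circ\overline x$ together with $\overline{\overline x}=x$.

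\textbf{Main obstacle.} The main obstacle is only bookkeeping: making sure the well-definedness of $\varphi^{(n)}$ and of $\lambda$ on the group level is used consistently, so that the choice of lifts $\hat t$ and $\hat p$ does not matter. The substantive work was already done in Lemma~\ref{lambda of a reduced word} and in the inductive argument establishing the induced map $\varphi^{(n)}$.
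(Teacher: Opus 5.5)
Your proposal is correct and follows essentially the same route as the paper. The paper likewise descends $\lambda$ to $\Fg_+(A_n)$ using that $\lambda_{\overline a}$ inverts $\lambda_a$ on the generators and that, by Lemma~\ref{lambda of a reduced word}, words with $s(w)=1$ act trivially; it then obtains the identity by pushing Lemma~\ref{pre phi of sum} through $s$ and $s'$ via the induced map $\varphi^{(n)}$. Your explicit check that $s(\overline{u})=s(u)^{-1}$ is a detail the paper leaves implicit.
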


\begin{cor}\label{third step of phi(a-a)}
For every $a\in A_n$,
\[
\varphi\left(\lambda_{\varphi(-a)}^{-1}\left(-x_{\overline{a},a}^{(k(a)+1)}\right)\right)
=\overline{\varphi(-a)}\circ \overline{a}.
\]  
\end{cor}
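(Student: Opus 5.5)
The plan is to read off the identity from the trivial equation $\varphi^{(n)}(a-a)=1$ in $\Fg_{\circ}(A_n)$, expanding the left-hand side twice with \cref{phi of sum}. Since $\varphi^{(n)}$ has been shown to induce a well-defined map $\Fg_+(A_n)\to\Fg_{\circ}(A_n)$, and $a-a=0$ in $\Fg_+(A_n)$, we get $\varphi(a-a)=\varphi(0)=1$ by rule (1) of the definition of $\varphi$. Throughout we assume $k(a)<n$, so that the symbol $x^{(k(a)+1)}_{\overline a,a}$ exists; this is implicit in the statement.

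First expansion: apply \cref{phi of sum} with $t=a$ and $p=-a$. Since $\varphi(a)=a$, this gives
\[
1=\varphi(a-a)=a\circ\varphi\bigl(\lambda_{\overline a}(-a)\bigr).
\]
By the definition of $\lambda^{(n)}$ (first case, $k(\overline a)=k(a)\geq k(a)$) we have $\lambda_{\overline a}(a)=x^{(k(a)+1)}_{\overline a,a}+a$. Hence
\[
\lambda_{\overline a}(-a)=-\lambda_{\overline a}(a)=-a-x^{(k(a)+1)}_{\overline a,a}.
\]

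Second expansion: apply \cref{phi of sum} again, now with $t=-a$ and $p=-x^{(k(a)+1)}_{\overline a,a}$. This yields
\[
\varphi\bigl(-a-x^{(k(a)+1)}_{\overline a,a}\bigr)=\varphi(-a)\circ\varphi\Bigl(\lambda_{\overline{\varphi(-a)}}\bigl(-x^{(k(a)+1)}_{\overline a,a}\bigr)\Bigr).
\]
By \cref{phi of sum}, $\lambda\colon \Fg_{\circ}(A_n)\to\Aut(\Fg_+(A_n))$ is a group homomorphism. Therefore $\lambda_{\overline{\varphi(-a)}}=\lambda_{\varphi(-a)}^{-1}$, where $\overline{\varphi(-a)}$ is the inverse in $\Fg_{\circ}(A_n)$.

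Combining the two expansions,
\[
1=a\circ\varphi(-a)\circ\varphi\Bigl(\lambda^{-1}_{\varphi(-a)}\bigl(-x^{(k(a)+1)}_{\overline a,a}\bigr)\Bigr).
\]
Multiplying on the left by $\overline{\varphi(-a)}\circ\overline a$ in the group $\Fg_{\circ}(A_n)$ gives exactly the claimed equality.

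The only point needing care is that every manipulation takes place in the free groups, not in the free monoids. We are allowed to replace $a-a$ by $0$ and to use the inverse of $\lambda_{\varphi(-a)}$ there, which is precisely what the induced-map results of the previous subsection provide. The remaining steps are routine.
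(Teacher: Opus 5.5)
Your proof is correct and is the same argument as the paper's: expand $\varphi(a-a)=\varphi(0)=1$ twice using \cref{phi of sum} together with $\lambda_{\overline a}(a)=x^{(k(a)+1)}_{\overline a,a}+a$, then cancel on the left in $\Fg_{\circ}(A_n)$. You also write $\varphi(0)=1$ correctly where the paper writes $0$, and you rightly make explicit the hypothesis $k(a)<n$, which is needed for the symbol $x^{(k(a)+1)}_{\overline a,a}$ to exist.
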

\begin{proof}
Since $0=\varphi(0)$, by \cref{phi of sum} we have that
    \begin{align*}
       0
    &=\varphi(a-a)\\
    &=\varphi(a)\circ\varphi\left(\lambda_{\overline{\varphi(a)}}(-a)\right)\\
    &=\varphi(a)\circ\varphi\left(-\lambda_{\overline{a}}(a)\right)\\
    &=\varphi(a)\circ\varphi\left(-a-x_{\overline{a},a}^{(k(a)+1)}\right)\\
    &=\varphi(a)\circ\varphi(-a)\circ\varphi\left(\lambda_{\overline{\varphi(-a)}}\left(-x_{\overline{a},a}^{(k(a)+1)}\right)\right).
    \end{align*}
    Hence, 
    \[
    \varphi\left(\lambda_{\overline{\varphi(-a)}}\left(-x_{\overline{a},a}^{(k(a)+1)}\right)\right)
    =\overline{\varphi(-a)}\circ \overline{a}
    \] and the statement is proved.
\end{proof}

\vspace{10pt}
\subsection{Construction of a bijective 1-cocycle  \texorpdfstring{$\pi^{(n)}\colon \Fg_{\circ}(A_n)\rightarrow \Fg_+(A_n)$}{}.}

$\;$

\vspace{6pt}
Now, we will construct a  skew brace  with multiplicative group $\Fg_{\circ}(A_n)$ and additive group $\Fg_{+}(A_n)$  by explicitly defining a bijective $1$-cocycle 
  $$\pi^{(n)}\colon\Fg_{\circ}(A_n)\rightarrow \Fg_+(A_n),$$ 
with respect to the action defined by the homomorphism $\lambda^{(n)}$. (Again, whenever it is clear from the context, the superscript $(n)$ will be omitted to simplify the notation.)

The image of an element $w\in \Fg_{\circ}(A_n)$ will be defined recursively via the length $\ell(w)$ of $w$. Put $\pi (1)=0$. For $a\in A_n$, we define
 $$\pi(a)=a \quad \mbox{ and } \quad  \pi(\overline{a})=-\lambda_a^{-1}(a), \quad \mbox{for } a\in A_n.$$  If $\pi(w)$ is defined for some $w \in \Fg_{\circ}(A_n)$,
 then for $a\in A_n \cup\overline A_n$ with $\ell(a\circ w)=1+\ell(w)$ (note that this means that $a\circ w$ is in reduced form in case $w$ is in reduced form), we define 
 \begin{eqnarray}\label{picocycle}     
\pi(a\circ w) = \pi(a) + \lambda_a(\pi(w)).
\end{eqnarray}

We are now in the position to prove that $\pi^{(n)}$ is invertible with inverse map $\varphi^{(n)}$.
This will be done in a series of lemmas.

\begin{lem}\label{injective}
$\varphi\circ\pi=\textup{id}_{\Fg_\circ(A_n)}$, so $\pi$ is injective.
\end{lem}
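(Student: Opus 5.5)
We argue by induction on the reduced length $\ell(w)$ of $w\in\Fg_\circ(A_n)$ and show $\varphi(\pi(w))=w$. The case $w=1$ holds because $\varphi(0)=1$. Once the inductive step is established, $\varphi\circ\pi=\id$ immediately gives injectivity of $\pi$.

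For the inductive step we write $w=a\circ w'$ in reduced form, with $a\in A_n\cup\overline{A_n}$ and $\ell(w)=1+\ell(w')$. By \eqref{picocycle} we have $\pi(w)=\pi(a)+\lambda_a(\pi(w'))$. \cref{phi of sum} then gives
\[
\varphi(\pi(w))=\varphi(\pi(a))\circ\varphi\bigl(\lambda_{\overline{\varphi(\pi(a))}}\lambda_a(\pi(w'))\bigr).
\]
So everything reduces to the length-one statement $\varphi(\pi(a))=a$. Granting this, $\overline{\varphi(\pi(a))}=\overline a$, so $\lambda_{\overline a}\lambda_a=\id$ because $\lambda$ is a group homomorphism on $\Fg_\circ(A_n)$ (Proposition \ref{phi of sum}). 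The right-hand side therefore collapses to $a\circ\varphi(\pi(w'))=a\circ w'=w$ by the induction hypothesis.

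It remains to prove the base case, and this is where the real work lies. For $a\in A_n$ it is immediate: $\varphi(\pi(a))=\varphi(a)=a$ by definition. For $\overline a$ we need $\varphi(-\lambda_a^{-1}(a))=\overline a$, and we plan to obtain this without computing $\lambda_a^{-1}(a)$ explicitly. We apply \cref{phi of sum} to $0=x+(-x)$ with $x=\lambda_{\overline a}(a)$:
\[
1=\varphi(x)\circ\varphi\bigl(\lambda_{\overline{\varphi(x)}}(-x)\bigr).
\]
This identity only becomes useful once we know $\varphi(x)$, so we must first compute $\varphi(\lambda_{\overline a}(a))$.

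If $k(a)=n$, then $\lambda_{\overline a}=\id$ and the definition of $\varphi$ gives $\varphi(-a)=\overline a$ directly. If $k(a)<n$, then $\lambda_{\overline a}(a)=x^{(k(a)+1)}_{\overline a,a}+a$. By \cref{phi of sum},
\[
\varphi\bigl(x^{(k(a)+1)}_{\overline a,a}+a\bigr)=x^{(k(a)+1)}_{\overline a,a}\circ\varphi\bigl(\lambda_{\overline{x_{\overline a,a}}}(a)\bigr),
\]
and $\lambda_{\overline{x_{\overline a,a}}}(a)=x^{(k(a)+2)}_{\overline{x_{\overline a,a}},a}+a$. Iterating this does not terminate nicely, so we expect this direct route to be the main obstacle. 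Our first attempt will instead be to exploit \cref{third step of phi(a-a)} together with the identity $\varphi(a-a)=1$ used in its proof, and to solve for $\varphi(-\lambda_{\overline a}(a))$. Concretely, $\varphi(a-a)=1$ together with \cref{phi of sum} gives
\[
a\circ\varphi\bigl(\lambda_{\overline a}(-a)\bigr)=1,
\]
that is,
\[
\varphi\bigl(-\lambda_{\overline a}(a)\bigr)=\overline a.
\]
Since $\lambda_{\overline a}=\lambda_a^{-1}$ in $\Aut(\Fg_+(A_n))$, this is exactly $\varphi(\pi(\overline a))=\overline a$. The base case therefore follows directly from Proposition \ref{phi of sum} applied to $a-a=0$, and the induction completes the proof.
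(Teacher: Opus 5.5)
Your proof is correct, and the inductive step is the same as the paper's: peel off the first letter via \eqref{picocycle}, apply Proposition~\ref{phi of sum}, and cancel $\lambda_{\overline a}\lambda_a$.

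The difference is in the base case $\varphi(\pi(\overline a))=\overline a$. The paper treats $a\in X^{(n)}$ and $k(a)<n$ separately. For $k(a)<n$ it expands $\pi(\overline a)=-a-x^{(k(a)+1)}_{\overline a,a}$, applies Proposition~\ref{phi of sum} once more, and invokes Corollary~\ref{third step of phi(a-a)} to reach $\varphi(-a)\circ\overline{\varphi(-a)}\circ\overline a=\overline a$. You stop at what is only the first step of that corollary's proof. Proposition~\ref{phi of sum} applied to $a+(-a)=0$ gives $a\circ\varphi(-\lambda_{\overline a}(a))=1$ at once. This holds uniformly in $k(a)$ and needs neither the explicit form of $\lambda_{\overline a}(a)$ nor that of $\varphi(-a)$.

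Both routes depend on the same hard input: that $\varphi$ is well defined on $\Fg_+(A_n)$, so that $\varphi(a-a)=\varphi(0)=1$, together with Proposition~\ref{phi of sum}. Your argument is therefore a strict shortcut, and the paper's detour through the corollary gains nothing for this lemma.

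In a final write-up, delete your exploratory attempts: the direct computation of $\varphi(\lambda_{\overline a}(a))$ and the decomposition $0=x+(-x)$ with $x=\lambda_{\overline a}(a)$. Both are dead ends and are not used in the argument that works.
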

\begin{proof}
Clearly, if $a\in A_n$, then $\varphi(\pi(a))=a$, while
if $a\in X^{(n)}$, then
\[
\varphi(\pi(\overline{a}))
=\varphi\left(-\lambda_{\overline{a}}(a)\right)
=\varphi\left(-a\right)
=\overline{a_1}
=\overline{a}.
\]
Suppose now that $a\in A_n$ with $k(a)<n$. 
Then, by \cref{third step of phi(a-a)} and \cref{phi of sum},
\begin{align*}
    \varphi(\pi(\overline{a}))&=
    \varphi\left(-\lambda_{\overline{a}}(a)\right)=
    \varphi\left(-\left(x_{\overline{a},a}^{(k(a)+1)}+a\right)\right)=
    \varphi\left(-a-x_{\overline{a},a}^{(k(a)+1)}\right)\\
    &=\varphi(-a)\circ\varphi\left(\lambda_{\overline{\varphi(-a)}}\left(-x_{\overline{a},a}^{(k(a)+1)}\right)\right)\\
    &=\varphi(-a)\circ\overline{\varphi(-a)}\circ \overline{a}
    =\overline{a}.
\end{align*}
So for $c\in A_n\cup \overline{A_n}$ we proved that $\varphi(\pi(c))=c$. Moreover, for $w\in  \Fg_\circ(A_n)$
\begin{align*}
    \varphi(\pi(c\circ w))&=
    \varphi(\pi(c)+\lambda_c(\pi(w))
    =\varphi(\pi(c))+\varphi\left(\lambda_{\overline{\varphi(\pi(c))}}\lambda_c(\pi(w))\right)\\
    &=c+\varphi(\lambda^{-1}_{c}\lambda_c(\pi(w)))
    =c+\varphi(\pi(w)).
\end{align*}

 Therefore, we can conclude, by induction on the length of $w\in \Fg_\circ(A_n)$ that
 \[
 \varphi(\pi(w))=w,
 \]
 for every $w\in \Fg_\circ(A_n)$.
 \end{proof}
Hence, we have shown that $\pi$ is injective. In order to prove that $\pi$ is surjective, we first prove that $\pi$ is a $1$-cocycle. This will allow us to use induction in the proof of surjectivity.

\begin{lem}\label{cocycle}
    The map $\pi$ is a $1$-cocycle for the action of $\lambda$.
\end{lem}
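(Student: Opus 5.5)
The plan is to prove the cocycle identity
\[
\pi(u\circ w)=\pi(u)+\lambda_u(\pi(w))\qquad\text{for all } u,w\in \Fg_\circ(A_n)
\]
in two stages. First I treat the case where $u$ is a single letter $c\in A_n\cup\overline{A_n}$ and $w$ is arbitrary. Then I pass to general $u$ by induction on $\ell(u)$. Throughout, $\pi$ is understood as defined on reduced words via \eqref{picocycle}, so it is well defined on $\Fg_\circ(A_n)$. I also use that $\lambda\colon \Fg_\circ(A_n)\to\Aut(\Fg_+(A_n))$ is a group homomorphism, as established before Proposition~\ref{phi of sum}.

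\emph{Step 1: one letter.} Let $c\in A_n\cup\overline{A_n}$ and $w\in\Fg_\circ(A_n)$ be reduced. If $\ell(c\circ w)=1+\ell(w)$, the identity is exactly the defining rule \eqref{picocycle}. Otherwise $w=\overline{c}\circ w'$ with $\ell(w)=1+\ell(w')$, so $c\circ w=w'$. Applying \eqref{picocycle} to the reduced word $w$ gives $\pi(w)=\pi(\overline c)+\lambda_{\overline c}(\pi(w'))$. Using that $\lambda$ is a homomorphism, we get
\[
\pi(c)+\lambda_c(\pi(w))=\pi(c)+\lambda_c(\pi(\overline c))+\pi(w').
\]
It therefore suffices to check $\pi(c)+\lambda_c(\pi(\overline c))=0$ for every letter $c$.

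\emph{The base identity.} For $c=a\in A_n$, we have $\pi(\overline a)=-\lambda_a^{-1}(a)$, so $\lambda_a(\pi(\overline a))=-a=-\pi(a)$. For $c=\overline a$, we have $\pi(\overline a)+\lambda_{\overline a}(\pi(a))=-\lambda_a^{-1}(a)+\lambda_a^{-1}(a)=0$, since $\lambda_{\overline a}=\lambda_a^{-1}$ on $\Fg_+(A_n)$. This is precisely why $\pi(\overline a)$ was defined as it was.

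\emph{Step 2: induction on $\ell(u)$.} The cases $\ell(u)=0,1$ are trivial or covered by Step~1. If $u=c\circ u'$ is reduced with $\ell(u)=1+\ell(u')$, apply Step~1 to the pair $(c,\,u'\circ w)$, then the inductive hypothesis to $(u',w)$, and then \eqref{picocycle} to $u$:
\[
\pi(u\circ w)=\pi(c)+\lambda_c\bigl(\pi(u')+\lambda_{u'}(\pi(w))\bigr)=\pi(c)+\lambda_c(\pi(u'))+\lambda_u(\pi(w))=\pi(u)+\lambda_u(\pi(w)).
\]

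The only delicate point is the cancellation case in Step~1, that is, the base identity $\pi(c)+\lambda_c(\pi(\overline c))=0$ together with the fact that $\lambda$ factors through the free group. Both have already been secured, so I expect no serious obstacle.
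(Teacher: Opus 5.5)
Your proof is correct and follows essentially the same route as the paper. It first treats the one-letter case, reducing the cancellation case to the identity $\pi(c)+\lambda_c(\pi(\overline c))=0$, checked separately for $c\in A_n$ and $c\in\overline{A_n}$. It then inducts on $\ell(u)$, using that $\lambda$ is a homomorphism on $\Fg_\circ(A_n)$; the only cosmetic difference is that you close the inductive step with the defining rule \eqref{picocycle} for the reduced word $u=c\circ u'$, where the paper re-invokes the one-letter case.
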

\begin{proof}
We need to show that
$\pi(a\circ w) = \pi(a) + \lambda_a(\pi(w)),$
for any $a,w\in \Fg_\circ(A_n)$. 
We prove this by induction on the length $\ell(a)$ of $a$. 

If $\ell(a)=0$, then this is obvious as $\lambda_1=\id$. Assume  $\ell(a)=1$.  If $\ell(a\circ w)=1+\ell(w)$, then the claim is valid by the definition of $\pi$.  Otherwise, 
the first letter in the reduced form of $w$ is $\overline{a}$, say $\widetilde w=\overline{a}\circ w'$ is the reduced form of $w$.
By definition 
\[
\pi(a)+\lambda_a(\pi(\overline{a}\circ w'))
=\pi(a)+\lambda_a(\pi(\overline{a})+\lambda_{\overline{a}}(\pi(w')))
=\pi(a)+\lambda_a(\pi(\overline{a}))+\pi(w')
\]

We claim that  $0=\pi (a) +\lambda_a (\pi (\overline{a}))$, so the previous expression becomes
$\pi(w')=\pi(a\circ \overline{a}\circ w')=\pi(a\circ w)$, which will complete the proof for $\ell(a)=1$.

In case $a\in A_n$, we have 
\[
\lambda_a (\pi (\overline{a}))
= \lambda_a(- \lambda_{a}^{-1}(a))
= 
-\lambda_a (\lambda_{a}^{-1} (a))
=-a
=-\pi (a)
\]
and thus the claim follows.
In case  $a\in \overline{A_n}$, put $a=\overline{b}$ with $b\in A_n$.
We need to show that  $0=\pi \left(\overline{b}\right) +\lambda_{\overline{b}}(\pi (b))$, i.e.
\[
-\pi \left(\overline{b}\right) 
= \lambda_{\overline{b}}(\pi (b))
=\lambda_{b}^{-1}(b),
\]
and this holds by definition.
 
 Suppose now that  $\ell(a)\geq 2$ and write $a=b\circ w'$, with $\ell(b)=1$ and $\ell(w')<\ell(a) $.
Then, by the previous step, 
$ \pi (a\circ w) =\pi (b \circ w' \circ w)
=\pi (b) +\lambda_b (\pi ( w'\circ w))$.
Hence, by
the inductive hypothesis,  $ \pi (a\circ w)= \pi (b) +\lambda_b \left(\pi (w') +\lambda_{w'}(\pi (w ))\right)$.
The additivity of the map $\lambda_b$ yields
\[
\pi (a\circ w)   = \pi (b) +\lambda_b (\pi (w')) +\lambda_b \lambda_{w'} \pi (w)=  \pi (b) +\lambda_b (\pi (w')) +\lambda_{b\circ w'}(\pi (w)).
\]
Since $\ell(b)=1$, applying again the previous case,
we thus obtain
\[
 \pi (a\circ w)=\pi (b\circ w') +\lambda_{b\circ w'}(\pi (w))
    = \pi (a) +\lambda_{a} \pi (w),
\]
as desired.
\end{proof}

\begin{lem}\label{lemma10.3}
For any $a \in \pm A_n $, we have $ \pi(\varphi(a))=a$.
\end{lem}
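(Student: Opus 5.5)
We argue separately for $a\in A_n$ and $a\in -A_n$. The second case is handled by downward induction on the level $k(b)$ of $a=-b$, using the explicit formula for $\varphi(-b)$ together with the fact that $\pi$ is a $1$-cocycle (Lemma \ref{cocycle}). All computations take place in $\Fg_+(A_n)$ and $\Fg_\circ(A_n)$, through the induced maps $\lambda$ and $\varphi$ of the previous subsection.

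If $a\in A_n$, then by definition $\varphi(a)=a$ and $\pi(a)=a$, so there is nothing to prove. Now let $a=-b$ with $b\in A_n$, and put $k=k(b)$. By \eqref{phimina} we have $\varphi(-b)=\overline{b}\circ\varphi(-b_2)$, where $b_2=x^{(k+1)}_{b,b}$ and $k(b_2)=k+1$ (for $k<n$). Since $\pi$ is a $1$-cocycle for $\lambda$, we do not need to worry whether this product is reduced, and we obtain
\[
\pi(\varphi(-b))=\pi(\overline b)+\lambda_{\overline b}\bigl(\pi(\varphi(-b_2))\bigr).
\]
We now induct downwards on $k$. In the base case $k=n$ we have $\varphi(-b)=\overline b$ and $\lambda_b=\id$, so $\pi(\overline b)=-\lambda_b^{-1}(b)=-b$, as required.

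For $k<n$, the induction hypothesis gives $\pi(\varphi(-b_2))=-b_2$. Since $\lambda_{\overline b}=\lambda_b^{-1}$ and $k(\overline b)=k\geq k(b)$, the first case of the definition of $\lambda$ yields
\[
\pi(\overline b)=-\lambda_{\overline b}(b)=-\bigl(x^{(k+1)}_{\overline b,b}+b\bigr)=-b-x^{(k+1)}_{\overline b,b}.
\]
Next we compute $\lambda_{\overline b}(b_2)$. Here $k(\overline b)=k<k+1=k(b_2)$, and $b_2=x^{(k+1)}_{f,t}$ with $f=b$, which is exactly the inverse of $\overline b$. So the third case of the definition applies and gives $\lambda_{\overline b}(b_2)=-x^{(k+1)}_{\overline b,b}$. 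Consequently $\lambda_{\overline b}(-b_2)=x^{(k+1)}_{\overline b,b}$. Putting everything together,
\[
\pi(\varphi(-b))=-b-x^{(k+1)}_{\overline b,b}+x^{(k+1)}_{\overline b,b}=-b,
\]
which completes the induction.

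The main obstacle is bookkeeping rather than any conceptual difficulty. One must check that the third case in the definition of $\lambda$ applies to $\lambda_{\overline b}(x^{(k+1)}_{b,b})$, which requires identifying the inverse of $\overline b$ with $b$ in $\Fg_\circ(A_k)$. One must also justify using the cocycle identity on the product $\overline b\circ\varphi(-b_2)$, which is licensed by Lemma \ref{cocycle}.
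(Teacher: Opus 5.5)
Your proof is correct and follows the paper's argument: downward induction on the level, with base case $\varphi(-b)=\overline b$, and an inductive step that applies the cocycle identity to $\varphi(-b)=\overline b\circ\varphi(-b_2)$. The only cosmetic difference is the final simplification: the paper rewrites $-\lambda_b^{-1}(b)+\lambda_{\overline b}(-b_2)$ as $-\lambda_b^{-1}(b_2+b)=-\lambda_b^{-1}(\lambda_b(b))=-b$, whereas you evaluate each term separately using the first and third cases of the definition of $\lambda$; both are valid.
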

 \begin{proof}
If $a\in A_n$, then by definitions $\pi(\varphi(a))=a$, and moreover, 
     \begin{align*}
         \pi (\varphi(-a))
         &=\pi\left(\overline{a_1}\circ \overline{a_2}\circ\ldots\circ \overline{a_{n-k(a)+1}}\right)
         =\pi\left(\overline{a_1}\right)+\lambda_{\overline{a_1}}\left(\pi\left(\overline{a_2}\circ\ldots\circ \overline{a_{n-k(a)+1}}\right)\right)
     \end{align*} by \cref{cocycle}.
     To prove that $\pi (\varphi(-a))=-a$,
     we will proceed by induction on~$k(a)$.
     If $k(a)=n$, then 
     $\lambda_{a}$ is the identity and $\varphi(-a)=\overline{a}$,
     so 
     \[
     \pi (\varphi(-a))
     =\pi(\overline{a})=-\lambda_a^{-1}(a)
     =-a.
     \]
     Let now $k(a)<n$ and suppose that 
     $\pi(\varphi(-b))=-b$ for every $b\in X^{(k(a)+1)}$.
     Note that $a_2=x^{(k(a)+1)}_{a,a}\in X^{(k(a)+1)}$ and that 
     \[
     \varphi(-a_2)=\overline{a_2}\circ\ldots\circ \overline{a_{n-k(a)+1}}.
     \]
     Therefore, the inductive hypothesis on $a_2$ implies that
     \[
     \pi(\overline{a_2}\circ\ldots\circ \overline{a_{n-k(a)+1}})=-a_2.
     \]
     Hence,
     \begin{align*}
        \pi (\varphi(-a))
        &=\pi\left(\overline{a}\right)+\lambda_{\overline{a}}\left(\pi\left(\overline{a_2}\circ\ldots\circ \overline{a_{n-k(a)+1}}\right)\right)\\
     &=-\lambda_a^{-1}(a)+\lambda_{\overline{a}}\left(-a_2\right)\\
     &=-\lambda_a^{-1}\left(x^{(k(a)+1)}_{a,a}+a\right)\\
      &=-\lambda_a^{-1}\left(\lambda_a(a)\right)\\
      &=-a. 
     \end{align*}
     The statement is proved.
 \end{proof}

\begin{lem}\label{lemsurjective}
    The map $\pi$ is surjective.
\end{lem}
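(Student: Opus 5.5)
We show that the image of $\pi$ is all of $\Fg_+(A_n)$ by showing it is a subgroup of $\Fg_+(A_n)$ that contains the free generators $A_n$. By definition $\pi(a)=a$ for every $a\in A_n$, so $A_n\subseteq\pi(\Fg_\circ(A_n))$. By Lemma~\ref{lemma10.3}, $\pi(\varphi(-a))=-a$ for every $a\in A_n$, so $\pm A_n$ lies in the image. Since $\Fg_+(A_n)$ is generated as a monoid by $\pm A_n$, it is enough to show that the image is closed under addition. Equivalently, if $p=\pi(u)$ and $c\in\pm A_n$, then $c+p$ is in the image; an induction on the length of a word in $\pm A_n$ representing an element of $\Fg_+(A_n)$ then finishes the proof.

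For the key step we use the cocycle identity of Lemma~\ref{cocycle}. Let $c\in\pm A_n$ and put $g=\varphi(c)$, so that $\pi(g)=c$ by Lemma~\ref{lemma10.3}. For any $u\in\Fg_\circ(A_n)$ we have
\[
\pi(g\circ u)=\pi(g)+\lambda_g(\pi(u))=c+\lambda_g(\pi(u)).
\]
Given an arbitrary target $c+p$, we therefore want $u$ with $\pi(u)=\lambda_g^{-1}(p)=\lambda_{\overline g}(p)$. This uses that $\lambda_g\in\Aut(\Fg_+(A_n))$, which follows from Proposition~\ref{phi of sum}.

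The remaining difficulty is that $\lambda_{\overline g}(p)$ need not be shorter than $p$, so a naive induction on the length of $p$ may not terminate. The cleaner route is to show the image is stable under every $\lambda_h$: indeed $\pi(h\circ u)=\pi(h)+\lambda_h(\pi(u))$, so $\lambda_h(\pi(u))=-\pi(h)+\pi(h\circ u)$. This turns stability under $\lambda_h$ into closure under addition together with closure under negation. Closure under negation holds because the cocycle identity with $u=\overline v$ gives $0=\pi(v)+\lambda_v(\pi(\overline v))$, hence
\[
-\pi(v)=\lambda_v(\pi(\overline v)).
\]
So we prove by induction on $m$ the combined statement: every sum $c_1+\dots+c_m$ with $c_i\in\pm A_n$ is in the image, and the image is stable under every $\lambda_h$ applied to such sums.

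A more direct alternative is to mirror the construction of $\varphi$. Write $p=c_1+\dots+c_m$ and set $p'=\lambda_{\overline{\varphi(c_1)}}(c_2+\dots+c_m)$, which is again a word of $m-1$ letters in the monoid $\Fm_+(\pm A_n)$ because $\lambda$ is a monoid endomorphism preserving $\pm$ on letters up to the level-raising terms of Lemma~\ref{lambdapusheslevelupevenmore}. The termination argument already given for $\varphi$ shows this recursion ends. One then sets $u=\varphi(c_1)\circ\varphi(p')$ and uses the cocycle identity with induction along that same terminating recursion to get
\[
\pi(u)=c_1+\lambda_{\varphi(c_1)}(\pi(\varphi(p')))=c_1+\lambda_{\varphi(c_1)}(p')=p.
\]
This proves $\pi\circ\varphi=\mathrm{id}$ on $\Fm_+(\pm A_n)$, and it descends to $\Fg_+(A_n)$ through $s'$. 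The main obstacle is making this induction well-founded. We would use exactly the ordering from the well-definedness proof of $\varphi$: a downward induction on the minimal level $k$, with an inner induction on the number of occurrences of letters from $\pm X^{(k)}$. Lemma~\ref{lambdapusheslevelup} guarantees that each step of the recursion stays within that ordering.
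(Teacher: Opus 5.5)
Your ``cleaner route'' does not work as stated: the combined induction on $m$ is circular. Via $\pi(g\circ u)=\pi(g)+\lambda_g(\pi(u))$ and $\lambda_h(\pi(u))=-\pi(h)+\pi(h\circ u)$, closure of $\pi(\Fg_\circ(A_n))$ under addition and its stability under every $\lambda_h$ are equivalent statements, and nothing in your induction decreases. To get stability for sums of length $m$ you must add $\lambda_h(c_1)$, whose length is unbounded as $h$ varies. For instance, for $n=2$ and $X=\{x_0\}$, the element $\lambda_{x_0^k}(x_0)$ has length $k+1$ in the basis $A_2$. Even the base case, $\lambda_h(c)\in\pi(\Fg_\circ(A_n))$ for all $h$ and all $c\in\pm A_n$, is not available. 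The image of a $1$-cocycle is just the orbit of $0$ under the affine action $g\cdot x=\pi(g)+\lambda_g(x)$, and such orbits need not be subgroups. So some input specific to this construction is unavoidable, and you should drop this part.

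Your ``more direct alternative'' supplies that input; it is correct and differs from the paper's proof. The statement to prove is $\pi(s(\varphi(w)))=s'(w)$ for all $w\in\Fm_+(\pm A_n)$, by induction on the number of steps of the recursion defining $\varphi(w)$.
\begin{itemize}
\item This number is finite by the termination argument following Lemma~\ref{pre phi of sum}. It drops by exactly one when passing from $w=c_1+w''$ (with $w''=c_2+\cdots+c_s$) to $w'=\lambda_{\overline{\varphi(c_1)}}(w'')$.
\item So you need neither the level ordering nor any length count. Your claim that $w'$ has $m-1$ letters is false (already $\lambda_a(b)=x^{(k(a)+1)}_{a,b}+b$ has two letters), but you never actually use it.
\item The base cases are $\pi(1)=0$ and Lemma~\ref{lemma10.3}.
\item The inductive step is your computation, using Lemma~\ref{cocycle}, the compatibility $s'\circ\lambda_v=\lambda_{s(v)}\circ s'$, and $s(\overline v)=s(v)^{-1}$.
\item Surjectivity of $s'$ then gives surjectivity of $\pi$, without even needing that $\varphi$ descends to $\Fg_+(A_n)$.
\end{itemize}

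The paper instead runs a fresh downward induction on the level $k$, with an inner induction on the number of letters from $X^{(k)}$ in the reduced form. It explicitly assembles a preimage $z_1\circ\cdots\circ z_l\circ\varphi(a)\circ t'$ by pushing $a$ through the maps $\lambda_{z_i}^{-1}$ up to the top level, where $\lambda$ is trivial. In effect it redoes the termination analysis inside $\Fg_+(A_n)$. This also yields the extra information that elements of $\Fg_+(A_n\setminus A_{k-1})$ have preimages in $\Fg_\circ(A_n\setminus A_{k-1})$.

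Your route recycles the termination already proved and identifies $\varphi$ as a right inverse of $\pi$ in one stroke, so it is shorter. Its correctness rests entirely on the termination proof for the monoid recursion.
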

\begin{proof}
    We prove the surjectivity by showing, by induction on $k$ from $n$ to 1,
    that for every 
    $w\in \Fg_+(A_n\setminus A_{k-1})$,
    there exists $z\in \Fg_\circ(A_n\setminus A_{k-1})$ 
    such that $\pi(z)=w$.

    The base step for the induction is to start with an element of $\Fg_{+}(A_n\setminus A_{n-1})$. Let $w \in \Fg_{+}(A_n)$ such that $w= x_1 + \ldots +x_l$ with $x_i \in X^{(n)}\cup -X^{(n)}$. Denote $$ t_i = \begin{cases}
        x_i & \text{ if } x_i \in X^{(n)},\\
        \overline{y_i} & \text{ if } x_i=-y_i \in -X^{(n)}. 
    \end{cases}$$
    Then, by \cref{cocycle} and because $\lambda_{t_i}=\id$, we get $\pi(t_1 \circ \ldots \circ t_l) = x_1 + \ldots +x_l=w.$ This proves the base step.
    
    Suppose that $\pi: \Fg_\circ(A_n\setminus A_k)\to \Fg_+(A_n\setminus A_k)$
    is surjective.
    To prove that $\pi: \Fg_\circ(A_n\setminus A_{k-1})\to \Fg_+(A_n\setminus A_{k-1})$ is surjective, we proceed by a second induction on the number of occurrences of elements of $X^{(k)}$ in its reduced form.

    Let $w\in \Fg_+(A_n\setminus A_{k-1})$ 
    with exactly $j$ occurrences of elements of $X^{(k)}$ in its reduced form $\widetilde w$.
    Write $\widetilde w=w_1 + a + t_1$, where $w_1$ is a reduced word with only letters in $A_n\setminus A_k$,
    $a \in \pm X^{(k)}$ and $t_1 \in \Fg_+(A_n\setminus A_{k-1})$ 
    with exactly $j-1$ occurrences of elements of $X^{(k)}$.
    Then, by the first induction hypothesis,
    since $w_1\in \Fg_+(A_n\setminus A_{k})$, there exists a $z_1\in \Fg_{\circ}(A_n\setminus A_{k})$ such that $\pi(z_1) = w_1$. 
    
    Note that $z_1=s_1\circ\ldots\circ s_m$ for some $s_1,\dots,s_m\in (A_n\setminus A_k)\cup \overline{(A_n\setminus A_k)}$, so $k(s_i) > k=k(a)$.
    Hence 
    \[
    \lambda_{s_i}^{-1}(a)
    =\begin{cases} 
    x_{\overline{s_i},a}^{(k(s_i)+1)}+a &\text{ if } s_i\in A_n\setminus A_{k-1}\\[0.2cm]
    x_{s_i,a}^{(k(s_i)+1)}+a &\text{ if } s_i \in \overline{(A_n\setminus A_{k-1})}
    \end{cases},
    \]
    for every $i\in\{1,\dots, m\}$.
    More generally, we have 
    \begin{eqnarray} \label{lambdaprop} 
    \lambda^{-1}_{s_{1}\circ\ldots\circ s_i}(a)&=&v_i+a, 
    \end{eqnarray}
    where $v_i\in \Fg_+(A_n\setminus A_{k+1})$ for every $i\in\{1,\dots,m\}$.   
    This  statement can be proven by induction on $i$. 
    The case $i=1$ is clear from the above.
    Now suppose $1\leq i<m$   and $\lambda^{-1}_{s_{1}\circ\ldots\circ s_i}(a)=v_i+a$
    for some $v_i\in \Fg_+(A_n\setminus A_{k+1})$.
    Recall from \cref{level of lamda images} that also 
    $\lambda^{-1}_{s_{i+1}}(v_i)\in \Fg_+(A_n\setminus A_{k+1})$. Hence
    \[
    \lambda^{-1}_{s_{1}\circ\ldots\circ s_i\circ s_{i+1}}(a)
    =\lambda^{-1}_{s_{i+1}}\lambda^{-1}_{s_{1}\circ\ldots\circ s_i}(a)
    =\lambda^{-1}_{s_{i+1}}(v_i+a)
    =v_{i+1}+a,
    \]
    with
    \[
       v_{i+1} 
    =\begin{cases} 
    \lambda^{-1}_{s_{i+1}}(v_i)+x_{\overline{s_{i+1}},a}^{(k(s_{i+1})+1)} &\text{ if } s_{i+1}\in A_n\setminus A_{k-1}\\[0.2cm]
    \lambda^{-1}_{s_{i+1}}(v_i)+x_{s_{i+1},a}^{(k(s_{i+1})+1)} &\text{ if } s_{i+1} \in \overline{(A_n\setminus A_{k-1})}
    \end{cases},
    \]
    which is again in $\Fg_+(A_n\setminus A_{k+1})$, since $k(s_{i+1})\geq k$.
    This proves \eqref{lambdaprop}.

    Therefore, 
    $\lambda_{z_1}^{-1}(a)=\lambda_{s_1\circ\ldots\circ s_m}^{-1}(a)=v_m+a$, with $v_m\in \Fg_+(A_n\setminus A_{k+1})$.

    \smallskip
    
    Put $w_2=v_m$, so that
    $\lambda_{z_1}^{-1}(a) = w_2+a$, and put 
    $t_2=\lambda_{z_1}^{-1}(t_1)$, where $w_2$ is in its reduced form (which only contains elements of $A_n\setminus A_{k+1}$).
    By \cref{lambdapusheslevelup} there are at most $j-1$ occurrences of elements of $X^{(k)}$ in the reduced form of $t_2$.
    Continue this procedure inductively, defining $z_i\in \Fg_{\circ}(A_n\setminus A_{k+i-1})$
    such that $\pi(z_i) =w_i$ and $\lambda_{z_i}^{-1}(a) = w_{i+1}+a$
    for some $w_{i+1}\in \Fg_{+}(A_n\setminus A_{k+i})$ 
    and $t_{i+1} = \lambda_{z_i}^{-1}(t_i)$. 
    At some point (actually for  $l=n-k$), we have that $w_l\in \Fg_+(A_n\setminus A_{n-1})$.  
   So, we have that $\pi(z_{l-1})=w_{l-1}$ and $\lambda^{-1}_{z_{l-1}}(a)=w_l+a$
    with  $w_l\in \Fg_{+}(A_n\setminus A_{n-1})$. 
   Then we define $t_l=\lambda_{\varphi(a)}^{-1}(t_{l-1})$. 
Note again that by \cref{lambdapusheslevelup}  each $t_i$ belongs to $\Fg_+(A_n\setminus A_{k-1})$ and its reduced form has $j-1$ occurrences of elements of $X^{(k)}$.
    
    Finally, by the first base step, let  $z_l \in \Fg_{\circ}(A_n\setminus A_{n-1})$ be
    such that $\pi(z_l) =w_l$ and, by the induction hypothesis, there exists 
    $t' \in \Fg_{\circ}(A_n\setminus A_{k-1})$ 
    such that $\pi(t')=\lambda_{\varphi(a)}^{-1}(t_{l})$.
Then, we claim that $$\pi(z_1\circ\ldots \circ z_{l} \circ \varphi(a)\circ t') = w.$$
    Indeed, first utilizing that $\pi$ is a $1$-cocycle, one finds that 
    \begin{align*}
        \pi&(z_1\circ\ldots \circ z_{l-1} \circ z_l\circ \varphi(a)\circ t')\\
        &= \pi(z_1)+\lambda_{z_1}(\pi(z_2)+\ldots+\lambda_{z_{l-1}}(\pi(z_l)+\lambda_{z_l}(\pi \varphi(a) + \lambda_{\varphi(a)}(\pi(t'))))\dots)
    \end{align*}
    Recall that $z_l \in \Fg_{\circ}(A_n\setminus A_{n-1})$, so $\lambda_{z_l}=\textup{id}_{\Fg_+(A_n)}$. Using this, and splitting the term containing $t'$, we find that the previous expression is equal to
    \begin{align*}
        \pi&(z_1)+\lambda_{z_1}(\pi(z_2)+\ldots+\lambda_{z_{l-1}}(\pi(z_l)+\pi \varphi(a))\dots) + \lambda_{z_1}\ldots \lambda_{z_{l-1}}\lambda_{\varphi(a)}(\pi(t')) \\ 
        &=\pi(z_1)+\lambda_{z_1}(\pi(z_2)+\ldots+\lambda_{z_{l-1}}(w_l+a)\dots) + \lambda_{z_1}\ldots \lambda_{z_{l-1}}\lambda_{\varphi(a)}(\lambda_{\varphi(a)}^{-1}(t_l)),
    \end{align*}
    where in the last equality we used that $\pi \varphi(a)=a$ 
    (because of \cref{lemma10.3}),
    $\pi(z_l)=w_l$, and the definition of $t'$.
    Recall that $w_l+a=\lambda_{z_{l-1}}^{-1}(a)$ and $t_l= \lambda_{z_{l-1}}^{-1}(t_{l-1})$. Then, the previous expression becomes $$\pi(z_1)+\lambda_{z_1}(\pi(z_2)+\lambda_{z_2}(\pi(z_3)\ldots+\lambda_{z_{l-2}}(w_{l-1}+a)\ldots)) + \lambda_{z_1}\ldots\lambda_{z_{l-2}}(t_{l-1})). $$ Repeating this inductively, we find that this is equal to $$ w_1+\lambda_{z_1}(\lambda_{z_1}^{-1}(a)) + \lambda_{z_1}(\lambda_{z_1}^{-1}(t_1)) =w_1+a+t_1=w,$$
    which shows the induction step. Hence, the result follows.
\end{proof}

\vspace{10pt}
\subsection{Construction of the free skew brace.}

$\;$

\vspace{6pt}
Because of \cref{injective}, \cref{lemma10.3} and \cref{lemsurjective} we have  shown that $\pi^{(n)}$ is a bijective $1$-cocycle (whose inverse is $\varphi^{(n)}$). Hence, 
we obtain  
$$\textit{a skew brace, denoted by }  \FSB_{\mathcal{RN}_n,X},$$ 
with additive group $\Fg_+(A_n)$ and multiplicative group $\Fg_{\circ}(A_n)$. 
In \cref{FreeNilpn} we will show that this is indeed the free skew brace in the category of skew braces that are right nilpotent of class at most $2$ on the set $X$. So~$\FSB_{\mathcal{RN}_n,X}$ can be seen as a skew brace with 
    $(\Fg_+(A_n),+,\circ)$, where  $\circ$ (with an abuse of notation, we denote the multiplication on  $\Fg_+(A_n)$  with the same symbols as the operation of $F_\circ(A_n)$)
    can be transferred using the $1$-cocycle in the following way:
    \[
    u\circ v= \pi^{(k)}(\varphi^{(k)}(u)\circ \varphi^{(k)}(v)).
    \]

\begin{rem}
{\rm It follows from the construction that $\FSB_{\mathcal{RN}_n,X}$ is generated by the set~$X$.}
\end{rem}

In the following lemma, we examine the role of the subsets $X^{(i)}$ in the skew  brace~$F_{R_n,X}$.
\begin{lem}
\label{star prod in FRn}
For any positive integer $i<n$, let $a \in X^{(i)}$ and $b \in A_i$. Then,
    $$ x^{(i+1)}_{a,b} = a\ast b \text{ and }x^{(i+1)}_{\overline{a},b}=\overline{a} \ast b.$$
    Furthermore, for $f \in F_{i+1}$ which in reduced form is $f'\circ a$ \textnormal(resp. $f'\circ \overline{a}$\textnormal) one has that
    $$ x^{(i+1)}_{f,b} = \lambda_{f'}(a\ast b) \quad\left(\text{resp. } x^{(i+1)}_{f,b} = \lambda_{f'}(\overline{a}\ast b)\right).$$
\end{lem}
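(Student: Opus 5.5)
The plan is to compute directly in the skew brace just built, using $a\ast b=\lambda_a(b)-b$. Here $\lambda_a$ denotes the automorphism of $\Fg_+(A_n)$ induced by the monoid endomorphism $\lambda^{(n)}_a$ of \S4.1. The key preliminary observation is that the $\lambda$-action of $\FSB_{\mathcal{RN}_n,X}$ coincides with $\lambda^{(n)}$. Indeed, the skew brace arises from the bijective $1$-cocycle $\pi$ for the action $\lambda$ of $\Fg_\circ(A_n)$ on $\Fg_+(A_n)$, and under Theorem \ref{skew braces and regular subgroups} the multiplicative element $w$ corresponds to the additive element $\pi(w)$. Moreover $\pi(a)=a$ for $a\in A_n$, and $\pi(\overline a)$ is the multiplicative inverse of $a$ transported to the additive side. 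So I would first check that the skew-brace lambda map of the element $\pi(w)$ is $\lambda^{(n)}_w$. This is immediate, because $\pi(w)\circ\pi(v)=\pi(w\circ v)=\pi(w)+\lambda_w(\pi(v))$.

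With this identification, take $a\in X^{(i)}$ and $b\in A_i$ with $i<n$, so that $k(a)=i\geq k(b)$ and $k(a)<n$. The first case of the definition of $\lambda^{(n)}_a$ gives
\[
\lambda_a(b)=x^{(i+1)}_{a,b}+b,
\]
and hence $a\ast b=\lambda_a(b)-b=x^{(i+1)}_{a,b}$. The same computation with $\overline a\in\overline{X^{(i)}}$, where $k(\overline a)=i$, gives $\lambda_{\overline a}(b)=x^{(i+1)}_{\overline a,b}+b$. Here $\overline a$ is the multiplicative inverse of $a$, which is the element $\pi(\overline a)$, and its lambda map is $\lambda_{\overline a}=\lambda_a^{-1}$. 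Therefore $\overline a\ast b=x^{(i+1)}_{\overline a,b}$. In both computations I must check that $a$ and $\overline a$ lie in $F_{i+1}$ (a reduced word ending in $X^{(i)}\cup\overline{X^{(i)}}$), so that the symbols are defined.

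For the second part, let $f\in F_{i+1}$ have reduced form $f'\circ a$ with $f'\in\Fg_\circ(A_i)$. I would write $f'=s_1\circ\cdots\circ s_m$ in reduced form, with $s_j\in A_i^{\pm}$, so that $k(s_j)\leq i<i+1$. Then I would prove by induction on $m$ that
\[
\lambda_{s_j\circ\cdots\circ s_m}\bigl(x^{(i+1)}_{a,b}\bigr)=x^{(i+1)}_{s_j\circ\cdots\circ s_m\circ a,b}.
\]
Each step applies the second case of the definition of $\lambda^{(n)}$. That case requires $k(s_j)<k(b')=i+1$ and that the prefix $s_{j+1}\circ\cdots\circ s_m\circ a$ is not $\overline{s_j}$. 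The second condition holds because $f$ is reduced, so no cancellation occurs and the new subscript is again a reduced word in $F_{i+1}$. Combining this with the first part gives $x^{(i+1)}_{f,b}=\lambda_{f'}(a\ast b)$, and the case $f=f'\circ\overline a$ is identical.

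The main obstacle is the bookkeeping in this induction. I expect to need care in excluding the third case $f=\overline a$ of the definition, which reducedness handles. I also expect to need care in making precise that the skew-brace lambda map agrees with $\lambda^{(n)}$ evaluated on reduced words of the free group. Everything else is a direct unfolding of the definitions.
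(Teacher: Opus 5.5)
Your proposal is correct and follows essentially the same route as the paper: the first part is read off from the first case of the definition of $\lambda^{(n)}$ (together with $\lambda_{\overline a}=\lambda_a^{-1}$), and the second part is the same letter-by-letter induction along the reduced form of $f'$ using the second case of the definition, where reducedness of $f=f'\circ a$ rules out the third case. Your explicit check that the skew-brace $\lambda$-map of $\pi(w)$ equals $\lambda^{(n)}_w$ only makes precise what the paper leaves implicit.
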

\begin{proof}
Since $\FSB_{\mathcal{RN}_n,X}$ is a skew  brace, the first equality follows from the definition of the map $\lambda_a$. The second equality follows from $\lambda^{-1}_{a}=\lambda_{\overline{a}}$.
We now prove the remaining equalities.

Let $f'=l_1\circ l_2 \circ\ldots \circ l_s$ be the reduced form of $f'$  in $\Fg_\circ(A_i)$.
In particular, $l_s\neq \overline{a}$ and 
\(
k\big(x^{(i+1)}_{a,b}\big)=i+1>i\geq k(l_s),
\)
so
\[
\lambda_{l_s}(a*b)
=\lambda_{l_s}\left(x^{(i+1)}_{a,b}\right)=x^{(i+1)}_{l_s\circ a,b}.
\]
Therefore, since $l_{j-1}\neq \overline{l_j}$ and 
\(
k\left(x^{(i+1)}_{l_j\circ\ldots \circ l_s\circ a,b}\right)=i+1>i\geq l_{j-1},
\)
\[
\lambda_{l_{j-1}}\left(x^{(i+1)}_{l_j\circ\ldots \circ l_s\circ a,b}\right)
=x^{(i+1)}_{l_{j-1}\circ\ldots \circ l_s\circ a,b}.
\]
So, by induction on $j$ from $s-1$ to 1, we obtain that
\[
\lambda_{l_j\circ l_{j+1}\circ \ldots \circ l_{s-2}\circ l_{s-1}}\left(x^{(i+1)}_{l_s\circ a,b}\right)
=x^{(i+1)}_{l_j\circ \ldots \circ l_s\circ a,b}.
\]
Hence
\[
\lambda_{f'}(a*b)
=\lambda_{l_1\circ l_2\circ \ldots \circ l_{s-1}\circ l_s}\left(x^{(i+1)}_{a,b}\right)
=x^{(i+1)}_{l_1\circ\ldots l_s\circ a,b}
=x^{(i+1)}_{f,b}.
\] The statement is proved.
\end{proof}

\begin{pro}
\label{right nilp class n}
    The skew brace $\FSB_{\mathcal{RN}_n,X}$ is right nilpotent of class $n$.
\end{pro}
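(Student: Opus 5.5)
We show two things: $B^{(n+1)}=\{0\}$, where $B=\FSB_{\mathcal{RN}_n,X}$, and $B^{(n)}\neq\{0\}$. The natural route is to track levels. Put $L_k=\Fg_+(A_n\setminus A_{k-1})$, the additive subgroup generated by the letters of level at least $k$, for $1\le k\le n$, and put $L_{n+1}=\{0\}$.

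The first claim is that $B^{(k)}\subseteq L_k$ for every $k$; we prove it by induction on $k$. The case $k=1$ is trivial, since $L_1=\Fg_+(A_n)=B$. For the inductive step it suffices to show that $u\ast b\in L_{k+1}$ for $u\in L_k$ and $b\in B$. Indeed, the subgroup generated by such elements is then contained in $L_{k+1}$.

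To get this, we first show $\lambda_u=\lambda_{\varphi(u)}$ in terms of letters. By the construction in Subsection 4.2, together with the remark that $\varphi(w)\in\Fm_\circ((A_n\setminus A_{k-1})^{\pm})$ whenever $w\in \Fm_+(\pm(A_n\setminus A_{k-1}))$, we have $\varphi(u)\in \Fg_\circ(A_n\setminus A_{k-1})$. So $\lambda_u$ is a product of maps $\lambda_c^{\pm1}$ with $k(c)\ge k$.

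Next we check on generators $t\in A_n$ that $\lambda_c^{\pm1}(t)-t\in L_{k+1}$ whenever $k(c)\ge k$. We go through the defining cases.
\begin{itemize}
\item If $k(c)\ge k(t)$, then $\lambda_c(t)-t=x^{(k(c)+1)}_{c,t}$, which has level $k(c)+1\ge k+1$. If $k(c)=n$, then $\lambda_c=\id$.
\item If $k(c)<k(t)$, then $\lambda_c(t)$ is $\pm$ a letter of level $k(t)>k$. So both $t$ and $\lambda_c(t)$ lie in $L_{k+1}$.
\end{itemize}

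From the generator case we extend to all of $B$. The set $\{x\in B : \lambda(x)-x\in L_{k+1}\ \text{for all } \lambda \text{ in the relevant group}\}$ is an additive subgroup, because $L_{k+1}$ is normal in $(B,+)$. To see normality, note that $L_{k+1}$ is the kernel of the projection $\Fg_+(A_n)\to\Fg_+(A_k)$ that kills all letters of higher level. Moreover, $L_{k+1}$ is $\lambda$-invariant by \cref{level of lamda images}. Closure under composition then follows from $\mu\lambda(x)-x=\mu(\lambda(x)-x)+\mu(x)-x$. Hence $u\ast b=\lambda_u(b)-b\in L_{k+1}$. Taking $k=n$ gives $B^{(n+1)}\subseteq L_{n+1}=0$.

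For nontriviality, fix $x\in X$ and define $y_1=x$ and $y_{j+1}=y_j\ast y_j$. By \cref{star prod in FRn}, $y_j\ast y_j=x^{(j+1)}_{y_j,y_j}$, and this is exactly the element $a_{j+1}$ of Subsection 4.2. Each $y_j$ is a free generator of level $j$. By induction $y_j\in B^{(j)}$, since $B^{(j+1)}=B^{(j)}\ast B$. Hence $0\ne y_n\in B^{(n)}$.

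The main obstacle is the first step: we need $\varphi$ to preserve levels as an honest statement about group elements, not only about the monoid words used in the construction. I would justify it through the commutative square relating $\varphi^{(n)}$ on $\Fm_+(\pm A_n)$ and on $\Fg_+(A_n)$, choosing a word representative that uses only letters of level at least $k$. An alternative route avoids $\varphi$: show directly that $L_k$ is a sub-skew brace by checking that $\pi$ maps $\Fg_\circ(A_n\setminus A_{k-1})$ onto $L_k$. This is what the proof of \cref{lemsurjective} actually establishes level by level, and combined with injectivity it identifies the multiplicative subgroup with $L_k$.
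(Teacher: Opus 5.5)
There is a genuine gap, and it is not the one you flag. Your $L_{k+1}=\Fg_+(A_n\setminus A_k)$ is a free factor of $\Fg_+(A_n)$, not a normal subgroup. The kernel of the projection $\Fg_+(A_n)\to\Fg_+(A_k)$ is its normal closure, which is much larger.

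Your extension-from-generators step depends on normality, and it fails. Already for $n=2$, $k=1$ and $x\in X$, you have $\lambda_x(x)-x=x^{(2)}_{x,x}\in L_2$, but
\[
x\ast(x+x)=\lambda_x(x+x)-(x+x)=x^{(2)}_{x,x}+x+x^{(2)}_{x,x}-x\notin L_2 .
\]
In fact the inclusion $B^{(k)}\subseteq L_k$ you are aiming for is false for $2\leq k\leq n$. $B^{(k)}$ is an ideal, hence additively normal. By your own nontriviality argument it contains $y_k\in X^{(k)}$, hence it also contains $x+y_k-x\notin L_k$. (The paper shows $B^{(n)}$ is exactly the normal closure of $X^{(n)}$.)

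By contrast, your worry about $\varphi$ preserving levels is harmless: either of your two justifications works. Your nontriviality half is correct and is the paper's argument.

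The natural repair is to use the normal closures $N_k$ of $A_n\setminus A_{k-1}$, which are normal and still $\lambda$-invariant. Your generator computation then gives $L_k\ast B\subseteq N_{k+1}$. But the induction needs $N_k\ast B\subseteq N_{k+1}$, that is, control of $\lambda_{a+s-a}$ for $s\in L_k$ and arbitrary $a\in B$. The level bookkeeping on $\varphi$ says nothing about this, and it is the real content of the proposition.

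The paper handles it in two steps. First, it proves that the level-killing projection $\varepsilon\colon\FSB_{\mathcal{RN}_n,X}\to\FSB_{\mathcal{RN}_{n-1},X}$ is a skew brace morphism, so $B^{(n)}\subseteq\ker\varepsilon=N_n$ by induction on $n$. Second, it shows $N_n\subseteq\ker\lambda$ using $a\circ w\circ\overline{a}=a+\lambda_a(w)-a$ for $w\in\ker\lambda$.

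Within your filtration you could instead run a downward induction on $k$. If $w$ acts trivially on $B/N_{k+1}$, then
\[
a\circ w\circ\overline{a}=a+\lambda_a(w)+\lambda_a(m)-a,\qquad m=\lambda_w(\overline{a})-\overline{a}\in N_{k+1}.
\]
The identity $\lambda_{u+v}=\lambda_u\lambda_{\lambda_u^{-1}(v)}$, together with $N_{k+1}\ast B\subseteq N_{k+2}$ from the previous stage, lets you absorb these $N_{k+1}$ error terms and pass from $L_k$ to its normal closure. Without an argument of this kind, $B^{(n+1)}=\{0\}$ is not established.
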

\begin{proof}
    We proceed by induction on $n$. For $n=1$, this is clear because $\FSB_{\mathcal{RN}_1,X}$ is defined as the trivial skew brace on the group $\Fg_+(X)$.

    Suppose that we have shown that $\FSB_{\mathcal{RN}_{n-1},X}$ is right nilpotent of class $n-1$. Then, denote $\varepsilon\colon \Fg_+(A_n) \rightarrow \Fg_+(A_{n-1})$ for the group morphism that is the extension of the rule $\varepsilon(a)=a$ for $a \in A_{n-1}$ and $\varepsilon(a)=0$ for $a \in X^{(n)}$. We claim that $\varepsilon$ is a skew brace morphism between $\FSB_{\mathcal{RN}_n,X}$ and $\FSB_{\mathcal{RN}_{n-1},X}$.
    
    Note that  for every $u,v\in \Fg_+(A_k)$,
    \begin{align*}
        \varepsilon(u\circ v)
    &=\varepsilon\left(\pi^{(n)}\left(\varphi^{(n)}(u)\circ \varphi^{(n)}(v)\right)\right)\\
    &=\pi^{(n-1)}\left(\delta\left(\varphi^{(n)}(u)\circ \varphi^{(n)}(v)\right)\right)
    \end{align*}
    and
    \begin{align*}
    \varepsilon(u)\circ\varepsilon(v)
    &=\pi^{(n-1)}\left(\varphi^{(n-1)}\left(\varepsilon(u)\right)\circ \varphi^{(n-1)}\left(\varepsilon(v)\right)\right)\\
    &=\pi^{(n-1)}\left(\delta\left(\varphi^{(n)}(u)\right)\circ \delta\left(\varphi^{(n)}(v)\right)\right),
    \end{align*}
    where $\delta=\varphi^{(n-1)}\circ \varepsilon\circ \pi^{(n)}:\Fg_\circ(A_n)\to \Fg_\circ(A_{n-1})$. Thus, in order to prove the claim, we need to prove that $\delta$ is a group homomorphism.
    
    We first show by induction on the length of $u\in \Fg_\circ(A_n)$ the following claim

   \begin{eqnarray}\label{epsilonlambda}
    \varepsilon\circ\lambda^{(n)}_u=\lambda^{(n-1)}_{\delta(u)}\circ\varepsilon,
    \end{eqnarray}

where we denote $\lambda^{(k)}$ the $\lambda$-action of  $F_{R_k,X}$ for $k\in\N$.
    Clearly, for every $a\in A_n$, we have  
    $$\delta(a)=\begin{cases}
    a&\text{ if } a\in A_{n-1}\\
    0&\text{ if } a\in X^{n}\\
    \end{cases}.$$
    Moreover, by definition, for every $a,b\in A_n$
    \[
    \varepsilon\left(\lambda^{(n)}_a(b)\right)
    =\begin{cases}
    0&\text{ if } k(a)=n=k(b)\text{ or } k(a)<k(b)=n\\
    b&\text{ if } k(a)=n>k(b)\\
    \lambda^{(n-1)}_a(b)&\text{ if } a,b\in A_{n-1}
    \end{cases}
    =\lambda^{(n-1)}_{\delta(a)}(\varepsilon(b)).
    \]
     So, since $\lambda^{(n)}_a,\lambda^{(n-1)}_{\delta(a)}$ and $\varepsilon$
     are all group homomorphisms, we conclude that 
      \begin{eqnarray} \label{epsilondelta}
     \varepsilon\circ\lambda^{(n)}_a=\lambda^{(n-1)}_{\delta(a)}\circ\varepsilon, \;   \text{ for every } a\in A_n.
     \end{eqnarray}
    This implies 
    \begin{align*}
        \delta(\overline{a})
        &=\varphi^{(n-1)}\left(\varepsilon\left(\pi^{(n)}\left(\overline{a}\right)\right)\right)\\
        &=\varphi^{(n-1)}\left(\varepsilon\left(-\left(\lambda_a^{(n)}\right)^{-1}(a)\right)\right)   \\
        &=\varphi^{(n-1)}\left(-\left(\lambda_{\delta(a)}^{(n-1)}\right)^{-1}(\varepsilon(a))\right) \\   
        &=\begin{cases}
    \varphi^{(n-1)}\left(-\left(\lambda_{a}^{(n-1)}\right)^{-1}(a)\right)&\text{ if } a\in A_{n-1}\\
    \varphi^{(n-1)}(0)&\text{ if } a\in X^{n}
    \end{cases}\\
    &=\begin{cases}
    \varphi^{(n-1)}\left(\pi^{(n-1)}\left(\overline{a}\right)\right)&\text{ if } a\in A_{n-1}\\
    0&\text{ if } a\in X^{n}
    \end{cases}\\
    &=\begin{cases}
    \overline{a}&\text{ if } a\in A_{n-1}\\
    0&\text{ if } a\in X^{n}
    \end{cases}\\
    &=\overline{\varepsilon(a)},
    \end{align*}
    i.e. 
    $$\delta(\overline{a})=\overline{\varepsilon(a)}  \;\;  \text{ 
    for every }a\in A_n.$$

    Now, because of (\ref{epsilondelta}), we also have that for every $a\in A_n,$
    \[
    \varepsilon\circ\lambda_{\overline{a}}^{(n)}
    =\varepsilon\circ\left(\lambda_a^{(n)}\right)^{-1}
    =\left(\lambda_{\delta(a)}^{(n-1)}\right)^{-1}\circ\varepsilon\circ\lambda_a^{(n)}\circ\left(\lambda_a^{(n)}\right)^{-1}
    =\left(\lambda_{\delta(a)}^{(n-1)}\right)^{-1}\circ\varepsilon.
    \]
    Let us now assume that $c\in A_n\cup\overline{A_n}$
    and that $\varepsilon\circ\lambda^{(n)}_u=\lambda^{(n-1)}_{\delta(u)}\circ\varepsilon$ for some $u\in \Fg_\circ(A_n)$.
    Then, because of \cref{phi of sum}, \cref{cocycle} and (\ref{epsilondelta}),
    \begin{align*}
    \delta&(u\circ c)\\
    &=\varphi^{(n-1)}\left(\varepsilon\left(\pi^{(n)}\left(u\circ c\right)\right)\right)\\
    &=\varphi^{(n-1)}\left(\varepsilon\left(\pi^{(n)}(u)+\lambda^{(n)}_u\left(\pi^{n}(c)\right)\right)\right)\\
    &=\varphi^{(n-1)}\left(\varepsilon\left(\pi^{(n)}(u)\right)+\varepsilon\left(\lambda^{(n)}_u\left(\pi^{n}(c)\right)\right)\right)\\
    &=\varphi^{(n-1)}\left(\varepsilon\left(\pi^{(n)}(u)\right)+\lambda^{(n-1)}_{\delta(u)}\left(\varepsilon\left(\pi^{n}(c)\right)\right)\right)\\
    &=\varphi^{(n-1)}\left(\varepsilon\left(\pi^{(n)}(u)\right)\right)\circ\varphi^{(n-1)}\left(\left(\lambda^{(n-1)}_{\varphi^{(n-1)}\left(\varepsilon\left(\pi^{(n)}(u)\right)\right)}\right)^{-1}\left(\lambda^{(n-1)}_{\delta(u)}\left(\varepsilon\left(\pi^{n}(c)\right)\right)\right)\right)\\
    &=\delta(u)\circ\varphi^{(n-1)}\left(\left(\lambda^{(n-1)}_{\delta(u)}\right)^{-1}\left(\lambda^{(n-1)}_{\delta(u)}\left(\varepsilon\left(\pi^{n}(c)\right)\right)\right)\right)\\
     &=\delta(u)\circ\varphi^{(n-1)}\left(\varepsilon\left(\pi^{n}(c)\right)\right)\\
     &=\delta(u)\circ\delta(c).
    \end{align*}
    Hence
    \begin{align*}
        \varepsilon\circ\lambda^{(n)}_{u\circ c}
    &=\varepsilon\circ\lambda^{(n)}_u\circ \lambda^{(n)}_c\\
    &= \lambda^{(n-1)}_{\delta(u)}\circ\varepsilon\circ\lambda^{(n)}_c\\
    &= \lambda^{(n-1)}_{\delta(u)}\circ\lambda^{(n-1)}_{\delta(c)}\circ \varepsilon\\
    &=\lambda^{(n-1)}_{\delta(u)\circ\delta(c)}\circ\varepsilon\\
    &= \lambda^{(n-1)}_{\delta(u\circ c)}\circ\varepsilon.
    \end{align*}
    So we proved the claim (\ref{epsilonlambda}).
    
    Finally we get
    \begin{align*}
       \delta &(v\circ w)
    \; =\; \varphi^{(n-1)}\left(\varepsilon\left(\pi^{(n)}\left(v\circ w\right)\right)\right)\\
   &=\varphi^{(n-1)}\left(\varepsilon\left(\pi^{(n)}(v)+\lambda^{(n)}_u\left(\pi^{n}(w)\right)\right)\right)\\
    &=\varphi^{(n-1)}\left(\varepsilon\left(\pi^{(n)}(v)\right)+\varepsilon\left(\lambda^{(n)}_u\left(\pi^{n}(w)\right)\right)\right)\\
    &=\varphi^{(n-1)}\left(\varepsilon\left(\pi^{(n)}(v)\right)+\lambda^{(n-1)}_{\delta(v)}\left(\varepsilon\left(\pi^{n}(w)\right)\right)\right)\\
    &=\varphi^{(n-1)}\left(\varepsilon\left(\pi^{(n)}(v)\right)\right)\circ\varphi^{(n-1)}\left(\left(\lambda^{(n-1)}_{\varphi^{(n-1)}\left(\varepsilon\left(\pi^{(n)}(v)\right)\right)}\right)^{-1}\left(\lambda^{(n-1)}_{\delta(v)}\left(\varepsilon\left(\pi^{n}(w)\right)\right)\right)\right)\\
    &=\delta(v)\circ\varphi^{(n-1)}\left(\left(\lambda^{(n-1)}_{\delta(v)}\right)^{-1}\left(\lambda^{(n-1)}_{\delta(v)}\left(\varepsilon\left(\pi^{n}(v)\right)\right)\right)\right)\\
     &=\delta(v)\circ\varphi^{(n-1)}\left(\varepsilon\left(\pi^{n}(w)\right)\right)\\
     &=\delta(v)\circ\delta(w),
    \end{align*}
    for every $v,w\in \Fg_\circ(A_n)$. So, indeed, $\varepsilon$ is a skew brace homomorphism.

    Since, by inductive hypothesis, $\FSB_{\mathcal{RN}_{n-1},X}$
    is right nilpotent of class $n-1$
    and $\varepsilon$ is a skew brace homomorphism,
    $\varepsilon\left(\FSB_{\mathcal{RN}_n,X}^{{(n)}}\right)\subseteq \FSB_{\mathcal{RN}_{n-1},X}^{(n)}=\{0\}$, 
    hence $$\FSB_{\mathcal{RN}_n,X}^{(n)} \subseteq \ker(\varepsilon).$$ 
    
    We claim that  $\ker(\varepsilon)  = \left< a+b-a \mid a \in \Fg_+(A_n),\quad b\in X^{(n)}\right>$. 
    To prove this, put  $K=\left< a+b-a \mid a \in \Fg_+(A_n), b\in X^{(n)}\right>_+$. Clearly $K\subseteq \ker(\varepsilon)$.
    To prove the other inclusion, we proceed by induction on the 
    length of the reduced form.
    Let $w\in \ker(\varepsilon)$. If $w\in A_n\cup -A_n$,
    then 
    \[
    0=\varepsilon(w)=\begin{cases}
        w &\text{ if }w\in \pm A_{n-1}\\
        0 &\text{ if }w\in \pm X^{(n)}.
    \end{cases}
    \]
    So $w\in X^{(n)}\subseteq K$.
    
     Assume now that $w\in\ker(\varepsilon)$ and that
    $w=l_1+l_2+\ldots +l_s$ is the reduced form of $w$, with $s>1$ 
    and $l_j\in A_n\cup -A_n$ for every $j\in\{1,\dots, s\}$.
    If $l_j\in A_{n-1}\cup -A_{n-1}$ for every $j\in\{1,\dots, s\}$,
    then 
    \[
    0=\varepsilon(w)=l_1+l_2+\ldots+l_s,
    \]
    but this is a contradiction with the fact that it is a reduced form.
    Hence, there is $i\in\{1,\ldots,s\}$ such that
    $l_i\in X^{(n)}$ and $l_j\in A_{n-1}$ for every $j<i$.
    Let $u_1=l_1+\ldots+l_{i-1}$ and $v_1=l_{i+1}+\ldots +l_s$,
    so that $w=u_1+l_i+v_1$ and
    \[
    0=\varepsilon(w)
    =\varepsilon(u_1+l_i+v_1)
    =\varepsilon(u_1+v_1).
    \]
    Then, since $w_1=u_1+v_1\in\ker(\varepsilon)$ and
    has a reduced form of length $\leq s$,
    by inductive hypothesis,
    $u_1+v_1\in K$.
    Moreover, $u_1+l_i-u_1\in K$ and so
    \[
    w=u_1+l_i+v_1=u_1+l_i-u_1+w_1\in K.
    \]
    This proves the claim.

     Assume now that $a \in \Fg_+(A_n)$ and $b\in X^{(n)}$.
Using  $\lambda_a(b)\in X^{(n)}\subseteq \ker(\lambda)$, we have that
    \begin{align*}
       a\circ\lambda_a(b)\circ\overline{a}
    &=a\circ\left(\lambda_a(b)+\lambda_{\lambda_a(b)}(\overline{a})\right)\\
    &=a\circ\left(\lambda_a(b)+\overline{a}\right)\\
    &=a\circ\lambda_a(b)-a+a\circ\overline{a}\\
    &=a+\lambda_a (\lambda_a (b))-a. 
    \end{align*} 
    As $b\in \ker(\lambda)$ we obtain  $a\circ\lambda_a(b)\circ\overline{a}\in\ker(\lambda)$ for all $b\in X^n$.
    
   Hence, $a+\lambda_a (\lambda_a (b))-a \in \ker(\lambda)$ for every $ a \in \Fg_+(A_n)$ and $b\in X^{(n)}$.
   It follows from the previous claim that $\ker(\varepsilon)\subseteq \ker(\lambda)$. Thus, $\FSB_{\mathcal{RN}_n,X}^{(n)} 
   \subseteq \ker(\varepsilon)\subseteq \ker(\lambda)$. Consequently  $\FSB_{\mathcal{RN}_n,X}^{(n+1)}=\{0\}$, as required.
   Hence we have shown that $\FSB_{\mathcal{RN}_n,X}$ is right nilpotent of class  at most $n$. Because of  \cref{star prod in FRn} we know that $\{ 0\} \neq X^{(n)} \subseteq \FSB_{\mathcal{RN}_n,X}^{(n)}$. Since $\FSB_{\mathcal{RN}_n,X}^{(n)}$ is an ideal, it is in particular a
normal subgroup of the additive group. Hence it contains the normal additive
subgroup generated by $X^{(n)}$, namely
\[
\left\langle a+b-a \mid a\in \Fg_+(A_n),\ b\in X^{(n)}\right\rangle_+.
\]
By the claim proved above, this subgroup is exactly $\ker(\varepsilon)$.

Therefore $\ker(\varepsilon)\subseteq \FSB_{\mathcal{RN}_n,X}^{(n)}$ and hence $\FSB_{\mathcal{RN}_n,X}^{(n)}=\ker(\varepsilon)$.
In particular, since $X^{(n)}\neq\{0\}$ and $X^{(n)}\subseteq
\FSB_{\mathcal{RN}_n,X}^{(n)}$, we have $\FSB_{\mathcal{RN}_n,X}^{(n)}\neq 0$.
Therefore, $\FSB_{\mathcal{RN}_n,X}$ is right nilpotent of class $n$.
\end{proof}

As a direct consequence of the proof of \cref{right nilp class n},  we obtain the following.

\begin{cor}\label{quotient-by-last-term}
    Denote $B=\FSB_{\mathcal{RN}_n,X}$. Then, $B/B^{(n)}\cong \FSB_{\mathcal{RN}_{n-1},X}$ and $$B^{(n)}=\langle a+b-a\mid b\in X^{(n)},a\in \Fg_{+}(A_n)\rangle$$ is a trivial skew brace whose additive group is free with
 free basis given by the set $\left\lbrace a+b-a \mid a \in F_{+}(A_{n-1}), b \in X^{(n)}\right\rbrace$.
\end{cor}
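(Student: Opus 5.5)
The proof of \cref{right nilp class n} already does most of the work, and I will extract the remaining pieces from it. It showed that $\varepsilon\colon \FSB_{\mathcal{RN}_n,X}\to \FSB_{\mathcal{RN}_{n-1},X}$ is a skew brace homomorphism. It is surjective, since $\varepsilon$ is the identity on $A_{n-1}$, which additively generates $\Fg_+(A_{n-1})$. The same proof established
\[
B^{(n)}=\ker(\varepsilon)=\langle a+b-a\mid a\in \Fg_+(A_n),\ b\in X^{(n)}\rangle_+ .
\]
By the first isomorphism theorem for skew braces (the kernel of a skew brace morphism is an ideal), we get $B/B^{(n)}\cong \FSB_{\mathcal{RN}_{n-1},X}$. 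That proof also showed $B^{(n)}\subseteq \ker(\lambda)$. Hence $a\circ b=a+\lambda_a(b)=a+b$ for all $a,b\in B^{(n)}$, so $B^{(n)}$ is a trivial skew brace.

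It remains to identify a free basis of the additive group of $\ker(\varepsilon)$. I would use the splitting $\Fg_+(A_n)=\Fg_+(X^{(n)})\ast\Fg_+(A_{n-1})$ as a free product. In this decomposition, $\varepsilon$ is the retraction onto the free factor $\Fg_+(A_{n-1})$ killing $X^{(n)}$. By a standard Kurosh/Reidemeister--Schreier argument (or directly via the covering-space picture), the kernel of such a retraction is the normal closure of $X^{(n)}$. This kernel is freely generated by the conjugates $a+b-a$ with $b\in X^{(n)}$ and $a$ ranging over a transversal of the kernel, and $\Fg_+(A_{n-1})$ itself is such a transversal.

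To keep the argument self-contained, I would split the proof into the two halves of the freeness statement. First, generation: every $a\in\Fg_+(A_n)$ can be written as $a=k+a'$ with $k\in\ker\varepsilon$ and $a'=\varepsilon(a)\in\Fg_+(A_{n-1})$. Then $a+b-a=k+(a'+b-a')-k$, and since $k$ is itself a sum of such conjugates, induction on length shows that the conjugates by $\Fg_+(A_{n-1})$ generate $\ker\varepsilon$. Second, independence, which is the main obstacle. I would take a reduced word in the proposed basis elements $a_j+b_j-a_j$ ($a_j\in\Fg_+(A_{n-1})$, $b_j\in X^{(n)}$, consecutive terms not mutually inverse) and show that its free-group normal form in $\Fg_+(A_n)$ is nonempty. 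When two adjacent conjugates are concatenated, the middle $-a_j+a_{j+1}$ cancels only within $A_{n-1}$-letters. It cannot swallow the letters $b_j$ or $b_{j+1}$, since these lie in $\pm X^{(n)}$, unless $a_j=a_{j+1}$ and $b_{j+1}=-b_j$, which is excluded by reducedness. So the $X^{(n)}$-letters survive, and the word is nontrivial.

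Alternatively, I would map onto the semidirect product $\Fg_+(X^{(n)}\times \Fg_+(A_{n-1}))\rtimes \Fg_+(A_{n-1})$, with the action given by left translation on the second coordinate. This defines a homomorphism from $\Fg_+(A_n)$ sending each conjugate $a+b-a$ to the generator $(b,a)$, which certifies independence at once. I would present this second argument as the cleaner one.
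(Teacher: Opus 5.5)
\textbf{Verdict: the proposal is correct; the free-basis step uses a different route from the paper.}

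Your first two claims are handled exactly as in the paper. These are the isomorphism $B/B^{(n)}\cong \FSB_{\mathcal{RN}_{n-1},X}$, via the surjective skew brace morphism $\varepsilon$ with $\ker\varepsilon=B^{(n)}$, and the triviality of $B^{(n)}$, from $B^{(n)}\subseteq\ker(\lambda)$. Both are read off from the proof of \cref{right nilp class n}.

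For the free basis, the paper simply invokes Reidemeister--Schreier. $\Fg_+(A_{n-1})$ is a Schreier transversal for $B^{(n)}$ in the free group on $A_{n-1}\cup X^{(n)}$. The Schreier generators $f+a-\overline{f+a}$ are trivial for $a\in A_{n-1}$ and equal $f+a-f$ for $a\in X^{(n)}$, and the nontrivial ones form a free basis.

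You instead prove this special case by hand, split into generation and independence. For independence you give either the count of surviving $X^{(n)}$-letters in the normal form, or the homomorphism to $\Fg_+(X^{(n)}\times\Fg_+(A_{n-1}))\rtimes\Fg_+(A_{n-1})$ sending $a+b-a$ to the free generator $(b,a)$. Both are valid. The semidirect-product map is in fact an isomorphism, with inverse $(b,a)\mapsto a+b-a$ and $c\mapsto c$ for $c\in A_{n-1}$. So it gives generation and freeness in one stroke. Your route is longer but entirely elementary; the paper's is shorter but rests on the standard theorem.

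Two small points to tighten:

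\begin{enumerate}
\item In the generation step, ``since $k$ is itself a sum of such conjugates'' is precisely what must be shown, so make it explicit. Write $a=u_0+b_1+u_1+\dots+b_m+u_m$ with $u_i\in\Fg_+(A_{n-1})$ and $b_i\in\pm X^{(n)}$, and put $p_i=u_0+\dots+u_i$. Then $a=\sum_{i=1}^{m}(p_{i-1}+b_i-p_{i-1})+p_m$ and $p_m=\varepsilon(a)$, so $a-\varepsilon(a)$ is visibly a sum of conjugates by elements of $\Fg_+(A_{n-1})$. Alternatively, check that the subgroup generated by these conjugates is normalized by every letter of $A_n$. (The paper's own induction proving $\ker\varepsilon=K$ in \cref{right nilp class n} already produces conjugators in $\Fg_+(A_{n-1})$, so you could also just cite it.)
\item In the Kurosh/Reidemeister--Schreier remark, conjugates by an arbitrary transversal need not form a basis of the kernel. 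What matters is that $\Fg_+(A_{n-1})$ is a Schreier transversal (indeed a complementary subgroup), which is what both of your direct arguments actually use.
\end{enumerate}
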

\begin{proof}
    It remains to show that the given set is a free basis.
    We know that the group $(B,+)/(B^{(n)},+)$
 is isomorphic with the free group $\Fg_{+}(A_{n-1})$, so $\Fg_{+}(A_{n-1})$ is a right (Schreier) transversal for $B^{(n)}$
in~$B$. Furthermore, $(B,+)$ is a free group with basis $A_n=A_{n-1}\cup X^{(n)}$.
Hence, by the~Reide\-meister--Schreier method, $(B^{(n)},+)$ is generated by the elements $f +a -\overline{f+a}$, with $f\in \Fg_{+}(A_{n-1})$ and $a\in A_n$, where $\overline{f+a}$ is the the unique element $f'\in \Fg_{+}(A_{n-1})$ such that $ f +a \in B^{(n)}+f'$. Of course, if~\hbox{$a\in A_{n-1}$}, then $f'=f+a$.
If~\hbox{$a\in X^{(n)}$}, then 
$f+a+B^{(n)}=f+B^{(n)} = B^{(n)}+f$. Hence, in this case $f +a -\overline{f+a}=f+a-f$. Thus, the result follows.
\end{proof}

Finally, we are in a position to prove the main result of this section.

\begin{thm}\label{FreeNilpn}
$\FSB_{\mathcal{RN}_n,X}$  is the free skew brace on non-empty $X$ in the category of skew braces that are right nilpotent of class at most $n$.
\end{thm}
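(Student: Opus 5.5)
We verify the universal property directly. By \cref{right nilp class n}, $B:=\FSB_{\mathcal{RN}_n,X}$ lies in $\mathcal{RN}_n$. Fix a skew brace $C\in\mathcal{RN}_n$ and a map $f\colon X\to C$. Uniqueness is immediate from the Remark that $B$ is generated by $X$ as a skew brace. For existence we build a map $\theta\colon A_n\to C$ extending $f$, by induction on the level $k$. Set $\theta|_{X}=f$. Suppose $\theta$ is already defined on $A_i$ for some $i<n$. Then $\theta|_{A_i}$ extends uniquely to a homomorphism of free groups $\theta_\circ\colon \Fg_\circ(A_i)\to(C,\circ)$. Guided by \cref{star prod in FRn}, for $x^{(i+1)}_{f',t}\in X^{(i+1)}$ with $f'=g\circ a$ in reduced form ($a\in X^{(i)}\cup\overline{X^{(i)}}$), we put
\[
\theta\bigl(x^{(i+1)}_{f',t}\bigr)=\lambda^C_{\theta_\circ(g)}\bigl(\theta_\circ(a)\ast\theta(t)\bigr).
\]
This yields $\theta\colon A_n\to C$. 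We then extend $\theta$ to a homomorphism $\Theta_+\colon\Fg_+(A_n)\to(C,+)$ and to a homomorphism $\Theta_\circ\colon \Fg_\circ(A_n)\to(C,\circ)$, both agreeing with $\theta$ on $A_n$.

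The goal is to show $\Theta_+\circ\pi=\Theta_\circ$. Once this holds, $\Theta_+$ is simultaneously additive and multiplicative, because $u\circ v=\pi(\varphi(u)\circ\varphi(v))$, and so it is the required skew brace morphism. We follow the pattern of the proof that $\varepsilon$ is a morphism in \cref{right nilp class n}. The key intermediate claim is the intertwining identity
\[
\Theta_+\circ\lambda_u=\lambda^C_{\Theta_\circ(u)}\circ\Theta_+\qquad\text{for all } u\in\Fg_\circ(A_n).
\]
Since both sides are multiplicative in $u$ and additive homomorphisms, it suffices to check it for $u=a\in A_n$ (the case $\overline a$ follows by inverting) and on generators $b\in A_n$. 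We then go through the three cases in the definition of $\lambda^{(n)}_a$.
\begin{itemize}
\item If $k(a)\ge k(b)$ and $k(a)<n$, the identity becomes $\theta(x^{(k(a)+1)}_{a,b})+\theta(b)=\lambda^C_{\theta(a)}(\theta(b))$, which is exactly the defining formula with $g=1$.
\item If $k(a)<k(b)$, the identity follows from the defining formula together with $\lambda^C_{\theta(a)}\lambda^C_{\theta_\circ(g)}=\lambda^C_{\theta_\circ(a\circ g)}$, provided $\theta_\circ$ restricted to $\Fg_\circ(A_{k(b)-1})$ agrees with $\Theta_\circ$. It does, since both are homomorphisms extending $\theta$. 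The third (cancellation) case is handled the same way, using $\overline a\ast t=-\lambda_{\overline a}(a\ast t)$ in $C$.
\item If $k(a)=n$, we need $\lambda^C_{\theta(a)}=\id$. This is the main obstacle. It requires showing that the elements $\theta(X^{(k)})$ lie in $K_{n-k+1}(C)$, and in particular that $\theta(X^{(n)})\subseteq\ker\lambda^C$.
\end{itemize}
To handle the obstacle we use \cref{lambda action for rclass n} and \cref{pro:ascendingchainofideals}. Since $C\in\mathcal{RN}_n$ we have $K_n(C)=C$, and inductively $K_{j+1}(C)\ast C\subseteq K_j(C)$. Moreover $K_j(C)$ is $\lambda$-stable under $K_{j+1}(C)=\dots=C$ (for $j\ge n$), and each $K_j$ is a sub-skew brace stable under the relevant $\lambda$'s. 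Combining these, $\theta(X^{(k)})\subseteq K_{n-k+1}(C)$ follows by induction on $k$, and hence $\theta(X^{(n)})\subseteq K_1(C)=\ker\lambda^C$. An alternative route is to prove instead that $\theta(X^{(k)})\subseteq C^{(k)}$, using $C^{(k)}\ast C=C^{(k+1)}$ and the fact that the $C^{(k)}$ are ideals, hence $\lambda$-invariant. This version is cleaner, and it gives $\theta(X^{(n)})\subseteq C^{(n)}\subseteq\ker\lambda^C$ because $C^{(n+1)}=0$. I would use this second route.

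Finally, with the intertwining identity in hand, we prove $\Theta_+(\pi(w))=\Theta_\circ(w)$ by induction on the length of the reduced word $w$, using the defining recursion \eqref{picocycle}. For the first letter, the case $a$ is trivial. For $\overline a$ we have $\Theta_+(\pi(\overline a))=-\lambda^C_{\theta(a)}{}^{-1}(\theta(a))$, which is the inverse of $\theta(a)$ in $(C,\circ)$. For the inductive step,
\[
\Theta_+(\pi(a\circ w))=\theta(a)+\lambda^C_{\theta(a)}\bigl(\Theta_\circ(w)\bigr)=\theta(a)\circ\Theta_\circ(w).
\]
This completes the argument.
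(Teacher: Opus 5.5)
Your proposal is correct and takes essentially the paper's route. Like the paper, you define the map on $A_n$ level by level using the formula of \cref{star prod in FRn}, extend it additively, prove the intertwining identity $\Theta_+\circ\lambda_u=\lambda^C_{\Theta_\circ(u)}\circ\Theta_+$ case by case on generators, deduce $\Theta_+\circ\pi=\Theta_\circ$ by induction on word length, and get uniqueness from generation by $X$. Two of your choices tighten points the paper leaves implicit: you use the free-group homomorphism $\Theta_\circ$ where the paper writes $\psi(\pi(f'))$ before $\psi$ is defined on all letters of $\pi(f')$, and you check explicitly that $\theta(X^{(k)})\subseteq C^{(k)}$, which gives $\lambda^C_{\theta(a)}=\id$ for $a\in X^{(n)}$.
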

\begin{proof}
We have already shown that $\FSB_{\mathcal{RN}_n,X}$ is a skew brace of right nilpotency class~$n$. Hence, it only remains to prove that this is the free object in this category. We will show that $\FSB_{\mathcal{RN}_n,X}$ satisfies the required universal property.

    Let $B$ be a skew brace of right nilpotency class at most $n$ and $\phi\colon X \rightarrow B$ a map. Then, we need to show that there exists a unique skew brace morphism $\psi\colon \FSB_{\mathcal{RN}_n,X} \rightarrow B$ that factors $\phi$.

    We first inductively define a map $\psi\colon A_n \rightarrow B$. As $A_1=X$, we define $\psi(a)=\phi(a)$ for $a \in A_1$.

    Assume we have defined the map on $A_i$. Then, we define for $a \in X^{(i)}$ and $b \in A_i$ that
    $$ \psi\left( x^{(i+1)}_{a,b}\right)= \psi(a)\ast \psi(b)
    \quad\text{ and }\quad\psi(x^{(i+1)}_{\overline{a},b})=\overline{\psi(a)} \ast \psi(b).$$
    For $f \in F_{i+1}$ such that $f=f'\circ a$ (resp. $f=f'\circ\overline{a}$) in reduced form with $a \in X^{(i)}$ and $b\in A_i$ we define 
    $$ \psi(x^{(i+1)}_{f,b}) = \lambda_{\psi(\pi(f'))}(\psi(a)\ast \psi(b))$$
    $$(\textnormal{resp.}\;     \psi(x^{(i+1)}_{f,b})=\lambda_{\psi(\pi(f'))}(\overline{\psi(a)} \ast \psi(b))).$$
    Now, $\psi$ can be extended to a unique group morphism $\psi\colon \Fg_+(A_n)\rightarrow B$. It remains to show that this is a skew brace morphism.
    To prove this, we will prove that $\psi\circ\pi:\Fg_\circ(A_n)\to (B,\circ)$
    is a group homomorphism.
    Pre-composing with the bijective 1-cocycle allows us to work with words in $\Fg_\circ(A_n)$.
    In fact, we first prove, by induction on the length of $u\in F_\circ(A_n)$, that
    \[
    \psi\circ\lambda_u=\lambda_{\psi(\pi(u))}\circ\psi.
    \]

    We start with proving by induction on $i$
    that the equality holds in every $A_i$.
    Let $a\in X=A_1$.
    Then $\pi(a)=a$ and for every $b\in A_1$, using that $B$ is right nilpotent of class at most $n$,
    \begin{align*}
        \psi\left(\lambda_a(b)\right)
    &=\begin{cases}
        \psi(b)&\text{ if } n=1\\
        \psi\left(x^{(2)}_{a,b}+b\right)&\text{ if }n>1 
    \end{cases}\\
    &=\begin{cases}
        \psi(b)&\text{ if } n=1\\
        \psi(a)*\psi(b)+\psi(b)&\text{ if }n>1 
    \end{cases}\\
    &=\lambda_{\psi(a)}(\psi(b)).
    \end{align*}
    Moreover,
    \begin{align*}
        \psi(\pi(\overline{a}))
    &=\psi\left(-\lambda_a^{-1}(a)\right)
    =\begin{cases}
        \psi(-a)&\text{ if } n=1\\
        \psi\left(-a-x^{(2)}_{\overline{a},a}\right)&\text{ if }n>1 
    \end{cases}\\
    &=\begin{cases}
        -\psi(a)&\text{ if } n=1\\
        -\psi(a)-\overline{\psi(a)}*\psi(a)&\text{ if }n>1 
    \end{cases}\\
    &=\overline{\psi(a)}.
    \end{align*}

    Assume now we know that 
    \[
    \psi\left(\lambda_a(b)\right)=\lambda_{\psi(\pi(a))}(\psi(b))
    \]
    for every $a,b\in A_i$.
    Then 
    \begin{align*}
        \psi(\pi(a\circ b))
    &=\psi(\pi(a)+\lambda_a(\pi(b)))\\
    &=\psi(\pi(a))+\psi\left(\lambda_a(\pi(b))\right)\\
    &=\psi(\pi(a))+\lambda_{\psi(\pi(a))}(\psi(b))\\
    &=\psi(\pi(a))\circ\psi(\pi(b)),
    \end{align*}
    for every $a\in \Fg_\circ(A_i)$, and $b\in A_i$.
    Let $a\in A_i$ and $v\in X^{(i+1)}$, i.e., $v=x^{(i+1)}_{f\circ c,t}$,
    for some $f\in \Fg_\circ(A_i)$ and $c\in X^{(i)}\cup \overline{X^{(i)}}$.
    Then, since $k(a)\leq i <i+1=k(v)$,
    \begin{align*}
        \psi\left(\lambda_a\left(v\right)\right)
        &=\psi\left(\lambda_a\left(x^{(i+1)}_{f\circ c,t}\right)\right)
        =\begin{cases}
            \psi\left(x^{(i+1)}_{a\circ f\circ c,t}\right)&\text{ if }f\circ c\neq \overline{a}\\
            \psi\left(-x^{(i+1)}_{a,t}\right)&\text{ if }f\circ c=\overline{a}
        \end{cases}\\
         &=\begin{cases}
            \psi\left(\lambda_{\psi\left(\pi\left(a\circ f\right)\right)}\left(\psi(c)*\psi(t)\right)\right)&\text{ if }c\in X^{(i)}\\
            \psi\left(\lambda_{\psi\left(\pi\left(a\circ f\right)\right)}\left(\overline{\psi(d)}*\psi(t)\right)\right)&\text{ if }f\circ c\neq \overline{a}\text{, }c=\overline{d},\; d\in X^{(i)}\\
            -\left(\psi(a)*\psi(t)\right)&\text{ if }f\circ c=\overline{a}
        \end{cases}
    \end{align*}
    Since $a\in A_i$ and $f\in \Fg_\circ(A_i)$,
    \[
    \psi\left(\pi\left(a\circ f\right)\right)
    =\psi\left(\pi\left(a\right)\right)\circ\psi\left(\pi\left(f\right)\right)
    =\psi\left(\pi(a)\right)\circ\psi\left(\pi\left(f\right)\right),
    \]
    hence,
    \begin{align*}
        \psi(\lambda_a&(v))\\
        &=\begin{cases}
            \psi\left(\lambda_{\psi\left(\pi(a)\right)}\left(\lambda_{\psi\left(\pi\left(f\right)\right)}\left(\psi(c)*\psi(t)\right)\right)\right)&\text{ if }c\in X^{(i)}\\
            \psi\left(\lambda_{\psi\left(\pi(a)\right)}\left(\lambda_{\psi\left(\pi\left(f\right)\right)}\left(\overline{\psi(d)}*\psi(t)\right)\right)\right)&\text{ if }f\circ c\neq \overline{a}\text{, }c=\overline{d}, d\in X^{(i)}\\
            \lambda_{\psi(a)}\left(\psi\left(x^{(i+1)}_{\overline{a},t}\right)\right)&\text{ if }f\circ c=\overline{a}
        \end{cases}\\
        &=\begin{cases}
            \lambda_{\psi\left(\pi(a)\right)}\left(\psi\left(x^{(i+1)}_{f\circ c,t}\right)\right)&\text{ if }c\in X^{(i)}\\
            \lambda_{\psi\left(\pi(a)\right)}\left(\psi\left(x^{(i+1)}_{f\circ c,t}\right)\right)&\text{ if }f\circ c\neq \overline{a}\text{, }c=\overline{d}, d\in X^{(i)}\\
            \lambda_{\psi(a)}\left(\psi\left(x^{(i+1)}_{\overline{a},t}\right)\right)&\text{ if }f\circ c=\overline{a}
        \end{cases}\\
        &=\lambda_{\psi\left(\pi(a)\right)}\left(v\right)
    \end{align*}
    Finally, if $a\in X^{(i+1)}$ and $b\in A_{i+1}$,
    then, since $k(a)\geq k(b)$ and $B$ is right nilpotent of class at most $n$,
    \begin{align*}
       \psi(\lambda_a(b))
        &=\begin{cases}
        \psi(b) &\text{ if } i+1=n\\
        \psi\left(x^{(i+2)}_{a,b}+b\right)&\text{ if } i+1<n
        \end{cases}\\
        &=\begin{cases}
        \psi(b) &\text{ if } i+1=n\\
        \psi(a)*\psi(b)+\psi(b)&\text{ if } i+1<n
        \end{cases}\\
        &=\lambda_{\psi(a)}(\psi(b)). 
    \end{align*}
    Therefore 
    \[
    \psi\left(\lambda_a(b)\right)=\lambda_{\psi(\pi(a))}(\psi(b)),
    \]
    for every $a,b\in A_n$.
    Hence, as before, we can also conclude that
    \begin{align*}
        \psi(\pi(\overline{a}))
    &=\psi\left(-\lambda_a^{-1}(a)\right)\\
    &=\begin{cases}
        \psi(-a) &\text{ if } k(a)=n\\
        \psi\left(-a-x^{(k(a)+1)}_{\overline{a},a}\right)&\text{ if } k(a)<n
    \end{cases}\\
    &=\begin{cases}
        -\psi(a) &\text{ if } k(a)=n\\
        -\psi(a)-\overline{\psi(a)}*\psi(a)&\text{ if } k(a)<n
    \end{cases}
    =\overline{\psi(a)}
    \end{align*}
    and
    \begin{align*}
        \psi(\pi(a\circ b))
    &=\psi(\pi(a)+\lambda_a(\pi(b)))\\
    &=\psi(\pi(a))+\psi\left(\lambda_a(\pi(b))\right)\\
    &=\psi(\pi(a))+\lambda_{\psi(\pi(a))}(\psi(b))\\
    &=\psi(\pi(a))\circ\psi(\pi(b)),
    \end{align*}
    for every $a,b\in A_n$.
    Moreover, since $\psi\circ\lambda_a$ and $\lambda_{\psi(\pi(a))}\circ\psi$ are
    group homomorphisms $\Fg_+(A_n)\to (B,+)$ and they coincide on $A_n$,
    we have that
    \[
    \psi\circ\lambda_a=\lambda_{\psi(\pi(a))}\circ\psi,
    \]
    for every $a\in A_n$.

    We can now proceed with the inductive step.
    Let $u\in \Fg_\circ(A_n)$ and $c\in A_n\cup\overline{A_n}$
    Assume that 
    \[
    \psi\circ\lambda_u=\lambda_{\psi(\pi(u))}\circ\psi.
    \]
    Then 
    \begin{align*}
        \psi(\pi(u\circ c))
    &=\psi(\pi(u)+\lambda_u(\pi(b)))\\
    &=\psi(\pi(u))+\psi\left(\lambda_u(\pi(c))\right)\\
    &=\psi(\pi(u))+\lambda_{\psi(\pi(u))}(\psi(c))\\
    &=\psi(\pi(u))\circ\psi(\pi(c)).
    \end{align*}
    Therefore,
    \begin{align*}
        \psi\circ\lambda_{u\circ c}
    &=\psi\circ\lambda_u\circ\lambda_c\\
    &=\lambda_{\psi(\pi(u))}\circ\psi\circ\lambda_c\\
    &=\lambda_{\psi(\pi(u))}\circ\lambda_{\psi(\pi(c))}\circ\psi\\
    &=\lambda_{\psi(\pi(u))\circ\psi(\pi(c))}\circ\psi\\
    &=\lambda_{\psi(\pi(u\circ c))}\circ\psi.
    \end{align*}
    With this, we proved that 
    \[
    \psi\circ\lambda_u=\lambda_{\psi(\pi(u))}\circ\psi,
    \]
    for every $u\in \Fg_\circ(A_n)$.

    Finally, we can conclude that
    \begin{align*}
        \psi(\pi(v\circ w))
    &=\psi(\pi(v)+\lambda_v(\pi(w)))\\
    &=\psi(\pi(v))+\psi\left(\lambda_v(\pi(w))\right)\\
    &=\psi(\pi(v))+\lambda_{\psi(\pi(v))}(\psi(w))\\
    &=\psi(\pi(v))\circ\psi(\pi(w)),
    \end{align*}
    for ever $v,w\in \Fg_\circ(A_n)$. Thus, $\psi$ is a skew brace morphism.

    The last thing to prove for the universal property
    is the uniqueness of $\psi$.
    Let $\psi':\FSB_{\mathcal{RN}_n,X}\to B$ be a skew brace homomorphism
    such that $\psi'(a)=\phi(a)=\psi(a)$
    for every $a\in X=A_1$.
    We will prove by induction on $i$ that $\psi'$
    coincides with $\psi$ on $A_i$.
    Assume that $\psi'(a)=\psi(a)$ for every $a\in A_i$.
    Let 
    $f\in \Fg_\circ(A_i),$ $a\in X^{(i)}$  and $b\in A_i$.
    Then, by \cref{star prod in FRn},
    \[
    \psi'\left(x^{(i+1)}_{a,b}\right)
    =\psi'(a\ast b)
    =\psi'(a)\ast\psi'(b)
    =\psi(a)\ast\psi(b)
    =\psi(a\ast b)
    =\psi\left(x^{(i+1)}_{a,b}\right),
    \]
    \[
    \psi'\left(x^{(i+1)}_{\overline{a},b}\right)
    =\psi'(\overline{a}\ast b)
    =\overline{\psi'(a)}\ast\psi'(b)
    =\overline{\psi(a)}\ast\psi(b)
    =\psi(\overline{a}\ast b)
    =\psi\left(x^{(i+1)}_{\overline{a},b}\right)
    \]
    \begin{align*}
        \psi'\left(x^{(i+1)}_{f\circ a,b}\right)
    &=\psi'\left(\lambda_{f}(a\ast b)\right)
    =\lambda_{\psi'(f)}(\psi'(a)\ast\psi'(b))
    =\lambda_{\psi(f)}(\psi(a)\ast\psi(b))\\
    &=\psi\left(\lambda_{f}(a\ast b)\right)
    =\psi'\left(x^{(i+1)}_{f\circ a,b}\right)    
    \end{align*}
    \begin{align*}
        \psi'\left(x^{(i+1)}_{f\overline{a},b}\right)
    &=\psi'\left(\lambda_{f}(\overline{a}\ast b)\right)
    =\lambda_{\psi'(f)}(\overline{\psi'(a)}\ast\psi'(b))
    =\lambda_{\psi(f)}(\overline{\psi(a)}\ast\psi(b))\\
    &=\psi\left(\lambda_{f}(\overline{a}\ast b)\right)
    =\psi'\left(x^{(i+1)}_{f\circ \overline{a},b}\right).
    \end{align*}
    Hence $\psi'(v)=\psi(v)$ for every $v\in X^{(i+1)}$.
    So $\psi'(a)=\psi(a)$ for every $v\in A_{i+1}$ 
    and this concludes the induction.

    Finally, since $\psi$ and $\psi'$ are also additive group homomorphisms
    that coincide on~$A_n$, they coincide on $\Fg_+(A_n)$.
    Hence $\psi=\psi'$.
\end{proof}

\section{Structural consequences for free right nilpotent skew braces}\label{rightnilclass2}

The aim of this section is to exploit the construction of the free right nilpotent skew brace of class at most $n$ given in Section \ref{sec: right nilp}. We do this in two respects. On  one hand, we prove that such free objects $\FSB_{\mathcal{RN}_n,X}$ are residually finite (see~The\-o\-rem~\ref{classnresfinhopfia}), so even Hopfian provided $X$ is finite (although $\FSB_{\mathcal{RN}_n,X}$ is not co-Hopfian in general). On the other hand, for $n=2$, we describe the regular subgroup corresponding to $\FSB_{\mathcal{RN}_2,\{ x_0\}}$ (see \cref{descriptionregular}),
which in turn allows us to show that 
$\FSol_{\mathcal{RN}_n,X}$ is not an injective solution (see \cref{solnotinjective}).

\medskip

The reader should also note the following auxiliary results, which give further insights on the structure of $\FSB_{\mathcal{RN}_n,X}$: \cref{quotient-by-last-term} and \cref{freebasisy}.
In particular, in the case $n=2$ and $X=\{x_0\}$, we obtain a more explicit description of the relevant quotients and associated solutions, which allows us to compare the free right nilpotent skew brace with the free right nilpotent skew brace of abelian type and to describe the free injective solution in the corresponding category.

\subsection{Residual Finiteness}

In order to prove that free right nilpotent skew braces of class at most $n$ are residually finite, we need some preliminary results that allow us to find suitable behaving quotients. Throughout this subsection $X$ is a non-empty finite set and  $n\geq 2$.  In  $B=\FSB_{\mathcal{RN}_{n},X}$ we define the following elements: $$y_{f,a}=\lambda_f(a) \text{ with } f\in \Fg_{\circ}(A_{n-1}),\, a \in X.$$
In particular, $y_{e,x}=x$ for $x\in X$. Our key auxiliary result (see \cref{freebasisy}) shows that these elements form a free basis for $(B,+)$.

We also define the functions 
$$h\colon A_n \rightarrow \mathbb{N}, \quad t\colon A_n\rightarrow X\quad\textnormal{ and }\quad j\colon A_n \rightarrow F_{\circ}(A_{n-1})$$ inductively as follows.

Set $h(A_1)=0$, $t|_{A_1}=\id_X$ and $j(A_1)=e$, where \hbox{$\Fg_{\circ}(A_{0})=\{e\}$.}  Suppose we have defined $h,t$ and $j$ on $A_i$ with $i<n$. Then, for $w=x^{(i+1)}_{f,b}$, we set $h(w)=\ell(f)+h(b)$, $t(w)=t(b)$ and $j(w) = f\circ j(b)$. 

\begin{lem}\label{helpfullemma}
    For a positive integer $k$, if $b\in A_n$ with $h(b)= k$ then $$b\in \langle  y_{w,a} \mid \ell(w) \leq k,\, a\in X \rangle_{+}.$$ Moreover, $$ b = y_{j(b),t(b)} + r, \text{ with } r\in \langle y_{w,a} \mid \ell(w) < k,\, a\in X \rangle_{+}.$$
\end{lem}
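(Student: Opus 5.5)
We argue by induction on the level $k(b)$ of $b\in A_n$; the claim about $h(b)$ then follows along. If $k(b)=1$, then $b\in X$, $h(b)=0$, $j(b)=e$ and $b=y_{e,b}$, so we may take $r=0$.

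Now let $b=x^{(i+1)}_{f,c}$ with $f\in F_{i+1}\subseteq \Fg_\circ(A_i)$ and $c\in A_i$. Then $h(b)=\ell(f)+h(c)$, $t(b)=t(c)$ and $j(b)=f\circ j(c)$. By \cref{star prod in FRn}, if $f=f'\circ a$ in reduced form with $a\in (X^{(i)})^{\pm}$, we have
\[
b=\lambda_{f'}(a\ast c)=\lambda_{f'}\bigl(\lambda_a(c)-c\bigr)=\lambda_{f}(c)-\lambda_{f'}(c).
\]
Here $\lambda_f$ and $\lambda_{f'}$ are the $\lambda$-maps of $B$ evaluated at the elements $\pi(f),\pi(f')$, with $\lambda_{\overline a}=\lambda_a^{-1}$. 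This identifies $b$ as a difference of two $\lambda$-translates of the lower-level generator $c$, with $\ell(f')=\ell(f)-1$.

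Apply the induction hypothesis to $c$: $c=y_{j(c),t(c)}+r_c$ with $r_c\in\langle y_{w,x}\mid \ell(w)<h(c)\rangle_+$. Since $\lambda_g$ is an additive automorphism and $\lambda_g(y_{w,x})=\lambda_g\lambda_w(x)=y_{g\circ w,x}$ with $\ell(g\circ w)\le \ell(g)+\ell(w)$, we get
\[
\lambda_f(c)=y_{f\circ j(c),t(c)}+\lambda_f(r_c),\qquad \lambda_f(r_c)\in\langle y_{w,x}\mid \ell(w)<\ell(f)+h(c)=h(b)\rangle_+,
\]
and $\lambda_{f'}(c)\in\langle y_{w,x}\mid \ell(w)\le \ell(f')+h(c)=h(b)-1\rangle_+$. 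Since $j(b)=f\circ j(c)$, combining these gives $b=y_{j(b),t(b)}+r$ with $r=\lambda_f(r_c)-\lambda_{f'}(c)$ lying in the span of the $y_{w,x}$ with $\ell(w)<h(b)$. The first assertion, namely $\ell(j(b))\le h(b)$, follows at once.

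The main obstacle is a subtlety in the definition of $y$: $y_{f,a}$ requires $f\in\Fg_\circ(A_{n-1})$, but $f\circ j(c)$ is a product taken in $\Fg_\circ(A_{n-1})$ and need not be reduced as written. Lengths only go down under reduction, so the bounds survive. Still, one has to check that the inequality $\ell(f\circ j(c))\le h(b)$ is what is actually needed, rather than an equality. One also has to confirm that $y_{w,x}$ depends only on the group element $w$, which holds because $\lambda$ is a homomorphism on $\Fg_\circ$ by \cref{phi of sum}.

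A second point is to use the identification of $\lambda_g$ for $g\in\Fg_\circ(A_n)$ with $\lambda_{\pi(g)}$ in the skew brace, which is valid because $\pi$ is the bijective $1$-cocycle transferring $\circ$. The case $f=f'\circ\overline a$ is handled identically, using $\overline a\ast c=\lambda_{\overline a}(c)-c$.
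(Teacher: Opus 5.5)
Your proof is correct and follows the paper's argument: induction on the level, the identity $b=\lambda_f(c)-\lambda_{f'}(c)$ obtained from Lemma~\ref{star prod in FRn}, and the bound $\ell(g\circ w)\leq \ell(g)+\ell(w)$ applied to the induction hypothesis for $c$. You also make explicit the bound $\ell(j(b))\leq h(b)$, which the first assertion needs and which the paper leaves implicit.
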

\begin{proof}
Let $b\in A_n$. We prove the result by induction on $k(b)$. If $k(b)=1$, i.e. $b\in X^{(1)}=X=A_1$ then the result clearly holds. Suppose we have shown the result for $b \in A_{i-1}$.

    Let $b \in X^{(i)}$. Then there exists $f \in F_i$ and $a \in A_{i-1}$
    such that $b=x_{f,a}^{(i)}$. Denote $f=w\circ f'$ in reduced form. Then, by Lemma \ref{star prod in FRn}, $$ b = x_{f,a}^{(i)} = \lambda_w(\lambda_{f'}(a)-a) = \lambda_{f}(a)-\lambda_w(a).$$

By the induction hypothesis applied to \(a\), 
we may write
\[
a=y_{j(a),t(a)}+r,
\]
where \(r\) belongs to the additive subgroup generated by the elements $y_{u,x}$ with $x\in X$ and $\ell(u)<h(a)$.
Applying \(\lambda_f\), we obtain
\[
\lambda_f(a)
=
\lambda_f(y_{j(a),t(a)})+\lambda_f(r).
\]
Since \(y_{u,x}=\lambda_u(x)\), we have
\[
\lambda_f(y_{j(a),t(a)})
=
\lambda_f(\lambda_{j(a)}(t(a)))
=
\lambda_{f\circ j(a)}(t(a)).
\]
By definition,
\[
j(b)=f\circ j(a)
\qquad\text{and}\qquad
t(b)=t(a),
\]
and thus 
\[
\lambda_f(y_{j(a),t(a)})=y_{j(b),t(b)}.
\]

Moreover, since \(r\) is an additive word in elements \(y_{u,x}\) with
\(x\in X\) and \(\ell(u)<h(a)\), the element \(\lambda_f(r)\) is an additive
word in elements of the form
\[
\lambda_f(y_{u,x})=y_{f\circ u,x}.
\]
For each such element we have
\[
\ell(f\circ u)\leq \ell(f)+\ell(u)<\ell(f)+h(a)=h(b).
\]
Therefore, all terms appearing in \(\lambda_f(r)\) have length strictly smaller
than \(h(b)\). Similarly, applying the induction hypothesis to \(a\), the
terms appearing in \(\lambda_w(a)\) are of the form \(y_{w\circ u,x}\), with
\(\ell(u)\leq h(a)\). Since \(\ell(w)<\ell(f)\), we have
\[
\ell(w\circ u)\leq \ell(w)+\ell(u)<\ell(f)+h(a)=h(b).
\]
Thus all terms appearing in \(\lambda_w(a)\) also have length strictly smaller
than \(h(b)\).
\end{proof}

Also the following map is needed:
 $$q_{n-1}\colon \Fg_{\circ}(A_{n-1}) \rightarrow \mathbb{N},$$ 
with $q_{n-1}(a)$  the number of elements of $X^{(n-1)}$ that appear in the  reduced form of $a$. Define the sets $$ X_{i,k}^{(n)}=\left\{x^{(n)}_{f,b} \in X^{(n)} \mid h(b)\geq k, \; q_{n-1}(f)=i \right\}$$ and $$Y_{i,k}^{(n)}=\left\{ y_{f\circ b,a} \mid a \in X,\;  f \in F_n,\; \ b \in F_{\circ}(A_{n-2}),\;  h(b)<k, \; q_{n-1}(f)=i \right\}.$$ We set 
$$Y_{\leq i}^{(n)}= \bigcup_{j=1}^{i}\bigcup_{k=0}^{\infty}Y_{j,k}^{(n)} \quad \text{ and } \quad
X_{\geq i}^{(n)}=\bigcup_{j=i}^{\infty}\bigcup_{k=0}^{\infty}X_{j,k}^{(n)}.$$

\begin{pro}\label{freebasisy}
    Let $X$ be a non-empty set and let $B=\FSB_{\mathcal{RN}_{n},X}$. Then 
    $$ \left\lbrace y_{f,a}=\lambda_f(a) \mid f \in F_{\circ}(A_{n-1}),\, a \in X \right\rbrace$$ is a basis for the free group $(B,+)$.
\end{pro}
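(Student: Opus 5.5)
The plan is to compare the given set with the obvious free basis $A_n$ of $(B,+)=\Fg_+(A_n)$ through a unitriangular change of basis, filtered by the length function $h$. Lemma~\ref{helpfullemma} already gives the triangular relation: for every $b\in A_n$,
\[
b=y_{j(b),t(b)}+r_b,\qquad r_b\in\langle y_{w,a}\mid \ell(w)<h(b),\ a\in X\rangle_+ .
\]
Two things then remain. The first is to show that $b\mapsto (j(b),t(b))$ is a bijection $A_n\to \Fg_\circ(A_{n-1})\times X$ satisfying $h(b)=\ell(j(b))$. The second is to run a filtration argument on free groups.

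\emph{Step 1 (the bijection).} By induction on $k(b)$, $j(b)$ is a reduced word in $\Fg_\circ(A_{k(b)-1})$ and $h(b)=\ell(j(b))$. Indeed, for $b=x^{(i+1)}_{f,c}$ we have $f\in F_{i+1}$, so its reduced form ends in a letter of $X^{(i)}\cup\overline{X^{(i)}}$. Also $c\in A_i$, so inductively $j(c)\in\Fg_\circ(A_{i-1})$. Hence $f\circ j(c)$ is a reduced concatenation, and
\[
h(b)=\ell(f)+h(c)=\ell(f)+\ell(j(c))=\ell(f\circ j(c))=\ell(j(b)).
\]
For the inverse, take $g\in\Fg_\circ(A_{n-1})$ and $a\in X$. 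If $g=e$, send $(e,a)$ to $a$. Otherwise let $i$ be the largest level of a letter of $g$. Cut $g$ after the last letter from $X^{(i)}\cup\overline{X^{(i)}}$, writing $g=f\circ g'$ with $f\in F_{i+1}$ and $g'\in\Fg_\circ(A_{i-1})$; this cut is forced, so it is unique. Recursively let $c\in A_i$ be the preimage of $(g',a)$, and send $(g,a)$ to $x^{(i+1)}_{f,c}$. Uniqueness of the cut gives injectivity, and the construction gives surjectivity. It follows that the $y_{f,a}$ are indexed without repetition by the elements of $A_n$, and the length filtration matches the $h$-filtration.

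\emph{Step 2 (filtration).} For $k\geq 0$ put $G_k=\langle b\in A_n\mid h(b)\leq k\rangle_+$. This is a free factor of $\Fg_+(A_n)$, and $G_k=G_{k-1}\ast\Fg_+(\{b\mid h(b)=k\})$. Put also $H_k=\langle y_{w,a}\mid \ell(w)\leq k\rangle_+$. Lemma~\ref{helpfullemma} gives $G_k\subseteq H_k$. Conversely, Step 1 lets us write each $y_{w,a}$ with $\ell(w)=k$ as $b-r_b$ with $h(b)=k$, so by induction on $k$ we get $H_k\subseteq G_k$, hence $G_k=H_k$ and $r_b\in G_{k-1}$. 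By induction on $k$, assume $\{y_{w,a}\mid \ell(w)<k\}$ is a free basis of $G_{k-1}$. Replacing each basis element $b$ with $h(b)=k$ by $b-r_b$, where $r_b\in G_{k-1}$, is an automorphism of $G_{k-1}\ast\Fg_+(\{b\mid h(b)=k\})$ that fixes $G_{k-1}$; it is a product of simultaneous transvections, with inverse $b\mapsto b+r_b$. So $\{y_{w,a}\mid \ell(w)\le k\}$ is a free basis of $G_k$. Since every finite subset of the $y$'s lies in some $G_k$ and $\bigcup_k G_k=\Fg_+(A_n)$, the whole set is a free basis.

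The main obstacle is Step 1. One must check carefully that $j(c)$ really lies in $\Fg_\circ(A_{i-1})$, and not merely in $\Fg_\circ(A_i)$, so that the concatenation is reduced and the cut point is canonical. One must also check that the level bookkeeping (the superscript $i+1$ and $c\in A_i$ with $k(c)\le i$) matches the recursive definition of $X^{(i+1)}$ in Section~\ref{sec: right nilp}. If this fails, I would instead index by pairs $(f,c)$ directly and prove injectivity of $b\mapsto y_{j(b),t(b)}$ by induction on $k(b)$. The rest is standard free-group bookkeeping, and it does not need $X$ to be finite.
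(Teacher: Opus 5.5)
Your proof is correct, but it organizes the change of basis differently from the paper.

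\emph{The paper's route.} The paper inducts on the level. It assumes that the $y_{f,a}$ with $f\in\Fg_\circ(A_{n-2})$ form a basis $Y$ of $\Fg_+(A_{n-1})$, and starts from the basis $Y\cup X^{(n)}$ of $\Fg_+(A_n)$. It then replaces the generators $x^{(n)}_{f,b}$ by Nielsen moves in stages, graded first by the number $q_{n-1}(f)$ of level-$(n-1)$ letters in $f$ and then by $h(b)$. The moves use $x^{(n)}_{f,b}=\lambda_f(b)-\lambda_w(b)$, where $w$ is $f$ with its last letter deleted, together with \cref{helpfullemma} for $b\in A_{n-1}$. Only at the end is the resulting set identified with $\{y_{f,a}\}$, by cutting $g\in\Fg_\circ(A_{n-1})$ after its last letter of level $n-1$.

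\emph{Your route.} You apply \cref{helpfullemma} to every $b\in A_n$ at once. You also turn that cutting procedure into an explicit recursive bijection $b\mapsto(j(b),t(b))$ from $A_n$ onto $\Fg_\circ(A_{n-1})\times X$ with $h(b)=\ell(j(b))$; the paper uses these facts only implicitly. The check you flag as the main obstacle does go through. For $c\in A_i$ one has $j(c)\in\Fg_\circ(A_{k(c)-1})\subseteq\Fg_\circ(A_{i-1})$. So in $j(b)=f\circ j(c)$ the last letter of maximal level $i$ is the last letter of $f$, the product is reduced, and the cut is canonical. Your fallback is therefore not needed.

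\emph{What each buys.} Your approach reduces the statement to a single unitriangular change of basis along the filtration by $h$. It avoids the induction on $n$, the double grading by $q_{n-1}$ and $h$, and the auxiliary sets $X^{(n)}_{i,k}$ and $Y^{(n)}_{i,k}$. The paper's level-by-level route shows along the way that the $y_{f,a}$ with $f\in\Fg_\circ(A_{i-1})$ form a basis of $\Fg_+(A_i)$ for every $i\le n$; your argument gives this too if you run it inside $\Fg_+(A_i)$. Neither argument uses finiteness of $X$.
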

\begin{proof}
We prove by induction on $i\geq 1$ that  $\left\{y_{f,a} \mid f \in \Fg_{\circ}(A_{i-}),\;  a \in X\right\}$ is a free basis for $\Fg_{+}(A_{i})$. Clearly, $X$ is a free basis for $F_+(X)$ and thus the case $i=1$ holds. Assume we have shown that $Y=\left\{y_{f,a} \mid f \in \Fg_{\circ}(A_{n-2}),\;  a \in X\right\}$ is a free basis for $\Fg_{+}(A_{n-1})$. Hence, we have that $X^{(n)}\cup Y$ is a free basis for $\Fg_+(A_n)$. Assume  $f \in F_n$ with $f=w\circ f'$  in reduced form (so, $f'\in X^{(n-1)}$) with that $w\in  \Fg_{\circ}(A_{n-2})$, 
 and $a \in X$. Then $$y_{f,a} = \lambda_f(a) = \lambda_w\lambda_{f'}(a) = \lambda_w( x^{(n)}_{f',a}+a) = x^{(n)}_{f,a}+\lambda_w(a). $$   
Since
\(w\in \Fg_{\circ}(A_{n-2})\) and \(a\in X\), we have $\lambda_w(a)=y_{w,a}\in Y$.
Hence
\[
y_{f,a}=x^{(n)}_{f,a}+y_{w,a},
\]
with \(y_{w,a}\in Y\). Thus \(y_{f,a}\) is obtained from the generator
\(x^{(n)}_{f,a}\) by adding an element of the subgroup generated by the  free
basis \(Y\). Therefore, by Nielsen transformations, we may replace each
generator \(x^{(n)}_{f,a}\) with \(q_{n-1}(f)=1\) and \(a\in X\) by the
corresponding element \(y_{f,a}\). Equivalently, we replace the part
\[
X_{1,0}^{(n)}\setminus X_{1,1}^{(n)}
\]
of \(X^{(n)}\) by \(Y_{1,1}^{(n)}\). The remaining elements of \(X^{(n)}\) are
precisely
\[
X_{1,1}^{(n)}\cup X_{\geq 2}^{(n)}.
\]
Consequently,
\[
Y\cup Y_{1,1}^{(n)}\cup X_{1,1}^{(n)}\cup X_{\geq 2}^{(n)}
\]
is again a free basis of \(\Fg_+(A_n)\).

Now suppose that 
$\mathcal B_k=Y\cup Y_{1,k}^{(n)}\cup X_{1,k}^{(n)}\cup X_{\geq 2}^{(n)}$
is a free basis of $\Fg_+(A_n)$. We prove that
$\mathcal B_{k+1}=Y\cup Y_{1,k+1}^{(n)}\cup X_{1,k+1}^{(n)}\cup X_{\geq 2}^{(n)}$
again is  a free basis. In order to do this, it is enough to show that we may replace the elements of
$X_{1,k}^{(n)}\setminus X_{1,k+1}^{(n)}$ by the corresponding elements of
$Y_{1,k+1}^{(n)}\setminus Y_{1,k}^{(n)}$. 
Let
$x_{f,b}^{(n)}\in X_{1,k}^{(n)}\setminus X_{1,k+1}^{(n)}$. Then
$q_{n-1}(f)=1$ and $h(b)=k$. Since $f\in F_n$, the reduced form of $f$ ends
with an element of $X^{(n-1)}\cup\overline{X^{(n-1)}}$. As $q_{n-1}(f)=1$, we
can write $f=w\circ\varepsilon$ in reduced form, where
$w\in\Fg_\circ(A_{n-2})$ and
$\varepsilon\in X^{(n-1)}\cup\overline{X^{(n-1)}}$. In particular, $w$ is
obtained from $f$ by deleting its last letter.

By the definition of the action, we have
$x_{f,b}^{(n)}=\lambda_f(b)-\lambda_w(b)$. By Lemma~\ref{helpfullemma}, we may
write $b=y_{j(b),t(b)}+w'$, where $w'$ is an additive word in elements
$y_{t,a}$ with $\ell(t)<k$. Hence
\[
\lambda_f(b)
=
y_{f\circ j(b),t(b)}+\lambda_f(w')
\quad\textnormal{
and}\quad
\lambda_w(b)
=
y_{w\circ j(b),t(b)}+\lambda_w(w').
\]
Therefore, in the additive group,
\[
y_{f\circ j(b),t(b)}= x_{f,b}^{(n)} + y_{w\circ j(b),t(b)} + \lambda_w(w') - \lambda_f(w').
\]

Now $y_{w\circ j(b),t(b)}\in Y$, because
$w\circ j(b)\in\Fg_\circ(A_{n-2})$. Moreover, since $w'$ is an additive word
in elements $y_{t,a}$ with $\ell(t)<k$, the element $\lambda_w(w')$ is an additive
word in elements of $Y$, while $\lambda_f(w')$ is an additive word in elements
of $Y_{1,k}^{(n)}$. Indeed,
$\lambda_w(y_{t,a})=y_{w\circ t,a}\in Y$ and
$\lambda_f(y_{t,a})=y_{f\circ t,a}\in Y_{1,k}^{(n)}$ whenever $\ell(t)<k$.

Thus the correction term
$y_{w\circ j(b),t(b)}+\lambda_w(w')-\lambda_f(w')$
belongs to the subgroup generated by $Y\cup Y_{1,k}^{(n)}$. Hence each element
of $Y_{1,k+1}^{(n)}\setminus Y_{1,k}^{(n)}$ is obtained from the corresponding
generator in $X_{1,k}^{(n)}\setminus X_{1,k+1}^{(n)}$ by adding an additive
word in the already existing basis elements. Therefore, by Nielsen
transformations,
\[
Y\cup Y_{1,k+1}^{(n)}\cup X_{1,k+1}^{(n)}\cup X_{\geq 2}^{(n)}
\]
is again a free basis of $\Fg_+(A_n)$.

So far we have proved that for every $k\geq1$, the set
\[
\mathcal B_k=
Y\cup Y_{1,k}^{(n)}\cup X_{1,k}^{(n)}\cup X_{\geq 2}^{(n)}
\]
is a free basis of $\Fg_+(A_n)$. Since every
$b\in A_{n-1}$ has finite $h(b)$, every
$x_{f,b}^{(n)}$ with $q_{n-1}(f)=1$ belongs to
$X_{1,k}^{(n)}\setminus X_{1,k+1}^{(n)}$ for $k=h(b)$, and hence can be 
replaced after finitely many steps. Therefore, after all these Nielsen transformations,
all the $x_{f,b}^{(n)}$ with~\hbox{$q_{n-1}(f)=1$} have been replaced by the
corresponding elements of
\[
Y_1^{(n)}=\bigcup_{k\geq 0}Y_{1,k}^{(n)}.
\]
The generators with $q_{n-1}(f)\geq 2$ are not changed. Hence, we have shown that
\[
Y\cup Y_1^{(n)}\cup X_{\geq 2}^{(n)}
\]
is a free basis of $\Fg_+(A_n)$.

Assume now that, for some $k\geq 2$, we have proved that
$$
Y\cup Y_{\leq k-1}^{(n)}\cup X_{\geq k}^{(n)}
$$
is a free basis of $\Fg_+(A_n)$. We prove that
$$
Y\cup Y_{\leq k}^{(n)}\cup X_{\geq k+1}^{(n)}
$$
again is  a free basis.

\medskip
First, we claim that $$
Y\cup Y_{\leq k-1}^{(n)}\cup Y_{k,i}^{(n)}
\cup X_{k,i}^{(n)}\cup X_{\geq k+1}^{(n)}
$$
is a free basis of $\Fg_+(A_n)$ for every $i$. For $i=0$ there is nothing to prove, since $Y_{k,0}^{(n)}=\emptyset$ and
$X_{k,0}^{(n)}\cup X_{\geq k+1}^{(n)}=X_{\geq k}^{(n)}$. Hence the induction
hypothesis says exactly that
$$
Y\cup Y_{\leq k-1}^{(n)}\cup Y_{k,0}^{(n)}
\cup X_{k,0}^{(n)}\cup X_{\geq k+1}^{(n)}
$$
is a free basis.

Suppose therefore that
$$
\mathcal B_{k,i}
=
Y\cup Y_{\leq k-1}^{(n)}\cup Y_{k,i}^{(n)}
\cup X_{k,i}^{(n)}\cup X_{\geq k+1}^{(n)}
$$
is a free basis. We show that the same holds with $i+1$ in place of $i$.
It is enough to replace the elements of
$X_{k,i}^{(n)}\setminus X_{k,i+1}^{(n)}$ by the corresponding elements of
$Y_{k,i+1}^{(n)}\setminus Y_{k,i}^{(n)}$.

Let $x_{f,b}^{(n)}\in X_{k,i}^{(n)}\setminus X_{k,i+1}^{(n)}$. Then
$q_{n-1}(f)=k$ and $h(b)=i$. Write $f=w\circ\varepsilon$ in reduced form,
where $\varepsilon$ is the last letter of $f$. Thus
$\varepsilon\in X^{(n-1)}\cup\overline{X^{(n-1)}}$ and $w$ is obtained from
$f$ by deleting its last letter. Hence $q_{n-1}(w)=k-1$. Clearly,
$$
x_{f,b}^{(n)}=\lambda_f(b)-\lambda_w(b).
$$

By Lemma~\ref{helpfullemma}, we may write
$b=y_{j(b),t(b)}+w'$, where $w'$ is an additive word in elements $y_{s,a}$
with $\ell(s)<h(b)=i$. Therefore
$$
\lambda_f(b)=y_{f\circ j(b),t(b)}+\lambda_f(w')
\quad\text{and}\quad
\lambda_w(b)=y_{w\circ j(b),t(b)}+\lambda_w(w').
$$
Using $x_{f,b}^{(n)}=\lambda_f(b)-\lambda_w(b)$, we obtain
$$
y_{f\circ j(b),t(b)}
=
x_{f,b}^{(n)}
+
y_{w\circ j(b),t(b)}
+
\lambda_w(w')
-
\lambda_f(w').
$$

We now check that the correction term belongs to the subgroup generated by the
old basis elements. Since $b\in A_{n-1}$, the definition of $j$ gives
$j(b)\in\Fg_\circ(A_{n-2})$, and $t(b)\in X$. As $q_{n-1}(w)=k-1$, it follows
that $y_{w\circ j(b),t(b)}$ belongs to $Y_{\leq k-1}^{(n)}$.

Moreover, $w'$ is an additive word in elements $y_{s,a}$ with $\ell(s)<i$. For
such elements we have
$\lambda_w(y_{s,a})=y_{w\circ s,a}\in Y_{\leq k-1}^{(n)}$, because
$q_{n-1}(w)=k-1$, and
$\lambda_f(y_{s,a})=y_{f\circ s,a}\in Y_{k,i}^{(n)}$, because
$q_{n-1}(f)=k$ and $\ell(s)<i$. Hence
$y_{w\circ j(b),t(b)}+\lambda_w(w')-\lambda_f(w')$ belongs to the subgroup
generated by
$Y\cup Y_{\leq k-1}^{(n)}\cup Y_{k,i}^{(n)}$.

Thus each element of $Y_{k,i+1}^{(n)}\setminus Y_{k,i}^{(n)}$ is obtained from
the corresponding generator in $X_{k,i}^{(n)}\setminus X_{k,i+1}^{(n)}$ by
adding an additive word in the already existing basis elements. Therefore, by
Nielsen transformations,
$$
Y\cup Y_{\leq k-1}^{(n)}\cup Y_{k,i+1}^{(n)}
\cup X_{k,i+1}^{(n)}\cup X_{\geq k+1}^{(n)}
$$
is again a free basis of $\Fg_+(A_n)$.

This proves, by induction on $i$, that
$$
Y\cup Y_{\leq k-1}^{(n)}\cup Y_{k,i}^{(n)}
\cup X_{k,i}^{(n)}\cup X_{\geq k+1}^{(n)}
$$
is a free basis for every $i$. Since every $b\in A_{n-1}$ has finite
$h(b)$, every generator $x_{f,b}^{(n)}$ with $q_{n-1}(f)=k$ is replaced at a
finite stage, namely at $i=h(b)$. Hence, after passing through all $i$, we get
that $$
Y\cup Y_{\leq k}^{(n)}\cup X_{\geq k+1}^{(n)}
$$ is a free basis. This completes the induction on $k$.

Therefore $Y\cup Y_{\leq k}^{(n)}\cup X_{\geq k+1}^{(n)}$
is a free basis of $\Fg_+(A_n)$ for every $k\geq 1$. Since every reduced word
$f\in \Fg_\circ(A_{n-1})$ contains only finitely many letters from
$X^{(n-1)}\cup\overline{X^{(n-1)}}$, every generator $x_{f,b}^{(n)}$ belongs
to $X_{\geq k+1}^{(n)}$ only for finitely many $k$. Hence, after implementing all the
Nielsen transformations, all elements of $X^{(n)}$ have been replaced, and we
obtain that
$$
Y\cup \bigcup_{k\geq 1}Y_{\leq k}^{(n)}
$$
is a free basis of $\Fg_+(A_n)$.

It remains only to identify this set. By the induction hypothesis,
$$
Y=\{y_{f,a}\mid f\in \Fg_\circ(A_{n-2}),\ a\in X\}.
$$
On the other hand, let $g\in \Fg_\circ(A_{n-1})\setminus \Fg_\circ(A_{n-2})$.
Then the reduced form of $g$ contains at least one letter from
$X^{(n-1)}\cup\overline{X^{(n-1)}}$. Write
$g=f\circ b$, where $f$ is the initial segment of the reduced form of $g$
ending with the last such letter, and $b\in \Fg_\circ(A_{n-2})$ is the
remaining final segment. Then $f\in F_n$, and therefore $y_{g,a}=y_{f\circ b,a}$
belongs to some $Y_{i,k}^{(n)}$. Conversely, every element of every $Y_{i,k}^{(n)}$ is
of the form $y_{g,a}$ with $g\in \Fg_\circ(A_{n-1})$ and $a\in X$.

Thus
$$
Y\cup \bigcup_{k\geq 1}Y_k^{(n)}
=
\{y_{f,a}=\lambda_f(a)\mid f\in \Fg_\circ(A_{n-1}),\ a\in X\}.
$$
Hence this set is a free basis of $\Fg_+(A_n)$, as required.
\end{proof}

\begin{lem}\label{analogforgeneraln}
Let $X$ be a finite non-empty set, let $B=\FSB_{\mathcal{RN}_n,X}$, and let
$J$ be an ideal of $B$ containing $B^{(n)}$. Put $I=J/B^{(n)}$, and let
$$
\theta\colon \Fg_\circ(A_{n-1})\to (B/B^{(n)},\circ)
$$
be the group homomorphism induced by the natural map
$A_{n-1}\to B/B^{(n)}$. Set $H=\theta^{-1}(I)$. Then the additive group of $B/(J*B)$ is free. More precisely, if $R$ is a set
of representatives for the left cosets of $H$ in $\Fg_\circ(A_{n-1})$, then a
free basis is given by
$\{\, y_{g,t}+J*B \mid g\in R,\ t\in X\,\}.$
\end{lem}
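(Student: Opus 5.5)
The plan is to combine the free basis of \cref{freebasisy} with the observation that, modulo $B^{(n)}$, the $\lambda$-action on this basis is just left translation of indices. By \cref{freebasisy}, $(B,+)$ is free on $\{y_{g,t}=\lambda_g(t)\mid g\in\Fg_\circ(A_{n-1}),\ t\in X\}$, where $\lambda_g$ is computed in $\Fg_\circ(A_n)$. Since every letter of $X^{(n)}\cup\overline{X^{(n)}}$ acts trivially, $\lambda$ on $\Fg_\circ(A_n)$ factors through the retraction $\Fg_\circ(A_n)\to\Fg_\circ(A_{n-1})$ that kills $X^{(n)}$. Consequently, for every $u\in\Fg_\circ(A_n)$ with image $\hat u\in\Fg_\circ(A_{n-1})$, we get $\lambda_u(y_{g,t})=y_{\hat u\circ g,t}$.

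Next I would identify the set of indices $\hat u$ coming from elements $u\in J$, written multiplicatively via $\varphi$. By \cref{quotient-by-last-term}, $B/B^{(n)}\cong\FSB_{\mathcal{RN}_{n-1},X}$, and its multiplicative group is $\Fg_\circ(A_{n-1})$. Under this identification, $u\mapsto\hat u$ is the quotient map, which is compatible with $\theta$. Hence $\{\hat u\mid u\in J\}=\theta^{-1}(I)=H$, using that $J\supseteq B^{(n)}$ is the full preimage of $I$. Moreover $H$ is normal in $\Fg_\circ(A_{n-1})$, because $I$ is an ideal and in particular multiplicatively normal. So left and right cosets of $H$ coincide, and $\{y_{hg,t}\}$ with $h\in H$ runs over one coset class.

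Then I compute $J*B$. By definition it is the additive subgroup generated by the elements $\lambda_u(b)-b$ with $u\in J$ and $b\in B$. It is an ideal of $B$, being the star product of an ideal with $B$, so in particular it is additively normal. Using $\lambda_u(v+w)-(v+w)=\lambda_u(v)+(\lambda_u(w)-w)-v$ and induction on word length, every $\lambda_u(b)-b$ lies in the normal subgroup generated by the elements $\lambda_u(y)-y$ with $y$ a basis element. Therefore $J*B$ equals the normal closure in $(B,+)$ of the set
\[
\{y_{h\circ g,t}-y_{g,t}\mid h\in H,\ g\in\Fg_\circ(A_{n-1}),\ t\in X\}.
\]

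Finally, the quotient of a free group by the normal closure of relations that identify basis elements is the free group on the resulting equivalence classes of generators. Here two basis elements $y_{g,t}$ and $y_{g',t'}$ are identified exactly when $t=t'$ and $Hg=Hg'$. Thus $\{y_{g,t}+J*B\mid g\in R,\ t\in X\}$ is a free basis of $(B/(J*B),+)$.

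The step I expect to be the main obstacle is the second one: checking carefully that the indices realised by $J$ are exactly $H$. This needs the compatibility of the multiplicative groups of $B$ and $B/B^{(n)}$ with the projection $\Fg_\circ(A_n)\to\Fg_\circ(A_{n-1})$, which is the map $\delta$ from the proof of \cref{right nilp class n}, together with the fact that $\lambda$ factors through it. Everything else is standard free-group bookkeeping.
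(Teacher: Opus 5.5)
Your proposal is correct and follows essentially the same route as the paper: you identify $J*B$ with the additive normal closure of the elements $y_{h\circ g,t}-y_{g,t}$ ($h\in H$) by showing $\lambda_a=\lambda_{\hat a}$ for $a\in J$, and then read off the free basis of the quotient. The differences are cosmetic: you justify $\lambda_a=\lambda_{\hat a}$ by factoring $\lambda$ through the retraction $\delta$ that kills $X^{(n)}$, where the paper uses $B^{(n)}\subseteq\ker(\lambda)$, and you state explicitly that $H$ is normal (since $I$ is multiplicatively normal), which settles the left/right coset convention.
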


\begin{proof}
Let
$\psi\colon \Fg_\circ(A_{n-1})\to (B,\circ)$
be the group homomorphism induced by the inclusion $A_{n-1}\subseteq B$.
Thus $\theta$ is the composition of $\psi$ with the quotient map given by
$(B,\circ)\to (B/B^{(n)},\circ)$.

Recall that $J\ast B$ is an ideal of $B$. Let $K$ be the normal subgroup of
$(B,+)$ generated by the elements
$y_{h\circ f,t}-y_{f,t}$ with $h\in H,\ f\in \Fg_\circ(A_{n-1}),\ t\in X$. We claim that $K=J*B$.

First we show that $K\subseteq J*B$. Let $h\in H$. Since
$H=\theta^{-1}(I)$ and $I=J/B^{(n)}$, there exists $a\in J$ such that the
image of $a$ in $B/B^{(n)}$ is equal to $\theta(h)$. Equivalently, the
elements $a$ and $\psi(h)$ have the same image in $B/B^{(n)}$. Hence they
differ by an element of $B^{(n)}$ in the multiplicative group of $B$. Since $B$ is right nilpotent of class at most $n$, we have $B^{(n)}\ast B=0$.
Thus $B^{(n)}\subseteq\ker(\lambda)$, and therefore
$\lambda_a=\lambda_{\psi(h)}$. Hence, for every
$f\in \Fg_\circ(A_{n-1})$ and $t\in X$,
$$
a*y_{f,t}
=
\lambda_a(y_{f,t})-y_{f,t}
=
\lambda_{\psi(h)}(y_{f,t})-y_{f,t}
=
y_{h\circ f,t}-y_{f,t}.
$$
Since $a\in J$, the left-hand side belongs to $J*B$. Hence every generator of
$K$ belongs to $J*B$, and so $K\subseteq J*B$.

Conversely, we prove that $J*B\subseteq K$. Let $a\in J$. Since $\theta$ is an epimorphism, we may choose
$h\in \Fg_\circ(A_{n-1})$ such that $\theta(h)$ is the image of $a$ in
$B/B^{(n)}$. Then $h\in H$. As above, $a$ and $\psi(h)$ differ by an element
of $B^{(n)}$, and hence $\lambda_a=\lambda_{\psi(h)}$. Therefore, for every
$f\in \Fg_\circ(A_{n-1})$ and $t\in X$,
$$
a*y_{f,t}
=
\lambda_a(y_{f,t})-y_{f,t}
=
y_{h\circ f,t}-y_{f,t}
\in K.
$$

By Lemma~\ref{freebasisy}, the additive group of $B$ is freely generated by
the elements $y_{f,t}$, with $f\in \Fg_\circ(A_{n-1})$ and $t\in X$. Thus
every element $b\in B$ is an additive word in these generators and their
additive inverses. Moreover, for every $u,v\in B$ one has
$$
a*(u+v)=a*u+u+a*v-u\quad\textnormal{and}
\quad
a*(-u)=-u-a*u+u.
$$
Since $K$ is normal in $(B,+)$, these formulas imply by induction on the
length of an additive word that $a*b\in K$ for every $b\in B$. Hence
$J*B\subseteq K$, and therefore $J\ast B=K$.

It remains to find a free basis for the quotient $B/J\ast B$. Since $(B,+)$ is the free group on
$$
\{\,y_{f,t}\mid f\in\Fg_\circ(A_{n-1}),\ t\in X\,\},
$$
and since $J*B=K$ is the normal subgroup generated by the elements
$$
y_{h\circ f,t}-y_{f,t},
\qquad
h\in H,\ f\in \Fg_\circ(A_{n-1}),\ t\in X,
$$ the quotient $(B/J\ast B,+)$ is obtained by identifying generators whose first
indices lie in the same left coset of $H$. Therefore, if $R$ is a set of
representatives for the left cosets of $H$ in $\Fg_\circ(A_{n-1})$, a free
basis is given by
$$
\{\, y_{g,t}+J*B \mid g\in R,\ t\in X\,\}.
$$
This proves the result.
\end{proof}

\begin{thm}\label{classnresfinhopfia}
     Let $X$ be a non-empty set. The skew brace
    $\FSB_{\mathcal{RN}_n,X}$ is residually finite. If, moreover, $X$ is finite, then  $\FSB_{\mathcal{RN}_n,X}$ is Hopfian.
\end{thm}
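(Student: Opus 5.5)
We argue by induction on $n$, using Corollary~\ref{quotient-by-last-term} and Lemma~\ref{analogforgeneraln}; the Hopfian part then follows from Theorem~\ref{hopfian}, since $\FSB_{\mathcal{RN}_n,X}$ is generated by $X$. First we reduce to finite $X$. If $0\neq b\in B=\FSB_{\mathcal{RN}_n,X}$, then $b$ is a skew brace word in finitely many elements of $X$, say those in a finite subset $X_0$. The morphism $B\to\FSB_{\mathcal{RN}_n,X_0}$ fixing $X_0$ and sending $X\setminus X_0$ to $0$ is a retraction onto the sub-skew brace generated by $X_0$, so $b$ survives it. It therefore suffices to separate $b$ from $0$ in a finite quotient of $\FSB_{\mathcal{RN}_n,X_0}$. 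From now on $X$ is finite.

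The base case $n=1$ is the trivial skew brace on the free group $\Fg_+(X)$. Normal subgroups of finite index are ideals, and free groups are residually finite, so this case holds.

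For $n\geq 2$, write $B=\FSB_{\mathcal{RN}_n,X}$ and let $0\neq b\in B$. If $b\notin B^{(n)}$, then by induction $B/B^{(n)}\cong\FSB_{\mathcal{RN}_{n-1},X}$ has a finite quotient separating $b$, and we are done. So assume $b\in B^{(n)}$. By the induction hypothesis, residual finiteness of $B/B^{(n)}$, we choose a finite-index ideal $I=J/B^{(n)}$ of $B/B^{(n)}$. Then $H=\theta^{-1}(I)$ has finite index in $\Fg_\circ(A_{n-1})$, so the coset representative set $R$ in Lemma~\ref{analogforgeneraln} is finite. That lemma shows $(B/(J*B),+)$ is free of finite rank $|R||X|$.

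Next we argue that $B/(J*B)$ still separates $b$ once $J$ is chosen small enough. Since $b$ is a finite additive word in the basis $y_{f,t}$ (Proposition~\ref{freebasisy}), its image in $B/(J*B)$ is obtained by replacing each index $f$ by its $H$-coset representative. The image is nonzero as soon as the finitely many indices $f$ occurring in $b$ lie in distinct left cosets of $H$, or at least as soon as the reduced word does not cancel. We achieve this by choosing $I$ so that the finitely many elements $\theta(f_i^{-1}f_j)$, for $i\neq j$, avoid $I$. This is possible by residual finiteness of $B/B^{(n)}$ applied to those finitely many nontrivial elements: take the intersection of the corresponding finite-index ideals. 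Here one must check that $f_i^{-1}f_j\neq 1$ implies $\theta(f_i^{-1}f_j)\neq 1$. This holds because $\theta$ is injective, being the restriction of the isomorphism $\Fg_\circ(A_{n-1})\cong (B/B^{(n)},\circ)$ from the construction of $\FSB_{\mathcal{RN}_{n-1},X}$.

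Now $C=B/(J*B)$ is a skew brace with finitely generated free additive group, and $J/(J*B)$ has finite index in $C$ and acts trivially by $\lambda$. The final step, which I expect to be the main obstacle, is to pass from $C$ to a genuinely finite quotient. The plan is to take $m$ such that $b$ survives in $(C,+)/N$, where $N$ is a characteristic subgroup of finite index in the free group $(C,+)$, for example the subgroup generated by $m$-th powers and iterated commutators of bounded weight, i.e.\ a verbal subgroup chosen via residual finiteness of free groups. Characteristic subgroups are $\lambda$-invariant and normal in $(C,+)$. It remains to check multiplicative normality, which should follow from $\lambda$-invariance together with $\bar a\circ c\circ a=\bar a\circ(c+\lambda_c(a)-\lambda_c(\ldots))$-type identities showing $a\circ N\circ\bar a\subseteq N$. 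If that direct argument fails, the fallback is to note that the image of $C\to\Hol(C/N,+)$ is finite, since $(C,\circ)$ acts through the finite group $(C,\circ)/(J/(J*B))$ on the finite set $C/N$. The kernel of the combined map is then a finite-index ideal not containing $b$.
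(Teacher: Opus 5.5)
Up to the last step your argument is the paper's. You reduce to finite $X$ and induct through Corollary~\ref{quotient-by-last-term}. You separate the finitely many indices $f$ occurring in $b$ by a finite-index ideal $J\supseteq B^{(n)}$; your observation that $\theta$ is injective is exactly what the paper uses tacitly there. Lemma~\ref{analogforgeneraln} then shows that $b$ survives in $C=B/(J*B)$, whose additive group is free of finite rank.

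\textbf{The gap is the passage from $C$ to a finite quotient.} A characteristic subgroup $N$ of $(C,+)$ is only a strong left ideal. For such $N$ one computes that $a\circ m\circ\bar a\in N$ if and only if $m*\bar a\in N$. So multiplicative normality is equivalent to $N*C\subseteq N$, and this fails in general. Here is an example. Take $n=2$, $X=\{x_0\}$ and $J/B^{(2)}=2\Z$ in $B/B^{(2)}\cong\Triv(\Z)$. By Theorem~\ref{descriptionregular}, $(C,+)$ is free on the images of $x_0,x_1$, and $\lambda_w$ is the swap of $x_0,x_1$ raised to the power $\varepsilon(w)$. Let $N$ be the kernel of $(C,+)\to(\Z/3\Z)^2$. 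Then $b=x_1-x_0=x^{(2)}_{x_0,x_0}$ survives modulo $N$ and $3x_0\in N$, but $(3x_0)*x_0=x_1-x_0\notin N$. So your $N$ need not be an ideal, and the ``direct argument'' cannot work.

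Your fallback only asserts that $\ker(C\to\Hol(C/N,+))=\{c\in N\mid c*C\subseteq N\}$ is an ideal. This kernel is a normal subgroup of $(C,\circ)$. However, nothing you wrote shows it is closed under $+$ or under the maps $\lambda_a$. Since $k_1+k_2=k_1\circ\lambda_{\bar k_1}(k_2)$, closure under $+$ already requires control of $\lambda_{\lambda_{\bar k_1}(k_2)}$ modulo $N$, and nothing provides that.

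\textbf{The missing idea is to force the subgroup into $\ker\lambda$}, using that $J/(J*B)\subseteq\ker\lambda$ has finite index. A normal, $\lambda$-invariant subgroup $Q$ of $(C,+)$ contained in $\ker\lambda$ is automatically an ideal, because
\[
a\circ q\circ\bar a=a+\lambda_a(q+\lambda_q(\bar a))=a+\lambda_a(q)+\lambda_a(\bar a)=a+\lambda_a(q)-a\in Q .
\]
So replace your $N$ by $N\cap J/(J*B)$. This subgroup is normal, $\lambda$-invariant, of finite index, lies in $\ker\lambda$, and does not contain $\bar b$.

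Equivalently, as the paper does: take a finite-index $M$ with $\bar b\notin M$, intersect it with the finite-index subgroup $\ker\lambda\supseteq J/(J*B)$, and pass to a characteristic finite-index subgroup of $M\cap\ker\lambda$. With this change your proof is complete, and the Hopfian part follows from Theorem~\ref{hopfian} as you say.
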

\begin{proof} 
Let $0\neq b \in B=\FSB_{\mathcal{RN}_n,X}$. To prove that  $B$ is residually finite we need to show that $B$ contains an ideal $I$ such that $b\not\in I$ and $B/I$ is finite.

    First, we remark that it is sufficient to consider only finite sets $X$. Indeed, 
    as~$(B,+)$ is a free group, there is a finite subset~$X_1$ of $X$ 
such that the reduced form of $b$ is contained in the sub-skew brace generated by~$X_1$. Extending the canonical embedding \hbox{$\iota\colon X_1 \rightarrow B_1=\FSB_{\mathcal{RN}_n,X_1}$} to a mapping \hbox{$\iota \colon X\rightarrow B$} such that \hbox{$\iota(X\setminus X_1)=0$,} the universal property guarantees a skew brace epimorphism $\varphi\colon B\rightarrow B_1$ such that $\varphi(b) \neq 0$. Hence, if the result is shown for finite $X$, it follows for arbitrary $X$.

    So, from now on we assume  $X$ is finite. 
   First, if $b \not\in B^{(n)}$, then $b$ is non-zero in the skew brace $B/B^{(n)}$, which is residually finite by induction (see  \cref{quotient-by-last-term}).

Hence, assume $b\in B^{(n)}$. Since, by Proposition~\ref{freebasisy}, the additive
group of $B$ is freely generated by the elements $y_{f,a}$, with
$f\in \Fg_\circ(A_{n-1})$ and $a\in X$, we may write
$$
b=\varepsilon_1 y_{f_1,a_1}+\ldots+\varepsilon_l y_{f_l,a_l},
$$
where $f_i\in \Fg_\circ(A_{n-1})$, $a_i\in X$, and
$\varepsilon_i\in\{1,-1\}$. 
Denote by 
$T$ the finite subset of $B/B^{(n)}$
defined by
$$
T=
\left\{
\theta(f_j\circ f_i^{-1})
\;\middle|\;
a_i=a_j,\ f_i\neq f_j
\right\},
$$ where $$
\theta\colon \Fg_\circ(A_{n-1})\to (B/B^{(n)},\circ)
$$
is the group homomorphism induced by the natural map
$A_{n-1}\to B/B^{(n)}$. 
Since $B/B^{(n)}\simeq \FSB_{\mathcal{RN}_{n-1},X}$ is
residually finite by induction, we can choose an ideal $J$ of $B$ containing
$B^{(n)}$ such that $J/B^{(n)}$ has finite index in $B/B^{(n)}$ and
$$
T\cap J/B^{(n)}=\emptyset.
$$
Consider $J\ast B$, which is an ideal of $B$. 
Then, the image of $b$ in $B/(J \ast B)$ is non-zero. Indeed, by \cref{analogforgeneraln}, the additive group of $B/(J*B)$ is freely
generated by
$$
\{\, y_{g,t}+J*B \mid g\in R,\ t\in X\,\},
$$
where $R$ is a set of representatives for the left cosets of
$H=\theta^{-1}(J/B^{(n)})$ in $\Fg_\circ(A_{n-1})$. The condition
$T\cap J/B^{(n)}=\emptyset$ says precisely that, whenever $a_i=a_j$ and
$f_i\neq f_j$, the elements $f_i$ and $f_j$ do not belong to the same left
coset of $H$. Hence the distinct basis elements appearing in the reduced
expression of $b$ remain distinct in $B/(J*B)$. Therefore the image of $b$ is
non-zero.

Note that in $B/(J\ast B)$ one has that
$$
J + B^{(n)} \subseteq \ker(\lambda).
$$
As the former is of finite index, the latter is of finite index. 
Let $\overline B=B/(J*B)$, and denote by $\overline b$ the image of $b$ in
$\overline B$. Since $\overline b\neq 0$ and $(\overline B,+)$ is a free group
of finite rank by \cref{analogforgeneraln}, there exists a subgroup $M$ of
finite index in $(\overline B,+)$ such that $\overline b\notin M$. Put
$$
Q=\ker(\lambda)\cap M.
$$
Since both $\ker(\lambda)$ and $M$ have finite index in $(\overline B,+)$,
also $Q$ has finite index in~$(\overline B,+)$. Moreover,
$Q\subseteq\ker(\lambda)$ and $\overline b\notin Q$. As $(\overline B,+)$ is finitely generated, the subgroup $Q$ contains a characteristic subgroup $Q_2$ of finite index in $(\overline B,+)$. In particular, $Q_2\subseteq Q\subseteq\ker(\lambda)$ and $\overline b\notin Q_2$. Since $Q_2$ is characteristic in the additive group, it is normal in
$(\overline B,+)$ and invariant under all automorphisms of $(\overline B,+)$;
in particular, it is invariant under all maps $\lambda_x$, with
$x\in\overline B$. As $Q_2\subseteq\ker(\lambda)$, it follows that $Q_2$ is an
ideal of $\overline B$. Moreover, $Q_2$ has finite index and does not contain
$\overline b$. Hence $\overline B/Q_2$ is a finite skew brace in which the
image of $b$ is non-zero. This proves that $B$ is residually finite.

    \smallskip
    
    The second part of the result follows at once from \cref{hopfian}.
\end{proof}

\begin{thm}\label{notcohop}
The free skew brace $\FSB_{\mathcal{RN}_{n}, X}$ is not co-Hopfian.
\end{thm}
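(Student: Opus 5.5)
To show that $B=\FSB_{\mathcal{RN}_{n},X}$ is not co-Hopfian, I will exhibit an injective skew brace endomorphism $\psi\colon B\to B$ that is not surjective. By the universal property (Theorem~\ref{FreeNilpn}), any map $X\to B$ extends uniquely to an endomorphism, so it suffices to choose images of the generators cleverly. The natural first choice is a ``doubling'' map: fix $x_0\in X$ and send $x\mapsto x$ for $x\neq x_0$ and $x_0\mapsto x_0+x_0$ (additive square). An alternative, if doubling is awkward to control, is the multiplicative square $x_0\mapsto x_0\circ x_0$. For $n=1$ the skew brace is trivial with additive group $\Fg_+(X)$, and then $\psi$ is the endomorphism of the free group doubling one generator. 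This map is injective and not surjective, which settles the base case and guides the general case.

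Non-surjectivity should come from passing to a suitable finite or abelian quotient. Consider the quotient of $B$ onto the trivial skew brace $\Triv(\mathbb Z/2)$ obtained by sending $x_0\mapsto 1$ and the other generators to $0$. I expect that its composition with $\psi$ kills $x_0$, since $x_0+x_0\mapsto 0$, and hence kills all of $X$. So the composite is the zero map, whereas the original quotient is onto. It follows that the image of $\psi$ lies in the kernel of a nonzero morphism, and therefore $\psi$ is not surjective. This step is easy.

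Injectivity is the main obstacle. My approach is induction on $n$ using Corollary~\ref{quotient-by-last-term}. The map $\psi$ maps $B^{(n)}$ into $B^{(n)}$ and induces the analogous doubling map on $B/B^{(n)}\cong\FSB_{\mathcal{RN}_{n-1},X}$, which is injective by the inductive hypothesis. It therefore remains to show that $\psi$ is injective on $B^{(n)}$, whose additive group is free on $\{a+b-a\mid a\in\Fg_+(A_{n-1}),\,b\in X^{(n)}\}$. Here I would use the basis $y_{f,a}$ of Proposition~\ref{freebasisy}. Since $\psi(\lambda_f(x))=\lambda_{\psi(f)}(\psi(x))$, I need to show that the images of distinct basis elements stay free. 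One approach is a ``leading term'' argument in the free group $\Fg_+(A_n)$ with respect to the grading by $h$ and $k$. I would compute $\psi$ on the generators $x^{(i)}_{f,b}$ and show that the top-level letter of $\psi(x^{(i)}_{f,b})$ determines $(f,b)$ injectively (for example via $f\mapsto\psi(f)$, which is injective on $\Fg_\circ(A_{i-1})$ by induction), so that Nielsen-type reduction applies.

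If that bookkeeping becomes too heavy, my fallback is to take for $\psi$ a map whose injectivity is structurally evident. For instance, when $X$ is infinite, an injection $X\to X$ that is not onto gives an embedding with a retraction, so injectivity is immediate. For finite $X$, I would instead realize $B$ inside $\FSB_{\mathcal{RN}_n,X'}$ with $|X'|>|X|$ and use residual finiteness (Theorem~\ref{classnresfinhopfia}) to separate the kernel of $\psi$, showing that any element killed by $\psi$ is killed in every finite quotient. Injectivity of $\psi$ on $B^{(n)}$ is the step where the real work lies.
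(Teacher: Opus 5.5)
Your frame is the right one: a doubling endomorphism, induction on $n$ through $B/B^{(n)}\cong\FSB_{\mathcal{RN}_{n-1},X}$, and the basis $\{y_{f,a}\}$ of Proposition~\ref{freebasisy}. Your non-surjectivity argument via $\Triv(\mathbb{Z}/2)$ is also correct. It is essentially the paper's, whose even-length sub-skew brace $T$ is the kernel of the parity morphism onto $\Triv(\mathbb{Z}/2)$.

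However, the injectivity step, which you yourself flag as where the real work lies, is never proved. The ``leading term'' analysis of $\psi(x^{(i)}_{f,b})$ is only announced, not carried out. It also mixes two different bases: $\{a+b-a\}$ is a basis of $B^{(n)}$, whereas $\{y_{f,a}\}$ is a basis of all of $(B,+)$. The fallbacks do not close the gap. The injection $X\to X$ trick needs $X$ infinite. For finite $X$, residual finiteness gives no handle on $\ker\psi$: showing that an element killed by $\psi$ dies in every finite quotient is just the injectivity problem restated. Hopficity, which is what Theorem~\ref{classnresfinhopfia} provides, concerns surjective endomorphisms, not the non-surjective one you need.

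The missing idea is a one-line computation in the $y$-basis. View $f\in\Fg_\circ(A_{n-1})$ as an element of $B$; then $\psi(y_{f,a})=\lambda_{\psi(f)}(\psi(a))$.

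\begin{itemize}
\item Since $B^{(n)}\subseteq\ker(\lambda)$ and $\Fg_\circ(A_{n-1})$ is a multiplicative transversal of $B^{(n)}$ in $B$, you get $\lambda_{\psi(f)}=\lambda_{g(f)}$. Here $g(f)$ is the unique element of $\Fg_\circ(A_{n-1})$ with the same image as $\psi(f)$ in $B/B^{(n)}$.
\item The assignment $f\mapsto g(f)$ is the induced doubling endomorphism of $\FSB_{\mathcal{RN}_{n-1},X}$, so it is injective by the inductive hypothesis.
\item Hence $\psi(y_{f,a})=y_{g(f),a}$ for $a\neq x_0$, and $\psi(y_{f,x_0})=y_{g(f),x_0}+y_{g(f),x_0}$.
\item So $\psi$ sends the free basis injectively onto a set of pairwise distinct basis elements, some of them doubled. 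Such a set freely generates a free subgroup, because a nontrivial reduced word in these elements remains reduced when rewritten in the basis.
\end{itemize}

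Therefore $\psi$ is injective on all of $(B,+)$ at once, and there is no need to restrict to $B^{(n)}$. This is exactly the paper's proof, which doubles every generator rather than a single one.
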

\begin{proof} 
Put $B=\FSB_{\mathcal{RN}_n,X}$. 
Let $T$ be the subset of $B$ consisting of the elements having even additive length in the  basis
\[
\left\lbrace y_{f,a}=\lambda_f(a) \mid f \in F_{\circ}(A_{n-1}),\, a \in X \right\rbrace
\]
of $(B,+)$ (see \cref{freebasisy}). Clearly, this is a proper subset of $B$.
We claim that $T$ is a sub-skew brace. To prove this we only have to show that it is a multiplicative subgroup.
This follows  from the fact that every $\lambda_a$, with $a\in B$,  maps an element of even length to an element of even length, and thus for  $w,s\in T$ we get that that $ w \circ s = w+\lambda_w(s)\in T$.
Similarly, because $\overline{w}=-\lambda_{\overline{w}}(w)$ we also get that $\overline{w}\in T$. Hence the claim follows.
As a sub-skew brace  of $B$, the skew brace $T$ is  right nilpotent of class at most $n$. Hence, there exists a skew brace morphism $\varphi\colon B \rightarrow T$ with $\varphi(x)=2x$ for $x \in X$.
We claim that $\varphi$  is injective. We prove this by induction on $n$. It is clear for $n=1$. Hence, we assume that the proposed map is injective for the free right nilpotent skew brace of class at most $n-1$. 
Note that $\varphi(y_{f,a})=2y_{\varphi(f),a}$, with $f \in F_{\circ}(A_{n-1})$ and $ a \in X$. Hence, as $\varphi$ is injective on $B/B^{(n)}=\FSB_{\mathcal{RN}_{n-1},X}$ (see~\cref{quotient-by-last-term}), the indices $\varphi(f)$ are distinct for all $f \in \Fg_{\circ}(A_{n-1})$. By \cref{freebasisy}, the $y_{f,a}$ form a free basis of $(B,+)$, which shows that $\varphi$ is injective. As $T$ is a proper sub-skew brace of $B$, we thus have shown that~$B$ is not co-Hopfian.
\end{proof}

\subsection{One generated skew braces}

Here we focus on a very specific case of the construction in \cref{sec: right nilp}: the free right nilpotent skew brace of class at most $2$ on a set with only one element $X=\{x_0\}$. \cref{skew braces and regular subgroups} allows us to compute the corresponding regular subgroup and to analyse the induced $\lambda$-action, from which we obtain a description of the additive commutator subgroup and the non-Hopficity of such objects. Another relevant consequence is the non-injectivity of the free solution $\FSol_{\mathcal{RN}_2,\{x_0\}}$.

\medskip

Let $X=\{x_0\}$ be a set with one element and thus let $\FSB_{\mathcal{RN}_{2},\{x_0\}}$ be the free right nilpotent skew brace of class $2$ on $X$.
Specializing the construction in~\cref{sec: right nilp} to this case,
we obtain $\FSB_{\mathcal{RN}_{2},\{x_0\}}=(\Fg_{+}(A_2),+,\circ)$,
where $A_2=\{x_0\}\cup\left\{x^{(2)}_{x_0^i,x_0}\mid i\in\Z\setminus\{0\}\right\}$
and the $\lambda$-action is given by $\lambda_{x^{(2)}_{x_0^i,x_0}}=\id$ for every $i\in\Z\setminus\{0\}$ and
\begin{align*}
    \lambda_{x_0}:\qquad x_0 &\longmapsto x^{(2)}_{x_0,x_0}+x_0\\
    x^{(2)}_{x_0^i,x_0}&\longmapsto \begin{cases}
    -x^{(2)}_{x_0,x_0} &\text{ if }i=-1\\
    x^{(2)}_{x_0^{i+1},x_0} &\text{ if }i\neq-1,0
\end{cases}
\end{align*}

\begin{lem}\label{isomorpshimsmth}
Let $Z=\{x_i\mid i\in \Z\}$ be a set indexed by $\Z$. 
The additive group~$\Fg_+(A_2)$ of $\FSB_{\mathcal{RN}_{2},\{x_0\}}$ is isomorphic to $\Fg_+(Z)$.
\end{lem}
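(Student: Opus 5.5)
Both groups are free on countably infinite sets, so I would write down an explicit bijection between free bases; that bijection is the isomorphism. By construction, $\Fg_+(A_2)$ is the free group on
\[
A_2=\{x_0\}\cup\{x^{(2)}_{x_0^i,x_0}\mid i\in\Z\setminus\{0\}\}.
\]
This set is countably infinite, and so is $Z$. Since free groups on equipotent sets are isomorphic, the statement already follows from this count alone.

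To make the isomorphism useful for the later description of the regular subgroup, I would instead use the basis from Proposition~\ref{freebasisy}. For $n=2$ and $X=\{x_0\}$ it says that
\[
\{\,y_{f,x_0}=\lambda_f(x_0)\mid f\in \Fg_\circ(A_1)\,\}
\]
is a free basis of $(\Fg_+(A_2),+)$. Here $\Fg_\circ(A_1)=\Fg_\circ(\{x_0\})$ is infinite cyclic, so its elements are exactly $x_0^i$ with $i\in\Z$, and the basis is $\{\lambda_{x_0}^i(x_0)\mid i\in\Z\}$. I would then define the isomorphism $\Fg_+(Z)\to\Fg_+(A_2)$ on generators by $x_i\mapsto \lambda_{x_0}^i(x_0)$.

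To be safe I would check the basis property by hand, since it is easy here. Using the displayed formulas for $\lambda_{x_0}$, induction gives
\[
\lambda_{x_0}^i(x_0)=x^{(2)}_{x_0^i,x_0}+\lambda_{x_0}^{i-1}(x_0)\quad\text{for } i\geq 1,
\]
and an analogous formula for negative $i$, obtained from $\lambda_{x_0}^{-1}=\lambda_{\overline{x_0}}$ together with the case $i=-1$. So each new basis element differs from the previous one by exactly one new generator $x^{(2)}_{x_0^{i},x_0}$. A chain of Nielsen transformations therefore turns $A_2$ into $\{\lambda_{x_0}^i(x_0)\mid i\in\Z\}$. The only mildly delicate step is getting the sign and indexing right for negative $i$, where the third case in the definition of $\lambda$ enters; I expect this to be the main obstacle.
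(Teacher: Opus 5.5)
Your proposal is correct and is essentially the paper's proof. Your map $x_i\mapsto\lambda_{x_0}^i(x_0)$ is exactly the paper's $\varphi$: unfolding your recursion gives the telescoping sums $x^{(2)}_{x_0^i,x_0}+\dots+x^{(2)}_{x_0,x_0}+x_0$, and the analogous sums with negative exponents. The paper makes your Nielsen argument explicit by writing down the inverse $\psi$, namely $x^{(2)}_{x_0^i,x_0}\mapsto x_i-x_{i-1}$ for $i>0$ and $x_i-x_{i+1}$ for $i<0$. Your cardinality remark already settles the bare statement on its own.

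One small correction: the third case of the definition of $\lambda$ never occurs when computing $\lambda_{\overline{x_0}}^{\,j}(x_0)$. Applying $\lambda_{\overline{x_0}}$ to $x^{(2)}_{x_0^{-m},x_0}$ would need $x_0^{-m}=x_0$ with $m\geq 1$ for that case to apply, which is impossible, so the negative indices are as easy as the positive ones.
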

\begin{proof}
Define two homomorphisms
\[
\varphi : \Fg_+(Z) \to \Fg_+(A_2),
\quad
\psi : \Fg_+(A_2) \to \Fg_+(Z),
\]
and show that they are inverse to each other. Since $\Fg_+(Z)$ is the free group on $Z$, it suffices to define $\varphi$ on generators:
\[
\varphi(x_i) = \begin{cases}
x^{(2)}_{x_0^i,x_0} + x^{(2)}_{x_0^{i-1},x_0} + \ldots + x^{(2)}_{x_0,x_0} + x_0 & \text{if } i>0,\\
x_0&\text{if } i=0,\\
x^{(2)}_{x_0^i,x_0} + x^{(2)}_{x_0^{i+1},x_0} + \ldots + x^{(2)}_{x_0^{-1},x_0} + x_0 & \text{if } i<0.
\end{cases}
\in \Fg_+(A_2),
\]
This defines a homomorphism
$\varphi : \Fg_+(Z)\to \F_+(A_2)$.

We define $\psi$ on the generators $A_2$ of $\Fg_+(A_2)$ by $\psi(x_0) = x_0$
and for $i\neq 0$,
\[
\psi\left(x^{(2)}_{x_0^i,x_0}\right) =
\begin{cases}
x_i - x_{i-1} & \text{if } i>0,\\[6pt]
x_i - x_{i+1} & \text{if } i<0.
\end{cases}
\]
This extends uniquely to a homomorphism $\psi : \Fg_+(A_2)\to \Fg_+(Z)$.

For $i>0$, we compute:
\begin{align*}
\psi(\varphi(x_i))
&= \psi\big(x^{(2)}_{x_0^i,x_0} + x^{(2)}_{x_0^{i-1},x_0} + \ldots + x^{(2)}_{x_0,x_0} + x_0\big)\\
&= (x_i - x_{i-1}) + (x_{i-1} - x_{i-2}) + \ldots + (x_1 - x_0) + x_0\\
&= x_i
\end{align*}
and
\begin{align*}
\varphi\left(\psi\left(x^{(2)}_{x_0^i,x_0}\right)\right)
&= \varphi(x_i - x_{i-1})= \varphi(x_i) - \varphi(x_{i-1})= x^{(2)}_{x_0^i,x_0}.
\end{align*}

Similarly, for $i<0$,
\begin{align*}
\psi(\varphi(x_i))
&= \psi\big(x^{(2)}_{x_0^i,x_0} + x^{(2)}_{x_0^{i+1},x_0} + \ldots + x^{(2)}_{x_0^{-1},x_0} + x_0\big)\\
&= (x_i - x_{i+1}) + (x_{i+1} - x_{i+2}) + \ldots + (x_{-1} - x_0) + x_0\\
&= x_i
\end{align*}
and
\begin{align*}
\varphi\left(\psi\left(x^{(2)}_{x_0^i,x_0}\right)\right)
&= \varphi(x_i - x_{i+1})= \varphi(x_i) - \varphi(x_{i+1})= x^{(2)}_{x_0^i,x_0}.
\end{align*}

Hence indeed
$
\psi \circ \varphi = \mathrm{id}_{\Fg_+(Z)}\quad \text{and} \quad \varphi \circ \psi = \mathrm{id}_{\Fg_+(A_2)}
$.
Therefore, the maps $\varphi$ and $\psi$
are mutually inverse isomorphisms, hence $\Fg_+(A_2) \simeq \Fg_+(Z)$.
\end{proof}

\begin{thm}\label{descriptionregular}
Let $Z=\{x_i\mid i\in \Z\}$ be a set indexed by $\Z$. The free skew brace $\FSB_{\mathcal{RN}_{2},\{x_0\}}$ corresponds with the regular subgroup
    \[
 \left\lbrace \left(w,\theta^{\varepsilon(w)}\right) \mid w \in \Fg_+(Z) \right\rbrace\leq \Fg_+(Z) \rtimes \textup{Aut}(\Fg_+(Z)),
\] 
where $\varepsilon: \Fg_+(Z) \rightarrow \mathbb{Z}$ 
is the group epimorphism defined by $\varepsilon(x_i)=1$ and $\theta$ is  the group automorphism of $\Fg_+(Z)$ defined by $\theta(x_i)=x_{i+1}$. 
\end{thm}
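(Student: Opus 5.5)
The plan is to transport the standard regular subgroup $\{(a,\lambda_a)\mid a\in B\}$ of $\Hol(B,+)$, where $B=\FSB_{\mathcal{RN}_{2},\{x_0\}}$, along the isomorphism of \cref{isomorpshimsmth}. That subgroup is given by \cref{skew braces and regular subgroups}. Write $\psi\colon \Fg_+(A_2)\to\Fg_+(Z)$ and $\varphi=\psi^{-1}$ for the maps constructed there. A group isomorphism $\psi$ induces an isomorphism of holomorphs
\[
\Hol(\Fg_+(A_2))\to\Hol(\Fg_+(Z)),\qquad (a,\alpha)\mapsto(\psi(a),\psi\alpha\psi^{-1}).
\]
This isomorphism sends regular subgroups to regular subgroups. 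Hence it suffices to show that, for every $w\in\Fg_+(Z)$, we have $\psi\,\lambda_{\varphi(w)}\,\varphi=\theta^{\varepsilon(w)}$.

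First I will determine $\lambda_a$ for an arbitrary $a\in B$. Since $B$ is right nilpotent of class at most $2$, \cref{lambda action for rclass 2} gives $\lambda_{a+b}=\lambda_a\lambda_b$. So $a\mapsto\lambda_a$ is a homomorphism from $(B,+)$ to $\Aut(B,+)$, and it is determined by its values on the free basis $A_2$. By construction $\lambda_{x^{(2)}_{x_0^i,x_0}}=\id$, because these generators lie in $X^{(2)}$ and $n=2$. It follows that $\lambda_a=\lambda_{x_0}^{\varepsilon'(a)}$, where $\varepsilon'\colon\Fg_+(A_2)\to\Z$ is the homomorphism sending $x_0\mapsto1$ and $x^{(2)}_{x_0^i,x_0}\mapsto0$. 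Evaluating on generators shows $\varepsilon\circ\psi=\varepsilon'$: indeed $\psi(x_0)=x_0$ has $\varepsilon$-value $1$, and each $\psi(x^{(2)}_{x_0^i,x_0})=x_i-x_{i\mp1}$ has $\varepsilon$-value $0$. Therefore $\lambda_{\varphi(w)}=\lambda_{x_0}^{\varepsilon(w)}$.

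It then remains to check $\psi\lambda_{x_0}\varphi=\theta$, that is, $\lambda_{x_0}(\varphi(x_i))=\varphi(x_{i+1})$ for all $i\in\Z$. This is a generator computation using the explicit formula for $\lambda_{x_0}$.

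For $i\ge0$, each $x^{(2)}_{x_0^j,x_0}$ with $j\ge 1$ is shifted to $x^{(2)}_{x_0^{j+1},x_0}$, and $x_0$ goes to $x^{(2)}_{x_0,x_0}+x_0$. This produces exactly the telescoping sum $\varphi(x_{i+1})$.

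For $i<0$, the terms with $j\le-2$ shift to $j+1$, and the term with $j=-1$ goes to $-x^{(2)}_{x_0,x_0}$. This cancels against the $x^{(2)}_{x_0,x_0}$ produced by $\lambda_{x_0}(x_0)$. The result is $x^{(2)}_{x_0^{i+1},x_0}+\dots+x^{(2)}_{x_0^{-1},x_0}+x_0=\varphi(x_{i+1})$. For $i=-1$ this is just $x_0=\varphi(x_0)$.

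This sign and cancellation bookkeeping at the boundary $i=-1$ is the only delicate point. Once it is settled, the conjugated regular subgroup is exactly $\{(w,\theta^{\varepsilon(w)})\mid w\in\Fg_+(Z)\}$, as claimed in the statement.
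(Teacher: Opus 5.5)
Your proof is correct and follows the same route as the paper. It transports the regular subgroup along $\psi$, checks the shift identity $\lambda_{x_0}(\varphi(x_i))=\varphi(x_{i+1})$ including the cancellation at $i=-1$, and extends via $\lambda_{a+b}=\lambda_a\lambda_b$ from \cref{lambda action for rclass 2}. The only small difference is how you get $\lambda_{\varphi(w)}=\lambda_{x_0}^{\varepsilon(w)}$: you evaluate the homomorphism $a\mapsto\lambda_a$ on the basis $A_2$ (where it is trivial on $X^{(2)}$) and use $\varepsilon\circ\psi=\varepsilon'$, whereas the paper derives $\lambda_{\varphi(x_i)}=\lambda_{\varphi(x_0)}$ from the identity $\lambda_{\lambda_a(b)}=\lambda_b$ applied to $\varphi(x_i)=\lambda_{x_0}^{\,i}(x_0)$; both are valid.
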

\begin{proof}
To compute the regular subgroup corresponding to $\FSB_{\mathcal{RN}_{2},\{x_0\}}$,
we need to check how the $\lambda$-action translates on $\Fg_+(Z)$. Let \[
\varphi : \Fg_+(Z) \to \Fg_+(A_2)
\quad\textnormal{and}\quad
\psi : \Fg_+(A_2) \to \Fg_+(Z),
\] be the mappings from the proof of \cref{isomorpshimsmth}. We claim that    $\lambda_{\varphi(w)}=\varphi\circ\theta^{\varepsilon(w)}\circ \psi ,$
for every $w\in \Fg_+(Z)$.
We first start with $w=x_0$.
By definition, $\lambda_{x_0}(x_0)=x^{(2)}_{x_0,x_0}+x_0$ and
\[
\lambda_{x_0}\left(x^{(2)}_{x_0^i,x_0}\right)
=\begin{cases}
    -x^{(2)}_{x_0,x_0} &\text{ if }i=-1\\
    x^{(2)}_{x_0^{i+1},x_0} &\text{ if }i\neq-1,0
\end{cases}
\]
Hence 
\[
\lambda_{\varphi(x_0)}(x_0)
=x^{(2)}_{x_0,x_0}+x_0
=\varphi(x_1)\\
=\varphi(\theta(x_0))
=\varphi(\theta(\psi(x_0)).
\]
and
\begin{align*}
\lambda_{\varphi(x_0)}\left(x^{(2)}_{x_0^{-1},x_0}\right)
&=-x^{(2)}_{x_0,x_0}
=\varphi(x_0-x_1)
=\varphi(\theta(x_{-1}-x_0))\\
&=\varphi\left(\theta\left(\psi\left(x^{(2)}_{x_0^{-1},x_0}\right)\right)\right).
\end{align*}
Moreover, if $i>0$
\begin{align*}
\lambda_{\varphi(x_0)}\left(x^{(2)}_{x_0^i,x_0}\right)
&=x^{(2)}_{x_0^{i+1},x_0}\\
&=\varphi(x_{i+1}-x_i)\\
&=\varphi(\theta(x_i-x_{i-1}))\\
&=\varphi\left(\theta\left(\psi\left(x^{(2)}_{x_0^i,x_0}\right)\right)\right)
\end{align*}
while if $i<-1$
\begin{align*}
\lambda_{\varphi(x_0)}\left(x^{(2)}_{x_0^i,x_0}\right)
&=x^{(2)}_{x_0^{i+1},x_0}\\
&=\varphi(x_{i+1}-x_{i+2})\\
&=\varphi(\theta(x_i-x_{i+1}))\\
&=\varphi\left(\theta\left(\psi\left(x^{(2)}_{x_0^i,x_0}\right)\right)\right).
\end{align*}
So $\lambda_{\varphi(x_0)}=\varphi\circ\theta^{\varepsilon(x_0)}\circ \psi$, and the claim is proved.

\smallskip

Now, we claim that 
 $\varphi(x_{i+1})=\lambda_{x_0}(\varphi(x_i))$
for every $i\in \Z$. 
To prove this, we  distinguish the cases $i\geq 0$ and $i<0$
and proceed by induction.
We have already noted that $\varphi(x_1)=\lambda_{x_0}(x_0)$.
Assuming the claim holds  for $i-1\geq 0$, i.e.
$\varphi(x_i)=\lambda_{x_0}(\varphi(x_{i-1}))$, then
\begin{align*}
   \varphi(x_{i+1})
&=x^{(2)}_{x_0^{i+1},x_0}+\varphi(x_i)\\
&=x^{(2)}_{x_0^{i+1},x_0}+\lambda_{x_0}(\varphi(x_{i-1}))\\
&=\lambda_{x_0}\left(x^{(2)}_{x_0^i,x_0}+\varphi(x_{i-1})\right)\\
&=\lambda_{x_0}(\varphi(x_i)).
\end{align*}

On the other hand, if $i=-1$, then
\[
\varphi(x_0)=x_0
=-x^{(2)}_{x_0,x_0}+x^{(2)}_{x_0,x_0}+x_0
=\lambda_{x_0}\left(x^{(2)}_{x_0^{-1},x_0}+x_0\right)
=\lambda_{x_0}(\varphi(x_{-1})).
\]
Assuming the claim holds  for $i<0$, i.e.
$\varphi(x_{i+1})=\lambda_{x_0}(\varphi(x_i))$, then
\begin{align*}
   \varphi(x_i)
&=x^{(2)}_{x_0^i,x_0}+\varphi(x_{i+1})\\
&=x^{(2)}_{x_0^i,x_0}+\lambda_{x_0}(\varphi(x_i))\\
&=\lambda_{x_0}\left(x^{(2)}_{x_0^{i-1},x_0}+\varphi(x_i)\right)\\
&=\lambda_{x_0}(\varphi(x_{i-1})).
\end{align*}
Therefore, 
\(\varphi(x_{i+1})=\lambda_{x_0}(\varphi(x_i))\) for every $i\in \Z$, and the claim is proved.

\smallskip

Since $\FSB_{\mathcal{RN}_{2},\{x_0\}}$ is a skew brace of right nilpotency class $2$,
by \cref{lambda action for rclass 2},
\[
\lambda_{\varphi(x_i)}=\lambda_{\varphi(x_0)},
\]
for every $i\in \Z$. So
\[
\lambda_{\varphi(x_i)}=\varphi\circ\theta^{\varepsilon(x_i)}\circ \psi
\]
for every $i\in \Z$.
Finally, combining this result with the second part of \cref{lambda action for rclass 2}, we have that
\[
\lambda_{\varphi(w)}=\varphi\circ\theta^{\varepsilon(w)}\circ \psi,
\]
for every $w\in \Fg_+(Z)$.
Hence, the translation of the $\lambda$-action on $\Fg_+(Z)$ of a word $w\in \Fg_+(Z)$ is given by
\[
\psi\circ\lambda_{\varphi(w)}\circ\varphi
=\theta^{\varepsilon(w)}
\]
and it follows that the regular subgroup corresponding to $\FSB_{\mathcal{RN}_{2},\{x_0\}}$ is
\[
\left\{\left(w, \theta^{\varepsilon(w)}\right)\mid w\in \Fg_+(Z)\right\},
\]
as desired.
\end{proof}

\begin{cor} 
The free solution right nilpotent class $2$  on $\{ x_0\}$ 
is \[
\widetilde{\{x_0\}}_{\mathcal{RN}_{2}}=\{a+x_i-a\mid i\in\Z, a\in G\left(\FSol_{\mathcal{E},X},r_{\FSol_{\mathcal{E},X}}\right)\}
\]
\end{cor}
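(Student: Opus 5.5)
The plan is to specialize \cref{free injective in structure brace} (equivalently \cref{freerightnilpotentsoldescriptinj}) to $n=2$ and $X=\{x_0\}$. That result identifies the free injective solution with
\[
\{a+\lambda_b(x_0)-a\mid a,b\in G\},
\]
where $G=G(\FSol_{\mathcal{RNS}_2,\{x_0\}},r)$. By \cref{StrGrpFSBn}, $G\cong \FSB_{\mathcal{RN}_2,\{x_0\}}$ via an isomorphism fixing $x_0$. So it suffices to show that, inside $\FSB_{\mathcal{RN}_2,\{x_0\}}$, the set $\{\lambda_b(x_0)\mid b\}$ coincides with $\{\varphi(x_i)\mid i\in\Z\}$. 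Here we identify $x_i$ with $\varphi(x_i)$ through the isomorphism of \cref{isomorpshimsmth}, which is how the notation $x_i$ in the statement should be read.

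The key step uses \cref{descriptionregular}. For $b=\varphi(w)$ we have $\lambda_b=\varphi\circ\theta^{\varepsilon(w)}\circ\psi$. Applying this to $x_0=\varphi(x_0)$ gives
\[
\lambda_b(x_0)=\varphi(\theta^{\varepsilon(w)}(x_0))=\varphi(x_{\varepsilon(w)}).
\]
Hence $\{\lambda_b(x_0)\}\subseteq\{\varphi(x_i)\mid i\in\Z\}$. Since $\varepsilon$ is surjective (for instance $\varepsilon(x_0^{\,i})=i$, or $\varepsilon(i x_0)=i$ additively), every $\varphi(x_i)$ is attained. An alternative route avoids \cref{descriptionregular}: the identity $\varphi(x_{i+1})=\lambda_{x_0}(\varphi(x_i))$ from its proof gives $\varphi(x_i)=\lambda_{x_0}^{\,i}(x_0)$. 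Combined with \cref{lambda action for rclass 2} (so that $\lambda_{a+b}=\lambda_a\lambda_b$ and $\lambda_{\varphi(x_i)}=\lambda_{x_0}$), this shows that every $\lambda_b$ is a power of $\lambda_{x_0}$, which again yields the equality of sets.

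Substituting into the description from \cref{free injective in structure brace} then gives exactly $\{a+x_i-a\mid i\in\Z,\ a\in G\}$, and this set is the free injective solution in the category of right nilpotent solutions of class at most $2$ on $\{x_0\}$. The main obstacle is bookkeeping rather than mathematics. One must transport the identifications carefully along the isomorphisms $G\cong\FSB_{\mathcal{RN}_2,\{x_0\}}\cong(\Fg_+(Z),\ldots)$, check that they respect $x_0$ and additive conjugation, and interpret ``free solution'' in the statement as the free \emph{injective} solution given by \cref{injectivisation} (the $\mathcal{E}$, $X$ in the displayed set standing for $\mathcal{RNS}_2$ and $\{x_0\}$).
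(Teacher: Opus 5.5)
Your proposal is correct and takes the same route as the paper, whose proof simply cites Theorem~\ref{descriptionregular} together with Corollary~\ref{freerightnilpotentsoldescriptinj}. Your computation $\lambda_{\varphi(w)}(x_0)=\varphi(x_{\varepsilon(w)})$, combined with the surjectivity of $\varepsilon$, is exactly the step the paper leaves implicit. Your reading of ``free solution'' as the free injective solution, with $\mathcal{E},X$ standing for $\mathcal{RNS}_2,\{x_0\}$, is also the correct interpretation of the statement.
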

\begin{proof}
This follows at once from Theorem~\ref{descriptionregular} 
and Corollary~\ref{freerightnilpotentsoldescriptinj}
\end{proof}

As an application of \cref{descriptionregular}, we obtain the following result.

\begin{thm}
The solution $\FSol_{\mathcal{RN}_2,\{ x_0\}}$ is not injective.
\end{thm}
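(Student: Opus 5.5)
The plan is to exhibit two elements of $\FSol_{\mathcal{RN}_2,\{x_0\}}$, obtained by applying the operations $\lambda,\rho,\lambda^{-1},\rho^{-1}$ to $x_0$, that have the same image in $G(\FSol_{\mathcal{RN}_2,\{x_0\}})$ but are different in the free solution. By \cref{StrGrpFSBn} this structure skew brace is $\FSB_{\mathcal{RN}_2,\{x_0\}}$, and \cref{descriptionregular} lets us compute in it explicitly. Identify $(B,+)$ with $\Fg_+(Z)$. Then $x_0$ corresponds to $(x_0,\theta)$, $\lambda_w=\theta^{\varepsilon(w)}$, and $u\circ v=u+\theta^{\varepsilon(u)}(v)$. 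To separate two terms in the free solution, it suffices to find a single solution $(Y,s)\in\mathcal{RNS}_2$ and an element $y\in Y$ on which the two terms take different values. This works because, by freeness, there is a morphism $\FSol_{\mathcal{RN}_2,\{x_0\}}\to Y$ with $x_0\mapsto y$, and morphisms commute with the term operations.

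\emph{Step 1: two terms with equal image.} In $B$ the formula $\rho_y(x)=\overline{\lambda_x(y)}\circ x\circ y$ holds. With $x=y=x_0$ we have $x_0\circ x_0=x_0+x_1$ and $\lambda_{x_0}(x_0)=x_1$. Solving $x_1+\theta(w)=x_0+x_1$ gives
\[
\rho_{x_0}(x_0)=-x_0+x_{-1}+x_0 .
\]
On the other hand, $\lambda_{x_0}^{-1}(x_0)=x_{-1}$. Moreover, the bijection $\sigma_y(x)=\lambda_y(\rho_{\lambda_x^{-1}(y)}(x))$ is the additive conjugation $x\mapsto -y+x+y$ in the structure skew brace. (The paper prints this as $-y+x-y$, which I read as a typo; the proof there only uses that $\sigma_y$ is conjugation.) Hence the term
\[
t_2=\lambda_{x_0}\bigl(\rho_{\lambda^{-1}_{x'}(x_0)}(x')\bigr),\qquad x'=\lambda_{x_0}^{-1}(x_0),
\]
also evaluates to $-x_0+x_{-1}+x_0$ in $B$. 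So $t_1=\rho_{x_0}(x_0)$ and $t_2$ are equal under $g_{\FSol}$. If this particular pair turns out to be forced equal by the defining identities of solutions, I would use the regular-subgroup description to list further coincidences. Examples are different words that all reduce to the same conjugate $a+x_i-a$, as in the corollary describing $\widetilde{\{x_0\}}_{\mathcal{RN}_2}$, or words involving $\rho^{-1}$. From these I would pick a pair whose equality is not an obvious formal consequence of the Yang--Baxter identities.

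\emph{Step 2: a separating solution.} I will look for a small finite solution $(Y,s)$ satisfying $\lambda_{\lambda_x(y)}=\lambda_y$, which by the remark after the definition characterises $\mathcal{RNS}_2$. It must be non-injective, or at least one in which $\rho$ is not determined by the skew-brace formula. I would try first permutation-type (Lyubashenko) solutions $r(x,y)=(f(y),g(x))$ with $fg=gf$. These satisfy the class-$2$ condition trivially, since all $\lambda_x=f$. In such a solution $t_1=g(x_0)$ and $t_2=f(g(f^{-1}(x_0)))=g(x_0)$, so they agree, and this family cannot separate $t_1$ from $t_2$. Next I would take solutions where $\rho_y$ genuinely depends on $y$, for instance derived-type or twisted constructions on $\Z/m$ with $\lambda_x$ constant and $\rho_y(x)=g_y(x)$. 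I would impose the Yang--Baxter identities directly and then evaluate $t_1,t_2$ at some element. A small computer-style case check over sets of size $\le 4$ would settle which pair of terms is separated.

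\emph{Main obstacle.} The delicate point is Step 2: choosing the pair of terms together with a concrete solution in $\mathcal{RNS}_2$ that separates them. Computations in $B$ are routine thanks to \cref{descriptionregular}. Finding a witness solution requires ensuring both the Yang--Baxter Equation and the class-$2$ identity while keeping $\rho$ flexible enough to distinguish terms that the structure skew brace identifies. I expect the witness to be a non-injective finite solution whose retraction-type data coincide while the actual $\rho$-values differ.
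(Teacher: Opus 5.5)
\paragraph{Verdict: genuine gap.} Your strategy is right, and it is the paper's: find two terms in $x_0$ that coincide in $G(\FSol_{\mathcal{RN}_2,\{x_0\}})\cong\FSB_{\mathcal{RN}_2,\{x_0\}}$, then separate them by a morphism into some solution of $\mathcal{RNS}_2$. But the proposal never produces a witness, and the pair you chose cannot work.

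\paragraph{Why your pair fails.} The terms $t_1$ and $t_2$ coincide in \emph{every} solution of $\mathcal{RNS}_2$, hence already in the free solution. By the definition of $\sigma$, $t_2=\sigma_{x_0}(\lambda_{x_0}^{-1}(x_0))$. In any solution $\rho_b(a)=\lambda^{-1}_{\lambda_a(b)}\sigma_{\lambda_a(b)}(a)$, so $t_1=\lambda^{-1}_{\lambda_{x_0}(x_0)}\sigma_{\lambda_{x_0}(x_0)}(x_0)$. Two further facts close the gap between them. First, in $\mathcal{RNS}_2$ one has $\lambda_{\lambda_{x_0}(x_0)}=\lambda_{x_0}$. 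Second, in every solution $\lambda_a\sigma_y=\sigma_{\lambda_a(y)}\lambda_a$, i.e.\ the $\lambda_a$ are automorphisms of the derived rack; this is the middle component of the Yang--Baxter equation combined with $\lambda_a\lambda_b=\lambda_{\lambda_a(b)}\lambda_{\rho_b(a)}$. Write $x_0=\lambda_{x_0}(x')$ with $x'=\lambda_{x_0}^{-1}(x_0)$. Then $t_1=\lambda_{x_0}^{-1}\sigma_{\lambda_{x_0}(x_0)}\lambda_{x_0}(x')=\sigma_{x_0}(x')=t_2$. So no search over small solutions can separate them. Your fallback (``pick a pair whose equality is not a formal consequence'') only restates the problem. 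Step~2 is the whole proof, and it is missing.

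\paragraph{The missing idea: fixed points of $r_B$.} In any skew brace, if $\lambda_a(b)=a$ then $\rho_b(a)=\overline{a}\circ a\circ b=b$.
\begin{itemize}
\item In class $2$, $\lambda_{\lambda_{x_0}(x_0)}(x_0)=\lambda_{x_0}(x_0)$, so $\rho_{x_0}(\lambda_{x_0}(x_0))=x_0$ in $\FSB_{\mathcal{RN}_2,\{x_0\}}$. This is the paper's identity $\rho_{x_0}(x_1)=x_0$.
\item Likewise $\rho_{\lambda^{-1}_{x_0}(x_0)}(x_0)=\lambda^{-1}_{x_0}(x_0)$ holds in every skew brace.
\end{itemize}
Either pair is separated by the very family you discarded. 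On $Y=\{0,1\}$ take the permutation solution $r(a,b)=(\tau(b),a)$ with $\tau=(0\,1)$; it lies in $\mathcal{RNS}_2$ since all $\lambda_a=\tau$. The morphism with $x_0\mapsto 0$ sends $\lambda_{x_0}(x_0)\mapsto 1$ and $\rho_{x_0}(\lambda_{x_0}(x_0))\mapsto\rho_0(1)=1\neq 0$. So the obstruction was your choice of terms, not the class of test solutions.

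Your correction $\sigma_y(x)=-y+x+y$ of the paper's $-y+x-y$ is right.
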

\begin{proof} 
Set $S=\FSol_{\mathcal{RN}_2,\{x_0\}}$. Suppose, by contradiction, that \(S\) is injective. By~The\-o\-rem~\ref{StrGrpFSBn}, the
structure skew brace of \(S\) is the free skew brace $\FSB_{\mathcal{RN}_2,\{x_0\}}$. Since \(S\) is injective, we may identify \(S\) with its image inside its
structure skew brace.
Using right nilpotency class $2$,
    \begin{align*}
    \rho_b(a)
    &=\overline{\lambda_a(b)}\circ a-\overline{\lambda_a(b)}\\
    &=\overline{\lambda_a(b)}+\lambda_{\overline{\lambda_a(b)}}(a)-\overline{\lambda_a(b)}\\
    &=\overline{\lambda_a(b)}+\lambda_{\overline{b}}(a)-\overline{\lambda_a(b)},
    \end{align*}
    for every $a,b\in \FSB_{RN_2,\{x_0\}}$.

In particular, for $a=x_i$ and $b=x_j$, 
where $x_k=\lambda_{x_0}^k(x_0)$ for every $k\in \Z$,
we get
\[
\rho_{x_j}(x_i)
=
\overline{\lambda_{x_i}(x_j)}
+
\lambda_{\overline{x_j}}(x_i)
-
\overline{\lambda_{x_i}(x_j)}.
\]
Since $\lambda_{x_i}=\theta$, we have
$\lambda_{x_i}(x_j)=x_{j+1}$.
Moreover, $\overline{x_{j+1}}=-x_j$ and $\lambda_{\overline{x_j}}=\theta^{-1}$.

Thus $\lambda_{\overline{x_j}}(x_i)=x_{i-1}$
and hence
\[
\rho_{x_j}(x_i)
=
-x_j+x_{i-1}+x_j.
\]
In particular, taking $j=i-1$, we obtain
\[
\rho_{x_{i-1}}(x_i)
=
-x_{i-1}+x_{i-1}+x_{i-1}
=
x_{i-1}.
\]
So, for $i=1$, $\rho_{x_0}(x_1)=x_0$.

Now consider the solution $Y=\{0,1\}$ defined by
\[
r(a,b)=(\sigma(b),a),
\]
where $\sigma=(0\,1)$. Then $\lambda_a=\sigma$ and $\rho_b=\id$
for all $a,b\in Y$. Hence
\[
\lambda_{\lambda_y(x)}=\lambda_x
\]
for all $x,y\in Y$, and therefore $(Y,r)\in\mathcal{RNS}_2$.

Since $S$ is free on $x_0$ in $\mathcal{RNS}_2$, there exists a morphism of
solutions
\[
\varphi:S\to Y
\]
such that $\varphi(x_0)=0$.
Since $x_1=\lambda_{x_0}(x_0)$,
we obtain
\[
\varphi(x_1)
=
\lambda_{\varphi(x_0)}(\varphi(x_0))
=
\lambda_0(0)
=
\sigma(0)
=
1.
\]
On the other hand, applying $\varphi$ to the equality $\rho_{x_0}(x_1)=x_0$
gives
\[
\rho_{\varphi(x_0)}(\varphi(x_1))
=
\varphi(x_0),
\] so $\rho_0(1)=0$.
But $\rho_0=\id$, and hence $\rho_0(1)=1$, a contradiction. Therefore, $S$ cannot be injective.
\end{proof}

\begin{cor}
\label{solnotinjective}
For any non-empty set $X$, the solution 
 $\FSol_{\mathcal{RN}_2,X}$ is not injective.
\end{cor}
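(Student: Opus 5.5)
The plan is to reduce to the one-element case just proved, using a retraction onto a single generator. Fix $x_0\in X$ and write $S_X=\FSol_{\mathcal{RN}_2,X}$ and $S_0=\FSol_{\mathcal{RN}_2,\{x_0\}}$. The preceding theorem shows that $S_0$ is not injective. Its proof in fact exhibits two elements of $S_0$ with the same image in the structure skew brace that are separated by a morphism to the solution $Y=\{0,1\}$, $r(a,b)=(\sigma(b),a)$.

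These two elements are $\rho_{x_0}(\lambda_{x_0}(x_0))$ and $x_0$. Both are given by terms in the single variable $x_0$. In the structure skew brace $\FSB_{\mathcal{RN}_2,\{x_0\}}$ they are equal, by the computation $\rho_{x_0}(x_1)=x_0$. In $Y$, however, the morphism sending $x_0\mapsto 0$ maps them to $1$ and $0$ respectively, so they are distinct in $S_0$.

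To transfer this to $S_X$, consider the same two terms evaluated in $S_X$:
\[
u=\rho_{x_0}(\lambda_{x_0}(x_0)),\qquad v=x_0.
\]
Step one is to show that $u\neq v$ in $S_X$. Extend $x_0\mapsto 0$ arbitrarily to a map $X\to Y$, say $x\mapsto 0$ for all $x\in X$. Since $(Y,r)\in\mathcal{RNS}_2$, this map extends to a morphism $S_X\to Y$, which sends $u\mapsto\rho_0(1)=1$ and $v\mapsto 0$. Hence $u\neq v$.

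Step two is to show that $g_{S_X}(u)=g_{S_X}(v)$. By \cref{StrGrpFSBn}, $G(S_X)\cong\FSB_{\mathcal{RN}_2,X}$, and $g_{S_X}$ is a morphism of solutions. Therefore $g_{S_X}(u)=\rho_{x_0}(\lambda_{x_0}(x_0))$ is computed in the solution associated to $\FSB_{\mathcal{RN}_2,X}$. The task is then to verify that $\rho_{x_0}(\lambda_{x_0}(x_0))=x_0$ holds there.

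For this I would use the skew brace morphism $\FSB_{\mathcal{RN}_2,\{x_0\}}\to\FSB_{\mathcal{RN}_2,X}$ induced by $x_0\mapsto x_0$. Morphisms of skew braces are morphisms of the associated solutions, so they preserve $\lambda$ and $\rho$. The identity $\rho_{x_0}(\lambda_{x_0}(x_0))=x_0$, already established in the one-generator skew brace, therefore holds in the image.

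This is the only delicate point, and it is why I route the computation through the skew brace morphism rather than redoing the identity inside $\FSB_{\mathcal{RN}_2,X}$ directly. Alternatively, one could redo the short calculation: the formula $\rho_b(a)=\overline{\lambda_a(b)}+\lambda_{\overline b}(a)-\overline{\lambda_a(b)}$ together with \cref{lambda action for rclass 2} gives $\lambda_{\overline{x_0}}(\lambda_{x_0}(x_0))=x_0$, from which the identity follows immediately. Combining Steps one and two, $g_{S_X}$ is not injective.
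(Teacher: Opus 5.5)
Your proof is correct, but it takes a different route from the paper's.

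The paper does no computation in $S_X$. From $g\circ f=\id_{\{x_0\}}$ it gets a split injective morphism $\tilde f\colon S_0\to S_X$. It then uses, without spelling it out, that a solution admitting an injective morphism into an injective solution is itself injective; this follows from the naturality $g_{S_X}\circ\tilde f=G(\tilde f)\circ g_{S_0}$. Non-injectivity of $S_0$ therefore propagates to $S_X$ purely formally.

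You instead transport explicit witnesses. You separate $u=\rho_{x_0}(\lambda_{x_0}(x_0))$ and $v=x_0$ in $S_X$ via the constant map $X\to Y$. You identify them in $G(S_X)\cong\FSB_{\mathcal{RN}_2,X}$ via the brace morphism from the one-generator free brace. The retraction you announce in your first sentence plays no role in what you actually do.

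The paper's route is shorter and purely categorical. Yours is concrete, and it can be made independent of the one-generator theorem and of \cref{descriptionregular}. In any skew brace of right nilpotency class at most $2$, put $a=\lambda_y(y)$. Then $\lambda_a(y)=\lambda_y(y)=a$ by \cref{lambda action for rclass 2}, hence $\rho_y(a)=\overline{\lambda_a(y)}\circ a\circ y=\overline{a}\circ a\circ y=y$. Equivalently, in $G(S)$ for any $S\in\mathcal{RNS}_2$, the defining relation for the pair $(\lambda_y(y),y)$ reads $\lambda_y(y)\circ y=\lambda_y(y)\circ\rho_y(\lambda_y(y))$. Combined with your Step one, this proves the corollary, and the one-generator case, directly.

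One small point about your ``alternative'' calculation via the additive formula. It produces $\overline{x_1}+x_0-\overline{x_1}$ with $x_1=\lambda_{x_0}(x_0)$, and you still need $\overline{x_1}=-x_0$ to conclude. That identity holds, since $x_1\circ(-x_0)=x_1+\lambda_{x_0}(-x_0)=0$, but it is not ``immediate'' from what you wrote. The multiplicative form above avoids it.
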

\begin{proof}
Since there are maps $f:\{x_0\}\to X$ and $g:X\to\{x_0\}$
such that $g\circ f=\id_{\{x_0\}}$, 
the map $\tilde{f}:\FSol_{\mathcal{RN}_2,\{x_0\}}\to \FSol_{\mathcal{RN}_2,X}$ induced by $f$ is injective.
Therefore, if $\FSol_{\mathcal{RN}_2,X}$ were injective then 
also $\FSol_{\mathcal{RN}_2,\{x_0\}}$ would be injective.
Hence, by \cref{solnotinjective}, $\FSol_{\mathcal{RN}_2,X}$
cannot be injective.   
\end{proof}

\medskip

We end this section with a remark highlighting a structural difference between the free objects $\FSB_{\mathcal{RN}_2,\{ x_0\}}$ and $\FB_{\mathcal{RN}_{2},\{ x_0\}}$.

\begin{rem}\label{remarkrn22}
In general, the ideal generated by the additive commutator is not easy to describe explicitly.  However, in the particular case of one-generated right nilpotency
class $2$, the structure of this ideal can be
determined more precisely, which allows us to obtain a more explicit description of the corresponding free object.

 Set $B=\FSB_{\mathcal{RN}_{2},\{x_0\}}$. Then $[B,B]_+$ is an ideal of $B$ because it is a characteristic subgroup of $(B,+)$ which is included in $\ker(\lambda)$ (in fact, $\varepsilon(c)=0$ for every $c\in [B,B]_+$). Moreover, the free injective solution 
on $\{x_0\}$ in $\mathcal{RNS}_2$ is
\hbox{$\widetilde{X}=\{[x_i]\mid i\in \Z\}$} with
$ r_{B/C}([x_i],[x_j])=([x_{j+1}],[x_{i-1}])$.

 Finally, since  $\FSB_{\mathcal{RN}_{2},\{x_0\}}/\FSB_{\mathcal{RN}_{2},\{x_0\}}\ast \FSB_{\mathcal{RN}_{2},\{x_0\}}$ is a trivial brace on a cyclic group,
     \[
     [\FSB_{\mathcal{RN}_{2},\{x_0\}},\FSB_{\mathcal{RN}_{2},\{x_0\}}]_+\subseteq \FSB_{\mathcal{RN}_{2},\{x_0\}}\ast\FSB_{\mathcal{RN}_{2},\{x_0\}}.
     \]
     But, $(B/C,+)$ is free abelian of infinite rank, so the two ideals are not equal. This shows a difference between a free right nilpotent skew brace $\FSB_{\mathcal{RN}_2,\{ x_0\}}$ of class~$2$ and a free right nilpotent skew brace $\FB_{\mathcal{RN}_{2},\{ x_0\}}$ of abelian type and of class~$2$. The former has isomorphic additive and multiplicative groups (and these are free groups of infinite rank) and the latter has an additive free abelian group, while the multiplicative group is not abelian (because there exist skew braces of right nilpotency class $2$ whose additive group is infinite cyclic, while the multiplicative group is infinite dihedral).
\end{rem}

\section{Two-sided skew braces}\label{twosided}
This section is devoted to the most tractable class of skew braces, namely the category consisting of two-sided skew braces, denoted $\mathcal{T}$, with particular emphasis on two-sided skew braces of abelian type. That is, we require both the left and right skew distributivity \eqref{E:LeftSkewDistr} and \eqref{E:RightSkewDistr}, and often the commutativity of the operation $+$.  Recall that if $(B,+,\circ)$ is a two-sided skew brace of abelian type, then  $(B,+,\ast)$ is a (non-unital) \textit{radical ring} (and vice-versa); in particular, the operation $\ast$ is associative (which for a skew brace of abelian type is equivalent to two-sidedness, as shown in \cite{Lau}) and distributes over $+$. Such structures were thoroughly studied by ring- and skew brace-theorists;  for instance, see \cite{KN1,KN2, Sysak12, Smok18, TRAPPENIERS2023267}.

In this setting, the operation $*$ is compatible with the $\lambda$-actions in a particularly nice way.
\begin{lem}
\label{L:lamda_vs_*} 
In a two-sided skew brace of abelian type $(B,+,\circ)$, the following relations hold for all $a,b,c \in B$:
\[\lambda_a(b*c)= \lambda_a(b)*c.\]
\end{lem}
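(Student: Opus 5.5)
Since $(B,+,\ast)$ is a radical ring, the plan is to write $\lambda_a$ in terms of $+$ and $\ast$ and then use associativity and distributivity of $\ast$. By definition $a\ast x=\lambda_a(x)-x$, so
\[
\lambda_a(x)=a\ast x+x \qquad\text{for all } a,x\in B.
\]
Applying this with $x=b\ast c$ and with $x=b$, the claim $\lambda_a(b\ast c)=\lambda_a(b)\ast c$ becomes
\[
a\ast(b\ast c)+b\ast c=(a\ast b+b)\ast c .
\]

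The key steps are as follows. First I expand the right-hand side using right distributivity of $\ast$ over $+$, which holds because $(B,+,\ast)$ is a ring in the two-sided abelian-type case. This gives $(a\ast b)\ast c+b\ast c$. Then I use associativity of $\ast$ to get $(a\ast b)\ast c=a\ast(b\ast c)$. Since $+$ is abelian, this matches the left-hand side, and the identity follows.

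The only real point is the input that $\ast$ is associative and right distributive in a two-sided skew brace of abelian type. I will quote this from the recalled facts (see \cite{Brace} and \cite{Lau}) rather than reprove it.

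If a self-contained check of right distributivity is wanted, I would derive it from \eqref{E:RightSkewDistr} as follows:
\[
(b+c)\ast a=(b+c)\circ a-(b+c)-a=b\circ a-a+c\circ a-b-c-a,
\]
and since $+$ is abelian this rearranges to $(b\circ a-b-a)+(c\circ a-c-a)=b\ast a+c\ast a$.

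Associativity is the harder fact, and I would simply cite it rather than verify it here.
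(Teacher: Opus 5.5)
Your proof is correct and is exactly the paper's argument: write $\lambda_a(x)=a\ast x+x$, then use right distributivity and associativity of $\ast$ in the radical ring $(B,+,\ast)$ to get $(a\ast b+b)\ast c=(a\ast b)\ast c+b\ast c=a\ast(b\ast c)+b\ast c$. Your optional derivation of right distributivity from \eqref{E:RightSkewDistr} is also valid.
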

\begin{proof}
Using the properties of the ring $(B,+,*)$, one has
\begin{align*}
\lambda_a(b*c) &= a*(b*c)+b*c =(a*b)*c+b*c,\\
\lambda_a(b)*c &= (a*b+b)*c =(a*b)*c+b*c. \qedhere
\end{align*}
\end{proof}

Clearly, every skew brace with an abelian multiplicative group is two-sided, but there exist two-sided skew braces with a non-abelian multiplicative group.

\begin{exa}
\label{two-sided two-generated non-abelian}
    There exist two-generated two-sided skew braces of abelian type with a non-abelian multiplicative group.
    An example is the skew brace $B =(\Z_2\times\Z_2\times\Z_2, +, \circ)$ with the usual componentwise addition and  multiplication
    \[
    \begin{pmatrix}
        x_1\\y_1\\z_1
    \end{pmatrix}\circ \begin{pmatrix}
        x_2\\y_2\\z_2
    \end{pmatrix}=\begin{pmatrix}
        x_1+x_2+z_1y_2\\y_1+y_2\\z_1+z_2.
    \end{pmatrix}
    \]
    It is generated by the non-commuting elements $\begin{pmatrix} 0\\1\\0 \end{pmatrix}$ and $\begin{pmatrix} 0\\0\\1 \end{pmatrix}$.
    Note that $(B,+,\ast)$ is the ring with product   $\begin{pmatrix}
        x_1\\y_1\\z_1
    \end{pmatrix} \ast   \begin{pmatrix}
        x_2\\y_2\\z_2
    \end{pmatrix} =  \begin{pmatrix}
        z_1y_2\\0\\0
    \end{pmatrix}$. So $B$ is a nilpotent ring of nilpotency index $3$.

More generally, for any $n>1$, the skew brace $B_n=B\times \Triv\left(\Z_2^{n-2}\right)$ yields an example of a two-sided brace generated by $n$ elements and with a non-abelian multiplicative group.

\medskip

	Another interesting example of a two-generated two-sided skew brace of abelian type with a non-abelian multiplicative group is the ideal generated by $\{x,y\}$ in the ring $\Z_2\{x,y\}/\langle x,y\rangle^3$. Clearly, this ideal is the Jacobson radical of the ring, and it is nilpotent of index 3. In this skew brace, the $\circ$-inverse is given by the formula $\overline{a} = a+a^2$. Its additive group is~$\Z_2^6$, with as additive generators the $2$ monomials of degree $1$ and the $4$ monomials of degree $2$.
\end{exa}

\medskip

The abelianity requirement of the multiplicative group in fact is a very strong one:
it entails, for example, that the additive group is metabelian (see \cite{NASYBULLOV2019156}, Corollary~4.7). 
Our first result reduces the problem of verifying the commutativity of the multiplicative group of a two-sided skew brace to the generators, a fact that has some interesting consequences for a free two-sided skew brace.

\begin{pro}\label{two sided brace abelian generated is abelian }
Let $(B,+,\circ)$ be a two-sided skew brace admitting a generating subset $\emptyset\neq A\subseteq B$ whose elements pairwise $\circ$-commute. Then the group $(B,\circ)$ is abelian.
\end{pro}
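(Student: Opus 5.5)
The plan is to show that in a two-sided skew brace the multiplicative centraliser of any single element is a sub-skew brace. The statement then follows by a two-step centraliser argument. For $c\in B$ put
\[
C(c)=\{x\in B\mid x\circ c=c\circ x\}.
\]
This is a subgroup of $(B,\circ)$, so it is closed under $\circ$ and $\circ$-inverses and contains $0$. What remains is to check that $C(c)$ is an additive subgroup.

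For closure under addition I use both distributive laws. If $x,y\in C(c)$, then \eqref{E:LeftSkewDistr} and \eqref{E:RightSkewDistr} give
\[
c\circ(x+y)=c\circ x-c+c\circ y
\qquad\text{and}\qquad
(x+y)\circ c=x\circ c-c+y\circ c .
\]
The right-hand sides agree term by term, so $x+y\in C(c)$.

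For additive inverses, apply left distributivity to $c\circ(x+(-x))=c\circ 0=c$. This gives $c\circ(-x)=c-c\circ x+c$. Right distributivity applied to $(x+(-x))\circ c=c$ gives $(-x)\circ c=c-x\circ c+c$. Hence $-x\in C(c)$ whenever $x\in C(c)$. So $C(c)$ is a sub-skew brace of $B$; the term-by-term comparison is the only point where two-sidedness is needed.

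With this in hand, fix $a\in A$. Since the elements of $A$ pairwise $\circ$-commute, $A\subseteq C(a)$. Because $C(a)$ is a sub-skew brace and $A$ generates $B$, we get $C(a)=B$, i.e.\ every $a\in A$ is central in $(B,\circ)$.

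Then the centre $Z(B,\circ)=\bigcap_{c\in B}C(c)$ is an intersection of sub-skew braces, hence itself a sub-skew brace. It contains $A$, so it equals $\langle A\rangle=B$, and therefore $(B,\circ)$ is abelian.

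I do not expect a genuine obstacle. The only point needing care is the additive-inverse computation, where $c\circ 0=c=0\circ c$ must be combined with each distributive law separately.
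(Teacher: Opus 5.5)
Your proof is correct and is essentially the paper's argument. The paper builds the explicit filtration $W_0=\{0\}\cup A$, $W_{n+1}=-W_n\cup\overline{W_n}\cup(W_n+W_n)\cup(W_n\circ W_n)$ and shows, with exactly your four computations, that anything commuting with $W_n$ commutes with $W_{n+1}$; it then applies this twice at each level, which is your double-centraliser step. Phrasing it as ``$C(c)$ is a sub-skew brace'' simply avoids the filtration.
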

\begin{proof}
    Consider the following filtration of $B$. Put $W_0=\{0\}\cup A$ and, for all $n\geq 0$, 
    \[W_{n+1}=-W_n\cup \overline{W_n}\cup (W_n+W_n)\cup (W_n\circ W_n).
    \]
    Since $0\in W_n$ for all $n\in \N$, we have $W_n\subseteq W_{n+1}$. In addition, the fact that the set~$A$ is a generating set  for the skew brace $B$ can be translated as $\bigcup_{n\in\N}W_n = B$. Thus, we obtain an increasing filtration for $B$.

    We now prove, by induction on $n \in\N \cup \{0\}$, that all elements of $W_n$ $\circ$-commute.
    
    For $n=0$, the claim follows by hypothesis. We will next show that, if some $\alpha\in B$ $\circ$-commutes with every $\beta \in W_n$, then it also $\circ$-commutes with every $w\in W_{n+1}$.
    \begin{itemize}
    \item     If $w\in -W_n$,
    then $w=-\beta$ for some $\beta\in W_n$. Since $\alpha$ $\circ$-commutes with $\beta$,
    \[
    \alpha\circ(-\beta)=
    \alpha-\alpha\circ\beta+\alpha=
    \alpha-\beta\circ\alpha+\alpha=
    (-\beta) \circ \alpha.
    \]
    \item If $w\in \overline{W_n}$,
    then $w=\overline{\beta}$ for some $\beta\in W_n$.
    Since $\alpha$ $\circ$-commutes with $\beta$, it also $\circ$-commutes 
    with its $\circ$-inverse $\overline{\beta}$.
    \item If $w\in W_n+W_n$, then $w=\beta_1+\beta_2$
    for some $\beta_1,\beta_2\in W_n$. Since $\alpha$ \hbox{$\circ$-com}\-mutes with $\beta_1$ and $\beta_2$,
    \[
    \alpha\circ(\beta_1+\beta_2)=
    \alpha\circ\beta_1-\alpha+\alpha\circ\beta_2=
    \beta_1\circ\alpha-\alpha+\beta_2\circ\alpha=
    (\beta_1+\beta_2)\circ\alpha.
    \]
    \item Finally, if $w\in W_n\circ W_n$, then $w=\beta_1\circ\beta_2$
    for some $\beta_1,\beta_2\in W_n$.
    Since $\alpha$ $\circ$-commutes with $\beta_1$ and $\beta_2$,
    it also $\circ$-commutes with $\beta_1\circ\beta_2$.
    \end{itemize}

    Assume now the induction claim for $n=k$. That is, the elements of $W_k$ \hbox{$\circ$-commutes} with themselves. What we have already proved shows that every element of $W_k$  $\circ$-commutes with every element of $W_{k+1}$, so a further application of the same fact yields that the elements of $W_{k+1}$ $\circ$-commute with themselves. This terminates the inductive proof and proves the statement.
\end{proof}

\begin{cor}\label{two-sided and abelianity}
Every one-generated two-sided skew brace has an abelian multiplicative group, while  the free two-sided skew brace of abelian type  on a set $X$ with more than one element, i.e. $\FB_{\mathcal{T},X}$, has a non-abelian multiplicative group. 
\end{cor}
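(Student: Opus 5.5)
The corollary has two independent parts, and I would handle each with a separate, short argument.

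\emph{One-generated case.} Let $B$ be a two-sided skew brace generated by a single element $x$. Take $A=\{x\}$ in Proposition~\ref{two sided brace abelian generated is abelian }. Its elements pairwise $\circ$-commute for the trivial reason that there is only one of them, and $x\circ x=x\circ x$. The proposition then gives at once that $(B,\circ)$ is abelian. There is nothing further to check, except to note that a generating set must be non-empty, which holds here.

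\emph{Free two-sided brace on $|X|\geq 2$.} The plan is to use the universal property of $\FB_{\mathcal{T},X}$ to map onto a concrete two-sided skew brace of abelian type whose multiplicative group is non-abelian. Example~\ref{two-sided two-generated non-abelian} supplies such a brace, namely $B=(\Z_2^3,+,\circ)$ with two generators $u=(0,1,0)^{T}$ and $v=(0,0,1)^{T}$ that do not $\circ$-commute. Concretely, $u\circ v=(0,1,1)^{T}$ while $v\circ u=(1,1,1)^{T}$.

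Fix distinct $x_1,x_2\in X$ and define $f\colon X\to B$ by $f(x_1)=u$, $f(x_2)=v$, and $f(x)=0$ for all other $x$. The brace $B$ is two-sided and of abelian type, so it lies in the relevant category. Freeness therefore gives a skew brace morphism $\psi\colon \FB_{\mathcal{T},X}\to B$ extending $f$. Since $\psi$ is a homomorphism of multiplicative groups, we get
\[
\psi(x_1\circ x_2)=u\circ v\neq v\circ u=\psi(x_2\circ x_1).
\]
It follows that $x_1\circ x_2\neq x_2\circ x_1$ in $\FB_{\mathcal{T},X}$, so its multiplicative group is non-abelian.

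The only step needing real care is confirming that the example is genuinely a two-sided skew brace of abelian type. The paper asserts this through the associated nilpotent ring structure $(B,+,\ast)$, and I would take that as given, recalling that radical rings correspond to two-sided braces. I would also check the non-commutation directly from the displayed formula for $\circ$, as done above.
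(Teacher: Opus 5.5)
Your proposal is correct and follows the paper's own proof: the first part is the proposition on generating sets of pairwise $\circ$-commuting elements applied with $A=\{x\}$, and the second part maps the free object to the two-sided brace $(\mathbb{Z}_2^3,+,\circ)$ of the paper's example. The only difference is that the paper calls that brace a quotient of the free object (via $B\times\Triv(\mathbb{Z}_2^{n-2})$), whereas you use any homomorphism sending the other generators to $0$. That needs no surjectivity and works verbatim for infinite $X$.
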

\begin{proof}
The first part is a special case of \cref{two sided brace abelian generated is abelian }. On the other hand, any of the skew braces in \cref{two-sided two-generated non-abelian} is a quotient of the free two-sided skew brace on $n$ elements. Since their multiplicative groups are non-abelian, neither is the multiplicative group of the free two-sided skew brace of abelian type on $n$ elements.
\end{proof}

Next, we provide an explicit construction of the free two-sided skew brace of abelian type on a finite non-empty set $X$ (see Theorem~\ref{constructiontwosidedbrace}). (Note that the condition that  $X$ is finite is used in the proof of Theorem~\ref{thm:cohn}, especially in Section~3 of ~\cite{MR1151322}). Using this construction, we will show that such a free object is residually finite (see Corollary~\ref{cortwosidedbrace}). To this end, we first recall some definitions and results concerning the Cohn localisation of the free group ring, which can be found in~\cite{MR1151322}.

\begin{defn}
Let $R$ be a ring and $\Sigma$ a set of invertible square matrices with coefficients in $R$ (note that distinct elements of $\Sigma$ may have different orders). We say that a ring morphism  $f\colon R\to S$ is {\it $\Sigma$-invertible} if every matrix  $A\in \Sigma \cap M_{n}(R)$ is mapped, by the natural extension $M_{n}(R)\to M_n(S)$ of $f$, to an invertible matrix over $S$.
\end{defn}
\begin{defn}
    Let $R$ be a ring and $\Sigma$ a set of invertible square matrices over $R$. A $\Sigma$-invertible morphism $R\to R_\Sigma$ is said to be {\it universal} if for every $\Sigma$-invertible morphism $R\to S$ there exists a unique morphism $R_\Sigma\to S$ such that the following diagram commutes
    \[
    \begin{tikzcd}[column sep=small]
& R \arrow[dl] \arrow[dr] & \\
R_\Sigma \arrow[rr, dashed] & & S.
\end{tikzcd}
    \]
If such a $\Sigma$-invertible morphism exists, then the ring $R_\Sigma$ is unique and is called the~{\it Cohn localisation} of $R$. 
\end{defn}

Let $X$ be a non-empty finite set.
Consider the ring of non-abelian power series
$\Z\{\{X\}\}$. Let $M$ be the set of monomials (including $1$) in the variables $X$. For every $m\in M$ we define a $\Z$-linear mapping $$\dfrac{d}{dm}: \Z\{\{X\}\}  \to \Z\{\{X\}\}$$ as follows: If $w$ is a monomial, then \[
\dfrac{d}{dm}:w\mapsto\begin{cases} u &\text{ if } w=mu\\
0 &\text{ otherwise.}
\end{cases}
\]
For $f\in\Z\{\{X\}\}$, we say that $f$ is \emph{rational} if 
$\Z\left[\dfrac{d}{dm}\mid m\in M\right]f$ is a finitely generated $\Z$-module. The set of rational functions on $X$ is in fact a ring, we denote it by $\mathscr{R} (X)$, or simply by $\mathscr{R}$ if $X$ is clear from the context.

We embed the integral group ring $\Z[\Fg (X)]$ of the free group $\Fg (X)$ in $\Z\{\{X\}\}$ via the Magnus embedding:
\begin{align*}
    x&\mapsto 1+x,\\
    x^{-1}\mapsto 1&-x+x^2-x^3\dots
\end{align*}
for all $x\in X$. The image of the embedding is contained in $\mathscr{R}$, we denote it by $\Lambda (X)$, or simply by $\Lambda$ if $X$ is clear from the context.  Let 
     $$\epsilon\colon \Z\{\{X\}\}\to \Z$$ be the morphism mapping a power series to its constant coefficient. From now on, we set 
 $$\Sigma \text{ to be the set of square matrices }  (\lambda_{ij}) \text{ of any order with entries in } \Lambda $$
 $$\text{ and such that } \det((\epsilon(\lambda_{ij}))\in \{1,-1\},$$

\begin{thm}
[see \cite{MR1151322}, Theorem 5.1]
\label{thm:cohn}   The ring of rational functions $\mathscr{R} (X)$ on a finite non-empty set $X$ is the Cohn localisation of $\Lambda (X)$ with respect to $\Sigma (X)$.
\end{thm}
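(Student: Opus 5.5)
This statement is cited from \cite{MR1151322} (Theorem~5.1), so the paper relies on it without reproducing a proof. What follows is how I would reconstruct the argument. The aim is to verify the two defining properties of a Cohn localisation for the inclusion $\Lambda\hookrightarrow\mathscr R$: it is $\Sigma$-invertible, and it is universal among $\Sigma$-invertible morphisms.

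\textbf{Step 1: $\mathscr R$ is a ring and the inclusion is $\Sigma$-invertible.}
First I check that $\mathscr R$ is closed under sums and products, using the Leibniz-type formula $\frac{d}{dx}(fg)=\frac{d}{dx}(f)\,g+\epsilon(f)\frac{d}{dx}(g)$. From it one sees that if $f$ and $g$ lie in finitely generated $\Z$-modules $U$ and $V$ stable under all $\frac{d}{dm}$, then so does $fg$, namely in $U V+\Z V$.

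Next, $\Lambda\subseteq\mathscr R$, since each $1\pm x$ and each $(1+x)^{-1}$ has finitely many distinct derivatives.

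For $\Sigma$-invertibility, take $A\in\Sigma$ and write $A=A_0+A_1$, where $A_0=\epsilon(A)\in GL_n(\Z)$ and $A_1$ has entries with zero constant term. Then $A^{-1}=\sum_{k\ge0}(-A_0^{-1}A_1)^kA_0^{-1}$ converges in $M_n(\Z\{\{X\}\})$. To see that its entries are rational, I would use the characterisation (Schützenberger/Fliess) that $f$ is rational if and only if $f=c+u(I-M)^{-1}v$, where $M$ is a matrix whose entries are $\Z$-linear combinations of the variables. Matrices in $\Sigma$ can be linearised into this form, which closes the step.

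\textbf{Step 2: universality.}
Let $\phi\colon\Lambda\to S$ be $\Sigma$-invertible. For $f\in\mathscr R$, finite generation of $\Z[\frac{d}{dm}]f$ gives a linear representation
\[
f=\epsilon(f)+\sum_x x\,\tfrac{d}{dx}f,
\]
and applying this to a finite $\Z$-basis-like generating set $f_1,\dots,f_r$ of the derivative module produces a system
\[
(I-\textstyle\sum_x x\,C_x)\,\mathbf f=\mathbf c,
\]
where the $C_x$ are integer matrices and $\mathbf c$ is an integer vector. To place the coefficients in $\Lambda$, I would substitute $x=(1+x)-1$. The matrix $I-\sum_x x\,C_x$ then has entries in $\Lambda$, and its augmentation is $I$, so it lies in $\Sigma$. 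Hence $\mathbf f=A^{-1}\mathbf c$ with $A\in\Sigma$, and the only possible definition is
\[
\bar\phi(f)=\text{(the relevant component of)}\ \phi(A)^{-1}\mathbf c .
\]
This already gives uniqueness, and shows $\mathscr R$ is generated by entries of inverses of $\Sigma$-matrices.

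\textbf{Main obstacle: well-definedness of $\bar\phi$.}
One must show that $\bar\phi(f)$ does not depend on the chosen representation and that $\bar\phi$ is multiplicative. The natural attempt is a minimal-realisation argument: any two linear representations of the same $f$ should be related through the canonical minimal one, namely the derivative module itself, by $\Lambda$-matrices, and this relation should persist after applying $\phi$.

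Over $\Z$ this is delicate, because the derivative module need not be free. This is where finiteness of $X$ and the specific techniques of \cite[Section~3]{MR1151322} enter, and I would follow that source at this point. Once well-definedness is established, additivity and multiplicativity follow from the standard block-matrix constructions for sums and products of linear representations.
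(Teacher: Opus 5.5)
\textbf{The gap: existence of $\bar\phi$.} Your Steps 1 and 2 establish the easy half of the theorem. They show that the inclusion $\Lambda\hookrightarrow\mathscr R$ is $\Sigma$-inverting. They show that every $f\in\mathscr R$ is a component of $A^{-1}\mathbf c$ with $A=I-\sum_x xC_x$ (note that $x=(1+x)-1$ already lies in $\Lambda$) and $\epsilon(A)=I$, so the canonical map $\Lambda_\Sigma\to\mathscr R$ is onto. And they show that an extension $\bar\phi$ is unique if it exists. The other half is that $\bar\phi$ exists: the value $\mathbf u^{T}\phi(A)^{-1}\mathbf v$ must not depend on the chosen representation, equivalently $\Lambda_\Sigma\to\mathscr R$ must be injective. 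That is the whole content of the theorem, and you do not supply it. Deferring to \cite{MR1151322} at that point is the same as citing the result, which is all the paper itself does. Your explanation of why the step is hard is also mistaken: $\Z[\tfrac{d}{dm}\mid m\in M]f$ is a finitely generated subgroup of the torsion-free group $\Z\{\{X\}\}$, hence free of finite rank, and your construction of the system $(I-\sum_x xC_x)\mathbf f=\mathbf c$ only used a generating set, not a basis. A smaller point: the linearisation you invoke at the end of Step 1 can be replaced by a direct argument. For $B=-A_0^{-1}A_1$, the matrix $Y=(I-B)^{-1}$ satisfies $\tfrac{d}{dx}Y=(\tfrac{d}{dx}B)Y$, so its entries lie in a finitely generated derivative-stable module.

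\textbf{How to close it.} The missing step is elementary and needs no minimal realisations. Using block-diagonal sums, well-definedness and additivity reduce to the following claim. If $M_x\in M_n(\Z)$, $\mathbf u,\mathbf v\in\Z^n$ and $\mathbf u^{T}(I-\sum_x xM_x)^{-1}\mathbf v=0$ in $\Z\{\{X\}\}$, then $\mathbf u^{T}\phi(A)^{-1}\mathbf v=0$ in $S$, where $A=I-\sum_x xM_x\in\Sigma$.

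To prove the claim, note that vanishing of the series means $\mathbf u^{T}M_{x_1}\cdots M_{x_k}\mathbf v=0$ for every word $x_1\cdots x_k$. So $\mathbf u^{T}$ kills the reachable submodule $R=\sum_w\Z M_w\mathbf v$, and hence also its saturation $\overline R$. Moreover, $\overline R$ contains $\mathbf v$, is stable under every $M_x$, and is a direct summand of $\Z^n$ because $\Z^n/\overline R$ is torsion-free. Conjugating by some $Q\in GL_n(\Z)$ adapted to $\overline R$ gives $Q^{-1}M_xQ=\begin{pmatrix}K_x&L_x\\0&N_x\end{pmatrix}$, $Q^{-1}\mathbf v=(\mathbf v_1,0)^{T}$ and $\mathbf u^{T}Q=(0,\mathbf u_2^{T})$. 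The diagonal blocks $I-\sum_x xK_x$ and $I-\sum_x xN_x$ have augmentation $I$, so they lie in $\Sigma$ and their images $D_1,D_2$ in $S$ are invertible. Since integer matrices commute with the elements $\phi(x)$, the matrix $Q^{-1}\phi(A)^{-1}Q$ is block upper triangular with diagonal blocks $D_1^{-1},D_2^{-1}$. Hence $\mathbf u^{T}\phi(A)^{-1}\mathbf v=(0,\mathbf u_2^{T})(D_1^{-1}\mathbf v_1,0)^{T}=0$.

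With this in hand, the rest of your outline goes through. Multiplicativity comes from the triangular product representation with blocks $M_x$, $\mathbf v\mathbf u'^{T}M'_x$, $M'_x$, using $(I-T')^{-1}=I+T'(I-T')^{-1}$ for $T'=\sum_x\phi(x)M'_x$. The map $\bar\phi$ extends $\phi$ because the two agree on $1+x$ and $(1+x)^{-1}$. Finiteness of $X$ is used in that $\sum_x xC_x$ must be a finite sum for $A$ to have entries in $\Lambda$.
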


Our goal is to show that the free two-sided skew brace of abelian type (or free radical ring) generated by $X$ is $J(\mathscr{R})$, the Jacobson radical of $\mathscr{R}$. We recall the basic definitions of the theory of radical rings we need; more can be found in~\cite{MR1838439} and~\cite{MR393090}.

\begin{defn}
    Let $R$ be a ring. An element $u\in R$ is called {\it left quasiregular} if there exists $r\in R$ such that $r\circ u=r+ru+u=0$. {\it Right quasiregular} elements are defined symmetrically. An element that is both left and right quasiregular will be called {\it quasiregular}.
    
    The {\it Jacobson radical} $J(R)$ of $R$ is the set of elements $r\in R$ such that $sr$ is left quasiregular for all $s\in R$. $J(R)$ is a two-sided ideal. Equivalently, if $R$ is unital,  $J(R)$ is the set of elements $r\in R$ such that \hbox{$1+RrR\subset R^\times$,} where $R^\times$ is the set of invertible elements of $R$.

    The ring $R$ is {\it radical} if $J(R)=R$. Equivalently, $R$ is radical if all its elements are quasiregular.
\end{defn}

\begin{defn}
    Let $R$ be a ring, the {\it Dorroh extension} $D(R)$ of $R$ is the unital ring $\Z\times R$ with the canonical sum and multiplication defined by $(x,r)(y,s)=(xy,yr+xs+rs)$.
\end{defn}

\begin{pro}
    Let $R$ be a ring, $i\colon R\to D(R)$ the canonical embedding of $R$ in its Dorroh extension. Suppose there is a ring morphism $f\colon R\to S$ with $S$ a unital ring. Then, there exists a unique unital ring morphism $g\colon D(R)\to S$ such that $gi=f$.
\end{pro}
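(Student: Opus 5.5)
The plan is to define $g$ explicitly, check that it is a unital ring morphism extending $f$, and then derive uniqueness from the fact that $D(R)$ is generated as a unital ring by $1=(1,0)$ and $i(R)$. First recall that $i(r)=(0,r)$, that $(1,0)$ is the identity of $D(R)$, and that every element decomposes as
\[
(x,r)=x\cdot(1,0)+(0,r)=x\cdot 1_{D(R)}+i(r).
\]
Here $x\cdot(1,0)$ denotes the $\Z$-multiple, which makes sense because $D(R)$ is an abelian group under the canonical sum.

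Motivated by this decomposition, define $g(x,r)=x\cdot 1_S+f(r)$, again using the $\Z$-module structure of $(S,+)$. Several properties are then immediate. The map $g$ is additive, since both $x\mapsto x\cdot 1_S$ and $f$ are additive. It satisfies $g(1,0)=1_S$, and $g(i(r))=g(0,r)=f(r)$, so $gi=f$.

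The only real check is multiplicativity, which I expect to be the one routine computation. Using the product $(x,r)(y,s)=(xy,yr+xs+rs)$, we need
\[
g\big((x,r)(y,s)\big)=xy\cdot 1_S+y f(r)+x f(s)+f(r)f(s).
\]
Compare this with $(x\cdot 1_S+f(r))(y\cdot 1_S+f(s))$. Expanding by distributivity in $S$ gives the same four terms, provided we use two facts: $(x\cdot 1_S)(y\cdot 1_S)=xy\cdot 1_S$, and $(x\cdot 1_S)f(s)=x\cdot f(s)$. Both follow from bilinearity of multiplication over $\Z$. We also need that $f$ is additive and multiplicative, so that $f(yr)=y\,f(r)$ and $f(rs)=f(r)f(s)$. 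Note that no unitality of $f$ is needed, since $R$ need not be unital.

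For uniqueness, let $g'$ be any unital ring morphism with $g'i=f$. Since $g'$ is additive and unital, $g'(x\cdot(1,0))=x\cdot 1_S$, and $g'(0,r)=g'(i(r))=f(r)$. By the decomposition above, $g'(x,r)=x\cdot 1_S+f(r)=g(x,r)$, so $g'=g$. No step here is a genuine obstacle; the only point requiring care is bookkeeping the $\Z$-multiples $x\cdot 1_S$ correctly in the multiplicativity check.
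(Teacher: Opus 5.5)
Your proposal is correct and takes the same approach as the paper, whose proof consists only of the definition $g(x,r)=x1+f(r)$. You additionally write out the multiplicativity check and the uniqueness argument (via the decomposition $(x,r)=x\cdot 1_{D(R)}+i(r)$), which the paper leaves implicit.
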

\begin{proof}
    Simply define $g(x,r)=x1+f(r)$.
\end{proof}

\begin{lem}
    \label{lem:dorojacobson}
    Let $R$ be a ring and consider it as embedded in its Dorroh extension $D(R)$. We have $J(R)=J(D(R))$. In particular, $1+R\subset D(R)^\times$ if and only if $R$ is a radical ring.
\end{lem}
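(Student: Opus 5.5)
We want to show $J(R)=J(D(R))$ for $R$ embedded in $D(R)=\Z\times R$ as $r\mapsto(0,r)$, and then deduce that $1+R\subset D(R)^\times$ exactly when $R$ is radical. The whole argument rests on one identity in $D(R)$:
\[
(1+a)(1+b)=1+(a+b+ab)=1+(a\circ b), \qquad a,b\in R .
\]
So $1+R$ is closed under multiplication, and $1+u$ is left (resp.\ right) invertible in $D(R)$ with an inverse of the form $1+r$ precisely when $u$ is left (resp.\ right) quasiregular in $R$. We will also use that $R$ is a two-sided ideal of $D(R)$: indeed $(x,r)(0,s)=(0,xs+rs)$, and symmetrically on the other side.

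\textbf{Inclusion $J(D(R))\subseteq J(R)$.} Let $(x,r)\in J(D(R))$. First we show $x=0$. Applying the augmentation $D(R)\to\Z$, $(x,r)\mapsto x$, which is a surjective unital morphism, the element $1+x(-2)=1-2x$ must be a unit in $\Z$. This forces $x\in\{0,1\}$. If $x=1$, take $y=-(1,r)$; then $1+y\,(1,r)$ has augmentation $1-1=0$ and cannot be a unit, a contradiction. Hence $x=0$, so the element lies in $R$. Now for $s\in R$, the element $1+s\,(0,r)$ is invertible in $D(R)$. The augmentation shows its inverse has the form $1+t$ with $t\in R$. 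Therefore $sr$ is left quasiregular in $R$, and $(0,r)\in J(R)$.

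\textbf{Inclusion $J(R)\subseteq J(D(R))$.} Let $r\in J(R)$. We must show that $1+y r$ is left invertible for every $y=(x,s)\in D(R)$. Since $yr=xr+sr$, and $J(R)$ is an ideal of $R$ closed under the $\Z$-module structure, we get $yr\in J(R)$. Every element of $J(R)$ is left quasiregular, because $J(R)$ is a quasiregular ideal. Hence $1+yr$ is left invertible in $D(R)$. Since this holds for all $y\in D(R)$, we conclude $r\in J(D(R))$. This step relies on standard facts: $J(R)$ is a two-sided ideal, and all of its elements are quasiregular, which the paper recalls from the cited literature. The main care needed is checking that the definition of $J$ via left quasiregularity of $sr$ matches, for the unital ring $D(R)$, the characterisation by units $1+D(R)rD(R)$. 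Here we can invoke the standard equivalence that was already stated in the definition.

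\textbf{The ``in particular''.} If $R$ is radical, every $u\in R$ is quasiregular, so $1+u$ has a two-sided inverse of the form $1+r$. Conversely, suppose $1+u$ is a unit of $D(R)$. Its inverse has augmentation $1$, so it is of the form $1+r$ with $r\in R$. The identity above then gives $u\circ r=r\circ u=0$, so $u$ is quasiregular, and therefore $R$ is radical.
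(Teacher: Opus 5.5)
Your proof is correct, but it takes a more hands-on route than the paper's. The paper argues from two general facts. First, $J(D(R))\subseteq R$, because surjective morphisms map the Jacobson radical into the radical of the image and $J(D(R)/R)=J(\Z)=0$. Second, Theorem~3.14 of \cite{MR393090} ($J(I)=J(S)\cap I$ for an ideal $I$ of $S$) gives $J(R)=J(D(R))\cap R=J(D(R))$. The ``in particular'' is left implicit there; it follows because $R$ is an ideal of $D(R)$, so $1+D(R)rD(R)\subseteq 1+R$.

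You instead unpack both facts in this special case. Your test elements $-2$ and $-(1,r)$, combined with the augmentation, are exactly the statement $\epsilon(J(D(R)))\subseteq J(\Z)=0$ made concrete. You then check $J(D(R))\cap R=J(R)$ by hand. This is easy here because $D(R)=\Z\cdot 1+R$, so $D(R)r\subseteq J(R)$ needs only that $J(R)$ is an ideal of $R$. For a general ideal $I\subseteq S$, by contrast, showing that $J(I)$ is stable under multiplication by $S$ is the real content of the cited theorem. Your argument is self-contained and proves the ``in particular'' directly, without passing through the equality of radicals. The paper's is shorter but relies on a less elementary result.

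One small point is worth making explicit. With the paper's definition (left quasiregularity of $sr$ for all $s\in R$), it is not literally built in that each $r\in J(R)$ is itself left quasiregular. It does follow at once by taking $s=-r$: if $(1+t)(1-r^2)=1$, then $(1+t)(1-r)$ is a left inverse of $1+r$ of the form $1+(\text{element of }R)$.
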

\begin{proof}
    In general $J(D(R))\subset R$ since $J(D(R)/R)=J(\mathbb{Z})=\{ 0\}$ and surjective morphisms preserve the Jacobson radical. Theorem 3.14 of~\cite{MR393090} states that if $I$ is an ideal of a ring $S$ then $J(I)=J(S)\cap I$. In our case, this yields $J(R)=J(D(R))\cap R=J(D(R))$.
\end{proof}

\begin{exa}\label{ex:dorhogamma}
Let $\mathscr{R}_0=\epsilon^{-1}(\{0\})\cap \mathscr{R}$. Since the constant polynomials are rational functions, every element $\gamma\in \mathscr{R}$ can be written in a unique way as a sum $\gamma_n+\gamma_0$ with $\gamma_n\in \Z$ and $\gamma_0\in \mathscr{R}_0$. In fact, $\mathscr{R}$ is the Dorroh extension of $\mathscr{R}_0$.

    It is clear that $1+\mathscr{R}_0\subset \mathscr{R}^\times$ so that $\mathscr{R}_0$ is a radical ring and $\mathscr{R}_0=J(\mathscr{R})$.
\end{exa}

\begin{lem}[see \cite{MR393090}, Theorem 3.15]
\label{lem:radmat}
    Let $R$ be a ring and $M_n(R)$ the ring of square matrices of order $n$ over $R$. Then $J(M_n(R))=M_n(J(R))$.
\end{lem}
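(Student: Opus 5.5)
The statement is $J(M_n(R))=M_n(J(R))$. I plan to prove both inclusions by reducing to the Dorroh extension, so that everything takes place in a unital ring. By \cref{lem:dorojacobson}, $J(R)=J(D(R))$. The ring $M_n(R)$ is a two-sided ideal of the unital ring $M_n(D(R))$. The result quoted in the proof of \cref{lem:dorojacobson} (for an ideal $I$ of a ring $S$ one has $J(I)=J(S)\cap I$) then gives $J(M_n(R))=J(M_n(D(R)))\cap M_n(R)$. Hence it suffices to prove the statement for a unital ring $S$: namely $J(M_n(S))=M_n(J(S))$. Intersecting with $M_n(R)$ and using $J(D(R))=J(R)\subseteq R$ then finishes the argument.

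For unital $S$, I would use the characterisation $J(S)=\{r\in S\mid 1+SrS\subseteq S^\times\}$ together with the fact that $J(S)$ is an ideal. Equivalently, I would use the standard description of $J$ as the largest ideal $I$ such that $1+I$ consists of units.

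For the inclusion $M_n(J(S))\subseteq J(M_n(S))$, note that $M_n(J(S))$ is a two-sided ideal of $M_n(S)$. So it suffices to show that $1+A$ is invertible for every $A\in M_n(J(S))$. I would proceed by induction on $n$, using block Gaussian elimination. The entry $1+a_{11}$ is a unit because $a_{11}\in J(S)$. Clearing the first row and column by elementary matrices leaves a Schur complement of the form $1+A'$, where $A'\in M_{n-1}(J(S))$. This works because the correction terms are products involving entries of $J(S)$, and $J(S)$ is an ideal. Induction then shows that $1+A'$ is invertible, and therefore so is $1+A$.

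For the reverse inclusion, I would use that the ideals of $M_n(S)$ are exactly the sets $M_n(I)$ with $I$ an ideal of $S$. Thus $J(M_n(S))=M_n(I)$ for some ideal $I$, and it remains to show $I\subseteq J(S)$. Take $r\in I$. For every $s,t\in S$ the matrix $E_{11}(srt)$ lies in $M_n(I)$, so $1+E_{11}(srt)$ is invertible. Comparing the $(1,1)$ entries of the inverse shows that $1+srt$ is a unit of $S$, and hence $1+SrS\subseteq S^\times$. Note that $SrS$ means finite sums, so I would instead work with the one-sided version: $sr$ is left quasiregular for all $s$, read off from the matrix $E_{11}(sr)$ in the same way.

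I expect the main obstacle to be bookkeeping rather than any new idea. The delicate points are making sure the Schur complement in the inductive step really stays in $1+M_{n-1}(J(S))$, and using the one-sided quasiregularity characterisation correctly. Since this is a classical result (the paper cites \cite{MR393090}, Theorem 3.15), it is also acceptable to simply invoke it.
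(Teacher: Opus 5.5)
Your proof is correct. The paper does not prove this lemma at all; it only cites \cite[Theorem 3.15]{MR393090}, so your proposal is a self-contained alternative rather than a variant of a proof in the paper.

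The reduction to the unital case is sound. Since $M_n(R)$ is an ideal of $M_n(D(R))$, the hereditary property $J(I)=J(S)\cap I$ (the same Theorem~3.14 the paper uses in \cref{lem:dorojacobson}) gives $J(M_n(R))=M_n(J(D(R)))\cap M_n(R)=M_n(J(R))$ once the unital case is known.

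In the unital case, the step you flagged as delicate works. Write $A=\begin{pmatrix}a_{11}&b\\ c&D\end{pmatrix}$ and $u=(1+a_{11})^{-1}$. Then
\[
1+A=\begin{pmatrix}1&0\\ cu&1\end{pmatrix}\begin{pmatrix}1+a_{11}&0\\ 0&1+(D-cub)\end{pmatrix}\begin{pmatrix}1&ub\\ 0&1\end{pmatrix},
\]
and $D-cub\in M_{n-1}(J(S))$ because $J(S)$ is an ideal.

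For the reverse inclusion, the finite-sum issue does not arise. Apply the $E_{11}$ argument to an arbitrary $x\in I$ to get $1+I\subseteq S^\times$. Then $I\subseteq J(S)$ follows from the same \emph{largest ideal with $1+I$ consisting of units} characterisation you used for the first inclusion. Your one-sided variant with $sr$ also works.

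As for what each route gives you: the citation covers arbitrary, possibly non-unital, rings in one step. Your proof is elementary, and it shows that the paper's only use of the lemma needs just your unital argument, with no Dorroh reduction. That use is in the proof of \cref{constructiontwosidedbrace}, where the lemma is applied to the unital ring $D(B)$.
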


\begin{thm}\label{constructiontwosidedbrace}
Let $X$ be a finite non-empty set. Then, $\FB_{\mathcal{T},X}$, the  free two-sided skew brace of abelian type  on $X$,  is~$J(\mathscr{R}(X))$. 
\end{thm}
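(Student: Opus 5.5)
Two-sided skew braces of abelian type are the same thing as radical rings, with the brace product $a\circ b=a+b+a\ast b$ and ring product $\ast$. So I will show that $J(\mathscr R)=\mathscr R_0$ (Example~\ref{ex:dorhogamma}), with $X$ embedded as the power series $x$, is the free radical ring on $X$. Since the Dorroh extension turns non-unital ring maps into unital ones, this amounts to the following. For every radical ring $R$ and every map $f\colon X\to R$, there is a unique ring morphism $\mathscr R_0\to R$ extending $f$. In unital terms, with $S=D(R)$, I want a unique unital morphism $\mathscr R\to S$ sending $x\mapsto f(x)$ and $\mathscr R_0$ into $R$.

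To build the map, first send the free group $\Fg(X)$ into $S^\times$ by $x\mapsto 1+f(x)$. This is legitimate because $1+R\subseteq S^\times$ by Lemma~\ref{lem:dorojacobson}. It extends to a unital ring morphism $g\colon \Z[\Fg(X)]\cong\Lambda\to S$, and under the Magnus embedding $g(x)=f(x)$, where now $x$ denotes the series $x=(1+x)-1$.

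The key step is to check that $g$ is $\Sigma$-invertible. Take $A=(\lambda_{ij})\in\Sigma$. Compose $g$ with the quotient $S\to S/R\cong\Z$; on $\Lambda$ this is exactly the augmentation, which agrees with $\epsilon$, because group elements go to $1$. Hence $g(A)\equiv \epsilon(A)\pmod{M_n(R)}$. Now $\epsilon(A)\in GL_n(\Z)$ since its determinant is $\pm1$, so $g(A)=\epsilon(A)(1+N)$ with $N\in M_n(R)$. By Lemma~\ref{lem:radmat} and Lemma~\ref{lem:dorojacobson}, $M_n(R)=M_n(J(S))=J(M_n(S))$, so $1+N$ is invertible. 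Therefore $g(A)$ is invertible. Theorem~\ref{thm:cohn} then gives a unique unital morphism $\tilde g\colon\mathscr R\to S$ extending $g$.

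Next I check that $\tilde g(\mathscr R_0)\subseteq R$. The composite $\mathscr R\to S\to\Z$ and $\epsilon$ agree on $\Lambda$, and both are unital morphisms out of the Cohn localisation. By the uniqueness part of the universal property (applied with target $\Z$, noting $\epsilon$ is $\Sigma$-invertible), they coincide. So $\tilde g$ maps $\ker\epsilon\cap\mathscr R=\mathscr R_0$ into $R$, and the restriction of $\tilde g$ is the desired skew brace morphism $\mathscr R_0\to R$ extending $f$.

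For uniqueness, any ring morphism $h\colon\mathscr R_0\to R$ extending $f$ extends to a unital morphism $D(\mathscr R_0)=\mathscr R\to S$. That morphism agrees with $g$ on $\Lambda$, because $\Lambda$ is generated by $1+x$ and $(1+x)^{-1}$, and $h$ preserves these inverses. By the uniqueness in Theorem~\ref{thm:cohn}, $h$ equals the restriction of $\tilde g$.

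I expect the main obstacle to be the $\Sigma$-invertibility step, specifically identifying the reduction of $g$ modulo $R$ with $\epsilon$ and then the radical-matrix argument. The remaining points are the Dorroh and uniqueness bookkeeping, together with the observation that $\mathscr R_0$ is indeed a radical ring, which Example~\ref{ex:dorhogamma} supplies.
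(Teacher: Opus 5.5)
Your proposal is correct and follows the paper's proof essentially step for step. You pass to the Dorroh extension $D(R)$, and you check that $\Lambda\to D(R)$ is $\Sigma$-invertible by writing the image of $A\in\Sigma$ as a matrix in $\mathrm{GL}_n(\Z)$ times $1+N$, with $N\in M_n(R)=J(M_n(D(R)))$. You then invoke Theorem~\ref{thm:cohn} and identify $\epsilon_D\hat f$ with $\epsilon$ by uniqueness, which gives $\hat f(\mathscr R_0)\subseteq R$. Finally, you obtain uniqueness by extending any candidate morphism to $D(\mathscr R_0)=\mathscr R$. The paper instead multiplies by an integer matrix on the left so that the augmentation becomes the identity, but this difference is immaterial.
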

\begin{proof}
    Let $B$ be a radical ring, and let $g\colon X\to B$ be a map. We consider $X$ as a subset of $\mathscr{R}$ through the canonical inclusion $j\colon X\to \mathscr{R}$. First, we will construct a map that extends $g$ to $\mathscr{R}$. There exists a canonical morphism of unital rings $\mathbb{Z}[F(X)]\to D(B)$, given by 
    \begin{align*}
    x&\mapsto 1+g(x),\\
    x^{-1}&\mapsto (1+g(x))^{-1}.
\end{align*}
The corresponding map $f\colon\Lambda\to D(B)$ is such that $\epsilon|_\Lambda=\epsilon_Df$, where we define $\epsilon_D\colon D(B)\to \mathbb{Z}$ as the ring morphism that maps an element $(z,b)$ to $z$. We claim that $f$ is $\Sigma$-invertible. Consider a matrix $(\lambda_{ij})\in \Sigma$.  Let $M=(f(\lambda_{ij}))$, we have $\det(\epsilon_D(f(\lambda_{ij})))=\det(\epsilon(\lambda_{ij}))\in \{1,-1\}$. Hence, there exists an invertible square matrix $N$ over $\Z$, such that for $(u_{ij})=NM$, $\epsilon(u_{ij})=\delta_{ij}$ where $\delta_{ij}$ is the Kronecker symbol. Hence, $NM=\id + V$ for some matrix $V$ with entries in $B$. Therefore, by~Lem\-ma~\ref{lem:dorojacobson}, the entries of $V$ lie in the Jacobson radical of $D(B)$. Moreover,~Lem\-ma~\ref{lem:radmat} implies that $J(M_n(D(B)))=M_n(B)$, so that $NM$ is invertible and then~$M$ is invertible. This shows that $f$ is $\Sigma$-invertible.

Thus, by Theorem~\ref{thm:cohn}, there exists a unique morphism $\hat{f}\colon \mathscr{R}\to D(B)$ such that $\hat{f}\iota =f$, where $\iota$ is the inclusion $\Lambda\to \mathscr{R}$. Notice that the maps $\epsilon|_\Lambda$ and $\epsilon_Df$ coincide and are $\Sigma$-invertible by definition. Thus, both $\epsilon|_\mathscr{R}$ and $\epsilon_D\hat{f}$ are extensions of $\epsilon_Df$ to~$\mathscr{R}$, which implies that $\epsilon|_\mathscr{R}=\epsilon_D\hat{f}$. This means that $\hat{f}^{-1}(B)=\mathscr{R}_0$, where $\mathscr{R}_0$ is defined as in Example \ref{ex:dorhogamma}.

It is left to check that $\hat{f}|_{\mathscr{R}_0}$ is the unique morphism such that $\hat{f}|_{\mathscr{R}_0}j=g$. Let $h\colon \mathscr{R}_0\to B$ be a ring morphism such that $hj=g$. Hence, using the same notation as in~Example~\ref{ex:dorhogamma}, there is a unique extension $\hat{h}\colon \mathscr{R}\to D(B)$ of $h$ that maps $\gamma_n+\gamma_0$ to $\gamma_n+h(\gamma_0)$. As ring morphisms that coincide on $1+X$, we have $\hat{h}|_{\Lambda}=f$. By the universality of the Cohn localisation, $\hat{h}=\hat{f}$, which proves the uniqueness.
\end{proof}
    
\begin{cor}\label{cortwosidedbrace}
Let $X$ be a finite non-empty set. Then $\FB_{\mathcal{T},X}$, the free two-sided skew brace of abelian type on $X$, is residually finite.
\end{cor}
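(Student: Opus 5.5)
By \cref{constructiontwosidedbrace}, $\FB_{\mathcal{T},X}=J(\mathscr{R})=\mathscr{R}_0$, the radical ring of rational non-commutative power series with zero constant term, with skew brace operations $a+b$ and $a\circ b=a+b+ab$. The plan is to separate a non-zero $\gamma\in\mathscr{R}_0$ from $0$ in a finite quotient. For two-sided skew braces of abelian type, ring ideals of $(\mathscr{R}_0,+,\ast)$ are exactly skew brace ideals. Indeed, a ring ideal $I$ is additively normal, it is $\lambda$-invariant because $\lambda_a(i)=ai+i$, and it is $\circ$-normal because $a\circ i\circ\overline a\in i+I$ is computed inside the ring. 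It therefore suffices to find, for each $\gamma\neq0$, a two-sided ring ideal $I\subseteq\mathscr{R}_0$ of finite index with $\gamma\notin I$.

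The natural candidates are the truncation ideals. Let $M_{\geq k}$ be the set of power series in $\mathscr{R}_0$ all of whose monomials have degree at least $k$. Then take $I_{k,N}=\{f\in\mathscr{R}_0\mid f\equiv g \text{ with } g\in N\mathscr{R}_0+M_{\geq k}\}$, that is, the series whose coefficients in degrees $<k$ are all divisible by $N$. This set is a two-sided ideal. Multiplying by anything in $\mathscr{R}_0$ raises degree, and coefficient divisibility by $N$ is preserved under products, since each coefficient of $fh$ in degree $<k$ is a $\Z$-combination of products involving a coefficient of $f$ in degree $<k$.

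The quotient $\mathscr{R}_0/I_{k,N}$ injects into the finite set of tuples of coefficients modulo $N$ of the finitely many monomials of positive degree $<k$. Here the finiteness of $X$ is used. So $I_{k,N}$ has finite index.

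Now let $\gamma\ne0$. Choose a monomial $m$ with non-zero coefficient $c$ in $\gamma$, and set $k=\deg m+1$ and $N=|c|+1$. Then $\gamma\notin I_{k,N}$, and $\mathscr{R}_0/I_{k,N}$ is a finite skew brace in which the image of $\gamma$ is non-zero, which proves residual finiteness.

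Most of this is routine. The step I expect to need the most care is checking that $\mathscr{R}_0/I_{k,N}$ really is a skew brace quotient. This reduces to the ideal equivalence noted above. It is also worth confirming that the image of $\mathscr{R}_0$ in coefficient space is well defined: the series have integer coefficients, so no denominators arise, because rational functions here are defined inside $\Z\{\{X\}\}$. Finally, by \cref{hopfian} one also gets that $\FB_{\mathcal{T},X}$ is Hopfian, since it is generated by the finite set $X$.
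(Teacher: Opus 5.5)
Your argument is correct and is essentially the paper's proof. The paper also separates $f\neq 0$ by reducing coefficients and truncating: it reduces modulo a prime $p\nmid\alpha$ and kills all monomials above the least monomial $m$ of $f$ in the graded lexicographic order, which matches your quotient by $I_{k,N}$ up to the choice of modulus and cut-off. Your explicit check that ring ideals of $(\mathscr{R}_0,+,\ast)$ are skew brace ideals is a step the paper leaves implicit.
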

\begin{proof}
    Fix a well order on $X$ and order the monomials in $X$ following the graded lexicographic order. Let $f\in J(\mathscr{R})$. Thus, $f=\alpha m+\textnormal{other greater terms}$, where $\alpha\in \Z\setminus \{0\}$ and $m$ is some monomial. Choose a prime $p$ that does not divide $\alpha$, and let $B$ be the quotient of~$\F_p\{\{X\}\}$ by the two-sided ideal generated by all the monomials strictly greater than $m$. Since $X$ is finite, the number of monomials smaller than or equal to $m$ is finite. Hence, $B$ is finite and there is a surjective ring homomorphism $\mathscr{R}\to B$ which maps $f$ to a non-zero element.
\end{proof}

\section{Skew braces with commutative multiplication}\label{commutative}

We now consider the category  consisting of the commutative skew braces, denoted $\mathcal{C}$, which is a subcategory  of the category $\mathcal{T}$ consisting of the two-sided skew braces.
Recall that a commutative skew brace is a skew brace whose multiplicative group is abelian. In this setting, we will treat both the classical abelian case and the more general non-abelian type.

\medskip

Let $X$ be a set. In~\cite{LetFreeComm} it was shown that the free commutative skew brace on~$X$ can be obtained as the skew brace of fractions of the free commutative wire generated by $X$. We recall the necessary definitions here.

\begin{defn}
    A {\it commutative wire} is a triple $(W,+, \circ)$ such that $(W,+)$ is a group, $(W,\circ)$ is a commutative monoid and
     \begin{equation}
    \label{eq:leftdistrib}
        u\circ (v+w)=u\circ v-u+u\circ w
    \end{equation}
    holds for all $u,v,w\in W$. If $W,V$ are commutative wires, then a {\it wire morphism} $f\colon W\to V$ is a map that satisfies
    \[
    f(u+v)=f(u)+f(v)\quad \text{and}\quad f(u\circ v)=f(u)\circ f(v)
    \]
    for all $u,v\in W$. 
\end{defn}

\begin{defn}
    Let $(M,\circ)$ be a commutative monoid. On the cartesian product $M\times M$ define the equivalence relation $(u,v)\sim (u_1,v_1)$ if and only if there exists a~\hbox{$k\in M$} such that $u\circ v_1\circ k=u_1\circ v\circ k$. Then, the set 
    \[\operatorname{Q}(M)= (M\times M)/\sim\]
     is a group with operation $\frac{u}{v}\circ \frac{u_1}{v_1}=\frac{u\circ u_1}{v\circ v_1}$, where $\frac{x}{y}$ denotes the equivalence class of~$(x,y)$. This group is called the {\it Grothendieck group} of $M$.
\end{defn}

There is a canonical map $\iota \colon M\to \operatorname{Q}(M)$ that maps $m$ to $\frac{m}{e}$ where $e$ is the neutral element of the monoid $M$. This morphism has the following universal property.

\begin{pro}
    \label{pro:universalgroth}
    Let $(M,\circ)$ be a commutative monoid and $(H,\circ)$ a commutative group. Let $f\colon M\to H$ be a monoid morphism. Then, there exists a unique group morphism $\hat{f}\colon \operatorname{Q}(M)\to H$ such that $\hat{f}\iota=f$.
\end{pro}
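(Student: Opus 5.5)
This is the standard universal property of the Grothendieck group. I would construct $\hat f$ explicitly, show it is well defined and a homomorphism, and then prove uniqueness.

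\emph{Definition.} Put
\[
\hat f\Big(\frac{u}{v}\Big)=f(u)\circ f(v)^{-1},
\]
which makes sense because $H$ is a group.

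\emph{Well-definedness.} Suppose $\frac{u}{v}=\frac{u_1}{v_1}$. Then there is $k\in M$ with $u\circ v_1\circ k=u_1\circ v\circ k$. Applying $f$ gives
\[
f(u)\circ f(v_1)\circ f(k)=f(u_1)\circ f(v)\circ f(k).
\]
Since $H$ is a group, we can cancel $f(k)$. Since $H$ is commutative, we can then rearrange to obtain $f(u)\circ f(v)^{-1}=f(u_1)\circ f(v_1)^{-1}$. This cancellation step is the only point requiring care: it is exactly where invertibility in $H$ is used to absorb the auxiliary element $k$.

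\emph{Homomorphism.} By commutativity of $H$,
\[
\hat f\Big(\frac{u\circ u_1}{v\circ v_1}\Big)=f(u)\circ f(u_1)\circ f(v)^{-1}\circ f(v_1)^{-1}=\hat f\Big(\frac{u}{v}\Big)\circ\hat f\Big(\frac{u_1}{v_1}\Big).
\]
Moreover, since $f(e)$ is the identity of $H$, we get $\hat f(\iota(m))=\hat f\big(\frac{m}{e}\big)=f(m)$, so $\hat f\iota=f$.

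\emph{Uniqueness.} In $\operatorname{Q}(M)$ we have $\frac{u}{v}\circ\frac{v}{e}=\frac{u\circ v}{v}=\frac{u}{e}$, where the last equality holds with $k=e$. Hence $\frac{u}{v}=\iota(u)\circ\iota(v)^{-1}$, so $\operatorname{Q}(M)$ is generated as a group by $\iota(M)$. Any group morphism $g$ with $g\iota=f$ must therefore satisfy $g\big(\frac{u}{v}\big)=f(u)\circ f(v)^{-1}=\hat f\big(\frac{u}{v}\big)$, and so $g=\hat f$.
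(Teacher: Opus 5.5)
Your proof is correct and complete: well-definedness by cancelling $f(k)$ in $H$, multiplicativity via commutativity of $H$, and uniqueness from $\frac{u}{v}=\iota(u)\circ\iota(v)^{-1}$ is the standard argument. The paper states this universal property without proof, as a standard fact. Your explicit formula $\hat f(\frac{u}{v})=f(u)\circ f(v)^{-1}$ is the same one the paper records for the skew-brace analogue in Proposition~\ref{pro:universalskbfrac}.
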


The Grothendieck operator allows us to move from commutative wires $(W,+,\circ)$ to commutative skew braces.

\begin{pro}\label{pro:fractions}
 Let $(W,+,\circ)$ be a commutative wire, and let $(\operatorname{Q}(W),\circ)$ denote the Grothendieck group of the monoid $(W,\circ)$. On $\operatorname{Q}(W)$, define the operation
 \[\frac{u}{v}+\frac{u_1}{v_1}= \frac{u\circ v_1 -(v\circ v_1)+ u_1\circ v}{v\circ v_1}.\]
  Then, $(\operatorname{Q}(W),+,\circ)$ is a commutative skew brace. We call this skew brace the \emph{skew brace of fractions} of $W$ and denote it by~$\operatorname{Q}(W)$.
\end{pro}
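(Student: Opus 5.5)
We need to prove Proposition \ref{pro:fractions}: $\operatorname{Q}(W)$ with the given addition is a commutative skew brace.

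The plan is to check, in order, that the addition is well defined, that it makes $\operatorname{Q}(W)$ a group, and that left skew distributivity holds. Commutativity of $(\operatorname{Q}(W),\circ)$ comes for free from Pro\-po\-sition~\ref{pro:universalgroth}.

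\textbf{Two wire identities.} Two consequences of \eqref{eq:leftdistrib} will be used throughout.
\begin{itemize}
\item Taking $v=w=0$ gives $u\circ 0=u\circ 0-u+u\circ 0$, hence $u\circ 0=u$.
\item More generally, each $u\in W$ gives a map $v\mapsto -u+u\circ v$, and this map is an endomorphism $\lambda_u$ of $(W,+)$.
\end{itemize}
In particular $u\circ(-v)=u-u\circ v+u$, and $u\circ(v-w)=u\circ v-u\circ w+u$.

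\textbf{Well-definedness.} It suffices to treat replacing $\frac{u}{v}$ by $\frac{u\circ k}{v\circ k}$, and also the case of a witness $k$ with $u\circ v_1\circ k=u_1\circ v\circ k$. For the first, compute
\[
u\circ k\circ v_1-(v\circ k\circ v_1)+u_1\circ v\circ k = k\circ\bigl(u\circ v_1-v\circ v_1+u_1\circ v\bigr),
\]
using that $\lambda_k$ is additive, so $k\circ(a-b+c)=k\circ a-k\circ b+k\circ c$. This exhibits the new sum as the old numerator and denominator both multiplied by $k$, hence the same fraction.

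The general equivalence is then handled by observing that the relation $\sim$ is generated by such scalings together with equalities after multiplying by a common $k$. Here I expect the main obstacle: the cancellation witness $k$ must be propagated through a nonlinear expression. The approach I will try is to multiply everything by $k$ and use the scaling invariance just proved, so that both sides become equal elements of $W$ over the same denominator.

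\textbf{Group axioms and distributivity.} Fix a common denominator. Any two fractions can be written as $\frac{a}{d}$ and $\frac{b}{d}$ (use $\frac{u\circ v_1}{v\circ v_1}$ and $\frac{u_1\circ v}{v\circ v_1}$). Then
\[
\frac{a}{d}+\frac{b}{d}=\frac{a\circ d-d\circ d+b\circ d}{d\circ d}=\frac{d\circ(a-d+b)}{d\circ d}=\frac{a-d+b}{d},
\]
the last step using scaling invariance. So, for fixed $d$, the map $a\mapsto\frac{a}{d}$ transports the group $(W,+)$ with operation $a\star b=a-d+b$ (a conjugate of the original group structure) onto the fractions with denominator $d$.
\begin{itemize}
\item Associativity and inverses follow from this transport: the neutral element is $\frac{d}{d}=1$, and the inverse of $\frac{a}{d}$ is $\frac{d-a+d}{d}$.
\item Every finite computation lives over one common denominator, so these local group structures glue to a group structure on all of $\operatorname{Q}(W)$.
\end{itemize}

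For skew distributivity, write the three elements as $\frac{x}{e'}$, $\frac{a}{d}$, $\frac{b}{d}$. Then
\[
\frac{x}{e'}\circ\Bigl(\frac{a}{d}+\frac{b}{d}\Bigr)=\frac{x\circ(a-d+b)}{e'\circ d},
\]
and by additivity of $\lambda_x$ we have $x\circ(a-d+b)=x\circ a-x\circ d+x\circ b$. The right-hand side $\frac{x\circ a}{e'd}-\frac{x\circ d}{e'd}+\frac{x\circ b}{e'd}$, computed via the common-denominator formula (denominator $e'\circ d$), yields
\[
\frac{x\circ a-e'\circ d+e'\circ d-x\circ d+e'\circ d-e'\circ d+x\circ b}{e'\circ d}.
\]
After cancellation this equals the left-hand side.
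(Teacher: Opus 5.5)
Your proof is correct. The paper gives no argument for this proposition; it is only recalled from \cite{LetFreeComm}. So there is no in-paper route to compare with, and your direct verification stands on its own. One step is left as a sketch, and it is easier than you anticipate.

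The step you flag as the main obstacle closes immediately. If $u\circ v'\circ k=u'\circ v\circ k$, then scaling $(u,v)$ by $v'\circ k$ and $(u',v')$ by $v\circ k$ produces the identical pair $(u\circ v'\circ k,\,v\circ v'\circ k)$, by commutativity of $\circ$. So equivalent representatives always have a common scaling. The scaling invariance you proved then gives well-definedness, once you note that the same computation works in the second argument. No witness has to be pushed through a nonlinear expression.

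The remaining steps are sound:
\begin{itemize}
\item The identity $\frac{a}{d}+\frac{b}{d}=\frac{a-d+b}{d}$ makes each set $\{\frac{a}{d}\mid a\in W\}$ a homomorphic image of the group $(W,\star)$.
\item These sets increase under $d\mapsto d\circ k$ and share the neutral element $\frac{d}{d}$, which does not depend on $d$. This is exactly what your ``gluing'' needs.
\item In the distributivity check the middle term is $\frac{x}{e'}=\frac{x\circ d}{e'\circ d}$, after which your cancellation is right.
\end{itemize}
A minor point: commutativity of $(\operatorname{Q}(W),\circ)$ follows directly from the definition of the Grothendieck group of a commutative monoid, not from its universal property.
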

The universal property of the Grothendieck group extends to the skew braces of fractions.

\begin{pro}\label{pro:universalskbfrac}
Let $W$ be a commutative wire and $B$ a skew brace. Let\linebreak \hbox{$f\colon W\to B$} be a wire morphism. Then, there exists a unique skew brace morphism $\hat{f}\colon \operatorname{Q}(W)\to B$ such that $\hat{f}\iota=f$. This morphism is given by $\hat{f}(\frac{u}{v})=f(u)\circ f(v)'$.
\end{pro}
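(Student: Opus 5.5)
The plan is the standard Grothendieck-group construction, carried out in two steps.

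\textbf{Existence.} Since $B$ is a skew brace, $(B,\circ)$ is a group. By the universal property, the map $v\mapsto f(v)$ is a monoid morphism $(W,\circ)\to(B,\circ)$. If $(B,\circ)$ were known to be commutative, \cref{pro:universalgroth} would give the extension immediately. In general, though, we should define the candidate directly:
\[
\hat f\left(\tfrac{u}{v}\right)=f(u)\circ f(v)',
\]
where $f(v)'$ denotes the $\circ$-inverse $\overline{f(v)}$.

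For well-definedness, suppose $u\circ v_1\circ k=u_1\circ v\circ k$. Applying $f$ and cancelling $f(k)$ in the group $(B,\circ)$ gives $f(u)\circ f(v_1)=f(u_1)\circ f(v)$. To conclude $f(u)\circ\overline{f(v)}=f(u_1)\circ\overline{f(v_1)}$ we must reorder these factors. This is the point where commutativity is needed. The image $f(W)$ consists of pairwise $\circ$-commuting elements, because $W$ is commutative and $f$ is multiplicative. Hence the $\circ$-subgroup they generate is abelian, and all manipulations take place inside it, so the rearrangement is legitimate.

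The same observation shows that $\hat f$ is multiplicative:
\[
\hat f\left(\tfrac{u\circ u_1}{v\circ v_1}\right)=f(u)\circ f(u_1)\circ\overline{f(v)\circ f(v_1)},
\]
which equals $\hat f\left(\tfrac uv\right)\circ\hat f\left(\tfrac{u_1}{v_1}\right)$ after commuting factors within the abelian subgroup.

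\textbf{Additivity.} This is the main computation. Put $a=f(u)$, $b=f(v)$, $a_1=f(u_1)$, $b_1=f(v_1)$. We need
\[
f\bigl(u\circ v_1-v\circ v_1+u_1\circ v\bigr)\circ\overline{b\circ b_1}=a\circ\overline b+a_1\circ\overline{b_1}.
\]
First use additivity and multiplicativity of $f$ to rewrite the left-hand side as
\[
(a\circ b_1-b\circ b_1+a_1\circ b)\circ\overline{b\circ b_1}.
\]
The obstacle is that right skew-distributivity is not available in $B$. To get around it, write $\overline{b\circ b_1}=c$ and, using commutativity of $b,b_1$, factor
\[
a\circ b_1-b\circ b_1+a_1\circ b=(b\circ b_1)\circ\bigl(a\circ\overline b-1+a_1\circ\overline{b_1}\bigr)
\]
via left skew-distributivity, reading $x\circ(y-z+w)=x\circ y-x\circ z+x\circ w$ with the "$1$" being the zero element $0$. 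Concretely, the left distributive law gives
\[
x\circ(p+q)=x\circ p-x+x\circ q
\]
with $x=b\circ b_1$, $p=a\circ\overline b$, $q=a_1\circ\overline{b_1}$. Since everything involved commutes multiplicatively, $x\circ p=a\circ b_1$ and $x\circ q=a_1\circ b$. Then right-multiplying by $c=\overline x$ and using commutativity yields exactly $p+q$, which is the desired identity.

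\textbf{Uniqueness.} Every fraction satisfies $\tfrac uv=\iota(u)\circ\overline{\iota(v)}$. Therefore any skew brace morphism $g$ with $g\iota=f$ must satisfy $g\left(\tfrac uv\right)=f(u)\circ\overline{f(v)}$, which is the formula for $\hat f$.
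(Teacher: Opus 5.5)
Your proof is correct. The paper does not prove this proposition itself; it recalls it from \cite{LetFreeComm} as the skew-brace analogue of \cref{pro:universalgroth}. Your route is the natural one. Your existence step is simply \cref{pro:universalgroth} applied to the monoid morphism $f\colon (W,\circ)\to\langle f(W)\rangle_\circ$, whose target is an abelian group. So well-definedness and multiplicativity come for free, and the only real content is additivity, which you obtain correctly from left skew distributivity.

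Two points should be made explicit.

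First, in the final step of the additivity computation you need $x$ to commute with $p+q$. Your remark that everything lies in the abelian subgroup generated by $f(W)$ does not cover this a priori, since $p+q$ is an additive combination. It does hold, for the following reason. We have $x\circ(p+q)=f(U)$ with $U=u\circ v_1-v\circ v_1+u_1\circ v\in W$. In $W$, $U$ commutes with $V=v\circ v_1$, so $f(U)\circ\overline{x}=\overline{x}\circ f(U)=p+q$. Equivalently, $p+q=\overline{x}\circ f(U)$ does lie in $\langle f(W)\rangle_\circ$.

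Second, $f$ preserves the multiplicative identity not ``by the universal property'' but because $f(e)=f(e)\circ f(e)$ holds in the group $(B,\circ)$. Alternatively, $e=0$ in any wire, by left distributivity with $v=w=0$. This is also what gives $\hat f\iota=f$, which you never actually check.
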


Many examples of commutative wires can be built from idempotent endomorphisms of commutative rings. The following result is a direct consequence of Proposition 2.17 of~\cite{LetFreeComm}.

\begin{pro}
\label{pro:wiresfromalg}
    Let $(A,+,\cdot)$ be a commutative ring. Let $p,\pi\colon A\to A$ be idempotent ring endomorphisms. Then, for all subgroups $G$ and $H$ of $A^\times$,
    \begin{enumerate}
        \item[\textnormal{(1)}] $(p^{-1}(G) \cap \pi^{-1}(H),+_{_1},\circ)$ is a commutative wire with 
        \[
u +_{_1} v := \pi(v)\cdot u - p(u)\cdot\pi(v) + p(u)\cdot v\quad\textit{and}\quad u\circ v=u\cdot v.
\]
    \item[\textnormal{(2)}] $(p^{-1}(G),+_{_2},\circ)$ is a commutative wire with 
    \[u +_{_2} v := u +p(u)\cdot(v-1)\quad\textit{and}\quad u\circ v=u\cdot v.\]
    \end{enumerate}
\end{pro}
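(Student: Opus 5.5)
The plan is to verify the three axioms of a commutative wire directly from the formulas, in the following order:
\begin{enumerate}
\item $(W,\circ)$ is a commutative monoid.
\item $(W,+_{_i})$ is a group.
\item The left distributivity \eqref{eq:leftdistrib} holds.
\end{enumerate}
Throughout I assume, as is implicit in the statement, that $p$ and $\pi$ are unital, so $p(1)=\pi(1)=1$. I also use idempotency in the form $p(p(u))=p(u)$ and $\pi(\pi(u))=\pi(u)$.

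\textbf{Step 1: the monoid and closure under $+_{_i}$.} For the monoid part, note that $p^{-1}(G)$ and $\pi^{-1}(H)$ are closed under products and contain $1$, because $p,\pi$ are multiplicative and $G,H$ are subgroups of $A^\times$. Commutativity is inherited from $A$.

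The key observation for closure under the additions is that $p$ (and, in (1), also $\pi$) turns the new addition into multiplication. In case (2),
\[
p(u+_{_2}v)=p(u)+p(u)(p(v)-1)=p(u)p(v).
\]
In case (1), commutativity of $A$ and idempotency give
\[
p(u+_{_1}v)=p(\pi(v))p(u)-p(u)p(\pi(v))+p(u)p(v)=p(u)p(v).
\]
Symmetrically,
\[
\pi(u+_{_1}v)=\pi(v)\pi(u)-\pi(p(u))\pi(v)+\pi(p(u))\pi(v)=\pi(u)\pi(v).
\]
Hence $W$ is closed under $+_{_i}$, and $p$, $\pi$ become homomorphisms from $(W,+_{_i})$ into $G$, $H$. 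This will be used repeatedly.

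\textbf{Step 2: the group structure.}
\begin{itemize}
\item \emph{Neutral element.} In both cases it is $1$. For instance $u+_{_2}1=u$ and $1+_{_2}v=1+p(1)(v-1)=v$, and similarly for $+_{_1}$.
\item \emph{Associativity.} This is a direct expansion. In case (2), both $(u+_{_2}v)+_{_2}w$ and $u+_{_2}(v+_{_2}w)$ equal
\[
u+p(u)(v-1)+p(u)p(v)(w-1).
\]
Here one uses $p(u+_{_2}v)=p(u)p(v)$ and $p(p(u))=p(u)$. Case (1) is the same, using the multiplicativity of $p$ and $\pi$ from Step 1.
\item \emph{Inverses.} Solve the defining equation explicitly. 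In case (2), the right inverse of $u$ is
\[
u^{-}=1+p(u)^{-1}(1-u),
\]
with $p(u^{-})=p(u)^{-1}\in G$. One checks that it is also a left inverse. In case (1), solve $u+_{_1}v=1$ linearly in $v$. This uses that $\pi(v)$ must equal $\pi(u)^{-1}$ and $p(v)$ must equal $p(u)^{-1}$, both of which are units. Alternatively, appeal to the fact that a monoid in which every element has a right inverse is a group.
\end{itemize}

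\textbf{Step 3: left distributivity.} Rather than expanding $u\circ v-u+u\circ w$ blindly, I compute $\lambda_u(w):=(-u)+_{_i}(u\circ w)$ first, where $-u$ denotes the $+_{_i}$-inverse. The goal is then to show that $(uv)+_{_i}\lambda_u(w)=u(v+_{_i}w)$.

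In case (2), using $p(-u)=p(u)^{-1}$ one gets
\[
\lambda_u(w)=1+p(u)^{-1}u(w-1).
\]
Then
\[
uv+_{_2}\lambda_u(w)=uv+p(u)p(v)\,p(u)^{-1}u(w-1)=u\bigl(v+p(v)(w-1)\bigr)=u(v+_{_2}w),
\]
which is the required identity.

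Case (1) follows the same pattern. Here one uses that $-u$ has $p(-u)=p(u)^{-1}$ and $\pi(-u)=\pi(u)^{-1}$, and that $\pi(uw)=\pi(u)\pi(w)$.

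\textbf{Main obstacle.} I expect the bookkeeping in case (1) to be the main difficulty. This covers associativity of $+_{_1}$, the explicit form of the $+_{_1}$-inverse, and the distributivity check. There $p$ and $\pi$ interact, and one must consistently use commutativity together with $p\circ p=p$ and $\pi\circ\pi=\pi$. One might expect to need $p\circ\pi=\pi\circ p$, but Step 1 shows that only multiplicativity of $p$ and $\pi$ on $W$ is required.

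If the direct computation becomes unwieldy, I would instead rewrite
\[
u+_{_1}v=\pi(v)\bigl(u-p(u)\bigr)+p(u)v,
\]
which splits $u$ into a ``$p$-part'' $p(u)$ and a ``remainder'' $u-p(u)$, and then track these two components separately.
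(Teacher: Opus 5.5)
Your proof is correct. It differs from the paper's route mainly because the paper gives no argument of its own: it states the proposition as a direct consequence of Proposition~2.17 of \cite{LetFreeComm}, whereas you verify the wire axioms by hand.

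I checked the computations you only sketched for case~(1). The right inverse is $-u=\pi(u)^{-1}+p(u)^{-1}\bigl(1-\pi(u)^{-1}u\bigr)$, and it is also a left inverse. Moreover, $\lambda_u(w)=(-u)+_{_1}(uw)=\pi(w)+p(u)^{-1}u\,\bigl(w-\pi(w)\bigr)$, after which $(uv)+_{_1}\lambda_u(w)=u\,(v+_{_1}w)$ follows just as in case~(2).

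Two details deserve a line each in a write-up:
\begin{enumerate}
\item When you ``solve linearly'' for the inverse in case~(1), the conditions $p(v)=p(u)^{-1}$ and $\pi(v)=\pi(u)^{-1}$ are only necessary. You must check that the resulting candidate really has these images. It does, by idempotency and because unital endomorphisms preserve inverses of units (e.g.\ $p(p(u)^{-1})=p(u)^{-1}$). This check is what places the candidate in $p^{-1}(G)\cap\pi^{-1}(H)$.
\item Your unitality assumption is automatic whenever the set is non-empty, since $p(u)=p(u)p(1)$ with $p(u)$ a unit forces $p(1)=1$, and likewise for $\pi$.
\end{enumerate}

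As for what each route buys: the citation is shorter and places the statement within the general theory of wires in \cite{LetFreeComm}. Your verification is self-contained and shows that no compatibility such as $p\pi=\pi p$ is needed; all that is used is that $p$ and $\pi$ turn $+_{_i}$ into multiplication. It also gives explicit formulas for the additive inverse and the $\lambda$-action.
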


The following theorem shows that the free commutative wire over $X$ belongs to the family described in Proposition~\ref{pro:wiresfromalg}. Let $\Z[\fab(X)]$ be the integral group ring  of  the free abelian group $\fab(X)$ on $X$.
Then, consider the evaluation maps 
\[
\eva_{1}\colon \Z[\fab(X)][(t_x)_{x \in X}] \to \Z[\fab(X)], \quad t_x \mapsto 1
\]
and
\[
\eva_{X}\colon \Z[\fab(X)][(t_x)_{x \in X}] \to \Z[\fab(X)], \quad t_x \mapsto x.
\]
By Proposition~\ref{pro:wiresfromalg}, the set $\eva_{X}^{-1}(\fab(X))\cap \eva_{1}^{-1}(1)$ can be given a structure of commutative wire with
        \[
        f+_{_1} g= f+\eva_X(f)(g-1)\quad \text{and}\quad f\circ g=f\cdot g
        \]
        for all $f,g\in \eva_X^{-1}(\fab(X))\cap \eva_1^{-1}(1)$. Let us denote this wire by $$(\mathcal{W}(X),+_{_1},\circ).$$ Note that if there is no ambiguity, we simply write $+$ instead of $+_{_1}$.

\begin{thm}[see \cite{LetFreeComm}, Theorem 4.21]
    Let $X$ be a non-empty set. The commutative wire $(\mathcal{W}(X),+_{_1},\circ)$ is the free commutative wire on $X$.
\end{thm}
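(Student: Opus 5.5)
We verify the universal property of $(\mathcal{W}(X),+_{_1},\circ)$ with respect to the canonical map $X\to\mathcal{W}(X)$, $x\mapsto t_x$. Each $t_x$ lies in $\mathcal{W}(X)$, since $\eva_X(t_x)=x\in\fab(X)$ and $\eva_1(t_x)=1$. That $\mathcal{W}(X)$ is a commutative wire is already granted by Proposition~\ref{pro:wiresfromalg}(1), applied with $p=\eva_X$, $\pi=\eva_1$ (both idempotent ring endomorphisms of $A=\Z[\fab(X)][(t_x)_{x\in X}]$ when $\Z[\fab(X)]$ is viewed as a subring of $A$), $G=\fab(X)$ and $H=\{1\}$. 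So the work is existence and uniqueness of extensions.

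\textbf{Uniqueness.} First I would show that $\mathcal{W}(X)$ is generated as a wire by $\{t_x\}$. Elements of $\fab(X)\subseteq\mathcal{W}(X)$ are recovered as follows. The wire sum $t_x-t_x$ gives $0_{\mathcal W}$, which is the constant $1$. The element $\eva_X(t_x)=x$ should be expressible via the operations, for instance through $t_x\circ(0-t_x)$-type combinations; one computes $-_{1}f=\eva_X(f)^{-1}(2\eva_X(f)-f)$-like formulas. Using these, one reaches all monomials $m\,t^{\alpha}$ with coefficients of the right shape. A general element is a polynomial $\sum_\alpha c_\alpha t^\alpha$ with $c_\alpha\in\Z[\fab(X)]$ subject to the two evaluation constraints. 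By writing $f+_1 g-\ldots$ one decomposes it into wire sums of "monomial-type" pieces. The cleanest route I would try is to show that every $f\in\mathcal W(X)$ can be written as $u+_{1}\bigl(\text{element with }\eva_X=1\bigr)$ with $u$ a product of $t_x$'s and their inverses. The second summand can then be handled by induction on the total degree in the $t_x$.

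\textbf{Existence.} Given a commutative wire $V$ and a map $g\colon X\to V$, I would define $\hat g$ on the additive-multiplicative normal form found above. Then I would check that it respects $+_1$ and $\circ$. To organise this, I would embed $V$ into a skew brace of fractions $\operatorname{Q}(V)$ (Proposition~\ref{pro:fractions}). There the $\lambda$-action turns $(\operatorname{Q}(V),+)$ into a $\Z[\fab(X)]$-module-like object via $x\mapsto\lambda_{g(x)}$. Polynomials in the $t_x$ correspond to iterated $\circ$-products and $\Z[\fab(X)]$-coefficients correspond to $\lambda$-twisted sums. The map then reads $\sum c_\alpha t^\alpha\mapsto\sum \lambda_{c_\alpha}(g^{\alpha})$, suitably corrected by the constraint $\eva_1=1$. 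This is guided by the identity $u\circ(v+w)=u\circ v-u+u\circ w$.

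\textbf{Main obstacle.} I expect the main difficulty to be well-definedness of $\hat g$, namely independence of the chosen decomposition. This is where the non-abelian $+$ in $V$ bites. Rather than proving it by hand, I would rely on the cited reference: the statement is exactly \cite[Theorem 4.21]{LetFreeComm}, so within this paper the proof reduces to invoking that result. I would only add the compatibility remarks above, namely that the description via Proposition~\ref{pro:wiresfromalg} matches the wire constructed there.
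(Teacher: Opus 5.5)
The paper gives no argument of its own here: the statement is quoted from \cite[Theorem~4.21]{LetFreeComm}. The only in-paper input is that \cref{pro:wiresfromalg}(1), with $p=\eva_X$, $\pi=\eva_1$, $G=\fab(X)$, $H=\{1\}$, makes $\mathcal{W}(X)$ a commutative wire containing the $t_x$. Your closing reduction is therefore exactly the paper's route and is fine.

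The self-contained sketch around it, however, would not work as written, for three reasons.
\begin{itemize}
\item First, $\fab(X)\not\subseteq\mathcal{W}(X)$. For $a\in\fab(X)$ one has $\eva_1(a)=a$, so $\mathcal{W}(X)\cap\fab(X)=\{1\}$ (compare \cref{rem:triviialint}). In particular $x=\eva_X(t_x)$ is not an element of the wire and cannot be produced by wire operations. Constants from $\fab(X)$ enter only as coefficients, via additive conjugation: if $\eva_X(h)=1$, then $(u+_{_1}h)+_{_1}(-_{_1}u)=1+\eva_X(u)(h-1)$.
\item Second, the additive inverse is $-_{_1}f=1+\eva_X(f)^{-1}(1-f)$. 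Your formula $\eva_X(f)^{-1}(2\eva_X(f)-f)$ does not even lie in $\eva_1^{-1}(1)$ unless $\eva_X(f)=1$.
\item Third, the $t_x$ have no $\circ$-inverses. The units of $\Z[\fab(X)][(t_x)_{x\in X}]$ are $\pm\fab(X)$, so $(\mathcal{W}(X),\circ)$ has trivial unit group, as in the proof of \cref{freeabeliangroup}. To prescribe $\eva_X(u)$, use that $\eva_X$ is multiplicative for both $+_{_1}$ and $\circ$; for example $\eva_X(t_x+_{_1}t_x)=x^2$ and $\eva_X(-_{_1}t_x)=x^{-1}$.
\end{itemize}

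The existence part has a more serious flaw. You cannot embed an arbitrary commutative wire $V$ into $\operatorname{Q}(V)$. The canonical map is injective only when $(V,\circ)$ is cancellative, whereas the universal property must be checked against \emph{all} commutative wires. For instance, $\Z$ with $a\oplus b=a+b-1$ and the usual product is a commutative wire in which $0$ is $\circ$-absorbing. Its skew brace of fractions is therefore trivial, and nothing constructed inside it can be pulled back to a wire morphism into $V$. Moreover, $(V,+)$ is in general non-abelian, so an expression such as $\sum\lambda_{c_\alpha}(g^\alpha)$ with $c_\alpha\in\Z[\fab(X)]$ has no meaning.

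Constructing the extension for an arbitrary $V$ and proving it well defined is precisely the content of \cite[Theorem~4.21]{LetFreeComm}. So, like the paper, your proof rests entirely on that citation. The heuristic sketch should be dropped or rebuilt rather than presented as an outline of a proof.
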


\begin{thm}[see \cite{LetFreeComm}, Theorem 4.23]
\label{thm:freecomskb}
    Let $X$ be a non-empty set. The skew brace $\operatorname{FSB}_{\mathcal{C}, X}$ is $\operatorname{Q}(\mathcal{W}(X))$, the skew brace of fractions of $\mathcal{W}(X)$.
\end{thm}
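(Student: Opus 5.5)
The statement is Theorem~\ref{thm:freecomskb}: $\operatorname{FSB}_{\mathcal{C},X}\cong\operatorname{Q}(\mathcal{W}(X))$. The plan is to combine the two universal properties already recorded: freeness of $\mathcal{W}(X)$ among commutative wires, and the universal property of the skew brace of fractions (Proposition~\ref{pro:universalskbfrac}). The point is that the forgetful passage from commutative skew braces to commutative wires has $\operatorname{Q}$ as a left adjoint. Composing this adjunction with the free-wire adjunction gives the free commutative skew brace, exactly as in the proofs of Theorems~\ref{freesolutionisfreeskew} and~\ref{StrGrpFSBn}.

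The steps are as follows.

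First, $\operatorname{Q}(\mathcal{W}(X))$ lies in $\mathcal{C}$: by Proposition~\ref{pro:fractions} it is a skew brace, and its multiplicative group is a Grothendieck group of a commutative monoid, hence abelian.

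Second, every commutative skew brace $B$ is in particular a commutative wire, since a group is a monoid and left skew distributivity is \eqref{eq:leftdistrib}. Skew brace morphisms are then wire morphisms.

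Third, let $g\colon X\to B$ be a map with $B\in\mathcal{C}$. Freeness of $\mathcal{W}(X)$ gives a unique wire morphism $f\colon\mathcal{W}(X)\to B$ extending $g$. Proposition~\ref{pro:universalskbfrac} then gives a unique skew brace morphism $\hat f\colon\operatorname{Q}(\mathcal{W}(X))\to B$ with $\hat f\iota=f$, so $\hat f\iota|_X=g$.

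Fourth, uniqueness. Suppose $h\colon\operatorname{Q}(\mathcal{W}(X))\to B$ is a skew brace morphism with $h\iota|_X=g$. Then $h\iota$ is a wire morphism extending $g$, so $h\iota=f$ by freeness of the wire, and hence $h=\hat f$ by the uniqueness in Proposition~\ref{pro:universalskbfrac}.

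The main obstacle is ensuring the map $X\to\operatorname{Q}(\mathcal{W}(X))$ is the right one and that the generating set is genuinely $X$. Uniqueness handles the latter automatically. The remaining point is that $\iota$ composed with the inclusion of $X$ into $\mathcal{W}(X)$ is the canonical map; injectivity is not needed for the universal property. A subtle point to check is that Proposition~\ref{pro:universalskbfrac} is applied with $B$ commutative, so that $f(v)$ is invertible in $(B,\circ)$; this holds since $B$ is a group. The inverse written $f(v)'$ is $\overline{f(v)}$, and $\hat f$ is well defined on equivalence classes because $(B,\circ)$ is cancellative and commutative.
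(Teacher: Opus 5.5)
Your proposal is correct and is essentially the intended argument. The paper only cites \cite[Theorem~4.23]{LetFreeComm}, but the two facts it recalls just before it, namely the freeness of $\mathcal{W}(X)$ among commutative wires and the universal property of the skew brace of fractions (Proposition~\ref{pro:universalskbfrac}), combine exactly as you combine them, using that commutative skew braces are commutative wires.

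The one step you leave implicit is that $\iota\colon\mathcal{W}(X)\to\operatorname{Q}(\mathcal{W}(X))$ is itself a wire morphism, which you need for $h\iota$ to be one. It holds because \eqref{eq:leftdistrib} forces the identity $e$ of $(W,\circ)$ to equal $0$, so that $\frac{u}{e}+\frac{v}{e}=\frac{u+v}{e}$.
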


\begin{cor}
    The free two-sided skew brace on $\{x\}$ is $\operatorname{FSB}_{\mathcal{T}, \{x\}}=\operatorname{Q}(\mathcal{W}(\{x\}))$.
\end{cor}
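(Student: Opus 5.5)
The statement to prove is the corollary: the free two-sided skew brace on one generator $\{x\}$ coincides with the free commutative skew brace $\operatorname{Q}(\mathcal{W}(\{x\}))$. The plan is to show directly that $\operatorname{FSB}_{\mathcal{C},\{x\}}$ satisfies the universal property of the free object in $\mathcal{T}$ on $\{x\}$. The description as $\operatorname{Q}(\mathcal{W}(\{x\}))$ then follows from Theorem~\ref{thm:freecomskb}.

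First, $\operatorname{FSB}_{\mathcal{C},\{x\}}$ is an object of $\mathcal{T}$. A skew brace with abelian multiplicative group is two-sided, as noted before Example~\ref{two-sided two-generated non-abelian}. Concretely, right skew distributivity is obtained from the left one by commuting the factors:
\[
(b+c)\circ a=a\circ(b+c)=a\circ b-a+a\circ c=b\circ a-a+c\circ a.
\]

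Next, take any two-sided skew brace $T$ and any element $t\in T$. Let $T_0=\langle t\rangle$ be the sub-skew brace of $T$ generated by $t$. A sub-skew brace of a two-sided skew brace is again two-sided, since both distributive laws restrict. Proposition~\ref{two sided brace abelian generated is abelian } applies to $T_0$ with $A=\{t\}$, because a single element trivially commutes with itself. Hence $(T_0,\circ)$ is abelian, i.e.\ $T_0\in\mathcal{C}$.

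By freeness of $\operatorname{FSB}_{\mathcal{C},\{x\}}$ there is a skew brace morphism $\operatorname{FSB}_{\mathcal{C},\{x\}}\to T_0$ sending $x\mapsto t$. Composing with the inclusion $T_0\hookrightarrow T$ gives the required extension to $T$.

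For uniqueness, any morphism $\operatorname{FSB}_{\mathcal{C},\{x\}}\to T$ has image equal to the sub-skew brace generated by the image of $x$, since $\operatorname{FSB}_{\mathcal{C},\{x\}}$ is generated by $x$. So every such morphism sending $x\mapsto t$ lands in $T_0$. Two such morphisms are therefore morphisms into $T_0\in\mathcal{C}$ agreeing on $x$, and they coincide by the uniqueness part of the universal property in $\mathcal{C}$.

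Thus $\operatorname{FSB}_{\mathcal{C},\{x\}}$ is the free object on $\{x\}$ in $\mathcal{T}$, and Theorem~\ref{thm:freecomskb} identifies it with $\operatorname{Q}(\mathcal{W}(\{x\}))$. The only substantive input is Proposition~\ref{two sided brace abelian generated is abelian }. The step needing a little care is the uniqueness claim, which depends on the free commutative skew brace being generated by $x$; this holds by the standard construction of free algebras in a variety.
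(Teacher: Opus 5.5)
Your proposal is correct and follows the same route as the paper. The paper's proof is a single line: the result follows directly from the proposition that a two-sided skew brace generated by pairwise $\circ$-commuting elements has abelian multiplicative group, together with Theorem~\ref{thm:freecomskb}. You have written out the universal-property check the paper leaves implicit: existence through the commutative sub-skew brace $\langle t\rangle$, and uniqueness through generation by $x$.
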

\begin{proof}
    This is a direct consequence of Proposition~\ref{two sided brace abelian generated is abelian } and Theorem~\ref{thm:freecomskb}.
\end{proof}

\begin{pro}\label{freeabeliangroup}
    Let $X$ be a non-empty set. The multiplicative group of $\operatorname{FSB}_{\mathcal{C}, X}$ is free abelian.
\end{pro}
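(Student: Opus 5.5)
By \cref{thm:freecomskb}, the multiplicative group of $\operatorname{FSB}_{\mathcal{C},X}$ is the Grothendieck group $\operatorname{Q}(\mathcal{W}(X),\circ)$. The plan is to identify the commutative monoid $(\mathcal{W}(X),\cdot)$ explicitly inside $\Z[\fab(X)][(t_x)_{x\in X}]$ and then compute its group of fractions. The monoid is cancellative, since it sits inside the integral domain $A=\Z[\fab(X)][(t_x)]$; the group ring of a free abelian group is a Laurent polynomial ring, hence a domain. Therefore $\operatorname{Q}(\mathcal{W}(X))$ embeds in the multiplicative group of the fraction field $K=\operatorname{Frac}(A)$, as the subgroup generated by $\mathcal{W}(X)$.

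The next step is to use unique factorisation. The ring $A$ is a localisation of a polynomial ring over $\Z$, namely $\Z[(x^{\pm1})_{x\in X},(t_x)]$, so it is a UFD (for infinite $X$, pass to directed unions of finitely generated pieces). Its units are $\pm x^{m}$ with $x^m\in\fab(X)$. Hence $K^\times\cong\{\pm1\}\times\fab(X)\times \bigoplus_{p}\Z$, where $p$ runs over the primes of $A$ up to associates. Every element $f\in\mathcal{W}(X)$ satisfies $\eva_1(f)=1$, and this condition kills the sign and unit ambiguity. Concretely, write $f=u\prod p_i^{e_i}$ and normalise each prime $p$ by choosing an associate with $\eva_1(p)$ positive, or more carefully pick representatives compatibly. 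Then $\eva_1(f)=1$ forces the unit part $u$ to be determined by the primes. This gives an injective homomorphism $\operatorname{Q}(\mathcal{W}(X))\to \bigoplus_p \Z$, sending a fraction to its exponent vector, possibly after quotienting by $\pm$.

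The main obstacle lies here. First, $\eva_1(p)$ could be $0$ or a non-unit for some primes $p$, so the normalisation needs care. Such primes $p$ can divide elements $f$ with $\eva_1(f)=1$ only if the product of the $\eva_1$-values is $1$, so each factor has $\eva_1(p_i)$ a unit of $\Z[\fab(X)]$. The plan is to normalise every relevant prime so that $\eva_1(p)=1$, by multiplying by the unit $\eva_1(p)^{-1}\in\pm\fab(X)$. With this choice, $u$ must equal $1$, and the map $f\mapsto(e_p)_p$ is injective on $\operatorname{Q}(\mathcal{W}(X))$. Second, we must control the $\eva_X$ condition, namely that $\eva_X(f)$ is a monomial in $\fab(X)$; it is automatically satisfied for products and quotients within the image, so it plays no role in injectivity. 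Granting these two points, $\operatorname{Q}(\mathcal{W}(X))$ is isomorphic to a subgroup of the free abelian group $\bigoplus_p\Z$. A subgroup of a free abelian group is free abelian, by the standard theorem, valid for arbitrary rank. This concludes the proof.
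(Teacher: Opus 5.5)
Your proposal is correct and follows essentially the paper's argument. You use unique factorisation in $\Z[\fab(X)][(t_x)_{x\in X}]$ and normalise each prime factor of an element with $\eva_1=1$ by the unit $\eva_1(p)^{-1}$, which forces the unit part to be $1$; you then embed $\operatorname{Q}(\mathcal{W}(X))$ into a free abelian group and conclude because subgroups of free abelian groups are free. The only difference is packaging: the paper proves the monoid $\eva_1^{-1}(1)$ is free commutative and uses $\operatorname{Q}(\mathcal{W}(X))\hookrightarrow\operatorname{Q}(\eva_1^{-1}(1))$, whereas you map directly to exponent vectors in $\operatorname{Frac}(A)^\times$.
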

\begin{proof}
    It is well known that $R=\Z[\fab(X)][(t_x)_{x\in X}]$ is a unique factorisation domain. We claim that the monoid $M=(\eva_1^{-1}(1),\cdot)$ is free commutative. Since the units of $R$ are precisely the elements of
$\pm\fab(X)$, and $\eva_1$ restricts to the identity on $\Z[\fab(X)]$, we have $M^\times=\{1\}$.

An element of $M$ is irreducible in $M$ if and only if it is
irreducible in $R$. Indeed, if $m\in M$ is irreducible in $M$ and $m=ab$ in $R$, then both $\eva_1(a)$ and $\eva_1(b)$ are units. Normalising $a$ and $b$ by their evaluations yields a factorisation of $m$ in $M$; hence one of $a,b$ is a unit of $R$. The converse follows immediately from $R^\times\cap M=\{1\}$.

Now let
\[
m=u m_1\cdots m_k
\]
be a factorisation of $m\in M$ into irreducibles in $R$. Since $\eva_1(m)=1$, each $\eva_1(m_i)$ is a unit, and
\[
n_i=\eva_1(m_i)^{-1}m_i
\]
is an irreducible element of $M$. Moreover,
\[
m=n_1\cdots n_k.
\]
Uniqueness follows from uniqueness in $R$, since two associates
belonging to $M$ must be equal. Thus $M$ is a reduced unique
factorisation monoid, and hence a free commutative monoid.

Consequently, $\operatorname{Q}(M)$ is a free abelian group. The
embedding
\[
(\mathcal{W}(X),\circ)\hookrightarrow M
\]
induces an embedding
\[
(\operatorname{Q}(\mathcal{W}(X)),\circ)
\hookrightarrow \operatorname{Q}(M),
\]
since $M$ is cancellative. Therefore
$(\operatorname{Q}(\mathcal{W}(X)),\circ)$, being a subgroup of a free
abelian group, is itself free abelian. So the result now follows from Theorem~\ref{thm:freecomskb}.
\end{proof}

\begin{lem}
\label{lem:torsionfreewire}
    Let $W$ be a commutative wire with torsion-free additive group and cancellative multiplicative monoid. Then, the group $(\operatorname{Q}(W),+)$ is also torsion-free.
\end{lem}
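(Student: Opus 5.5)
Suppose $n\cdot \frac{u}{v}=0$ in $(\operatorname{Q}(W),+)$ for some $n\geq 1$; we must show $\frac{u}{v}=0$. The neutral element of $(\operatorname{Q}(W),+)$ is $\frac{e}{e}$, where $e$ is the neutral element of $(W,\circ)$, which coincides with the additive zero of $W$ by \eqref{eq:leftdistrib}. The plan is to compute iterated sums of a fraction explicitly in terms of the wire operations, transport the relation into $W$, and then use torsion-freeness of $(W,+)$ together with cancellativity of $(W,\circ)$.

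\emph{Step 1: a formula for multiples.} By induction on $k$, using the definition of $+$ in Proposition~\ref{pro:fractions}, we show
\[
k\cdot\frac{u}{v}=\frac{k\cdot_W (u\circ v^{\circ(k-1)}) - (k-1)\cdot_W v^{\circ k}}{v^{\circ k}}.
\]
This is cleaner after putting $a=u\circ v^{\circ(k-1)}$ and $b=v^{\circ k}$, so that $\frac{u}{v}=\frac{a'}{b'}$ with common denominator. Better still, we reduce to a fixed denominator. Every fraction with denominator $v$ can be written in terms of $\lambda$-like maps. The map $x\mapsto \frac{x}{v}$ sends $W$ into $\operatorname{Q}(W)$, and for $x,y\in W$ one gets
\[
\frac{x}{v}+\frac{y}{v}=\frac{x\circ v - v\circ v + y\circ v}{v\circ v}=\frac{(x - v + y)\circ v}{v\circ v}=\frac{x-v+y}{v},
\]
using left distributivity $v\circ(x-v+y)=v\circ x - v + v\circ(-v) \ldots$. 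This requires care: the left distributivity gives $v\circ(x+(-v)+y)=v\circ x-v+v\circ(-v)-v+v\circ y$, so I instead expect the identity $x\circ v - v\circ v + y\circ v = v\circ(x-v+y)$ to follow from the formula $v\circ(p-q+r)=v\circ p-v\circ q+v\circ r$, which is a consequence of \eqref{eq:leftdistrib} (since $v\circ(-q)=v-v\circ q+v$). Commutativity of $\circ$ and cancellation then give the displayed equality. Hence the map $x\mapsto \frac{x}{v}$ is an isomorphism from the group $(W,\ +_v)$, where $x+_v y=x-v+y$, onto the set of fractions with denominator $v$, with the addition of $\operatorname{Q}(W)$ restricted there. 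Note that $(W,+_v)$ is isomorphic to $(W,+)$ via the map $x\mapsto x-v$.

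\emph{Step 2: conclude.} From $n\cdot\frac{u}{v}=\frac{e}{e}=\frac{v}{v}$, Step~1 gives $\frac{n\cdot_{v}u}{v}=\frac{v}{v}$. By the definition of $\sim$ and cancellativity, this yields $n\cdot_v u=v$ in $W$. Translating via $x\mapsto x-v$, we obtain $n\cdot(u-v)=0$ in $(W,+)$. Torsion-freeness then gives $u=v$, so $\frac{u}{v}=0$.

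The main obstacle is Step~1: checking that fractions sharing a denominator add by the rule $x-v+y$. This requires the three-term distributivity identity for wires and the compatibility of the equivalence relation with it. Cancellativity is used essentially twice, to identify $\frac{z\circ v}{v\circ v}$ with $\frac{z}{v}$ and to pass from equality of fractions to equality in $W$.
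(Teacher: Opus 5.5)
Your argument is correct and is essentially the paper's: the identity $\frac{x}{v}+\frac{y}{v}=\frac{x-v+y}{v}$, which you get from $v\circ(p-q+r)=v\circ p-v\circ q+v\circ r$, is the inductive step behind the paper's formula $n\frac{u}{v}=\frac{(n-1)(u-v)+u}{v}=\frac{n(u-v)+v}{v}$, and both proofs then conclude by cancellativity and torsion-freeness. Delete your first displayed formula for $k\cdot\frac{u}{v}$, since it is not valid in general when $(W,+)$ is non-abelian and you never use it; also note that $\frac{z\circ v}{v\circ v}=\frac{z}{v}$ follows directly from the definition of $\sim$ and does not need cancellativity.
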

\begin{proof}
    One can show by induction that \[n\frac{u}{v}=\frac{(n-1)(u-v)+u}{v}\]
    for all integer $n\geq 1$ and $u,v\in W$. Thus, $n\frac{u}{v}=0$ if and only if $n(u-v)=0$, that is $u=v$ since $W$ is torsion-free.
\end{proof}

\begin{pro}\label{additivegrouptorsionfree}
    Let $X$ be a non-empty set. The additive group of $\operatorname{FSB}_{\mathcal{C}, X}$ is torsion-free.
\end{pro}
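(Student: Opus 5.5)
The plan is to combine Theorem~\ref{thm:freecomskb} with Lemma~\ref{lem:torsionfreewire}. Since $\operatorname{FSB}_{\mathcal{C},X}=\operatorname{Q}(\mathcal{W}(X))$, it is enough to check the two hypotheses of Lemma~\ref{lem:torsionfreewire} for the wire $W=\mathcal{W}(X)$: that $(\mathcal{W}(X),\circ)$ is cancellative, and that $(\mathcal{W}(X),+_{_1})$ is torsion-free.

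Cancellativity is immediate. The multiplication on $\mathcal{W}(X)$ is the ring product of $R=\Z[\fab(X)][(t_x)_{x\in X}]$, and $R$ is an integral domain, being a polynomial ring over the group ring of a free abelian group, which is a Laurent polynomial ring. Moreover, no element of $\mathcal{W}(X)$ is zero, since $\eva_1(f)=1$. So $uw=vw$ forces $u=v$.

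Torsion-freeness of $+_{_1}$ is the main point. I would compute $n$-fold sums explicitly. Write $p=\eva_X$, which is an idempotent ring endomorphism of $R$, and let $f\in\mathcal{W}(X)$ with $g=p(f)\in\fab(X)$. Using $f+_{_1}h=f+g(h-1)$ and $p(f+_{_1}h)=g\,p(h)$, an induction should give
\[
nf=\bigl(1+g+\dots+g^{n-1}\bigr)(f-1)+1 \qquad (n\geq 1).
\]
The neutral element is $1$. So $nf=1$ means $(1+g+\dots+g^{n-1})(f-1)=0$ in the domain $R$. The factor $1+g+\dots+g^{n-1}$ is nonzero in $\Z[\fab(X)]$: if $g=1$ it equals $n$, and otherwise its terms are distinct group elements. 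Hence $f=1$, so no nonzero element has finite order. Negative multiples reduce to positive ones. The step to verify carefully is this closed formula for $nf$, in particular that $p$ applied to the partial sums behaves multiplicatively as claimed. Lemma~\ref{lem:torsionfreewire} then gives that $(\operatorname{Q}(\mathcal{W}(X)),+)$ is torsion-free, which is the statement.
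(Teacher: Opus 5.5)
Your proposal is correct and follows the paper's proof: you reduce via Lemma~\ref{lem:torsionfreewire} to torsion-freeness of $(\mathcal{W}(X),+_{_1})$, using the same closed formula $nf=\bigl(\sum_{k=0}^{n-1}\eva_X(f)^k\bigr)(f-1)+1$. The differences are minor: you conclude $f=1$ by cancelling the nonzero factor in the domain $R$, whereas the paper uses the $t$-degree of $f$, and you explicitly verify the cancellativity hypothesis of that lemma, which the paper leaves implicit.
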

\begin{proof}
    By Lemma~\ref{lem:torsionfreewire}, it is enough to see that $(\mathcal{W}(X),+_1)$ is torsion-free. If $f\in (\mathcal{W}(X),+_1)$, then $$\underbrace{f+_1\ldots+_1f}_{\textnormal{$n$ times}}= \left(\sum_{k=0}^{n-1}\eva_X(f)^k\right)f-\sum_{k=1}^{n-1}\eva_X(f)^k.$$ Since the sum $\sum_{k=0}^{n-1}\eva_X(f)^k$ is non-zero, we can conclude as long as the degree of $f$ is $>0$. But, the only polynomial in $\mathcal{W}(X)$ of degree $0$ is $1$ which concludes the proof.
\end{proof}
Our goal for the rest of this section is to show that $\operatorname{FSB}_{\mathcal{C}, X}$ is residually finite. For this purpose we are going to embed it in a skew brace that consists of power series.
\begin{exa}
    \label{ex:series}
    Let $R$ be a commutative ring, $Y$ a set and $\epsilon\colon R[[Y]]\to R$ the map that associates to every series its constant coefficient. By Proposition~\ref{pro:wiresfromalg}, for $G\subset R^\times$ a subgroup, the set $\epsilon^{-1}(G)$ has a commutative skew brace structure with
    \[f+_{_2} g= f+\epsilon(f)(g-1)\quad\text{and}\quad f\circ g=f\cdot g.
        \]
\end{exa}
We are interested in the skew brace $U$ obtained in Example~\ref{ex:series} by setting $R=\Z[\fab(X)]$, $Y=\{t_x\mid x\in X\}$ and $G=\fab(X)$. At a first glance, the additive operations of $\operatorname{Q}(\mathcal{W}(X))$ and $U$ don't seem compatible.
However, operating a change in variable, one can have a different description of the free commutative wire on $X$. This time, considering the evaluations \[
\eva_0\colon \Z[\fab(X)][(t_x)_{x \in X}] \to \mathbb{Z}[\fab(X)], \quad t_x \mapsto 0
\]
and
\[
\eva_{1-X}\colon \mathbb{Z}[\fab(X)][(t_x)_{x \in X}] \to \mathbb{Z}[\fab(X)], \quad t_x \mapsto 1-x,
\]
$\eva_{0}^{-1}(\fab(X))\cap \eva_{1-X}^{-1}(1)$ has a structure of commutative wire with
        \[
        f+_{_1} g= f+\eva_{0}(f)(g-1)\quad \text{and}\quad f\circ g=f\cdot g
        \]
        for all $f,g\in \eva_0^{-1}(\fab(X))\cap \eva_{1-X}^{-1}(1)$.
Let us denote this wire by $\mathcal{V}(X)$.       
The wire~$\mathcal{W}(X)$ is isomorphic to $\mathcal{V}(X)$: apply the change of variable $t_x\mapsto t_x+x$. Since the wire $\mathcal{V}(X)$ is a subwire of $B$, by Proposition~\ref{pro:universalskbfrac}, there is a unique embedding $\operatorname{Q}(\mathcal{V}(X))\to U$ mapping $\frac{f}{g}$ to $fg^{-1}$.

\begin{rem}
\label{rem:triviialint}
    $Q(\mathcal{V}(X))\cap\fab(X)=\{1\}$. Indeed, let $a\in Q(\mathcal{V}(X))\cap\fab(X)$, there exist $u,v\in \mathcal{V}(X)$ such that $u=av$. Hence $1=\eva_{1-X}(u)=a\eva_{1-X}(v)=a$.
\end{rem}

\begin{pro}\label{resfincomm}
    The skew brace $\operatorname{FSB}_{\mathcal{C}, X}$ is residually finite for every non-empty set $X$.
\end{pro}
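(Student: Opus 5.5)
We will realise $\operatorname{FSB}_{\mathcal{C},X}\cong \operatorname{Q}(\mathcal{V}(X))$ (Theorem~\ref{thm:freecomskb} together with the isomorphism $\mathcal{W}(X)\cong\mathcal{V}(X)$) as a sub-skew brace of the power series skew brace $U=\epsilon^{-1}(\fab(X))\subseteq R[[Y]]$ with $R=\Z[\fab(X)]$, $Y=\{t_x\}$. We separate a nonzero element by finite quotients of $U$ of truncation type, and then restrict to $\operatorname{Q}(\mathcal{V}(X))$. As in the proof of \cref{classnresfinhopfia}, we first reduce to $X$ finite. An element involves only finitely many generators, and sending the others to the neutral element gives an epimorphism $\operatorname{FSB}_{\mathcal{C},X}\to\operatorname{FSB}_{\mathcal{C},X_1}$ that keeps it nontrivial.

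Fix a nontrivial element $\gamma=fg^{-1}\in \operatorname{Q}(\mathcal{V}(X))\subseteq U$, so $\gamma\neq 1$. There are two cases. If $\gamma\notin \fab(X)$, then $\gamma$ has a nonzero coefficient $c\in R$ at some monomial $t^\alpha$ with $\alpha\neq 0$; if $\gamma\in\fab(X)$, then Remark~\ref{rem:triviialint} gives $\gamma=1$, which is excluded. So we may assume the first case. Moreover, if $\epsilon(\gamma)=a\neq 1$, then $\gamma$ is already separated by the morphism $\epsilon\colon U\to \fab(X)$ onto a trivial skew brace, followed by a finite quotient of the residually finite group $\fab(X)$. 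Hence this case is fine too, and the main case is $\epsilon(\gamma)=1$ with $\gamma\neq 1$.

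For the main case, choose a finite quotient $\bar R=R/I$ of the ring $R=\Z[\fab(X)]$ in which the coefficient $c$ survives. This is possible since $\Z[\Z^k]$ is residually finite as a ring: map to $\Z/p[\Z^k/m\Z^k]$ for suitable $p,m$. Also choose $N>|\alpha|$. We then consider
\[U_{N,I}=\epsilon^{-1}(\overline{\fab(X)})\subseteq \bar R[[Y]]/(\text{monomials of degree}\ge N),\]
where $\overline{\fab(X)}$ is the (finite) image of $\fab(X)$ in $\bar R^\times$. By Proposition~\ref{pro:wiresfromalg}(2), applied to the commutative ring $\bar R[Y]/(Y)^N$ with the idempotent endomorphism $\epsilon$, this is a commutative skew brace. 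It is finite, and reduction gives a map $U\to U_{N,I}$ that is a morphism of skew braces, because the operations $f+_2g=f+\epsilon(f)(g-1)$ and $f\cdot g$ are polynomial and commute with reduction. Composing with $\operatorname{Q}(\mathcal{V}(X))\hookrightarrow U$, we obtain a morphism to a finite skew brace. Its kernel is an ideal of finite index, and it does not contain $\gamma$, since the coefficient of $t^\alpha$ survives.

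The main obstacle is checking that truncation modulo $(Y)^N$ together with reduction of coefficients is genuinely compatible with the wire structure: the quotient must be a ring, $\epsilon$ must descend to an idempotent endomorphism, and $\epsilon^{-1}$ of the finite subgroup must be closed under $+_2$ and its inverse. This is handled by re-verifying Proposition~\ref{pro:wiresfromalg}(2) in that quotient ring. A secondary point is handling the case $\epsilon(\gamma)=1$ separately from the coefficient argument. Here one should note that the coefficient of $t^\alpha$ in $\gamma-1$ being nonzero in $R$ is exactly what the finite ring quotient must detect.
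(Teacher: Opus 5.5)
Your proof is correct. Its skeleton is the paper's: embed $\operatorname{Q}(\mathcal{V}(X))$ in the power-series skew brace $U$, then use a ring map that reduces coefficients and truncates to push it onto a finite commutative skew brace $p^{-1}(G)$ supplied by Proposition~\ref{pro:wiresfromalg}(2).

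The real difference is the choice of finite coefficient ring.

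\textbf{The paper's route.} It maps $\Z[\fab(X)]$ onto a finite field $\F_q$ through a character $\fab(X)\to\F_q^\times$. It must then show that the Laurent-polynomial coefficient $\beta$ of the least nonconstant monomial $m$ of $f$ (in graded-lex order) does not vanish at some point of a torus over $\F_q$. This uses density of $(\overline{\F_p}^\times)^n$ in $\overline{\F_p}^{\,n}$. The paper then truncates above $m$.

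\textbf{Your route.} You use residual finiteness of $\Z[\Z^k]$ via the quotients $\F_p[(\Z/d\Z)^k]$. These keep any prescribed coefficient alive by an elementary choice of $p$ and $d$. You truncate by total degree, so any nonconstant monomial in the support works and no monomial order is needed.

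\textbf{What each buys.} Your coefficient ring is not a field, which is harmless because Proposition~\ref{pro:wiresfromalg} works over any commutative ring. You do need the preliminary reduction to finite $X$. The paper handles arbitrary $X$ directly by sending the variables not occurring in $m$ to $0$.

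Three small points to tighten:
\begin{itemize}
\item Proposition~\ref{pro:wiresfromalg}(2) only gives a commutative wire. You should add, as the paper does, that every element of $\epsilon^{-1}(\overline{\fab(X)})$ is a unit plus a nilpotent, hence invertible, with inverse again in that set.
\item The retraction $\operatorname{FSB}_{\mathcal{C},X}\to \operatorname{FSB}_{\mathcal{C},X_1}$ keeps your element nontrivial because its composite with the map $\operatorname{FSB}_{\mathcal{C},X_1}\to\operatorname{FSB}_{\mathcal{C},X}$ induced by inclusion is the identity. Say this explicitly.
\item Your case split is redundant. Either Remark~\ref{rem:triviialint} alone or the $\epsilon$-separation alone already yields a nonzero coefficient at a nonconstant monomial.
\end{itemize}
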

\begin{proof}
   Fix a well-order on $X$ and order the monomials in $X$ following the graded lexicographic order. Let $q$ be a power of a prime number, consider the ring $\F_q[Y]$ with $Y\subset X$ and take the quotient by the ideal generated by all monomials greater than some fixed monomial $m$. Denote by $\eva_0$ the evaluation of polynomials $x\mapsto 0$ for all $x\in Y$. The evaluation map lifts to the quotient. Then define a wire structure on the subset $\eva_0^{-1}(\F_q^\times)$  of the quotient as
    \[   f+_{_2} g= f+\eva_0(f)(g-1)\quad \text{and}\quad f\circ g=f\cdot g.
        \]
    We denote this wire by $\F_{q,m}(Y)$. Notice that every element of $\F_{q,m}(Y)$ is a sum of a unit and a nilpotent element; thus it is a commutative skew brace. 
    
    Let $f\in \operatorname{Q}(\mathcal{V}(X))\setminus\{1\}$ considered as an element of $U$ through the embedding described above. Our goal is to show that there exist a finite skew brace $B$ and a skew brace morphism $\phi\colon \operatorname{Q}(\mathcal{V}(X))\to B$ such that $\phi(f)\neq 0$.
    
    We know that $f\not\in\fab(X)$ because of Remark~\ref{rem:triviialint}. Hence there are two non-zero elements $\alpha\in \fab(X)$, $\beta\in \Z[\fab(X)]$ and a monomial $m$ such that  \[f=\alpha +\beta m+\text{other greater terms}.\] We claim that there exist a power of a prime number $q$ and a morphism $\Z[\fab(X)]\to \F_q$ such that the images of $\alpha$ and $\beta$ are not zero. Indeed, $\alpha$ and $\beta$ are contained in the Laurent polynomial ring in a finite set of variables $\{t_1,\dots, t_n\}\subset X$. That is, 
    \[\alpha =t_1^{z_1}\dots t_n^{z_n}\quad \text{and}\quad \beta=t_1^{-m_1}\dots t_n^{-m_n}g(t_1,\dots ,t_n)\] for $z_1,\dots ,z_n,b\in\Z$, $m_1,\dots ,m_n\in \N$ and some polynomial $g\in \Z[t_1,\dots ,t_n]$. Therefore, it is enough to show that there exist a power of a prime number $q$ and $\alpha_1,\dots ,\alpha_n\in \F_q^\times$ such that $g(\alpha_1,\dots,\alpha_n)\neq 0$. However, for any prime $p$ sufficiently large so that the residue of the coefficients of $g$ are not all zero, we know that $g$ cannot vanish entirely on $\overline{\F_p}^n$ where $\overline{\F_p}$ is the algebraic closure of $\F_p$. Moreover, the set $\left(\overline{\F_p}^\times\right)^n$ 
    is complementary to   the zero locus of $t_1\dots t_n$. Therefore, it is Zariski dense in $\overline{\F_p}^n$. Hence, $g$ does not vanish entirely on $\left(\overline{\F_p}^\times\right)^n$. Since $\overline{\F_p}=\cup_{k\in \N}\F_{p^k}$, this proves the claim. 
    
    Let $Y\subset X$ be the finite set of variables that appear in $m$. Since $Y$ is finite, there are only finitely many monomials smaller than or equal to $m$ in the variables~$Y$. Hence, there is an induced skew brace morphism $\phi\colon \operatorname{Q}(\mathcal{V}(X))\to \F_{q,m}(Y)$ that maps~$f$ to the sum of an element of $\F_q^\times$ and a non trivial nilpotent element. Therefore~$\phi(f)$ belongs to  $\F_{q,m}(Y)\setminus \{1\}$.
\end{proof}

\section{Commutative radical rings}\label{radicalring}

In the particular case of a two-sided skew brace of abelian type with commutative operation $\circ$, which corresponds to \textit{commutative radical rings}, a very explicit description of the free objects is known. For a subset $X$ of a ring $R$ we denote by $\langle X\rangle_{id}$ the ideal of $R$ generated by $X$.

\begin{thm}[Kepka--N{\v{e}}mec, \cite{KN1}, Section 11]  
\label{free radical commutative}
Let $X$ be a non-empty set. Then the set of rational functions
\[
\left\{\frac{f}{1+g}\ \mid\ f, g \in \langle X\rangle_{id} \right\}\subset \Q(X)
\] with the usual addition of functions, the multiplication given by
\begin{align}
\frac{f_1}{1+g_1} \circ \frac{f_2}{1+g_2} &:= \frac{f_1}{1+g_1} + \frac{f_1}{1+g_1} \frac{f_2}{1+g_2} + \frac{f_2}{1+g_2}\notag\\
&= \left(\frac{f_1}{1+g_1} + 1\right) \left(\frac{f_2}{1+g_2}+1\right) -1.\label{E:ProductFractions}
\end{align}
is the free commutative radical ring generated by $X$, i.e. it is $\FB_{\mathcal{C},X}$.
\end{thm}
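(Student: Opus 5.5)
We have to show that the set $R=\{f/(1+g)\mid f,g\in\langle X\rangle_{id}\}\subset\Q(X)$, with the given operations, is a commutative radical ring, and that it has the universal property among commutative radical rings. Here $\langle X\rangle_{id}$ is read as the ideal of $\Z[X]$ generated by $X$, and $\Q(X)$ is the rational function field. Throughout, a commutative radical ring is identified with a two-sided brace with abelian $\circ$. The natural tool is the Dorroh extension together with Lemma~\ref{lem:dorojacobson}: a commutative ring $S$ is radical if and only if $1+S\subseteq D(S)^\times$. Equivalently, each $s\in S$ has a quasi-inverse $s'$ with $s+s'+ss'=0$.

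\textbf{Step 1: $R$ is a radical ring.} Let $\mathfrak m=\langle X\rangle_{id}$ and let $T=1+\mathfrak m$. This $T$ is a multiplicative subset of $\Z[X]$ that does not contain $0$. The localisation $T^{-1}\Z[X]\subset\Q(X)$ is a unital subring. Its subset $R=T^{-1}\mathfrak m$ is an ideal, so it is closed under addition and multiplication. Moreover, $\Z+R=T^{-1}\Z[X]$ is the Dorroh extension of $R$: the sum is direct because evaluation at $x=0$ is well defined on $T^{-1}\Z[X]$ and vanishes exactly on $R$.

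For $r=f/(1+g)\in R$ we get $1+r=(1+g+f)/(1+g)$. Its inverse $(1+g)/(1+f+g)$ again lies in $1+R$, since $1+f+g\in T$. Hence $1+R\subseteq D(R)^\times$, so $R$ is a commutative radical ring. Its $\circ$ is exactly formula~\eqref{E:ProductFractions}, and it contains $X$.

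\textbf{Step 2: existence of morphisms.} Let $S$ be a commutative radical ring and $\phi\colon X\to S$ a map. The unital ring $D(S)$ is commutative, so $\phi$ extends to a unital ring morphism $\Phi\colon\Z[X]\to D(S)$. This $\Phi$ maps $\mathfrak m$ into $S$, because $S$ is an ideal of $D(S)$ and the generators go into $S$. It follows that $\Phi(T)\subseteq 1+S\subseteq D(S)^\times$. By the universal property of localisation, $\Phi$ extends uniquely to $\widehat\Phi\colon T^{-1}\Z[X]\to D(S)$. Then $\widehat\Phi(f/(1+g))=\Phi(f)\Phi(1+g)^{-1}$ lies in $S$, because $S$ is an ideal. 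Restricting to $R$ gives a ring morphism $R\to S$ extending $\phi$, and ring morphisms between radical rings are exactly brace morphisms.

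\textbf{Step 3: uniqueness.} Let $\psi\colon R\to S$ be any ring morphism with $\psi|_X=\phi$. Extend it unitally to $D(R)=T^{-1}\Z[X]\to D(S)$ using the Dorroh universal property. This extension agrees with $\Phi$ on $\Z[X]$, since both are unital and agree on $X$. By uniqueness in the localisation universal property, it then agrees with $\widehat\Phi$. Hence $\psi$ is the morphism constructed in Step 2.

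The only point needing care is the identification $D(R)\cong T^{-1}\Z[X]$, that is, that $\Z\cap R=0$. The evaluation-at-$0$ argument settles this, so I expect no real obstacle.
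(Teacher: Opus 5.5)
Your proof is correct, but it takes a different route from the paper.

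\textbf{The paper's route.} The paper argues by hand, in brace language. It checks the brace axioms on the set of fractions using the isomorphism $r\mapsto r+1$ onto $U_X$. It shows that $X$ generates via $f\circ\overline{g}-\overline{g}=f/(1+g)$. It then \emph{defines} the extension by
\[
\phi\bigl(f/(1+g)\bigr)=\phi(f)\circ\overline{\phi(g)}-\overline{\phi(g)}=\lambda^{-1}_{\phi(g)}(\phi(f)).
\]
After that it must verify by explicit computation two things. First, that this value is independent of the representative, by comparing with $f(1+h)/((1+g)(1+h))$. Second, that it preserves $+$ and $\ast$, using Lemma~\ref{L:lamda_vs_*}.

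\textbf{Your route.} You identify the Dorroh extension $D(R)$ with the localisation $T^{-1}\Z[X]$, where $T=1+\langle X\rangle_{id}$. With that in place, the following all fall out of the universal properties of $\Z[X]$, of localisation and of the Dorroh extension:
\begin{itemize}
\item the radical property;
\item independence of the representative;
\item the homomorphism property;
\item uniqueness.
\end{itemize}
Unwound in $D(S)$, your extension $\Phi(f)\Phi(1+g)^{-1}$ is exactly the paper's formula, since $(1+\phi(g))^{-1}=1+\overline{\phi(g)}$.

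\textbf{Comparison.} Your argument is shorter. It is also precisely the commutative counterpart of the Cohn-localisation proof the paper gives for Theorem~\ref{constructiontwosidedbrace}. There, the ordinary multiplicative set $1+\langle X\rangle_{id}$ plays the role of $\Sigma$. The inclusion $\Phi(T)\subseteq 1+S\subseteq D(S)^\times$ replaces the $\Sigma$-invertibility check. The paper's version is more elementary and stays inside the skew brace formalism. It also writes the extension and the generation by $X$ down explicitly, rather than obtaining them from universal properties.
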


\begin{rem}\label{inversecomm}
{\rm The multiplicative inverses in description of the free radical ring given in Theorem \ref{free radical commutative} are as follows: \[\overline{\left(\frac{f}{1+g}\right)}=\frac{-f}{1+f+g}.\]}

Note also that the star operation is the usual multiplication of functions.
\end{rem}

\begin{thm}\label{resfiniradicalring}
Let $X$ be a non-empty set. The radical ring $\FB_{\mathcal{C},X}$ is residually finite. 
\end{thm}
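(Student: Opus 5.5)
Given a nonzero element $r=\frac{f}{1+g}$ of $\FB_{\mathcal{C},X}$, we will construct a finite commutative radical ring $R$ and a ring morphism $\FB_{\mathcal{C},X}\to R$ that does not kill $r$. A ring morphism between radical rings is a skew brace morphism with $\ast$ as product, and kernels are ideals of finite index. First we reduce to finite $X$. The element $r$ involves only finitely many variables $Y\subseteq X$. The map $X\to \FB_{\mathcal{C},Y}$ sending $X\setminus Y$ to $0$ induces, via substitution in Theorem~\ref{free radical commutative}, a morphism that fixes $r$. This is well defined because $1+g$ evaluated at $0$ in the extra variables still lies in $1+\langle Y\rangle_{id}$.

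Now assume $X$ is finite and $f\neq 0$ (if $f=0$ then $r=0$). Since $1+g$ is invertible in the power series ring $\Z[[X]]$ (constant term $1$), we may embed $\FB_{\mathcal{C},X}$ into the maximal ideal $\mathfrak m=\langle X\rangle$ of $\Z[[X]]$, sending $\frac{f}{1+g}\mapsto f\,(1+g)^{-1}$. This map is additive, and it respects $\circ$ because of \eqref{E:ProductFractions}, since the star operation is the ordinary product (Remark~\ref{inversecomm}). It is injective because $\Q(X)$ embeds in the fraction field of $\Z[[X]]$; equivalently, $f(1+g)^{-1}=0$ forces $f=0$.

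Let $h=f(1+g)^{-1}\neq 0$. Pick a monomial $m$ of minimal total degree $d$ occurring in $h$, with coefficient $\alpha\neq 0$, and pick a prime $p\nmid\alpha$. Consider $R=\mathfrak m/(\langle X\rangle^{d+1}+p\,\Z[[X]])\cap\mathfrak m$, which is the augmentation ideal of $\F_p[X]/\langle X\rangle^{d+1}$. This ring is finite because $X$ is finite, and it is commutative and nilpotent, hence radical. The composite $\FB_{\mathcal{C},X}\to\mathfrak m\to R$ is a ring morphism, hence a skew brace morphism, and the image of $r$ has the nonzero coefficient $\bar\alpha$ at $m$. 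Its kernel is therefore an ideal of finite index not containing $r$, which proves residual finiteness.

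The main point needing care is that the map into $\Z[[X]]$ is well defined and injective on fractions. This is handled by working inside the domain $\Z[[X]]$, where $1+g$ is a unit, and noting that equality of fractions in $\Q(X)$ is preserved under this embedding.
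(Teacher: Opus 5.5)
Your proof is correct and follows essentially the same route as the paper. Both arguments map $\FB_{\mathcal{C},X}$, by a ring (hence skew brace) morphism, to the augmentation ideal of a truncated polynomial ring over $\F_p$ in finitely many variables. The paper kills a cofinite set of variables and the powers $x^n$; you substitute $0$ for the extra variables and then truncate by total degree. Your embedding into $\Z[[X]]$, with a nonzero coefficient $\alpha$ and a prime $p\nmid\alpha$, spells out why the given element survives in the finite quotient, a step the paper only asserts ("for a large enough prime $p$, a large enough $n$").
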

\begin{proof}
For any prime $p$, define \[
F_{p,X} := \left\{\frac{f}{1+g}\ \mid\ f, g \in \langle X\rangle_{id} \right\}\subseteq \mathbb{Z}_p(X).
\] Notice that $F_{p,X} =\FB_{\mathcal{C},X}/I$, where $I$ consists of the elements  $\frac{pf}{1+g}$ with $f, g \in \langle X\rangle_{id}$. 
Also, for every positive integer $n$ and for any subset $Y$ of $X$, we let $J_{p,n,Y}$ be the ideal of $F_{p,X}$ generated by all elements of the form $\frac{f}{1+g}$, where $f\in \langle x^n,y\,:x\in X,\, y\in Y\rangle_{id}$.

Now, let $0\neq w\in \FB_{\mathcal{C},X}$. Then for a large enough prime $p$, a large enough positive integer $n$, and a subset $Y$ of $X$ with finite complement, the image of~$w$ in~$F_{p,X}/J_{p,n,Y}$ is non-zero. Hence it follows that $\FB_{\mathcal{C},X}$ is residually finite.
\end{proof}

We give an elementary and self-contained proof of Theorem \ref{free radical commutative}, different from the one in \cite{KN1}.

\begin{lem}\label{free radical commutative mult}
Let $X$ be a non-empty set. Consider the set of rational functions
\[
U_X := \left\{\frac{1+f}{1+g}\ \mid\ f, g \in \langle X\rangle_{id} \right\}\subset \Q(X)
\]
with the usual multiplication. Then one has a group isomorphism
\begin{align*}
\xi \colon (\FB_{\mathcal{C},X},\circ) &\overset{\sim}{\to} (U_X, \cdot)\\
r &\mapsto r+1
\end{align*}
\end{lem}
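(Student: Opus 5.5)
The plan is to work with the concrete set
\[
R_X=\left\{\frac{f}{1+g}\ \middle|\ f,g\in\langle X\rangle_{id}\right\}\subset \Q(X),
\]
equipped with the operation $\circ$ of \eqref{E:ProductFractions}, which is the model of $\FB_{\mathcal{C},X}$ in Theorem~\ref{free radical commutative}. The lemma should then follow from the identity $(r_1+1)(r_2+1)=r_1\circ r_2+1$, which is built into \eqref{E:ProductFractions}. Since the surrounding aim is a self-contained proof of Theorem~\ref{free radical commutative}, I will use only the formula for $\circ$ on $R_X$ and not the freeness statement. The lemma then reads as a statement about $(R_X,\circ)$, and freeness is to be established afterwards.

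First I check that $\xi$ lands in $U_X$ and is bijective. For $r=\frac{f}{1+g}\in R_X$ we have
\[
r+1=\frac{1+(f+g)}{1+g},
\]
and $f+g\in\langle X\rangle_{id}$, so $\xi(r)\in U_X$. Conversely, for $\frac{1+f}{1+g}\in U_X$,
\[
\frac{1+f}{1+g}-1=\frac{f-g}{1+g}\in R_X,
\]
so the map $u\mapsto u-1$ sends $U_X$ into $R_X$. This map is a two-sided inverse of $\xi$, since both are translations by $\pm 1$ inside the field $\Q(X)$; hence $\xi$ is bijective.

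Next I check that $\xi$ is a homomorphism. Directly from \eqref{E:ProductFractions},
\[
\xi(r_1\circ r_2)=r_1\circ r_2+1=(r_1+1)(r_2+1)=\xi(r_1)\,\xi(r_2).
\]
Moreover $\xi(0)=1$, so $\xi$ preserves the identity. Along the way I also verify that $U_X$ is a multiplicative group. Closure holds because
\[
(1+f)(1+f')=1+(f+f'+ff'),
\]
and similarly for the denominators, with both expressions lying in $1+\langle X\rangle_{id}$. Inverses exist because $\left(\frac{1+f}{1+g}\right)^{-1}=\frac{1+g}{1+f}$, where $1+f\neq 0$ since $f$ has zero constant term. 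Transporting this structure through the bijection $\xi$ shows that $(R_X,\circ)$ is a group isomorphic to $(U_X,\cdot)$. As a consistency check, this recovers the inverse formula of Remark~\ref{inversecomm}:
\[
\frac{1+g}{1+f+g}-1=\frac{-f}{1+f+g}.
\]

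The only genuinely delicate point is the identification of $(\FB_{\mathcal{C},X},\circ)$ with $(R_X,\circ)$. If I am allowed to invoke Theorem~\ref{free radical commutative}, the computation above already completes the proof. If not, I will state the lemma for $R_X$ and defer freeness to the subsequent steps. There I would show that $(R_X,+,\cdot)$ is a commutative radical ring, using the fact that $1+r$ is invertible in $\Q(X)$ with inverse in $U_X$. I would then verify the universal property by sending $\frac{f}{1+g}$ to $f(b)\bigl(1+g(b)\bigr)^{-1}$ in the Dorroh extension $D(B)$, where $1+g(b)$ is invertible by Lemma~\ref{lem:dorojacobson}. I expect that well-definedness step, namely independence of the choice of representative of the rational function, to be the main obstacle.
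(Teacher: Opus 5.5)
Your proof is correct and follows the same route as the paper. The paper notes that $\xi$ is injective, writes $\xi\bigl(\tfrac{f}{1+g}\bigr)=\tfrac{1+f+g}{1+g}$ to identify the image with $U_X$, and reads off $\xi(r_1\circ r_2)=\xi(r_1)\xi(r_2)$ directly from \eqref{E:ProductFractions}. Your additional checks (the explicit preimage $\tfrac{f-g}{1+g}$, and closure of $U_X$ under products and inverses so that the group structure can be transported) fill in details the paper leaves implicit. These are worth having, since the paper later uses this lemma to conclude that $(\FB_{\mathcal{C},X},\circ)$ is a group at all.
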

\begin{proof}
The map $\xi$ is clearly injective. Writing 
\begin{equation}\label{E:xi}
\xi \left( \frac{f}{1+g} \right) = \frac{1+f+g}{1+g},
\end{equation}
one sees that its image is $U_X$. Finally, relation \eqref{E:ProductFractions} can be translated as $\xi(r_1 \circ r_2) = \xi(r_1)\xi(r_2)$, making $\xi$ a group isomorphism.
\end{proof}

\begin{cor}\label{corradicalring}
Let $X$ be a non-empty set. The group $(\FB_{\mathcal{C},X}, \circ)$ is free abelian. 
\end{cor}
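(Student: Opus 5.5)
By Lemma~\ref{free radical commutative mult}, $(\FB_{\mathcal{C},X},\circ)$ is isomorphic to the multiplicative group $(U_X,\cdot)$. It therefore suffices to show that $U_X$ is a free abelian group. The plan is to embed $U_X$ into a free abelian group built from unique factorisation, and then invoke the fact that subgroups of free abelian groups are free abelian. This mirrors the proof of Proposition~\ref{freeabeliangroup}.

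Concretely, let $R=\mathbb{Z}[X]$ be the polynomial ring, which is a unique factorisation domain with $R^\times=\{\pm1\}$. Consider the multiplicative monoid
\[
M=\{h\in R\mid h\equiv 1 \bmod \langle X\rangle_{id}\},
\]
that is, the polynomials with constant term $1$. Since $\langle X\rangle_{id}$ is an ideal, the set $1+\langle X\rangle_{id}$ is closed under products, so $M$ is indeed a monoid. Every element of $U_X$ has the form $(1+f)/(1+g)$ with $1+f,1+g\in M$. Hence $U_X$ is exactly the subgroup of $\mathbb{Q}(X)^\times$ generated by $M$, and $M$ is cancellative, so $U_X$ is isomorphic to the Grothendieck group $\operatorname{Q}(M)$.

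The key step is to show that $M$ is a free commutative monoid. Let $h\in M$ and factor $h=u\,p_1\cdots p_k$ into irreducibles in $R$. The constant term is multiplicative, so the product of the constant terms of the $p_i$ equals $\pm1$. Consequently each $p_i$ has constant term $\pm1$, and after replacing $p_i$ by $-p_i$ where needed we may assume every $p_i\in M$; then $u=1$. Irreducibility in $M$ coincides with irreducibility in $R$, because $M^\times=\{1\}$ and any factorisation in $R$ of an element of $M$ can be renormalised by signs into a factorisation in $M$. Uniqueness comes from uniqueness in $R$: two associate elements of $M$ differ by $\pm1$, and since both have constant term $1$ they must be equal. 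So $M$ is free commutative on the set of irreducible polynomials with constant term $1$.

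It follows that $\operatorname{Q}(M)\cong U_X$ is free abelian on the same set, and hence so is $(\FB_{\mathcal{C},X},\circ)$. The only point requiring care is the sign normalisation, which controls the passage from irreducibility in $R$ to irreducibility in $M$; the rest is routine.
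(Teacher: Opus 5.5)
Your proposal is correct and follows the paper's argument. You reduce via Lemma~\ref{free radical commutative mult} to $(U_X,\cdot)$ and use unique factorisation in $\mathbb{Z}[X]$, normalising irreducible factors to have constant term $1$, to see that $U_X$ is free abelian on the irreducible polynomials in $1+\langle X\rangle_{id}$. You make explicit the sign normalisation and the identification $U_X\cong\operatorname{Q}(M)$, which the paper leaves implicit; since that identification is exact, the appeal to subgroups of free abelian groups announced in your first paragraph is never actually used.
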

\begin{proof}
Since $\Z[X]$ is a unique factorization domain, a polynomial $1+f$ with $f \in \langle X\rangle_{id} $ uniquely decomposes as a product of irreducible factors, all of which can be chosen to be of the form $1+f_i$. Thus, $(U_X,\cdot)$ is the free abelian group on the set of all irreducible polynomials from $1+\langle X\rangle_{id} \subset \Z[X]$. By \cref{free radical commutative mult}, $(\FB_{\mathcal{C},X},\circ)$ is then free abelian as well.   
\end{proof}

The additive group $(\FB_{\mathcal{C},X}, +)$ turns out to be much subtler to deal with.

\begin{question}
 Is $(\FB_{\mathcal{C},X},+)$ a free abelian group for any non-empty set $X$?
\end{question} 

\begin{proof}[Proof of \cref{free radical commutative}]
The three operations (i.e., the operation $\circ$, and the usual product and sum) clearly take values in $\FB_{\mathcal{C},X}$, and are thus well-defined. Also, $(\FB_{\mathcal{C},X}, +)$ is a subgroup of the abelian group $\Q(X)$. \cref{free radical commutative mult} gives a monomorphism 
\begin{align*}
\xi \colon (\FB_{\mathcal{C},X},\circ) &\longrightarrow (\Q^\times, \cdot)\\
r &\longmapsto r+1,
\end{align*} so in particular also $(\FB_{\mathcal{C},X}, \circ)$ is an abelian group. The skew left distributivity follows from\[
\begin{array}{c}
\xi(a\circ(b+c))=\xi(a)\xi(b+c) = \xi(a)(b+c+1)=\xi(a)(\xi(b)-1+\xi(c))\\[0.2cm]
=\xi(a)\xi(b)-\xi(a)+\xi(a)\xi(c)
=\xi(a\circ b)-\xi(a)+\xi(a\circ c)
\end{array}\]
and the injectivity of $\xi$. Summarising, one sees that $(\FB_{\mathcal{C},X}, +, \circ)$ is a commutative radical ring. The formula for the $\circ$-inverse (see Remark \ref{inversecomm}) follows from \eqref{E:xi} and
\[\xi \left( \frac{-f}{1+f+g} \right) = \frac{1+g}{1+f+g},\]
which is the inverse of $\xi \left( \frac{f}{1+g} \right)$ in $(\Q, \cdot)$.

To see that the skew brace $\FB_{\mathcal{C},X}$ is generated by $X$, first note that, under the operations $+$ and $\ast$, the set $X$ generates all polynomials from $\langle X\rangle$. Since, for $f,g \in \langle X\rangle$, one has
\begin{equation}\label{E:FractionsToBraces}
f \circ \overline{g} - \overline{g} = f+f \ast \overline{g} = f+f\cdot \frac{-g}{1+g} = \frac{f}{1+g},
\end{equation}
so the set $X$ generates the whole skew brace $\FB_{\mathcal{C},X}$.

We are left with the freeness statement. Take a commutative radical ring $B$ with chosen elements $a_x,\ x \in X$. One needs to check that there is a skew brace homomorphism $\phi \colon \FB_{\mathcal{C},X} \to B$ mapping each $x$ to $a_x$. Its uniqueness will follow since $X$ generates~$\FB_{\mathcal{C},X}$. First, since the operation $\ast$ is associative in radical rings, the map $\phi$ can be extended from $X$ to $\langle X\rangle_{id}$ by sending the polynomial operations $+$ and $\cdot$ to the skew brace operations $+$ and $*$ in $B$. To extend it further to fractions, by \eqref{E:FractionsToBraces}, we need to put
\[\phi \left( \frac{f}{1+g} \right) = \phi(f) \circ \overline{\phi(g)} - \overline{\phi(g)} = \lambda_{\phi(g)}^{-1}(\phi(f)).\]
To check that this yields a well-defined map on $\FB_{\mathcal{C},X}$, we need to look at two things. First, if a polynomial $f$ is interpreted as a fraction $\frac{f}{1+0}$, our formula for fractions yields
\[\phi \left( \frac{f}{1+0} \right) = \phi(f) \circ \overline{\phi(0)} - \overline{\phi(0)} = \phi(f) \circ 0 - 0 = \phi(f),\]
which is coherent. Second, a presentation of a fraction in the form $\frac{f}{1+g}$ is not unique. To check that the evaluation of $\phi$ is independent of this presentation, for all $f,g,h \in \langle X\rangle_{id}$ we compute 
\begin{align*}
\phi \left( \frac{f(1+h)}{(1+g)(1+h)} \right) &= \phi \left( \frac{f+fh}{1+g+h+gh} \right) = \phi \left( \frac{f \circ h - h}{1+g \circ h} \right) \\
&= \phi(f \circ h - h) \circ \overline{\phi(g\circ h)} - \overline{\phi(g\circ h)} \\
&= \left( \phi(f) \circ \phi(h)-\phi(h) \right) \circ \overline{\phi(h)}\circ \overline{\phi(g)} - \overline{\phi(h)}\circ \overline{\phi(g)}\\
&= \left( \left( \phi(f) \circ \phi(h)-\phi(h) \right) \circ \overline{\phi(h)} - \overline{\phi(h)} \right)\circ \overline{\phi(g)}\\
&= \left( \phi(f) \circ \phi(h)\circ \overline{\phi(h)}- \phi(h)  \circ \overline{\phi(h)} \right)\circ \overline{\phi(g)}\\
&= \phi(f) \circ \overline{\phi(g)} - \overline{\phi(g)}
=\phi \left( \frac{f}{1+g} \right).
\end{align*}
It remains to show that the map $\phi$ thus defined is a skew brace morphism. We have 
\begin{align*}
\phi \left( \frac{f_1}{1+g_1} + \frac{f_2}{1+g_2} \right) 
&= \phi \left( \frac{f_1+f_2+f_1g_2+f_2g_1}{1+g_1 + g_2 +g_1 g_2}\right)\\
&= \phi \left( \frac{f_1\circ g_2+f_2\circ g_1 - g_1-g_2}{1+g_1 \circ g_2}\right)\\
&= \phi(f_1\circ g_2+f_2\circ g_1 - g_1-g_2) \circ \overline{\phi(g_1 \circ g_2)} - \overline{\phi(g_1 \circ g_2)}\\
&= \phi(f_1\circ g_2)\circ \overline{\phi(g_1 \circ g_2)}+\phi(f_2\circ g_1)\circ \overline{\phi(g_1 \circ g_2)} \\
&\qquad - \phi(g_1)\circ \overline{\phi(g_1 \circ g_2)}-\phi(g_2)\circ \overline{\phi(g_1 \circ g_2)}\\
&=\phi(f_1)\circ \overline{\phi(g_1)} +  \phi(f_2)\circ \overline{\phi(g_2)} -\overline{\phi(g_2)} -\overline{\phi(g_1)}\\
&= \phi \left( \frac{f_1}{1+g_1} \right) + \phi \left( \frac{f_2}{1+g_2} \right)
\end{align*}

\medskip

\noindent and

\medskip

\begin{align*}
\phi \left( \frac{f_1}{1+g_1} \ast \frac{f_2}{1+g_2} \right) 
&= \phi \left( \frac{f_1 \ast f_2}{1+g_1 \circ g_2}\right) 
= \lambda^{-1}_{\phi(g_1 \circ g_2)}\left( \phi(f_1 \ast f_2) \right)\\
&= \lambda^{-1}_{\phi(g_1) \circ \phi(g_2)}\left( \phi(f_1) \ast \phi(f_2) \right)
= \lambda^{-1}_{\phi(g_2)}\lambda^{-1}_{\phi(g_1)}\left( \phi(f_1) \ast \phi(f_2) \right)\\
&=  \lambda^{-1}_{\phi(g_1)}(\phi(f_1)) \ast \lambda^{-1}_{\phi(g_2)}(\phi(f_2))= \phi \left( \frac{f_1}{1+g_1}\right) \ast \phi \left(\frac{f_2}{1+g_2} \right), 
\end{align*}
where in the last lines, we used \cref{L:lamda_vs_*} and the commutativity of $\ast$.
\end{proof}

\begin{cor}
    The free two-sided skew brace of abelian type generated by one element $x$, i.e. $\FB_{\mathcal{T},\{ x \}}$, is the skew brace~$\FB_{\mathcal{C},\{x\}}$ from {\rm \cref{free radical commutative}}.
\end{cor}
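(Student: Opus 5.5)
The plan is to combine \cref{two-sided and abelianity} with the universal properties of the two free objects. By \cref{two sided brace abelian generated is abelian }, every skew brace generated by a single element $x$ that is two-sided has an abelian multiplicative group. So one-generated two-sided skew braces of abelian type are exactly one-generated commutative radical rings. I will show that $\FB_{\mathcal{C},\{x\}}$ from \cref{free radical commutative} satisfies the universal property defining $\FB_{\mathcal{T},\{x\}}$.

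\textbf{Step 1.} By \cref{free radical commutative}, $\FB_{\mathcal{C},\{x\}}$ is a commutative radical ring. Hence it is a two-sided skew brace of abelian type: commutativity of $\circ$ makes right distributivity follow from left distributivity. It is generated by $x$, by \eqref{E:FractionsToBraces}.

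\textbf{Step 2.} Let $B$ be any two-sided skew brace of abelian type and $b\in B$. Consider the sub-skew brace $\langle b\rangle$. It is still two-sided of abelian type, since both distributivity laws are identities and therefore pass to sub-skew braces. It is one-generated, so by \cref{two sided brace abelian generated is abelian } it has abelian multiplicative group. Thus $\langle b\rangle$ is a commutative radical ring, i.e. an object of the category of braces that are commutative. Freeness of $\FB_{\mathcal{C},\{x\}}$ then gives a skew brace morphism $\FB_{\mathcal{C},\{x\}}\to\langle b\rangle\subseteq B$ sending $x\mapsto b$. Composing with the inclusion gives existence in the two-sided category.

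\textbf{Step 3.} Uniqueness holds because $x$ generates $\FB_{\mathcal{C},\{x\}}$, and two morphisms that agree on a generating set agree everywhere.

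Steps 1–3 together show that $\FB_{\mathcal{C},\{x\}}$ has the universal property of $\FB_{\mathcal{T},\{x\}}$, so the two free objects are isomorphic.

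There is no real obstacle. The only point needing care is that the ambient $B$ need not be commutative. This is handled by restricting to $\langle b\rangle$ and observing that the image of any morphism out of a one-generated object lands in that sub-skew brace.
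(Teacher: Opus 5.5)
Your proof is correct and follows the paper's argument. The paper observes, via \cref{two-sided and abelianity}, that one-generated two-sided skew braces of abelian type are commutative radical rings, and concludes that the two free objects coincide. Your Steps 1--3 spell out the universal-property check that this leaves implicit, including the restriction to the sub-skew brace $\langle b\rangle$ when the target $B$ is not commutative.
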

\begin{proof}
    By \cref{two-sided and abelianity}, every one-generated two-sided skew brace of abelian type is a commutative radical ring. Thus, the free one-generated two-sided skew brace of abelian type is the same thing as the free one-generated commutative radical ring, the latter being explicitly described in \cref{free radical commutative}.    
\end{proof}

\begin{rem}
The subring generated by $X=\{x\}$ in $\FB_{\mathcal{C},\{x\}}$ is $\langle x \rangle_{id}$, which does not coincide with the sub-brace generated by $X$ because it does not contain $\overline{x}=\frac{-x}{1+x}$. 
\end{rem}

A natural question on free objects in algebra is whether their subobjects are free as well.

\begin{pro}\label{nielsen}
   The sub-brace $I$ of $\FB_{\mathcal{C},\{x\}}$ generated by $\{x+x, x\circ x\}$ is not a free commutative radical ring.
\end{pro}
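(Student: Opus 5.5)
The plan is to argue by contradiction and to compare ranks of an abelian invariant that is preserved by isomorphisms of commutative radical rings. Suppose $I$ were a free commutative radical ring. Since $I$ is generated by the two elements $a=x+x=2x$ and $b=x\circ x=2x+x^2$, the free generating set would have at most two elements. Indeed, freeness on a set $Y$ yields a surjection onto $\FB_{\mathcal{C},Y}\otimes\F_2$-type quotients of dimension $|Y|$, while $I$ generated by two elements bounds that dimension by $2$. So $I\cong \FB_{\mathcal{C},Y}$ with $|Y|\le 2$. I would then distinguish $I$ from each of $\FB_{\mathcal{C},\{y\}}$ and $\FB_{\mathcal{C},\{y_1,y_2\}}$ using the invariant $R/(R\ast R)$ (the quotient of the additive group by the ideal $R^2$ of the ring $(R,+,\ast)$). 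For the free object, $R/R^2$ is free abelian on $Y$; this follows from the universal property, by mapping onto the radical ring $\Z^Y$ with zero multiplication, together with the observation that by \eqref{E:FractionsToBraces} every element is congruent modulo $R^2$ to an element of $\langle X\rangle_{id}$, hence to a $\Z$-combination of $Y$.

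Next I would compute $I$ and $I/I^2$ concretely inside $\Q(x)$. All elements of $I$ have the form $\frac{f}{1+g}$, and since $f\circ\overline g-\overline g=\frac{f}{1+g}$, I expect $I=\{\frac{f}{1+g}: f,g\in A\}$, where $A$ is the subring without unit of $\Z[x]$ generated by $2x$ and $2x+x^2$. In particular $x^2=b-a\in A$, so $A$ is spanned by $2x$ and all $x^k$ with $k\ge2$. Then $I^2$ is generated by products of such fractions, so its numerators lie in $A^2$. The spans of $A^2$ and of the lower-degree terms of $A$ are computed as follows: $A^2$ contains $4x^2$, $2x^3$, $x^4$, and so on. Modulo $I^2$ I then expect the image of $x^2$ to have order $4$, or at least to be a torsion element that is nonzero. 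To make this rigorous I would build a finite radical-ring quotient: send $I\to \Z[x]/(x^5)$-type truncations through the maps $\frac{f}{1+g}\mapsto f(1+g)^{-1}$ and reduce. This detects that $I/I^2$ contains torsion, namely the class of $x^2$ with $2x^2\ne0$ but $4x^2\in I^2$, or similar. Since $\Z^Y$ is torsion-free, this gives the contradiction.

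The main obstacle is making the computation of $I/I^2$ airtight. That requires showing that $I^2$ really lies inside the set of fractions whose numerators lie in the relevant ideal, and that membership is detected by the lowest-degree coefficients of the power-series expansion. My first attempt would be to work entirely in $\Z[[x]]$. There each element of $I$ expands as $c_1x+c_2x^2+\cdots$ with $c_1$ even, and each element of $I^2$ has $c_1=0$ and $c_2\equiv0 \pmod 4$. The homomorphism $I\to\Z/4$, $r\mapsto c_2 \bmod 4$ restricted to $\{c_1=0\}$, then exhibits the nonzero torsion class $x^2=b-a$ with $2(x^2)=\ldots$. If the torsion bookkeeping fails, the fallback is to use the fact that $I/I^2$ has the wrong rank compared to the number of generators: $\Z$-rank $1$ versus needing rank equal to the number of free generators, with the case of one generator excluded because the multiplicative groups differ.
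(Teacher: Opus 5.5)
Your proof is correct, but it takes a different route from the paper's. The paper works with explicit presentations in the rational-function model. Assuming $I$ is free on $r_1,\dots,r_k$, it writes $2x$ and $x^2$ as reduced fractions in the $r_i$ and compares the two presentations of $(2x)^2=4x^2$. Unique factorisation then gives $f=2h$, so $x\in I$, which contradicts the parity of the coefficient of $x$.

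You instead use the isomorphism invariant $R/(R\ast R)$.
\begin{itemize}
\item For a free commutative radical ring this quotient is free abelian on the free generators. This needs only the universal property: $R/(R\ast R)$ has zero multiplication, so it is additively generated by the image of $Y$, and it maps onto $\Z^{(Y)}$.
\item For $I$, the class of $x^2=-(x+x)+x\circ x$ is a nonzero torsion element. On one hand $4x^2=(2x)\ast(2x)\in I\ast I$. On the other hand, $r\mapsto c_2(r)\bmod 4$ is additive on $I$, vanishes on $I\ast I$, and equals $1$ at $x^2$.
\item The vanishing holds because every $a\in I$ has $c_0(a)=0$ and $c_1(a)$ even. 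Indeed, the elements with even linear coefficient form a sub-brace containing $x+x$ and $x\circ x$, since $c_1(a\circ b)=c_1(a)+c_1(b)$ and $c_1(\overline a)=-c_1(a)$. Hence $c_2(ab)=c_1(a)c_1(b)\in 4\Z$, and no restriction to $\{c_1=0\}$ is needed.
\end{itemize}
In fact $I/(I\ast I)\cong\Z\oplus\Z/4\Z$.

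Your route is shorter and avoids the reduced-fraction and sign bookkeeping. It also works for a free generating set of any size, so the preliminary bound $|Y|\le 2$, the truncation quotient and the fallback are all unnecessary.

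Two statements in the proposal are wrong and should be dropped.
\begin{itemize}
\item $A$ is not spanned by $2x$ and all $x^k$ with $k\geq 2$. For instance $x^3\notin A$, since $A$ consists exactly of the polynomials in $x\Z[x]$ whose odd-degree coefficients are even. This slip does not affect the degree-$\le 2$ computation your argument uses.
\item The fallback claims the one-generator case is excluded because the multiplicative groups differ. This is unfounded: both $(I,\circ)$ and $(\FB_{\mathcal{C},\{y\}},\circ)$ are free abelian of countably infinite rank. This follows from Corollary~\ref{corradicalring}, noting that $(I,\circ)$ contains the independent elements $2^kx$, $k\geq 1$. The torsion argument already covers this case.
\end{itemize}
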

\begin{proof}
Assume that $I$ is the commutative radical ring freely generated by some $r_1, \ldots, r_k$ from $I$. By Theorem \ref{free radical commutative}, this means that:
\begin{enumerate}
 \item\label{I:i1} any $r\in I$ can be written as $r=\frac{f(r_1,\ldots, r_k)}{1+g(r_1,\ldots, r_k)}$ for some polynomials $f$ and $g$ in $k$ variables $x_1,\ldots, x_k$, with $f,g \in \langle x_1,\ldots, x_k \rangle$;
 \item\label{I:i2} for any other such presentation $r=\frac{\overline{f}(r_1,\ldots, r_k)}{1+\overline{g}(r_1,\ldots, r_k)}$ of the same element, $\frac{f(x_1,\ldots, x_k)}{1+g(x_1,\ldots, x_k)}$ and $\frac{\overline{f}(x_1,\ldots, x_k)}{1+\overline{g}(x_1,\ldots, x_k)}$ coincide as rational functions.
\end{enumerate} 
Applying (\ref{I:i1}) to $x+x=2x$ and $x^2=-x+x\circ x-x=-2x+x\circ x \in I$, one gets 
\[2x=\frac{f(r_1,\ldots, r_k)}{1+g(r_1,\ldots, r_k)}, \qquad x^2=\frac{u(r_1,\ldots, r_k)}{1+v(r_1,\ldots, r_k)}.\]
The fractions $\frac{f(x_1,\ldots, x_k)}{1+g(x_1,\ldots, x_k)}$ and $\frac{u(x_1,\ldots, x_k)}{1+v(x_1,\ldots, x_k)}$ may be taken reduced. Then, the element $(2x)^2=4x^2$ has two presentations:
\[\frac{(f(r_1,\ldots, r_k))^2}{(1+g(r_1,\ldots, r_k))^2} = (2x)^2=4x^2 = \frac{4u(r_1,\ldots, r_k)}{1+v(r_1,\ldots, r_k)}.\]
By (\ref{I:i2}), one then has an equality of rational functions
\[\frac{(f(x_1,\ldots, x_k))^2}{(1+g(x_1,\ldots, x_k))^2} =  \frac{4u(x_1,\ldots, x_k)}{1+v(x_1,\ldots, x_k)},\]
where both fractions are reduced. This implies the polynomial equality $f^2=4u$ (the sign indeterminacy disappears because the constant terms of the denominators are of the same sign). But then $f=2h$ for some $h \in \langle x_1,\ldots, x_k \rangle_{id}$. As a result, $I$ contains the element 
\[x=\frac{h(r_1,\ldots, r_k)}{1+g(r_1,\ldots, r_k)}.\]
Since $I$ is a commutative radical ring generated by two elements $2x$ and $x^2$, Theorem~\ref{free radical commutative} allows us to write $x\in I$ as
\[x=\frac{p(2x, x^2)}{1+q(2x, x^2)}\]
for some $p,q \in \langle x_1, x_2\rangle_{id}$. This implies the equality 
\[x(1+q(2x, x^2))=p(2x, x^2) \in \Z [x].\]
The monomial $x$ in the polynomial on the left comes with coefficient $1$, whereas for the polynomial on the right this coefficient is a multiple of $2$. Contradiction.
\end{proof}

Using our explicit description of free commutative radical rings $\FB_{\mathcal{C},X}$, it becomes easy to compute $\FB_{\mathcal{C},X}\ast \FB_{\mathcal{C},X}$ and  $\FB_{\mathcal{C},X}/(\FB_{\mathcal{C},X}\ast \FB_{\mathcal{C},X})$, whose importance was unveiled above. 

\begin{thm}\label{previousproof}
Let $X$ be a non-empty  set and $n\geq 1$ an integer. Then
\begin{equation}\label{E:F_X * F_X}
 \FB_{\mathcal{C},X}^{(n)}= \left\{\frac{f}{1+g}\ \mid\ f \in \langle X\rangle_{id}^n, g \in \langle X\rangle_{id} \right\}.   
\end{equation}
Moreover, the quotient $\FB_{\mathcal{C},X}^{(n)}/\FB_{\mathcal{C},X}^{(n+1)}$ is isomorphic to $\Triv(\fab(M_n))$, where $M_n$ denotes the set of monomials of total degree $n$. In addition, the quotient map can be explicitly written as follows \textnormal:
\begin{align*}
\xi \colon \FB_{\mathcal{C},X}^{(n)} &\to \fab(M_n),\\
\frac{f}{1+g} &\mapsto \bar{f},
\end{align*}
where $f \mapsto \bar{f}$ is the quotient map $\langle X\rangle_{id}^n \twoheadrightarrow \langle X\rangle_{id}^n / \langle X\rangle_{id}^{n+1} \simeq \fab(M_n)$.
\end{thm}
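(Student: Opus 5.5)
We argue by induction on $n$, working inside the explicit model of Theorem~\ref{free radical commutative}. There $B=\FB_{\mathcal{C},X}$ is a commutative radical ring in which $\ast$ is the ordinary product of rational functions. Hence $B^{(n+1)}=B^{(n)}\ast B$ is the additive subgroup generated by products $r\cdot s$ with $r\in B^{(n)}$ and $s\in B$. Write $P=\langle X\rangle_{id}$ and set
\[
S_n=\left\{\tfrac{f}{1+g}\mid f\in P^n,\ g\in P\right\}.
\]
For $n=1$ we have $S_1=B$ by definition, so the claim $B^{(1)}=S_1$ holds.

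For the inductive step, first show that $S_n$ is an additive subgroup. The sum formula $\frac{f_1}{1+g_1}+\frac{f_2}{1+g_2}=\frac{f_1(1+g_2)+f_2(1+g_1)}{(1+g_1)(1+g_2)}$ keeps the numerator in $P^n$ and the denominator of the form $1+P$. Next, $S_n\cdot S_1\subseteq S_{n+1}$, since numerators multiply into $P^{n+1}$; by induction this gives $B^{(n+1)}\subseteq S_{n+1}$. For the reverse inclusion, note that $P^{n+1}$ is additively spanned by products $m\cdot x$ with $m\in P^n$ a monomial multiple and $x\in X$. Then $\frac{m x}{1+g}=\frac{m}{1+g}\cdot x$ is a product of an element of $S_n=B^{(n)}$ with an element of $B$, so it lies in $B^{(n)}\ast B$. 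Splitting a general numerator $f\in P^{n+1}$ into such terms over the common denominator $1+g$ gives $S_{n+1}\subseteq B^{(n+1)}$, which proves \eqref{E:F_X * F_X}.

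For the quotient, define $\xi$ as stated. Well-definedness is the delicate point. If $\frac{f}{1+g}=\frac{f'}{1+g'}$, then $f(1+g')=f'(1+g)$, so $f-f'=f'g-fg'\in P^{n+1}$; hence $\bar f=\bar f'$. To make this rigorous, we should check that any representation of an element of $S_n$ has its numerator in $P^n$. For this we argue via the lowest-degree homogeneous component of the power series expansion: the denominator has constant term $1$, so the lowest-degree part of the numerator is determined by the element. This step is the main obstacle.

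Additivity follows from the sum formula: modulo $P^{n+1}$ the new numerator is congruent to $f_1+f_2$. Surjectivity is clear, since $\xi\left(\tfrac{f}{1}\right)=\bar f$ for any $f\in P^n$. For the kernel: $\bar f=0$ means $f\in P^{n+1}$, so $\frac{f}{1+g}\in S_{n+1}=B^{(n+1)}$; conversely, elements of $S_{n+1}$ clearly map to $0$. Finally, the quotient $B^{(n)}/B^{(n+1)}$ is a trivial skew brace because $B^{(n)}\ast B^{(n)}\subseteq B^{(n+1)}$, i.e.\ $\lambda$ acts trivially modulo $B^{(n+1)}$. It is therefore $\Triv$ of its additive group, and we have shown that additive group is $P^n/P^{n+1}\cong\fab(M_n)$, with the monomials of degree $n$ as a basis.
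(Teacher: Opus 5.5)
Your proposal is correct and follows essentially the paper's own route: induction on $n$ using that $\ast$ is ordinary multiplication of rational functions (so $S_n\cdot S_1\subseteq S_{n+1}$ and, conversely, $\frac{fx}{1+g}=\frac{f}{1+g}\ast x$), followed by checking via the sum formula that $\xi$ is additive and kills $\ast$-products. The only difference is that you explicitly verify well-definedness of $\xi$ and compute $\ker\xi=S_{n+1}$, which the paper leaves implicit; this is not really an obstacle, since your identity $f-f'=f'g-fg'\in\langle X\rangle_{id}^{n+1}$ already settles it for representations with numerators in $\langle X\rangle_{id}^{n}$, and $1+g$ being invertible modulo $\langle X\rangle_{id}^{n}$ shows that every representation is of that kind.
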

\begin{proof} 
To prove~\eqref{E:F_X * F_X} we proceed by induction on $n$. The case $n=1$ is true by definition. Recall that the operation $*$ on $\FB_{\mathcal{C},X}$ is the usual multiplication of functions. Thus, the $*$-product of two elements of $\FB_{\mathcal{C},X}$ writes as follows:
\[\frac{f_1}{1+g_1} \ast \frac{f_2}{1+g_2} = \frac{f_1f_2}{1+g_1 \circ g_2}.\] If $f_i\in \langle X\rangle_{id}^{n-1}$ and $g_i\in \langle X\rangle_{id}$, then we have $f_1f_2 \in \langle X\rangle_{id}^n$ and $g_1 \circ g_2 \in \langle X\rangle_{id}$. In the opposite direction, an element from the set on the right side of \eqref{E:F_X * F_X} is a linear combination of functions $\frac{f_1f_2}{1+g} = \frac{f_1}{1+g}*f_2 \in \FB_{\mathcal{C},X}^{(n-1)} \ast \FB_{\mathcal{C},X}$ by induction hypothesis.

Finally, the map $\xi$ sends a generator $m \in M_n$ of $\FB_{\mathcal{C},X}^{(n)}$ to the corresponding generator of $\fab(M_n)$, $\ast$-products to $0$ (recall that in the trivial skew brace $\Triv(\fab(M_n))$ all $\ast$-products vanish), and sums to sums, since
\[\frac{f_1}{1+g_1} + \frac{f_2}{1+g_2} = \frac{f_1+f_2+f_1g_2+g_1f_2}{1+g_1 \circ g_2},\]
and $\overline{f_1+f_2+f_1g_2+g_1f_2} = \overline{f_1+f_2}$ as $f_1g_2+g_1f_2 \in \langle X\rangle_{id}^{n+1}$. Thus $\xi$ describes the skew brace quotient map $\FB_{\mathcal{C},X}^{(n)} \twoheadrightarrow \FB_{\mathcal{C},X}^{(n)}/\FB_{\mathcal{C},X}^{(n+1)}$.
\end{proof}

\begin{rem}
{\rm It follows from Theorem \ref{previousproof} that also $\FB_{\mathcal{C},X}^{(n)}/\FB_{\mathcal{C},X}^{(n)}\ast \FB_{\mathcal{C},X}^{(n)}$ is a free abelian group. In fact, the associativity of the $\ast$-product and this result give that $\FB_{\mathcal{C},X}^{(n)}/\FB_{\mathcal{C},X}^{(n)}\ast \FB_{\mathcal{C},X}^{(n)}$ is an abelian group which is extension of free abelian groups, so it is free abelian itself.}
\end{rem}

\section{Free centrally nilpotent skew braces of class \texorpdfstring{$2$}{2}}\label{centrallynilponte}

\noindent The aim of this section is to deal with free objects in the class $\mathcal{CN}_2$ of centrally nilpotent skew braces of class at most $2$. 

\smallskip

Let $I$ be an ordered set, and let $X=\{x_i\,:\, i\in I\}$ be a set indexed by a non-empty set $I$. For each $(u,v)\in I\times I$ define an element~$y_{u,v}$, and let $Y$ be the set of all these elements.

Let $(\Fg_+(Y),+)$ be the free abelian group on $Y$, and let $(H,+)$ be the free nilpotent group of class $2$ on $X$ (if $X$ is a singleton, take $(H,+)$ to be free abelian, and replace class $2$ with class $1$ everywhere in the following). Note that $[H,H]$ is free abelian on the set $\{[u,v]\,:\, (u,v)\in X(2)\}$, where $X(2)$ is a subset of $X\times X$ that contains a unique pair $(u,v)$ for each choice of two distinct elements~$u$ and $v$ of~$X$. 
Consider the direct product 
 $$(\FSB_{\mathcal{CN}_2,X},+)=(\Fg_+(Y),+)\times (H,+).$$  
In order to define a skew brace structure on $\FSB_{\mathcal{CN}_2,X}$, we need to define the~\hbox{$\lambda$-func}\-tion. Thus, if $w\in \FSB_{\mathcal{CN}_2,X}$, then it may be written uniquely in the form
$$w=\varepsilon_1 x_{i_1}+\ldots+\varepsilon_nx_{i_n}+c+v,$$ where $c\in [H,H]$, $v\in \Fg_+(Y)$, $\varepsilon_i \in \mathbb{Z}$ for every $i\in\{1,\ldots,n\}$, and $i_1<\ldots<i_n$.

We put $$\lambda_w(y)=y \text{ for every } y\in Y$$
and $$\lambda_w(x_j)=\lambda_{x_{i_1}}^{\varepsilon_1}\ldots \lambda_{x_{i_n}}^{\varepsilon_n}(x_j)=x_j+\varepsilon_1y_{i_{1},j}+\ldots+\varepsilon_ny_{i_{n},j}$$ for every $x_j\in X$. So we obtain two mappings $$\lambda_w \colon Y \to \Fg_+(Y)\quad\textnormal{ and  }\quad\lambda_w \colon X\to H\times \Fg_+(Y).$$
The former obviously extends to a homomorphism  $\Fg_+(Y)\to   \Fg_+(Y) \times H$. The latter extends  to a homomorphism $H\to \Fg_+(Y) \times H$  because 
$H$ is free nilpotent on $X$ of class $2$ and $H\times \Fg_+(Y)$ is nilpotent of class $2$.
Since the  images of these maps commute, these maps naturally yield a homomorphism, which we also denote by~$\lambda_w$:
  $$\lambda_w \colon \FSB_{\mathcal{CN}_2,X} \to \FSB_{\mathcal{CN}_2,X}.$$
Note that $\lambda_w (v)\!=\!v$ for any  $v\!\in\! \Fg_+(Y)\cup[H,H]$. It easily yields  $\lambda_w\!\in\! \Aut (\FSB_{\mathcal{CN}_2,X})$.
If $h\in H$ and $v\in \Fg_+(Y)$ then $\lambda_{h+v}=\lambda_h$. Hence, we obtain, for $a,b\in \FSB_{\mathcal{CN}_2,X}$, $$\lambda_{a+\lambda_a(b)}=\lambda_{a+b}=\lambda_a\lambda_b.$$ Thus, $\FSB_{\mathcal{CN}_2,X}$ has a skew brace structure $(\FSB_{\mathcal{CN}_2,X},+,\circ)$ with $a\circ b = a+\lambda_a(b)$.

\begin{rem}
{\rm Clearly, $\FSB_{\mathcal{CN}_2,X}\ast \FSB_{\mathcal{CN}_2,X}$ and $[\FSB_{\mathcal{CN}_2,X},\FSB_{\mathcal{CN}_2,X}]_+$ are free abelian groups, while $\FSB_{\mathcal{CN}_2,X}/\FSB_{\mathcal{CN}_2,X}\ast \FSB_{\mathcal{CN}_2,X}$ is the free nilpotent group of class $2$.} 
\end{rem}

\begin{thm}\label{theofreecentralynilp}
Let $X=\{x_i\,:\, i\in I\}$ be a non-empty set. Then the skew brace $(\FSB_{\mathcal{CN}_2,X},+,\circ)$ is the free centrally nilpotent skew brace of class $2$ on $X$.
\end{thm}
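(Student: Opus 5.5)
We need to show two things: first, that $F=\FSB_{\mathcal{CN}_2,X}$ is centrally nilpotent of class at most $2$ and is generated by $X$; second, that every map $\phi\colon X\to B$ into some $B\in\mathcal{CN}_2$ extends uniquely to a skew brace morphism $F\to B$.

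\textbf{Step 1: $F\in\mathcal{CN}_2$.} Set $Z=\Fg_+(Y)\times[H,H]$. This subgroup is central in $(F,+)$ because $H$ has class $2$. It lies in $\ker(\lambda)$, since $\lambda_{h+v}=\lambda_h$ and commutators act trivially, the $\varepsilon_i$ of a commutator being zero. It is fixed pointwise by every $\lambda_w$. For $z\in Z$ we have $z\circ w=z+w$ and $w\circ z=w+\lambda_w(z)=w+z$, so $Z$ is multiplicatively central. Hence $Z\subseteq\operatorname{Ann}(F)$. The quotient $F/Z$ is abelian under $+$ and its $\lambda$-action is trivial, since $\lambda_w(x_j)-x_j\in\Fg_+(Y)$. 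It is therefore a trivial skew brace on an abelian group, so $F/Z=\operatorname{Ann}(F/Z)$ and $F=Z_2(F)$.

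\textbf{Step 2: generation by $X$.} We have $y_{i,j}=x_i\ast x_j$ and $[x_u,x_v]_+\in\langle X\rangle$, so $X$ generates $F$.

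\textbf{Step 3: existence of the extension.} Let $B\in\mathcal{CN}_2$ and put $b_i=\phi(x_i)$. Since $B/Z(B)$ is trivial with abelian additive group, we get $B\ast B\subseteq Z(B)$ and $[B,B]_+\subseteq Z(B)$. We need three facts about $B$.
\begin{enumerate}
\item $(B,+)$ is nilpotent of class at most $2$, so $x_i\mapsto b_i$ extends to an additive homomorphism $H\to B$.
\item For fixed $a$, the map $b\mapsto a\ast b=\lambda_a(b)-b$ is additive. This follows from $a\ast(b+c)=a\ast b+b+a\ast c-b$ together with the centrality of $a\ast c$.
\item $a\ast b$ depends on $a$ only through its class modulo $\ker\lambda\supseteq Z(B)$, and $\lambda_{a+a'}=\lambda_a\lambda_{a'}$. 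The latter holds by Corollary~\ref{lambda action for rclass 2}, since $B$ is right nilpotent of class at most $2$.
\end{enumerate}
Define $\psi$ on $F$ additively by $\psi|_H$ as in the first fact and $\psi(y_{u,v})=b_u\ast b_v$, extended to the free abelian group $\Fg_+(Y)$, which is legitimate because $B\ast B$ is central and abelian. To check that $\psi$ is multiplicative, it suffices to show $\psi\lambda_w=\lambda_{\psi(w)}\psi$ on the additive generators $x_j$ and $y_{u,v}$.
\begin{itemize}
\item On $y_{u,v}$: both sides fix it, because $\lambda_{\psi(w)}$ fixes $Z(B)$.
\item On $x_j$: we need $\psi(w)\ast b_j=\sum_k\varepsilon_k\,b_{i_k}\ast b_j$ for $w=\sum\varepsilon_kx_{i_k}+c+v$. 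This follows from the third fact, $\lambda_{\psi(w)}=\prod\lambda_{b_{i_k}}^{\varepsilon_k}$, and from $\lambda_{b}\lambda_{b'}(b_j)=b_j+b\ast b_j+b'\ast b_j$, which holds because the $\ast$ values are central and fixed by $\lambda$.
\end{itemize}
Since both $\psi\lambda_w$ and $\lambda_{\psi(w)}\psi$ are additive homomorphisms, agreement on generators gives agreement everywhere. Then $\psi(a\circ b)=\psi(a)+\psi(\lambda_a b)=\psi(a)\circ\psi(b)$.

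\textbf{Step 4: uniqueness.} This follows from Step 2.

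The main obstacle is the identity $\lambda_{\psi(w)}=\prod\lambda_{b_{i_k}}^{\varepsilon_k}$ in $B$. This is where right nilpotency of class $2$ (Corollary~\ref{lambda action for rclass 2}) and the centrality of $B\ast B$ must be combined carefully, in particular the fact that $\ast$ is additive in the second variable and constant on cosets of $\ker\lambda$ in the first.
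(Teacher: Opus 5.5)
Your proposal is correct and follows essentially the same route as the paper. Both arguments extend $x_i\mapsto b_i$ additively through the free nilpotent group $H$ and the free abelian group $\Fg_+(Y)$ via $y_{u,v}\mapsto b_u\ast b_v$, then check $\psi\lambda_w=\lambda_{\psi(w)}\psi$ on additive generators to get multiplicativity. You also supply details the paper leaves implicit: that $\FSB_{\mathcal{CN}_2,X}$ itself lies in $\mathcal{CN}_2$, that $X$ generates it (which gives uniqueness), and the justification of the paper's ``easily implies'' step via Corollary~\ref{lambda action for rclass 2} and the centrality of $B\ast B$.
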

\begin{proof}
Let $(B,+,\circ)$ be a centrally nilpotent skew brace of class at most $2$, and let $\pi:X\rightarrow B$ be any map. Write $(\FSB_{\mathcal{CN}_2,X},+)=(H,+)\times (\Fg_+(Y),+)$, where $H$ and~$Y$ are defined as explained above. Since $H$ is the free nilpotent group of class~$2$ on~$X$, we can extend $\pi$ to a homomorphism $\tau$ from $(H,+)$ to $(B,+)$ (this maps any commutator in the generators $X$ of $(H,+)$ to the corresponding one in $(B,+)$). Moreover, since~$\Fg_+(Y)$ is free abelian on $Y$ and $B*B$ is a trivial brace, we may define a homomorphism 
$$\sigma \colon  \Fg_+(Y) \to B$$ that maps every $y_{i,j}$, $i,j\in I$, to $\pi(x_i)\ast\pi(x_j)$. Finally, since $\sigma(\Fg_+(Y))$ lies in the centre of $B$, we may define a homomorphism 
 $$\theta \colon (\FSB_{\mathcal{CN}_2,X},+) \to (B,+)$$ extending $\tau$ and $\sigma$ (and so also $\pi$).

We need to show that $\theta$ is a skew brace homomorphism and therefore that   $\theta$ preserves the product. To see this, first notice that if $x_i,x_j\in X$, then $$
\begin{array}{c}
\theta(\lambda_{x_i}(x_j))=\theta(x_j+y_{i,j})=\theta(x_j)+\theta(y_{i,j})\\[0.2cm]
=\theta(x_j)+\theta(x_i)\ast\theta(x_j)=\lambda_{\theta(x_i)}(\theta(x_j)).
\end{array}
$$ This easily implies by the definitions,  for every $u,v\in \FSB_{\mathcal{CN}_2,X}$, we have $$
\begin{array}{c}
\theta(\lambda_u(v))=\lambda_{\theta(u)}(\theta(v)).
\end{array}
$$ 
As $u\circ v=u+\lambda_u(v)$, it follows that $\theta (u\circ v) =\theta (u) \circ \theta (v)$, as desired. Hence, the mapping $\pi \colon X\to B$ can be extended to a skew brace homomorphism $\theta \colon \FSB_{\mathcal{CN}_2,X}\to B$ and the result thus follows.
\end{proof}

\begin{cor} \label{FreeBrace}
Let $X$ be a non-empty set. Then $$(\FSB_{\mathcal{CN}_2,X}/[\FSB_{\mathcal{CN}_2,X},\FSB_{\mathcal{CN}_2,X}]_+,+,\circ)$$ is the free centrally nilpotent skew brace of abelian type of class $2$ on $X$.
\end{cor}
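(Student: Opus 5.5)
The statement is a corollary of \cref{theofreecentralynilp}, and I will prove it by the same quotient argument used in part (1) of \cref{propertieskrnn-general-X}. Write $F=\FSB_{\mathcal{CN}_2,X}$ and $C=[F,F]_+$.

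The first step is to check that $C$ is an ideal of $F$, so that the quotient skew brace exists. Normality in $(F,+)$ is automatic. Since $C$ is characteristic in $(F,+)$, it is invariant under every $\lambda_w\in\Aut(F,+)$, so it is a strong left ideal. For multiplicative normality I will use the construction. By definition of $\lambda$, every additive commutator lies in $[H,H]$, and $\lambda_w(v)=v$ for $v\in[H,H]$. Moreover $\lambda_v=\lambda_0=\id$ for $v\in[H,H]$, because $\lambda_w$ only depends on the coefficients $\varepsilon_i$ of the $x_i$. Hence $C\subseteq\ker(\lambda)\cap Z(F,+)$, using that $(F,+)$ has class $2$. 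It follows that $C\subseteq\operatorname{Soc}(F)$. In fact $a\circ c\circ\overline a = a+c+\lambda_{a\circ c}(\overline a)=a+c+\lambda_a(\overline a)=c$, since $a\circ\overline a=0$ gives $\lambda_a(\overline a)=-a$. So $C$ is even multiplicatively central, and in particular it is an ideal.

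Next, $F/C$ is of abelian type, and it remains centrally nilpotent of class at most $2$, since quotients preserve the upper central series condition (the image of $Z_i(F)$ lies in $Z_i(F/C)$). For freeness, let $A$ be a centrally nilpotent skew brace of abelian type of class at most $2$, and let $f\colon X\to A$ be a map. \cref{theofreecentralynilp} gives a unique skew brace homomorphism $\widehat f\colon F\to A$ extending $f$. Because $(A,+)$ is abelian, $\widehat f(C)=0$, so $\widehat f$ factors through $F/C$. Uniqueness of the factorisation follows because $F/C$ is generated by the image of $X$.

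The only nonroutine point is the ideal verification, and specifically multiplicative normality. The explicit computation above settles it, because $\lambda$ is trivial on, and fixes, $[H,H]$.
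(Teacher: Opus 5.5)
Your proof is correct and follows the paper's argument. You show $[F,F]_+$ is an ideal, which the paper gets from $[F,F]_+\subseteq\ker(\lambda)$ and which you verify in more detail by showing it lies in the annihilator, and then you factor the homomorphism from \cref{theofreecentralynilp} through the quotient because the target has abelian additive group. The only cosmetic difference is that you get uniqueness from generation by the image of $X$, whereas the paper gets it by lifting homomorphisms on the quotient back to $F$.
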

\begin{proof}
Note that since $[\FSB_{\mathcal{CN}_2,X},\FSB_{\mathcal{CN}_2,X}]_+ \subseteq \ker(\lambda)$, it is an ideal. Hence, $(\FSB_{\mathcal{CN}_2,X}/[\FSB_{\mathcal{CN}_2,X},\FSB_{\mathcal{CN}_2,X}]_+,+,\circ)$ is a centrally nilpotent skew brace of abelian type of class $2$. If $(B,+,\circ)$ is any centrally nilpotent skew brace of abelian type of class $2$, and $\varepsilon:X\rightarrow B$ is any map, then there exists a unique homomorphism $\pi:\FSB_{\mathcal{CN}_2,X}\rightarrow B$ extending $X$ by~The\-o\-rem~\ref{theofreecentralynilp}. Clearly, $[\FSB_{\mathcal{CN}_2,X},\FSB_{\mathcal{CN}_2,X}]_+$ lies in the kernel of such homomorphism and so $\pi$ extends to a homomorphism of $\FSB_{\mathcal{CN}_2,X}/[\FSB_{\mathcal{CN}_2,X},\FSB_{\mathcal{CN}_2,X}]_+$ to $B$. Finally, since every homomorphism of the latter type can be lifted to a homomorphism of $\FSB_{\mathcal{CN}_2,X}$ to $B$, we obtain the uniqueness requirement.
\end{proof}

\begin{rem}
{\rm In order to construct the free centrally nilpotent {\it skew brace of abelian type} of class $2$, we could have just followed the same arguments as above, replacing $H$ by a free abelian group on $X$.}
\end{rem}

\begin{thm}\label{resfinfinall}
Let $X$ be any non-empty set. Then both skew braces  $\FSB_{\mathcal{CN}_2,X}$ and $\FSB_{\mathcal{CN}_2,X}/[\FSB_{\mathcal{CN}_2,X},\FSB_{\mathcal{CN}_2,X}]_+$ are residually finite. 
\end{thm}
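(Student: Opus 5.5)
To prove Theorem~\ref{resfinfinall} I would exploit the explicit description of $B=\FSB_{\mathcal{CN}_2,X}$. Its additive group is $(\Fg_+(Y),+)\times(H,+)$, where $\Fg_+(Y)$ is free abelian and $H$ is free nilpotent of class $2$. The $\lambda$-action is trivial on $\Fg_+(Y)\times[H,H]$, and on generators it is given by $\lambda_w(x_j)=x_j+\sum_k\varepsilon_k y_{i_k,j}$. The idea is to produce, for each $0\neq b$, an ideal of finite index missing $b$. As in the proof of Theorem~\ref{classnresfinhopfia}, I would first reduce to finite $X$. Given $b$, only finitely many $x_i$ occur in its normal form. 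Sending the other generators to $0$ gives a skew brace epimorphism onto $\FSB_{\mathcal{CN}_2,X_1}$, by the universal property of Theorem~\ref{theofreecentralynilp}. Under this map the $H$-part and the $y$-coordinates indexed by $X_1$ survive unchanged, so the image of $b$ is non-zero.

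With $X$ finite, $(B,+)$ is a finitely generated nilpotent group, hence residually finite. Moreover, every finite-index subgroup contains a characteristic one of finite index. Fix an integer $m\geq 2$ and put $I_m=mB$, the subgroup generated by the elements $m w$ with $w\in B$; alternatively, take the verbal subgroup $B^m$ generated by $m$-th multiples, which is characteristic. The plan is then the following. First, $I_m$ is characteristic in $(B,+)$, hence normal and invariant under every $\lambda_a\in\Aut(B,+)$, so it is a strong left ideal. Second, I must check multiplicative normality, or equivalently $\lambda_a(c)-c\in I_m$ and $c\ast a\in I_m$ for $c\in I_m$. Third, $B/I_m$ is finite, since it is a finitely generated nilpotent group of exponent dividing a power of $m$. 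Fourth, for a suitable $m$ (larger than all coefficients of $b$ in the coordinates $\Fg_+(Y)$, $H/[H,H]$ and $[H,H]$), the image of $b$ is non-zero.

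The main obstacle is the second step, i.e.\ showing that $I_m$ is an ideal, meaning $I_m\ast B\subseteq I_m$. Since $\lambda_{h+v}=\lambda_h$ and $\lambda$ is additive in the exponents $\varepsilon_k$, we get $\lambda_{m w}=\lambda_w^m$, and $\lambda_w^m(x_j)-x_j=m(\ldots)\in I_m$. Hence $(mw)\ast B\subseteq I_m$. Because $\lambda$ factors through $H/[H,H]$, it depends linearly on the abelianization coordinates, so $\lambda_c-\mathrm{id}$ maps $B$ into $m\Fg_+(Y)$ for every $c\in I_m$. If $I_m$ as defined is awkward, I would instead take the subgroup generated by $m\Fg_+(Y)$, $m[H,H]$ and $m^2$-multiples in $H$. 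I would then check directly that it is a finite-index ideal, using the formula $[mx,nx']=mn[x,x']$ to handle the commutator part.

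For the quotient by $[B,B]_+$, the same argument applies verbatim: its additive group is $\Fg_+(Y)\times\fab(X)$, free abelian of finite rank, and the $\lambda$-action is again given by the same formulas. Alternatively, one can note that it is itself the free object by Corollary~\ref{FreeBrace}, and run the same reduction to finite $X$ followed by the quotient by $mB$.
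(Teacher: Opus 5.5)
Your argument is correct, but after the common first step it takes a different route from the paper. Both proofs reduce to finite $X$ via the universal property. For your version, make sure the finite $X_1$ also contains every index $u,v$ of a $y_{u,v}$ occurring in $b$, as your phrase ``the $y$-coordinates indexed by $X_1$ survive'' requires.

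From there the paper simply quotes \cite[Corollary~3.25]{Wordproblem}: every finitely generated centrally nilpotent skew brace is residually finite. You instead write down the separating ideals explicitly:
\begin{itemize}
\item $I_m=mB$ is characteristic in $(B,+)$, hence a strong left ideal.
\item For $c\in I_m$, the image of $c$ in $H/[H,H]$ lies in $m\,\fab(X)$. So $g\mapsto\lambda_c(g)-g$ is a homomorphism into the central subgroup $m\Fg_+(Y)\subseteq I_m$, which gives $I_m\ast B\subseteq I_m$, so $I_m$ is an ideal.
\item $B/I_m$ is finite, being a finitely generated nilpotent group of finite exponent.
\end{itemize}
The paper's proof is shorter and places the result inside a general theorem on centrally nilpotent skew braces. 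Yours is self-contained, uses only the explicit model of $\FSB_{\mathcal{CN}_2,X}$, and produces a concrete cofinal family of finite quotients.

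There is one slip in your fourth step: taking $m$ larger than the coefficients of $b$ is not enough when $m$ is even. In class $2$ one has $m(x_i+x_j)=mx_i+mx_j\pm\binom{m}{2}[x_i,x_j]$. Hence $mB$ contains $\gcd\bigl(m,\binom{m}{2}\bigr)[x_i,x_j]$, which equals $\frac m2[x_i,x_j]$ for even $m$. Thus $0\neq\frac m2[x_1,x_2]\in I_m$, although its coefficient is below $m$.

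Either fix works:
\begin{itemize}
\item Take $m$ odd. Then $mB\cap(\Fg_+(Y)\times[H,H])=m\Fg_+(Y)\times m[H,H]$, and your criterion goes through.
\item Observe that $\bigcap_m mB=0$. Indeed, $(B,+)$ is residually finite, and every finite-index normal subgroup $N$ contains $eB$, where $e$ is the exponent of $B/N$. Since $I_m$ is an ideal for every $m$, some $I_m$ then misses $b$.
\end{itemize}
For the abelian-type quotient the issue does not arise, since its additive group is free abelian.
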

\begin{proof}
Write 
\[
(\FSB_{\mathcal{CN}_2,X},+)=(H,+)\times (\Fg_+(Y),+),
\]
where $(H,+)$ and $(\Fg_+(Y),+)$ are as in the construction at the beginning of the section. Let $w\in \FSB_{\mathcal{CN}_2,X}$. Then $w$ has a unique expression as $x+d+c$, where $x\in X$, $d\in [H,H]_+$, and \hbox{$c\in \Fg_+(Y)$.} Let $W$ be the finite subset of $X$ made up of all the elements necessary to define $x$, $d$, and~$c$. By the universal property, $\FSB_{\mathcal{CN}_2,X}$ has an epimorphic image which is isomorphic to~$\FSB_{\mathcal{CN}_2,X}$ and in which the image of $w$ is non-zero. Thus, in order to prove that~$\FSB_{\mathcal{CN}_2,X}$ is residually finite, we only need to prove that~$\FSB_{\mathcal{CN}_2,X}$ is such. However, being~$F_W$ finitely generated and centrally nilpotent, this follows from~\cite[Corollary~3.25]{Wordproblem}.

The proof in the case of $\FSB_{\mathcal{CN}_2,X}/[\FSB_{\mathcal{CN}_2,X},\FSB_{\mathcal{CN}_2,X}]_+$ is essentially the same. 
\end{proof}

Let us examine the particular case in which $X=\{x\}$. Here, for clarity sake, we simply write $F_x$ instead of $\FSB_{\mathcal{CN}_2,X}$. First, we observe that in this case both $(F_x,+)$ and~$(F_x,\circ)$ are always abelian. Moreover,  from the construction it follows that $F_x\ast F_x=\langle x\ast x\rangle_+$. Actually, this can be seen directly as follows: clearly, \hbox{$F_x\ast F_x\leq Z(F_x)$,} and thus  $\langle x\ast x\rangle_+$ is an ideal of $F_x$; therefore, Lemma 2.1 of \cite{Dedekind} shows that $F_x/\langle x\ast x\rangle_+$ is a trivial skew brace, and hence   $F_x\ast F_x=\langle x\ast x\rangle_+$. In particular,~$F_x$ is an extension of an infinite cyclic group by an infinite cyclic group.  It is worth remarking that not every non-trivial sub-skew brace of $F_x$ is free of rank~$1$. In fact, for example, the sub-skew brace $C$ of $F_x$ generated by $x^2$ and $x\ast x$ is such that $\langle x\ast x\rangle=Z(C)>C\ast C=\langle x^2\ast x^2\rangle=\langle 4(x\ast x)\rangle$. (Note that since $F_x$ is centrally nilpotent of class $2$ we can write $x^2=2x+c$ for some $c\in Z(F_x)$, and hence $x^2\ast x^2=2(x\ast x^2)=2(x\ast (2x+c))=4(x\ast x)$.) But in our construction of the free centrally nilpotent skew brace $F_x$ of class $2$ on one generator, the centre coincides with $F_x\ast F_x$, so the previous sub-skew brace cannot be free of rank $1$. Nevertheless, we have the following result.

\begin{thm}
Let $F_x=\FSB_{\mathcal{CN}_2,\{x\}}$.
Then every non-trivial one-generated sub-skew brace of $F_x$ is isomorphic to $F_x$.
\end{thm}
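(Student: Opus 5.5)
We would prove the statement by combining the universal property of $F_x$ from Theorem~\ref{theofreecentralynilp} with an explicit coordinate description of $F_x$. Since $X=\{x\}$ is a singleton, the construction at the beginning of this section gives $(H,+)=\langle x\rangle_+\cong\Z$ and $Y=\{y\}$ with $y=y_{x,x}$. Hence $(F_x,+)=\Z x\oplus\Z y$, and
\[
\lambda_{ax+by}(x)=x+ay,\qquad \lambda_{ax+by}(y)=y .
\]
From $a\circ b=a+\lambda_a(b)$ we obtain the product rule
\[
(ax+by)\circ(cx+dy)=(a+c)x+(b+d+ac)y ,
\]
and also $y=x\ast x$.

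Fix $0\neq w=ax+by$ and let $\langle w\rangle$ be the sub-skew brace it generates. It is centrally nilpotent of class at most $2$. By Theorem~\ref{theofreecentralynilp} there is a unique skew brace homomorphism $\varphi\colon F_x\to F_x$ with $\varphi(x)=w$. Its image is a sub-skew brace containing $w$, and it is generated by $w$ because $F_x$ is generated by $x$. Hence $\varphi(F_x)=\langle w\rangle$, and it suffices to show that $\varphi$ is injective.

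To check injectivity we compute $\varphi$ on the additive basis $\{x,y\}$. We have $\varphi(y)=\varphi(x\ast x)=w\ast w=\lambda_w(w)-w$. Since $\lambda_w(w)=a(x+ay)+by$, this gives $\varphi(y)=a^2y$. Because $\varphi$ is additive,
\[
\varphi(cx+dy)=ca\,x+(cb+da^2)\,y .
\]
So $\varphi$ is the integer linear map with matrix $\begin{pmatrix}a&0\\ b&a^2\end{pmatrix}$, whose determinant is $a^3$. If $a\neq0$, this map is injective, and therefore $\langle w\rangle\cong F_x$.

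The main obstacle is the case $a=0$, that is, $w=by$ with $b\neq 0$. Then $w$ lies in $F_x\ast F_x=Z(F_x)$, so $\langle w\rangle=\Z by$ is a trivial skew brace on an infinite cyclic group, which is not isomorphic to $F_x$. We would therefore read ``non-trivial'' in the statement as ``not a trivial skew brace''. With this reading, $\langle w\rangle$ is non-trivial exactly when $\lambda_w\neq\id$, which happens exactly when $a\neq 0$. This is precisely the case handled by the determinant computation above, and we would state this reading explicitly in the proof.
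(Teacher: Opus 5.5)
Your proof is correct and follows essentially the same route as the paper: the paper also takes the homomorphism $x\mapsto u$ given by the universal property and gets injectivity from $(\langle u\rangle,+)=\langle u\rangle_+\oplus\langle u\ast u\rangle_+$ compared with the normal form $\langle x\rangle_+\oplus\langle x\ast x\rangle_+$ of $F_x$, and your determinant $a^3\neq 0$ makes that additive independence explicit. Your reading of ``non-trivial'' as ``not a trivial skew brace'' is also the paper's reading, since its proof starts from $C\ast C\neq\{0\}$.
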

\begin{proof}
Let $C=\langle u\rangle$ be a non-trivial one-generator sub-skew brace of $F_x$. In particular, $C\ast C\neq\{0\}$.  Then $\langle u\ast u\rangle_+=\langle u\ast u\rangle$ is a non-zero ideal of $C$. So, $(\langle u\rangle,+)=\langle u\rangle_+\oplus\langle u\ast u\rangle_+$ because $F_x$ is centrally nilpotent of class $2$, and both $u$ and $u\ast u$ are elements of infinite additive order. Thus, the map defined by mapping~$x$ to $u$ is an isomorphism of $F_x$ and $\langle u\rangle$ because every non-zero element of $F_x$ has a unique form as a sum of an element of $\langle x\rangle_+$ and an element of~$\langle x\ast x\rangle_+$.
\end{proof}

Let $(Q_1,+)$ and $(Q_2,+)$ be copies of the additive group of the rational numbers, and let $(Q,+)$ be their direct product. Let $\varphi_i:(Q_i,+)\rightarrow (\mathbb Q,+)$ be isomorphisms for $i\in\{1,2\}$. In order to define a skew brace structure on $Q$ we need to define the $\lambda$-map. Let $x=x_1+x_2,\; u=u_1+u_2\in Q$,  where $x_1,u_1\in Q_1$ and $x_2,u_2\in Q_2$. Also, write $\varphi_1(u_1)=m_1/n_1$ and $\varphi_1(x_1)=m_1'/n_1'$, 
where $n_1,n_2>0$, and $(m_1,n_1)=1=(m_1',n_1')$ if $m_1,m_1'\neq0$. Define $$\lambda_{x}(u)=\lambda_{x_1}(u)=u_1+\varphi_2^{-1}(m_1m_1'/n_1n_1')+u_2.$$  If $a,b\in Q$, then we can  easily see that $\lambda_{a+\lambda_a(b)}(w)=\lambda_{a+b}(w)=\lambda_a(\lambda_b(w))$, so $Q$ can be given a skew brace structure $(Q,+,\circ)$.

\begin{thm}\label{risultatofinale}
The skew brace $(Q,+,\circ)$ is centrally nilpotent of class $2$, locally free of rank $1$, but it is not finitely generated.
\end{thm}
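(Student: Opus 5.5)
We have to establish three properties of $(Q,+,\circ)$: central nilpotency of class $2$, local freeness of rank $1$ (every finitely generated sub-skew brace is isomorphic to $F_x=\FSB_{\mathcal{CN}_2,\{x\}}$), and non-finite generation. For convenience we identify $Q_1$ and $Q_2$ with $\mathbb Q$ through $\varphi_1,\varphi_2$. The $\lambda$-map then reads $\lambda_{(a,b)}(u_1,u_2)=(u_1,u_2+au_1)$, so that $(a,b)\ast(u_1,u_2)=(0,au_1)$.

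\textbf{Central nilpotency of class $2$.} By the formula above, $Q\ast Q=Q_2$. Every element of $Q_2$ lies in $\ker\lambda$, and $(Q,+)$ is abelian, so $Q_2\subseteq\operatorname{Soc}(Q)$. For $z\in Q_2$ we have $z\circ u=z+u$ and $u\circ z=u+\lambda_u(z)=u+z$, so $Q_2\subseteq Z(Q)$. In the quotient $Q/Q_2$ the $\lambda$-action is trivial, and the quotient is abelian, so it is a trivial abelian brace and equals its own centre. Hence $Q=Z_2(Q)$. Since $Q\ast Q\neq0$, the class is exactly $2$.

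\textbf{Local freeness.} Take finitely many elements $w_1,\dots,w_k$ of $Q$. Let $d$ be a common denominator of all their coordinates, and set $u=(1/d,0)$. Next choose $e$ so that the lattice $L=\mathbb Z(1/d)\oplus\mathbb Z(1/(d^2e))$ contains every $w_i$. The aim is to exhibit a single generator $v$, of the form $v=(1/N,0)$ for a suitable $N$, whose generated sub-skew brace is $\langle v\rangle=\mathbb Z(1/N)\oplus\mathbb Z(1/N^2)$ and contains $L$. The key computation is the following. The sub-skew brace generated by $(1/N,0)$ contains $v\ast v=(0,1/N^2)$. We claim it is exactly the group $\mathbb Z(1/N)\oplus\mathbb Z(1/N^2)$, which would show this group is closed under $\circ$ and inverses. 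Closure under $\circ$ holds because $(a,b)\circ(c,d')=(a+c,\,b+d'+ac)$ and $ac\in\frac1{N^2}\mathbb Z$. Closure under inverses holds because $\overline{(a,b)}=(-a,-b+a^2)$.

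Taking $N=d^2e$ makes $1/d$ and $1/(d^2e)$ integer multiples of $1/N$ and $1/N^2$, so $L\subseteq\langle v\rangle$. Consequently $\langle w_1,\dots,w_k\rangle\subseteq\langle v\rangle$. It remains to show $\langle v\rangle\cong F_x$ and that sub-skew braces of it are again one-generated.
\begin{itemize}
\item \emph{$\langle v\rangle\cong F_x$.} Send $x\mapsto v$. By the structure of $F_x$ recalled before the previous theorem, its additive group is $\langle x\rangle_+\oplus\langle x\ast x\rangle_+\cong\mathbb Z^2$. The image $v,\ v\ast v$ is $\mathbb Z$-independent, so the map is an isomorphism.
\item \emph{Sub-skew braces.} Apply the previous theorem: every non-trivial one-generated sub-skew brace of $F_x$ is isomorphic to $F_x$. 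To use it we must show that $\langle w_1,\dots,w_k\rangle$ is itself one-generated. This is the main obstacle. I would try to prove that a finitely generated sub-skew brace $C$ of $\langle v\rangle\cong\mathbb Z^2$ is generated by any element $u'$ whose first coordinate generates the projection of $C$ to $Q_1$. One then needs $C\cap Q_2=\langle u'\ast u'\rangle$, and this may fail; the earlier example $\langle x^2,x\ast x\rangle$ shows exactly this failure.
\end{itemize}
If the one-generation step fails, I would instead read "locally free of rank $1$" as "every finite subset lies in a sub-skew brace isomorphic to $F_x$". That version is exactly what the construction of $v$ above gives.

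\textbf{Not finitely generated.} By the local argument, any finitely generated sub-skew brace of $Q$ lies in some $\mathbb Z(1/N)\oplus\mathbb Z(1/N^2)$. This is a proper subset of $Q$, since for instance $(1/(2N),0)$ is not in it. Hence $Q$ is not finitely generated.
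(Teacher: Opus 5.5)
Your proof is correct and essentially follows the paper. $Q_2$ is central with trivial abelian quotient, and any finite subset is absorbed into $\langle v\rangle=\frac1N\mathbb Z\oplus\frac1{N^2}\mathbb Z\cong F_x$; this is exactly the sense of ``locally free of rank $1$'' that the paper establishes. Your fallback reading is the right one, because the stronger reading genuinely fails: $C=\langle(2,0),(0,1)\rangle=2\mathbb Z\oplus\mathbb Z$ has $C\ast C=\{0\}\oplus4\mathbb Z\neq\{0\}\oplus\mathbb Z=Z(C)$, so $C\not\cong F_x$.

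The only difference is in non-finite generation. The paper notes that $Q/Z(Q)\cong(\mathbb Q,+)$ is not finitely generated, while your containment of every finitely generated sub-skew brace in a proper $\frac1N\mathbb Z\oplus\frac1{N^2}\mathbb Z$ works equally well.
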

\begin{proof}
By construction, $Q_2=Z(Q)$ and $Z(Q/Z(Q))=Q/Z(Q)$, so $Q$ is centrally nilpotent of class $2$. Thus, $Q$ is not finitely generated because $Q/Z(Q)$ is a non-finitely generated group.

Let $E$ be any finite subset of $Q$. Recall that $E$ is made by elements of the form~\hbox{$a+b$,} where $a\in Q_1$ and $b\in Q_2$; let $F_1$ be the subgroup of $Q_1$ generated by all such $a$'s, and let $F_2$ be the subgroup of $Q_2$ generated by all $b$'s. Then $F_1$ and~$F_2$ are finitely generated and so they are (infinite) cyclic groups. It is now possible to find $E_1=\langle u\rangle_+\leq (Q_1,+)$ and $E_2=\langle v\rangle_+\leq (Q_2,+)$ such that $F_1\leq E_1$, $F_2\leq E_2$, and $\lambda_u(u)=u+v$ (it is enough to choose them in such a way that their images in $\mathbb Q$ under the $\varphi_1$ and $\varphi_2$ coincide). Therefore, the additive group of the sub-skew brace $\langle u\rangle$ is $\langle u\rangle_+\times\langle v\rangle_+$ and the $\lambda$-function is uniquely defined by $\lambda_u(u)=u+v$. This structure coincides with that of the free centrally nilpotent skew of class $2$ on one generator, so $\langle u\rangle$ is free centrally nilpotent skew of class $2$ on one generator.
\end{proof}

\begin{thm}
The multiplicative group of the free centrally nilpotent skew brace of class $2$ of abelian type \textnormal(resp. the free centrally nilpotent skew brace of class 2 \textnormal) on a non-empty set $X$, namely $\FB_{\mathcal{CN}_2,X}$ \textnormal(resp. $\FSB_{\mathcal{CN}_2,X}$\textnormal)  is isomorphic to the additive group.
\end{thm}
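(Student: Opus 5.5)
We will exhibit an explicit bijection $\Phi\colon (\FSB_{\mathcal{CN}_2,X},\circ)\to(\FSB_{\mathcal{CN}_2,X},+)$ that is a group isomorphism. Since $a\circ b=a+\lambda_a(b)$, the identity map is a bijective $1$-cocycle, so we need a genuinely different map: a correction $\Phi(w)=w+\delta(w)$ with $\delta(w)\in \Fg_+(Y)$, a central, $\lambda$-fixed subgroup. Write $w=h+v$ with $h\in H$, $v\in\Fg_+(Y)$, and let $\bar h=\sum_i\varepsilon_i x_i$ be the image of $h$ in $H/[H,H]\cong\fab(X)$. The computation $\lambda_a(b)=b+\beta(\bar a,\bar b)$ shows that
\[
a\circ b=a+b+\beta(\bar a,\bar b),\qquad \beta\Bigl(\sum\varepsilon_ix_i,\sum\eta_jx_j\Bigr)=\sum_{i,j}\varepsilon_i\eta_j\,y_{i,j}\in\Fg_+(Y),
\]
where $\beta$ is a bilinear form $\fab(X)\times\fab(X)\to\Fg_+(Y)$. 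So $(\FSB_{\mathcal{CN}_2,X},\circ)$ is the central extension of $(\FSB_{\mathcal{CN}_2,X},+)$ twisted by the $2$-cocycle $\beta$, and the task is to show that $\beta$ is a coboundary.

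We decompose $\beta=\beta_{s}+\beta_{a}$ over the fixed order on $I$. The first piece is $\beta_{s}(\bar a,\bar b)=\sum_{i\le j}\varepsilon_i\eta_j y_{i,j}+\sum_{i>j}\varepsilon_i\eta_j y_{j,i}$, which is symmetric bilinear after identifying $y_{i,j}$ with $y_{j,i}$ in the symmetric part. The second piece $\beta_a$ collects the differences $y_{i,j}-y_{j,i}$ for $i>j$. A symmetric-type bilinear cocycle on a free abelian group is a coboundary once we can halve it, and it cannot be halved in general. We therefore use instead the standard fact that over a free abelian group every bilinear form is cohomologous to one supported on $i<j$ (equivalently, determined by its alternating part). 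Concretely, we first try
\[
\delta(w)=-\sum_{i<j}\varepsilon_i\varepsilon_j\,y_{i,j}-\sum_i\binom{\varepsilon_i}{2}y_{i,i}\;+\;(\text{term absorbing }y_{j,i},\ j>i).
\]
This leaves a residual cocycle $(\bar a,\bar b)\mapsto\sum_{i>j}\varepsilon_i\eta_j(y_{i,j}-y_{j,i})$ of alternating type.

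The residual alternating part is absorbed by the commutator structure of $(H,+)$. Both $(\FSB_{\mathcal{CN}_2,X},+)$ and $(\FSB_{\mathcal{CN}_2,X},\circ)$ are nilpotent of class $\le2$ and generated by $X\cup Y$ modulo the centre. In $(H,+)$, and hence in $(\FSB_{\mathcal{CN}_2,X},+)$, we have $[x_i,x_j]_+$. Since $y$ is central, in the $\circ$-group we get $[x_i,x_j]_\circ=[x_i,x_j]_++y_{i,j}-y_{j,i}$. So rather than a formula, the cleanest route is to check a presentation. The group $(\FSB_{\mathcal{CN}_2,X},\circ)$ is generated by $x_i$ ($i\in I$) and the central elements $y_{i,j}$. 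Its elements have unique normal forms $\prod x_i^{\varepsilon_i}\cdot\prod_{i<j}[x_i,x_j]_\circ^{c_{ij}}\cdot(\text{element of }\Fg_+(Y))$. This shows it is isomorphic to $H\times\Fg_+(Y')$, where $Y'$ is the free abelian group spanned by $\{y_{i,j}\}$ and the $[x_i,x_j]_\circ$ (for $i<j$) are independent of $Y$, because $[x_i,x_j]_+$ is a basis element of $[H,H]$ not lying in $\Fg_+(Y)$. The key verification is therefore that $\langle x_i\rangle_\circ$ is free nilpotent of class $2$ and meets $\Fg_+(Y)$ trivially. We would establish this by mapping onto $H$ along the projection killing $\Fg_+(Y)$, which is a skew brace morphism onto the trivial brace $H$, and the $\circ$-images of the $x_i$ freely generate $H$. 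It then follows that $(\FSB,\circ)\cong H\times \Fg_+(Y)\cong(\FSB,+)$. The same argument, with $H$ replaced by $\fab(X)$, treats $\FB_{\mathcal{CN}_2,X}$; note that there $[x_i,x_j]_\circ=y_{i,j}-y_{j,i}$ lies in the $Y$-part, so we must instead choose a basis of $\Fg_+(Y)$ adapted to the $y_{i,j}-y_{j,i}$.

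The main obstacle is this last step, the abelian-type case. There $(\FB,\circ)$ is nilpotent but not abelian, while $(\FB,+)$ is abelian after the quotient. The statement therefore needs care: we must check exactly which quotient is meant, and whether the generators $y_{i,j}-y_{j,i}$ must be identified.
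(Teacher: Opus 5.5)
Your argument for $\FSB_{\mathcal{CN}_2,X}$ is correct, and it takes a different route from the paper. Its essential content is the splitting at the end. Every element of $[H,H]\times\Fg_+(Y)$ is additively central, lies in $\ker\lambda$, and is fixed by all $\lambda_w$, hence is $\circ$-central. So $(B,\circ)$ is nilpotent of class at most $2$. The quotient by the ideal $\Fg_+(Y)$ is $\Triv(H)$, and freeness of $H$ among nilpotent groups of class at most $2$ gives a section $H\to(B,\circ)$, $x_i\mapsto x_i$. Hence $(B,\circ)\cong H\times\Fg_+(Y)\cong(B,+)$, and you can drop both the coboundary attempt and the normal-form paragraph. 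The paper instead obtains the $\FSB$ case by lifting a decomposition of $(\FSB_{\mathcal{CN}_2,X}/[\FSB_{\mathcal{CN}_2,X},\FSB_{\mathcal{CN}_2,X}]_+,\circ)$ taken from the abelian-type case. Its argument therefore depends on the claim discussed next; yours does not.

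For $\FB_{\mathcal{CN}_2,X}$, do not look for an adapted basis: your own computation already refutes the claim when $|X|\geq 2$. For $i\neq j$ one has $x_i\circ x_j-x_j\circ x_i=y_{i,j}-y_{j,i}\neq 0$ in $\FB_{\mathcal{CN}_2,X}$. So its multiplicative group is non-abelian, while its additive group is free abelian, and no isomorphism exists.

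As an independent check, take $\Z^3$ with $\lambda_{(a_1,a_2,c)}(b_1,b_2,d)=(b_1,b_2,d+a_1b_2)$. This is a centrally nilpotent brace of class $2$, generated by $(1,0,0)$ and $(0,1,0)$, whose multiplicative group is the integral Heisenberg group. It is a quotient of $\FB_{\mathcal{CN}_2,X}$.

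In cohomological terms, the symmetric part of $\beta$ is always a coboundary over a free abelian group. Your own $\binom{\varepsilon_i}{2}$ correction already does this, so no halving is needed. The alternating residue is the entire obstruction: $[H,H]$ absorbs it for $\FSB$, but over $\fab(X)$ every coboundary is symmetric, so it survives.

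The paper's proof of this case breaks at the step $(a\circ b)^{-1}\circ b\circ a=(a\circ b)^{-1}\circ(b+a+a\ast b)$. In fact $b\circ a=b+a+b\ast a$, and writing $a\ast b$ presupposes the commutativity being proved. So the abelian-type half of the statement holds only for $|X|=1$, where both groups are $\Z^2$. The missing step is not one you could supply.
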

\begin{proof} By Corollary~\ref{FreeBrace}, $$(\FB_{\mathcal{CN}_2,X},+,\circ)=(\FSB_{\mathcal{CN}_2,X}/[\FSB_{\mathcal{CN}_2,X},\FSB_{\mathcal{CN}_2,X}]_+,+,\circ).$$
In particular, $(\FSB_{\mathcal{CN}_2,X}/[\FSB_{\mathcal{CN}_2,X},\FSB_{\mathcal{CN}_2,X}]_+,+)$ is the free abelian group on~\hbox{$Y\cup X$.}
If $a,b$ are elements of $\FB_{\mathcal{CN}_2,X}$, then 
\begin{align*}
b^{-1}\circ a^{-1}&\circ b\circ a
=(a\circ b)^{-1}\circ b\circ a
=(a\circ b)^{-1}\circ (b+a+a\ast b)\\
&=(a\circ b)^{-1}\circ b-(a\circ b)^{-1}+(a\circ b)^{-1}\circ(a+a\ast b)\\
&=(a\circ b)^{-1}\circ b-(a\circ b)^{-1}+(a\circ b)^{-1}\circ a-(a\circ b)^{-1}+(a\circ b)^{-1}\circ (a\ast b)\\
&=(a\circ b)^{-1}\circ b-(a\circ b)^{-1}+(a\circ b)^{-1}\circ a-(a\circ b)^{-1}+(a\circ b)^{-1}+ a\ast b\\
&=(a\circ b)^{-1}\circ b-(a\circ b)^{-1}+(a\circ b)^{-1}\circ a+ a\ast b\\
&=b^{-1}\circ a^{-1}\circ b-(a\circ b)^{-1}+b^{-1}+ a\ast b\\
&=b^{-1}\circ (a^{-1}+b+a^{-1}\ast b)-(a\circ b)^{-1}+b^{-1}+ a\ast b\\
&=b^{-1}\circ (a^{-1}+b-a\ast b)-(a\circ b)^{-1}+b^{-1}+ a\ast b\\
&=b^{-1}\circ a^{-1}-b^{-1}+b^{-1}\circ(b-a\ast b)
-(a\circ b)^{-1}+b^{-1}+a\ast b\\
&=b^{-1}\circ a^{-1}+b^{-1}\circ(b-a\ast b)
-(a\circ b)^{-1}+a\ast b\\
&=b^{-1}\circ a^{-1}-b^{-1}+b^{-1}-a\ast b-(a\circ b)^{-1}+a\ast b\\
&=-b^{-1}+b^{-1}-a\ast b+a\ast b\\
&=0
\end{align*}

\noindent(Here we have used that $(a\circ b)^{-1}\circ (a\ast b)=(a\circ b)^{-1}+ a\ast b$ for all $a,b\in B$ because $a\ast b\in Z(\FB_{\mathcal{CN}_2,X})$.)

This shows that $(\FB_{\mathcal{CN}_2,X},\circ)$ is abelian. The fact that it is free abelian on $X\cup Y$ follows from the fact that both $(\Fg_+(Y),\circ )$ and $(\FB_{\mathcal{CN}_2,X}/\Fg_+(Y),\circ)$ are free abelian groups on $Y$ and (the image of) $X$, respectively. This implies that the multiplicative group of~$\FB_{\mathcal{CN}_2,X}$ is isomorphic with the additive one.

\medskip

Let us  now consider the free centrally nilpotent skew brace $\FSB_{\mathcal{CN}_2,X}$ of class~$2$. By the above, the multiplicative group of $$\FSB_{\mathcal{CN}_2,X}/[\FSB_{\mathcal{CN}_2,X},\FSB_{\mathcal{CN}_2,X}]_+$$ can be written as the direct product of the image of $\Fg_+(Y)$ and a free abelian group $U/[\FSB_{\mathcal{CN}_2,X},\FSB_{\mathcal{CN}_2,X}]_+$ generated by the image of $X$ multiplicatively; of course we may assume that $[\FSB_{\mathcal{CN}_2,X},\FSB_{\mathcal{CN}_2,X}]_+\subseteq U$. As $[\FSB_{\mathcal{CN}_2,X},\FSB_{\mathcal{CN}_2,X}]_+\leq Z(B)$, we obtain that $U$ is an ideal of the skew brace $\FSB_{\mathcal{CN}_2,X}$. Since also \hbox{$U\cap \Fg_+(Y)=\{ 0\}$,} it follows that $(\FSB_{\mathcal{CN}_2,X},\circ)=(U,\circ)\times (\Fg_+(Y),\circ)$. 
Since $$(\FSB_{\mathcal{CN}_2,X}/\Fg_+(Y),+)=(\FSB_{\mathcal{CN}_2,X}/\Fg_+(Y),\circ)\simeq (U,\circ)$$ is a free nilpotent group of class~$2$ on $X$, and $(\Fg_+(Y),+)=(\Fg_+(Y),\circ)$, the statement is proved.
\end{proof}

\section*{Acknowledgments}
Properzi, Trombetti, and Van Antwerpen are members of the non-profit association \href{www.advgrouptheory.com}{AGTA --- Advances in Group Theory and Applications}.  Properzi is supported by Fonds Wetenschappelijk Onderzoek - Vlaanderen, via a PhD Fellowship for fundamental research, grant 11PIO24N. 
Trombetti is supported by GNSAGA (INdAM).
Letourmy is supported by FNRS via an ASP grant.

\smallskip

This work originated from the question “Is there an analogue of the Nielsen--Schreier theorem for skew braces?” posed during the Banff workshop ‘‘Skew Braces, Braids and the Yang–Baxter Equation’’ (24w5201, May 5--10, 2024). 
It was subsequently carried out through a focused research group at the Banach Center in B\c edlewo (January 26--31, 2026) under the same title, and we gratefully acknowledge the financial support and conducive working environment of the Banach Center and Banff International Research Station. We also gratefully acknowledge Be'eri Greenfeld and Victoria Lebed for many useful discussions and insights that contributed to this work.

\bibliography{refs}
\bibliographystyle{abbrv}
\end{document}